
\documentclass[11pt,a4paper]{article}

\usepackage{amsmath, latexsym, amsfonts, mathrsfs, ulem, amsthm}
\usepackage{amscd}
\usepackage{amssymb}
\usepackage{indentfirst}
\usepackage{multicol}
\usepackage{color}
\usepackage{todonotes}
\usepackage[toc,page]{appendix}
\usepackage{comment}

\usepackage{hyperref}
\hypersetup{
    colorlinks=true,
    linkcolor=blue,
    filecolor=magenta,
    urlcolor=cyan,
}

\usepackage[mathscr]{euscript}
\usepackage{dsfont}

\newcommand{\1}{\mathds{1}}

\allowdisplaybreaks
\numberwithin{equation}{section}

\newcommand{\bv}{\mathbf{v}}
\newcommand{\bm}{\mathbf{m}}
\newcommand{\tOmega}{\widetilde\Omega}
\newcommand{\bpsi}{\boldsymbol{\psi}}

\newcommand{\E}{\mathbb{E}}
\newcommand{\FT}{\mathcal{F}}
\newcommand{\tF}{\widetilde{\mathcal{F}}}
\newcommand{\tP}{\widetilde{\mathbb{P}}}

\newcommand{\dom}{\mathcal{O}}
\newcommand{\vf}{{\vc{f}}}
\newcommand{\vw}{{\vc{w}}}
\newcommand{\vr}{\varrho}
\newcommand{\vrn}{\vr_n}
\newcommand{\vun}{\vu_n}
\newcommand{\vt}{\vartheta}
\newcommand{\vu}{\vc{u}}
\newcommand{\vm}{\vc{m}}
\newcommand{\vq}{\vc{q}}
\newcommand{\vrr}{\vc{r}}
\newcommand{\vc}[1]{{\bf #1}}
\newcommand{\eps}{\varepsilon}
\newcommand{\Law}{{\mathcal L}}
\newcommand{\N}{\mathbb{N}}
\newcommand{\Prob}{\mathbb{P}}
\newcommand{\A}{{\mathbb{A}}}
\newcommand{\ep}{\varepsilon}
\newcommand{\embed}{\hookrightarrow}
\newcommand{\dd}{\delta}
\font\F=msbm10 scaled 1200
\newcommand{\R}{\mbox{\F R}}
\newcommand{\T}{\mbox{\F T}}
\newcommand{\wt}{\widetilde}


\newcommand{\Xr}{\mathcal{X}_{\vr}}
\newcommand{\Xsr}{\mathcal{X}_{\sqrt{\vr}}}
\newcommand{\Xru}{\mathcal{X}_{\vr\vu}}
\newcommand{\Xrdu}{\mathcal{X}_{\vr^{\frac{1}{2+\delta}}\vu}}


\newcommand{\lap}{\Delta}
\newcommand{\Div}{\operatorname{div}}
\newcommand{\Grad}{\nabla}
\newcommand{\pt}{\partial_{t}}
\newcommand{\Dt}{\frac{{\rm d}} {{\rm dt}}}
\newcommand{\dx}{{\rm d} {x}}
\newcommand{\de}{{\rm d}}
\newcommand{\ds}{{\rm d} s }
\newcommand{\dt}{{\rm d} t }
\newcommand{\dxds}{\dx \ \ds}
\newcommand{\dxdt}{\dx \ \dt}

\newcommand{\intO}[1]{\int_{\dom} #1 \ \dx}
\newcommand{\lr}[1]{\left( #1 \right)}
\newcommand{\intOB}[1]{\int_{\dom} \left( #1 \right) \ \dx}
\newcommand{\intTO}[1]{\int_0^T\!\!\!\! \int_{\dom} #1 \ \dxdt}
\newcommand{\inttO}[1]{\int_0^t\!\!\!\int_{\dom} #1 \ \dxds}

\newtheorem{theorem}{Theorem}[section]
\newtheorem{lemma}[theorem]{Lemma}
\newtheorem{proposition}[theorem]{Proposition}
\newtheorem{definition}[theorem]{Definition}
\newtheorem{remark}[theorem]{Remark}
\newtheorem{corollary}[theorem]{Corollary}
\newtheorem{assumption}[theorem]{Assumption}

\newcommand{\eq}[1]{\begin{equation}
\begin{split}
#1
\end{split}
\end{equation}}
\newcommand{\eqh}[1]{\begin{equation*}
\begin{split}
#1
\end{split}
\end{equation*}}


\newcommand\dela[1]{}

\definecolor{darkblue}{rgb}{0.1,0.1,0.9}

\topmargin       -0.0 cm   
\textwidth       15.5 cm \textheight      23.0 cm
\oddsidemargin    0.5 cm   
\evensidemargin   0.0 cm   

\begin{document}
\title{Sequential stability of weak martingale solutions \\
to stochastic compressible Navier-Stokes equations \\
with viscosity vanishing on vacuum}

\author{Zdzis\l aw Brze\'zniak$^*\;$,
Gaurav Dhariwal$^\dagger\;$,
Ewelina Zatorska$^\ddagger\;$
}

\date{\today}
\maketitle

{
\footnotesize
\centerline{$^*\;$Department of Mathematics, University of York}
\centerline{Heslington, York, YO105DD, UK}
\centerline{\small \texttt{zdzislaw.brzezniak@york.ac.uk}}

\bigbreak
\centerline{$^\dagger\;$Lloyds Banking Group}
\centerline{10 Gresham Street, EC2V 7JD, London, UK}
\centerline{\small \texttt{dhariwalgaurav90@gmail.com}}

\bigbreak
\centerline{$^\ddagger\;$Department of Mathematics, Imperial College London}
\centerline{South Kensington Campus -- SW7 2AZ, London, UK}
\centerline{\small \texttt{e.zatorska@imperial.ac.uk}}

}

\bigbreak

\begin{abstract}

In this paper, we investigate  the compressible Navier-Stokes equations with degenerate, density-dependent, viscosity coefficient driven by multiplicative stochastic noise. We consider  three-dimensional periodic domain and prove that the family of weak martingale solutions is sequentially compact.

\end{abstract}

\section{Introduction}
\noindent In this article we investigate the system of equations describing the flow of  a compressible fluid in the 3-dimensional domain with the periodic boundary condition.
The dynamics of such fluid is  characterised by the total mass density $\vr=\vr(t,x)$ and  the velocity vector field $\vu=\vu(t,x)$. The following equations express the physical laws of conservation of mass and momentum, respectively
 \begin{equation}\label{eqn-1.1}
 \left\{
\begin{array}{rl}
\vspace{0.2cm}
\pt\vr &+\Div (\vr \vu) = 0,\\
\vspace{0.2cm}
\de \lr{\vr\vu} &+\left[\Div (\vr \vu \otimes \vu) - \Div (\vr \Grad\vu) + \Grad p\right]\de t =\vr\vf \de W\end{array}\right.
{\rm in}\quad(0,T)\times\dom,
\end{equation}
Here by $\dom = \T^3$ we denote the $3$-dimensional torus,
  by $\vr\vf \de W$  special multiplicative noise  -- a stochastic external force, and by $p$  the pressure  depending on  the density $\vr$ via the following formula:
\begin{equation}\label{eqn:p}
p=p(\vr)=\vr^\gamma, \;\; \gamma \in (1,3).
\end{equation}
\dela{for some fixed parameter $\gamma$ satisfying the following condition
 \begin{equation}\label{eqn:gamma}
\gamma \in (1,3).
\end{equation}}

The aim of this paper is to prove the stability  of weak martingale solutions to system \eqref{eqn-1.1}.
We refer to this system as the stochastic compressible \textit{degenerate} Navier-Stokes equations, in contrast to the \textit{classical} stochastic  compressible Navier-Stokes equations
in which the stress tensor is independent  of the density.
The first result concerning the existence of global in time solutions to the stochastically perturbed compressible, viscous  multidimensional system,  was actually given for the degenerate case,
see Tornatore \cite{To00}. The author of that paper  considered a specific, rather un-physical, choice of the stress tensor, studied earlier in the deterministic setting by Vaigant and Kazhikhov \cite{VK95}.  More than decade later, Feireisl, Maslowski and Novotn\'y  in \cite{Fei+MaNo13}   investigated the stochastic classical $3$-d system.
Since then the study of stochastically driven compressible Navier-Stokes equations took off for even more general multiplicative noise $\Phi(\vr,\vr\vu)\,\de W$. This research was initiated by Breit and Hofmanova in \cite{Breit+Ho16}, and continued in collaboration with Feireisl in a series of papers  \cite{Breit+Feir+Ho18art,Breit+Feir+Ho17,Breit+Feir+Ho16}.
We  refer to their book \cite{Breit+Feir+Ho18}, for a complete account of the mathematical literature on that system, including  some singular limits results, and to  paper \cite{Breit+Feir+HoMa19} devoted to the existence of stationary solutions.
The underlying theory for all of these contributions is  the one developed for the classical deterministic system. Here the literature is much  broader and so we refer only to the first major contributions, i.e.  the pioneering work of by Lions \cite{Lio98} for compressible barotropic Navier-Stokes with  $\gamma>\frac{9}{5}$  and the proof of the existence of solutions covering more physical cases $\gamma\geq\frac53$ by Feireisl \cite{Fei_2001}. The overview of these methods can be found in the monograph \cite{NoSt04} by Novotny and Str\v{a}skraba.

\medskip
Let us also mention that there is an abundant amount of literature corresponding to the stochastically driven  incompressible fluids. An interested reader is referred to the original paper  \cite{Flandoli+Gat_1995} by Flandoli and G{\c a}tarek, as well as three more recent publications \cite{Bianchi+Flandoli_2020,Brz+Ferr_2019,Brz+Mot+Ondr_2017} and the references therein). The main idea of the above cited  papers is to find  some suitable a priori estimates for solutions to  appropriate  approximating problems,  then  to use appropriate  compactness results, more classical in \cite{Flandoli+Gat_1995} and  less classical, based on the weak topologies, in \cite{Brz+Ferr_2019,Brz+Mot+Ondr_2017}, to pass to the limit and  finally to identify the limit as a solution to the investigated problem.  The  paper \cite{Brz+Mot+Ondr_2017} relies on a novel approach to proving the passage to the limit based on  the Skorokhod-Jakubowski Theorem \cite{Jak97} which is a generalization to a large class of non-metric spaces of the classical Skorokhod Theorem from \cite{Skorokhod_1965}.

\medskip

For the compressible, degenerate  Navier-Stokes system the literature is much more sparse, and for the stochastic version, apart from the mentioned result of Tornatore \cite{To00}, none. For the deterministic version of system  \eqref{eqn-1.1}, i.e. when $\de W=1$, the sequential stability of weak solutions was proven by Mellet and  Vasseur \cite{Me+Va07}. The complete existence result was achieved nearly ten years later by Vasseur and Yu \cite{VaYu16, VaYu16b}, after couple of other attempts including approximation by cold pressure, drag terms or quantum force \cite{MuPoZa15, BrDe06}.
The main difficulty concerning systems with the viscosity coefficients vanishing when density equals $0$, is that the velocity vector field is  no longer  defined. This degeneracy leads to some further problems in the analysis, when compared to the constant-viscosity case of Lions \cite{Lio98} and Feireisl \cite{Fei_2001}. However, for certain forms of density-dependent viscosities, this degeneracy proves to be beneficial. Namely, it provides  a particular mathematical structure, called the BD-entropy inequality, that yields a global in time integrability of  $\Grad\varphi(\vr)$ for an appropriate increasing function $\varphi$. This mathematical structure was introduced for the first time by Bresch, Desjardins $\&$ Lin \cite{BrDeLi03} for the Korteweg equations. In the following work  \cite{BrDe03} by Bresch and Desjardins, the same concept was applied to 2-dimensional viscous shallow water model, without capillarity but with additional drag force. The contribution of Mellet and Vasseur \cite{Me+Va07} goes one step further and solves a problem of passing to the limit in the convective term without any further regularisations.
These authors  combined the BD-entropy method with an additional energy-type estimate   $\vr|\vu|^2$ in the space $L^\infty(0,T; L\log L(\dom))$.

\medskip

The present paper is concerned with the sequential stability  theory for the stochastic compressible Navier-Stokes Equations with density-dependent stress tensor
$\mathbb{S}(\vr,\nabla\vu)=\vr \Grad\vu$. To the best of our knowledge, this work is the first attempt to extend the deterministic results to the stochastic setting.
Our proof of the sequential stability of the weak martingale solutions  provides an additional essential insight into the the compactness argument.
The proof of the existence of those solutions, presumably lengthy and technical, will be considered in the future.

In simple terms, our main result says that given a sequence of martingale solutions satisfying given uniform bounds, there exists  a weak limit, possibly defined on a new probability space, which is also a martingale solution. The set of uniform bounds is a stochastic generalization of the deterministic ones from \cite{Me+Va07}, with the exception of the estimate of $\vr|\vu|^2\log(1+|\vu|^2)$. To make use of Mellet and Vasseur's idea we had to replace the symmetric part of the velocity gradient in the stress tensor by the full gradient.
Let us now shortly discuss the main difficulties and the content of the paper.

\medskip

The principle difficulty in proving convergence of solutions to problem \eqref{eqn-1.1} stems from only non-negativity of the density process $\vr$. Had this process been bounded from below by some constant $\eps>0$ one would be able to use the second equation in  \eqref{eqn-1.1} as the equation for the unknown $\vu$.
Because  $\vr$ can be equal to $0$ on a set of positive space-time Lebesgue measure, and because the velocity field $\vu$ is always preceded by the density function $\rho$, passage to the limit in nonlinear terms involving $\vu$ becomes particularly difficult. The corresponding   deterministic problem   has been investigated in a number of papers, starting from \cite{Me+Va05} by Mellet and Vasseur.

The present article will also use the approach from the deterministic variant of the problem to some extent. To be precise we first use the  following classical  functions associated to a solution $(\vr,\vu)$.
\begin{align}
\label{eqn-auxilary function-energy}
\intOB{{\frac12}\vr(t,x)|\vu(t,x)|^2+\frac{1}{\gamma - 1} \vr^\gamma(t,x)} & \hspace{1truecm}\mbox{ the energy}
\\
\label{eqn-auxilary function-B-D enstrophy}
\intOB{{\frac12}\vr(t,x)|\vu(t,x)+\Grad\log\vr(t,x)|^2+\frac{1}{\gamma - 1} \vr^\gamma(t,x)}
&\hspace{1truecm}\mbox{ the B-D enstrophy}
\\
\label{eqn-auxilary function-M-V energy}
 \intO{\frac{1}{2+\delta} \vr(t,x)|\vu(t,x)|^{2+\delta}}
& \hspace{1truecm}\mbox{ the M-V energy}
\end{align}
for some $\delta \in (0,1)$, as a source of the a'priori estimates on the solutions. For the convenience of the reader, we present all details of these a-priori estimates in the stochastic setting, assuming that the equations have regular enough solutions to justify all the passages, see Lemmas \ref{lem-energy estimate stoch}, \ref{lem-entropy stoch Bresch-Desjardins}, \ref{lem-estimates} and
and \ref{lem-Mellet+Vasseur-stoch}.  However, in the next steps we cannot use these functions directly and simply apply the stochastic counterparts of the classical deterministic compactness theorems.
Instead we introduce the following  auxiliary functions
\begin{align}\label{eqn-auxilary functions-five}
 \vt_n:=\sqrt{\vr_n}, \hspace{0.3truecm} \vm_n:=\vr_n \vu_n, \hspace{0.3truecm}  \vq_n:=\sqrt{\vr_n} \vu_n,  \hspace{0.3truecm}  \vrr_n:=\vr_n^{\frac{1}{2+\delta}} \vu_n
\end{align}
and
\begin{align}\label{eqn-mu_n}
\mu_n&=\Bigl(\vr_n,   \vt_n, \vm_n , \vq_n , \vrr_n,  W\Bigr),
\end{align}
and using the just estimates we prove that the laws of the  processes $\mu_n$ are tight on some appropriately chosen functional space
$\mathcal{X}_T$ defined in  \eqref{eqn-spaces-X_T},
see Lemmata \ref{lem-tightness of rho_n}, \ref{lem-tightness of sqrt of rho_n}, \ref{lem-tightness of  rho_n u_n}, \ref{lem-tightness of  rho_n u_n in C_w} and Corollary~\ref{cor-tight mu_n}. Let us point out here that, contrary to the deterministic case, the tightness deduced from these a priori estimates is insufficient to conclude the proof of our result.
Fortunately 
we are able to employ the Jakukowski-Skorokhod Theorem
and deduce that there exists a subsequence $\bigl(\mu_{k_n}\bigr)_{n=1}$,
a new  probability space $(\tOmega,\tF,\tP)$,  a sequence $\bigl(\tilde{\mu}_n\bigr)_{n=1}^\infty$ of $\mathcal{X}_T$-valued random variables, as well as a $\mathcal{X}_T$-valued random variable $\tilde{\mu}=\Bigl(\tilde{\vr},   \tilde{\vt},\tilde{\vm },\tilde{\vq },\tilde{\vrr},\tilde{ W}\Bigr)$, all defined on the new  probability space,
and such that
(i)  $\mathcal{L}(\tilde{\mu}_n)= \mathcal{L}(\mu_{k_n})$, $n \in \N$ and (ii)   $\tilde{\mu}_n$ converges $\tP$-almost surely in $\mathcal{X}_T$ to $\tilde{\mu}$.
However, in the present setting,   the limiting random variable $\tilde{\vm}$ does not define a candidate for the velocity field $\tilde{\vu}$.  In fact,  even the approximate random variables $\tilde{\mu}_n$ do not keep track of their original structure. We solve this problem
by
proving
 that
 $\wt\Prob$-a.s.
\begin{align*}
(\widetilde\vt_n, \widetilde\vm_n,\widetilde{\vq}_n)&= (\sqrt{\widetilde\vr_n},  \widetilde\vr_n^{\frac{1+\delta}{2+\delta}}\widetilde \vrr_n,\widetilde\vr_n^{\frac{\delta}{2(2+\delta)}}\widetilde \vrr_n), \\
(\widetilde\vt, \widetilde\vm,\widetilde{\vq})&= (\sqrt{\widetilde\vr},  \widetilde\vr^{\frac{1+\delta}{2+\delta}}\widetilde \vrr,\widetilde\vr^{\frac{\delta}{2(2+\delta)}}\widetilde \vrr).
\end{align*}
The proof of  these two identities requires  measurability of the corresponding nonlinear maps. These two identities allow us  to define vector-valued random variables
$\wt\vr_n$ and $\widetilde{\vu}_n$  on our new probability space  such that $\Prob$-a.s.
 \begin{align*}
(\widetilde{\vq}_n,\widetilde{\vm}_n)&=(\sqrt{\wt\vr_n}\widetilde{\vu}_n,\wt\vr_n\widetilde{\vu}_n)
\\
\wt\vr_n\widetilde{\vu}_n\otimes\widetilde{\vu}_n\to \wt\vr\widetilde{\vu}\otimes\wt\vu.
 \end{align*}

\dela{In fact, the proof of Lemma \ref{lem-tilde u} together with the proof of Lemma \ref{lem-convergence diffusion} constitute the final preparatory step  towards a   proof of Theorem \ref{thm-existence} -- our main result. The final steps of the  proof of Theorem \ref{thm-existence} are presented in Section \ref{sec-martingale}. This proof is the final third novel difficulty we have to overcome in our paper. This difficulty is related to the second one, i.e. that the convergence guaranteed by the application of the Jakubowski-Skorokhod Theorem, is in terms of random variables
$\widetilde{\mu}_n$ and not in terms of the density $\widetilde{\rho}_n$ and velocity $\widetilde{\vu}_n$. Firstly, in Lemmata \ref{lem-Brownian Motion} and \ref{lem-Brownian Motion-2} we identify the $6$-th component $\tilde{W}$ of the limiting process $\tilde{\mu}$ as a Brownian-Motion on the new probability space. Secondly, we show that the process  $\tilde{\vu}$ defined
by \eqref{eqn-u tilde-def} satisfies condition (ii) of Definition \ref{def-mart_sol}. Finally, in Lemmas~\ref{lem-convgergence} and~\ref{lem-convergence_stoch} we establish some additional convergences that are essential in concluding the proof of Theorem \ref{thm-existence}.
}

%

Let us conclude the short introduction with an overview of the paper. The rigorous formulation of the assumptions and the main result are presented below, in Section \ref{sec-main}. Then, in Section \ref{sec-apriori} we recall deterministic a-priori estimates and prove their stochastic equivalents for smooth enough solutions. The actual proof of our the main result is the content of Section \ref{sec-existence}. In Section \ref{sec-martingale} we show that the limiting process is a martingale solution to Problem \eqref{eqn-1.1}.
Finally in the Appendices we recall some known facts about the measurability of functions, simple consequences of the H\"older inequality, and a handy collection of facts from stochastic analysis.

\section{The main result}\label{sec-main}
\noindent We consider system (\ref{eqn-1.1}) with the initial conditions
\begin{equation}\label{initial}
\vr(0,x)=\vr^{0}(x),\quad \vr\vu(0,x)=\vc{m}^{0}(x),
\end{equation}
satisfying the following assumption.
\begin{assumption} \label{ass-initial}
Assume that $\vr^0, \vc{m}^0$ are measurable functions from $\dom$ taking values in $\R$ or {$\R^3$} such that $\vr_0\ge 0$ and $\vm^0=\vc{0}$ on the set $\{\vr^0=0\}=\{x\in \dom: \vr^0(x)=0\}$.\\
 In addition, denoting $\vu^0=\frac{\vc{m}^0}{\vr^0}\1_{\{\vr^0>0\}}$ we assume that
 \begin{align}
\label{est2_ini}
&\E \left(\int_\dom \vr^0|\vu^0|^2 \,\dx\right)^p
+\E \left(\intO{\vr^0|\Grad\log\vr^0|^2} \right)^p
+\E \left(  \int_\dom \lr{\vr^0}^\gamma \,\dx\right)^p \le C \\
\label{est4-}
& \E\left( \frac{1}{2+\delta}\intO{\vr^0|\vu^0|^{2+\delta}} \right) \le C
\end{align}
for  $\delta \in (0,1)$,  $p\geq1$ and
some positive constant $C$.
\end{assumption}
Concerning the external force, we assume what follows.
\begin{assumption}\label{ass-force}
For every $p \ge 1$, {$\vf \in L^p(\Omega; L^{3}(\dom))$}  is $\mathcal{F}_0$-measurable.
\end{assumption}

Throughout the paper we will use the following definition of the solution to \eqref{eqn-1.1}.

\begin{definition}[Martingale solution]
\label{def-mart_sol} Let $T>0$ be arbitrary, let $\vr^0$, $\vc{m}^0$ satisfy Assumption \eqref{ass-initial} and $\vc{f}$ satisfy Assumption \eqref{ass-force}.\\
We say that the system $(U, W, {\vr}, \vu)$ is a martingale solution to the problem \eqref{eqn-1.1}, with the initial data $\vr^0$ and $\vm^0$ as described in \eqref{initial}, if and only if  $ U := ({\Omega},{\mathcal{F}}, {\mathbb{P}},{\mathbb{F}})$
is a stochastic basis with filtration ${\mathbb{F}}=({\mathcal{F}}_t)_{t\in[0,T]}$,
$W$ is a $\R$-valued Wiener process  on $U$, and
$\left({\vr},{\vu}\right)$
are {processes} such that
\begin{trivlist}
\item[(i)] ${\vr}$ is a continuous  ${\mathbb{F}}$-progressively measurable $L^\gamma(\dom)$-valued process,
\item[(ii)] there exists a weakly continuous ${\mathbb{F}}$-progressively measurable $L^{3/2}(\dom)$-valued process $\vc{m}$, such that  $\vc{m}=\vr\,\vu$ a.e. in $\dom \times (0,T)$ $\Prob$-a.s.,
\item[(iii)]  for all  $\phi \in W^{1,\infty}(\dom)$, $\bpsi\in W^{2,\infty}(\dom)$, $t \in [0,T]$, ${\mathbb{P}}$-a.s.
\begin{align}
    \label{eqn:weak_sense1}
     & \int_\dom \vr (t)\phi \,\dx  = \int_\dom \vr^0 \phi\,\dx + \inttO{ {\vr}{\vu} \cdot \nabla \phi },  \\
    \label{eqn:weak_sense2}
    & \int_\dom \vc{m}(t)\cdot \bpsi \,\dx  = \int_\dom \vc{m}^0\cdot \bpsi \,\dx + \inttO{ {\vr}{\vu} \otimes {\vu}\colon \nabla \bpsi } \nonumber \\
    &\hspace{3truecm} + \inttO{ \vr \vu\cdot \lap \bpsi} + \inttO{ \Grad\vr \otimes\vu:\Grad\bpsi} \nonumber \\
    & \hspace{3truecm}+ \inttO{ {\vr}^\gamma\Div \bpsi}
     + \int_0^t \!\!\!\int_\dom{\vr}\vf \cdot \bpsi \,\dx\, \mathrm{d} {W}.
\end{align}
\end{trivlist}
\end{definition}

Then, the aim of this work is to prove the sequential stability of the martingale  solutions of (\ref{eqn-1.1}). More precisely, we want to show that a sequence $\left(U_n, W_n, \vrn,\vun\right)_{n\in \N}$ of martingale solutions to \eqref{eqn-1.1}, satisfying Definition~\ref{def-mart_sol} and some uniform bounds specified below, with initial data $\vrn(0)=\vrn^{0}$ and $\vc{m}_n(0) = \vc{m}_{n}^{0}$, converges to a martingale solution $\left(\widetilde{U}, \widetilde{W}, \widetilde{\vr}, \widetilde{\vu}\right)$ to \eqref{eqn-1.1} in an appropriate sense.

We will also be using the following assumption on the boundedness of solution in the energy class to establish our main theorem.

\begin{assumption} \label{ass-energy}
There exists a positive constant $C$ such that for
any $\delta\in(0,1)$ and a corresponding $c_\delta > 0$, and for any $p \ge 1$
 the following bounds are satisfied uniformly in $n \in \N$:
\begin{align}
\label{est1}
&\E \left(\sup_{t\in[0,T]} \int_\dom \vr_n|\vu_n|^2 \,\dx\right)^p
+\E\lr{\sup_{t \in [0,T]} \int_\dom \vr_n|\Grad\log\vr_n|^2\,\dx}^p
+\E \left(\sup_{t \in[0,T]} \int_\dom \vr_n^\gamma \,\dx\right)^p \le C \\
\label{est2}
& \E \left(\intTO{\vr_n|\Grad\vu_n|^2}\right)^p+  \E \left(\intTO{|\Grad\vr_n^{\frac{\gamma}{2}}|^2}\right)^p\le C,\\
\label{est4}
    & \E\left( \frac{1}{2+\delta} \sup_{t \in [0,T]} \intO{\vr_n|\vu_n|^{2+\delta}} + c_\delta \intTO{\vr_n|\vu_n|^\delta|\Grad\vu_n|^2} \right) \le C.
\end{align}
\end{assumption}

\begin{theorem}\label{thm-existence}
 Let
 $\left(U_n, W_n, \vrn,\vun\right)_{n\in \N}$ be a sequence of martingale solutions to problem \eqref{eqn-1.1} with the initial data $\vrn^{0}$ and $\vc{m}_{n}^{0}$ satisfying  \textbf{Assumption}~\ref{ass-initial} uniformly w.r.t. $n$. Let $\vc{f}$ satisfy \textbf{Assumption}~  \ref{ass-force} and let the uniform estimates from \textbf{Assumption}~\ref{ass-energy} be satisfied.
Furthermore, we assume that there exist random variables $\vr^0 \in L^\gamma(\dom)$ and $\vm^0 \in L^{3/2}(\dom)$ such that
\begin{equation}
\label{eqn:rho_n^0 to rho^0}
\begin{array}{c}
\vspace{0.2cm}
\vr^{0}\geq0,\quad \vrn^{0}\rightarrow\vr^{0} \mbox{ in } L^{\gamma}(\dom)\quad \mbox{and}\quad \vc{m}_n^0\rightarrow\vm^0 \mbox{ weakly in}\ L^{3/2}(\dom).
\end{array}
\end{equation}
Then, there exists a stochastic basis $\widetilde{U} = \left(\tOmega, \widetilde{\mathcal{F}}, \tP, \widetilde{\mathbb{F}}\right)$, $\R$-valued Wiener process $\wt W$ on $\wt U$ and $\wt{\mathcal{F}}$-measurable processes $\wt\vr$ and $\wt\vu$ such that the sequence $\{(\wt\vr_n, \wt\vu_n)\}_{n\in\N}$ converges to $(\wt\vr, \wt\vu)$.  Moreover, $\left(\widetilde{U}, \widetilde{W}, \widetilde\vr, \widetilde\vu\right)$ is a martingale solution to \eqref{eqn-1.1} such that $\mathcal{L}(\wt\vr(0)) =\mathcal{L}(\vr^0)$ and $\mathcal{L}(\wt\vr\wt\vu(0)) = \mathcal{L}(\vm^0)$.
\end{theorem}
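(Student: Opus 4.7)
The plan is to implement the Skorokhod--Jakubowski strategy sketched in the introduction. First I would combine the uniform moment bounds of Assumption~\ref{ass-energy} with the continuity and momentum equations to obtain uniform $L^p(\Omega)$-estimates on the auxiliary processes $\vt_n=\sqrt{\vr_n}$, $\vm_n=\vr_n\vu_n$, $\vq_n=\sqrt{\vr_n}\vu_n$, $\vrr_n=\vr_n^{1/(2+\delta)}\vu_n$. Spatial Sobolev regularity of $\vt_n$ comes straight from the BD bound on $\Grad\log\vr_n$ in \eqref{est1}; the $L^2((0,T)\times\dom)$-regularity of $\Grad\vq_n$ follows from $\vr_n|\Grad\vu_n|^2\in L^1$ in \eqref{est2} combined with $\sqrt{\vr_n}\in L^2(0,T;H^1)$; fractional time regularity of $\vm_n$ comes from inspecting \eqref{eqn:weak_sense2}, with Burkholder--Davis--Gundy and Assumption~\ref{ass-force} used to estimate the stochastic integral. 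These estimates feed into the Aubin--Lions-type compactness embeddings tailored to the functional space $\mathcal{X}_T$ (which must contain a weakly continuous component for $\vm_n$, since strong continuity in time fails for the noise-driven momentum) and give tightness of the laws of $\mu_n$ on $\mathcal{X}_T$.

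The second step is to apply the Jakubowski--Skorokhod theorem on $\mathcal{X}_T$ to extract a subsequence $(\mu_{k_n})$, a new stochastic basis $(\tOmega,\tF,\tP)$ and random variables $\widetilde\mu_n\to\widetilde\mu$ $\tP$-a.s.\ in $\mathcal{X}_T$ with $\mathcal{L}(\widetilde\mu_n)=\mathcal{L}(\mu_{k_n})$. Because the components of $\mu_n$ are tied by the Borel-measurable nonlinear identities $\vt_n=\sqrt{\vr_n}$, $\vm_n=\vr_n^{(1+\delta)/(2+\delta)}\vrr_n$, $\vq_n=\vr_n^{\delta/(2(2+\delta))}\vrr_n$, equality of laws together with the measurability lemmas from the appendix transfers these identities $\tP$-a.s.\ to the tilded variables, and analogously for the limit $\widetilde\mu$. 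I would then \emph{define}
\[
\widetilde{\vu}_n:=\widetilde{\vrr}_n/\widetilde{\vr}_n^{1/(2+\delta)}\ \text{on}\ \{\widetilde{\vr}_n>0\},\qquad \widetilde{\vu}_n:=\vc{0}\ \text{otherwise},
\]
and similarly $\widetilde\vu$, thereby recovering $\widetilde\vm_n=\widetilde\vr_n\widetilde\vu_n$ and $\widetilde\vq_n=\sqrt{\widetilde\vr_n}\widetilde\vu_n$. A separate Brownian motion identification argument (quadratic-variation characterisation applied to $\widetilde W$ on the canonical filtration generated by the limiting processes) shows that $\widetilde W$ is a Wiener process on $\widetilde U$.

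The last step is to pass to the limit in the weak formulation \eqref{eqn:weak_sense1}--\eqref{eqn:weak_sense2} written for each $(\widetilde U_n,\widetilde W_n,\widetilde\vr_n,\widetilde\vu_n)$ (the identical laws guarantee that these equations do hold on the new probability space). The continuity equation is linear in $\widetilde\vm_n$ and causes no trouble; the density-dependent viscous and cross terms $\vr_n\Grad\vu_n$, $\Grad\vr_n\otimes\vu_n$ I would rewrite via the BD structure as products of $\widetilde\vt_n$ and $\Grad\widetilde\vq_n$ (or $\Grad\widetilde\vt_n$ and $\widetilde\vq_n$) and close by weak $\times$ strong convergence; the pressure $\widetilde\vr_n^\gamma$ is handled by the strong convergence of $\widetilde\vr_n$ in $L^\gamma$ given by compactness in $\mathcal{X}_T$; the stochastic integral passes to the limit by a Bensoussan-type argument using strong convergence of $\widetilde\vr_n\vf$ and of $\widetilde W_n$. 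The principal obstacle is the convective term $\widetilde\vr_n\widetilde\vu_n\otimes\widetilde\vu_n=\widetilde\vq_n\otimes\widetilde\vq_n$: strong convergence of $\widetilde\vq_n$ in $L^2((0,T)\times\dom)$ is delivered by $\mathcal{X}_T$-compactness, but to pass the limit inside the product one must rule out concentration, and this is exactly where the Mellet--Vasseur estimate \eqref{est4} enters, upgrading integrability of $\widetilde\vq_n\otimes\widetilde\vq_n$ to $L^{(2+\delta)/2}$ and so providing the required uniform integrability. With the convective term identified, the limiting system $(\widetilde U,\widetilde W,\widetilde\vr,\widetilde\vu)$ satisfies Definition~\ref{def-mart_sol}, and the initial-law identities $\mathcal{L}(\widetilde\vr(0))=\mathcal{L}(\vr^0)$, $\mathcal{L}(\widetilde\vr\widetilde\vu(0))=\mathcal{L}(\vm^0)$ follow from \eqref{eqn:rho_n^0 to rho^0} together with equality of laws at $t=0$.
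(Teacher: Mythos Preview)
Your overall architecture matches the paper, but there is a genuine gap in the treatment of $\vq_n=\sqrt{\vr_n}\vu_n$. You assert that $\Grad\vq_n\in L^2((0,T)\times\dom)$ follows from $\vr_n|\Grad\vu_n|^2\in L^1$ and $\sqrt{\vr_n}\in L^2(0,T;H^1)$. But expanding
\[
\Grad\bigl(\sqrt{\vr_n}\vu_n\bigr)=\bigl(\Grad\sqrt{\vr_n}\bigr)\vu_n+\sqrt{\vr_n}\,\Grad\vu_n,
\]
the second term is controlled while the first contains a \emph{bare} $\vu_n$, on which no a priori estimate is available (only $\vr_n^{\alpha}\vu_n$ is bounded for $\alpha>0$). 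Thus $\vq_n$ has no usable spatial regularity, and you cannot put a strong-$L^2$ slot for $\vq_n$ into $\mathcal{X}_T$. In the paper $\mathcal{X}_{\sqrt{\vr}\vu}=(L^\infty(0,T;L^2),w^\ast)$ only, so $\mathcal{X}_T$-compactness delivers merely weak$^\ast$ convergence of $\widetilde\vq_n$. Consequently your proposed weak$\times$strong argument for the viscous and cross terms via $\widetilde\vt_n\cdot\Grad\widetilde\vq_n$ also breaks down, since $\Grad\widetilde\vq_n$ is not available.

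The paper recovers the missing strong convergence of $\widetilde\vq_n$ in $L^2((0,T)\times\dom)$ \emph{after} Skorokhod, not via tightness: from the strong convergence of $\widetilde\vr_n$ in $C([0,T];L^q)$ and of $\widetilde\vm_n$ in $L^r(0,T;L^\alpha)$ one gets $\widetilde\vm_n/\sqrt{\widetilde\vr_n}\to\widetilde\vm/\sqrt{\widetilde\vr}$ a.e.\ on $\{\widetilde\vr>0\}$, and then the Mellet--Vasseur bound on $\widetilde\vr_n|\widetilde\vu_n|^{2+\delta}$ is used (through the truncation $\chi_{|\widetilde\vu_n|\leq M}$) to supply the uniform integrability that upgrades a.e.\ to strong $L^2$. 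Note the logic is the reverse of what you wrote: MV is precisely what \emph{produces} the strong $L^2$ convergence of $\widetilde\vq_n$, not an add-on to a strong convergence already in hand. Once $\sqrt{\widetilde\vr_n}\widetilde\vu_n\to\sqrt{\widetilde\vr}\widetilde\vu$ strongly in $L^2$, both the convective term and the diffusion terms (rewritten as $\sqrt{\widetilde\vr_n}\cdot\sqrt{\widetilde\vr_n}\widetilde\vu_n$ and $\Grad\sqrt{\widetilde\vr_n}\otimes\sqrt{\widetilde\vr_n}\widetilde\vu_n$) pass to the limit by strong$\times$strong and weak$\times$strong products.
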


\begin{remark}\label{rem-weak convergence}
The sequence $\{(\wt\vr_n, \wt\vu_n)\}_{n\in\N}$ converges to $(\wt\vr, \wt\vu)$ weakly. The precise meaning of it is explained in the proof.
\end{remark}

\begin{remark}
\label{rem-main_thm}
Each martingale solution is a tuple consisting of the stochastic basis, Wiener process and the measurable processes. Since, we consider a sequence of martingale solutions we are dealing with a sequence of stochastic bases as well as a sequence of Wiener processes. However, Jakubowski in \cite{Jak97}, showed that one can consider a common stochastic basis, see also \cite{Breit+Ho16},  $U = \left(\Omega, \mathcal{F}, \mathbb{P}, \mathbb{F}\right)$ and can assume without loss of generality one Wiener process $W$ and we adopt this philosophy. This is the reason that in \eqref{est1}--\eqref{est4} we can consider a common probability measure $\mathbb{P}$ incorporated in the definition of the expectation $\E$.
\end{remark}

\begin{remark}
\label{rem-gen}
The result of Theorem~\ref{thm-existence} can be generalised in the following two ways:
\begin{itemize}
\item[(i)] by considering time dependent stochastic diffusion, i.e. $\vf : \Omega \times [0,T] \times \dom \to \R^3$ and
\item[(ii)] by taking infinite dimensional noise, for e.g. a Hilbert space valued Wiener process $W$.
\end{itemize}
\end{remark}


\section{A priori estimates }
\label{sec-apriori}
In this section we state the a priori estimates, being derived under the hypothesis that all quantities in question are smooth enough to justify our manipulations. We derive the estimates for both, deterministic (in the absence of Wiener process, but in the presence of deterministic external force ``$\vf{\rm{d}}t$'') and stochastic system (in the presence of Wiener process ``$\vf\,{\rm{d}}W$''). The indexes $n$ are dropped, but the reader should notice that all these estimates are uniform in $n$.


\subsection{Conservation of mass}
We start with the conservation of mass.  Integrating the continuity equation \eqref{eqn-1.1}$_1$ over $\dom$ we deduce that
\begin{equation*}
\Dt \intO{\vr}=0,
\end{equation*}
i.e. knowing that $\intO{\vr^{0}}=M$ we have $\intO{\vr(t,x)}=M$ for any $t\in[0,T]$. The conservation of mass identity can be interpreted as  a following uniform bound
\begin{equation} \begin{split}
\label{eqn:cons_mass}
\|\vr\|_{L^\infty(0,T; L^1(\dom))} \le C,
\end{split}
\end{equation}
for some $C > 0$. The above bound holds for every $\omega \in \Omega$ and the constant $C$ is independent of $\omega$ and only depends on $M$.


\subsection{The energy equality}
In Theorem~\ref{thm-existence} we assume that the sequence $\left(\vr_n, \vu_n\right)_{n \in \N}$ satisfies certain energy inequality and a priori estimates. However, in this section we showcase the steps and techniques used to establish these estimates. One crucial step is the application of the It\^o Lemma which is not completely justified as we are working on infinite dimensional Banach spaces. In principal, one could obtain these estimates in a finite dimensional setting (by studying a finite dimensional approximation of \eqref{eqn-1.1} with the help of Faedo-Galerkin type approximation, where application of the It\^o Lemma is justified) and extend those to infinite dimensions through limiting procedure.

In what follows we derive the usual energy estimate, for deterministic as well as stochastic case.

For the whole section we choose and fix
 $T>0$, random variables  $\vr^0$, $\vc{m}^0$ satisfying Assumption \eqref{ass-initial} and $\vc{f}$ satisfying Assumption \eqref{ass-force} and a
system $(U, W, {\vr}, \vu)$ which is a martingale solution to the problem \eqref{eqn-1.1} in the sense of Definition \ref{def-mart_sol} with the initial data $\vr^0$ and $\vm^0$  such that
 Assumption~\ref{ass-energy}  is satisfied.  Here $ U := ({\Omega},{\mathcal{F}}, {\mathbb{P}},{\mathbb{F}})$
is a stochastic basis with filtration ${\mathbb{F}}=({\mathcal{F}}_t)_{t\in[0,T]}$ satisfying the so called usual  assumptions and $W$ is a $\R$-valued Wiener process  on $U$.
 We will prove a series of additional a'priori energy estimates under a vague assumption that the process $({\vr}, \vu)$ is sufficiently smooth.

\begin{lemma}\label{lem-energy estimate det}
\dela{For any  solution of the deterministic version of problem  (\ref{eqn-1.1}),} In the deterministic case,  i.e. with  $\mathrm{d}W=\mathrm{d}t$,  \dela{in the sense of Definition \ref{def-mart_sol}}  we have
\begin{equation}\label{first}
\Dt \intOB{{\frac12}\vr(t,x)|\vu(t,x)|^2+\frac{1}{\gamma-1}\vr^{\gamma}(t,x)}+\intO{\vr|\Grad\vu(t,x)|^2}=\intO{\vr(t,x)\vf(t,x)\cdot\vu(t,x)}.
\end{equation}

\end{lemma}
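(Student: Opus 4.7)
The plan is to derive the identity by the standard energy method: test the momentum equation formally against $\vu$, rewrite the time-derivative and convective terms using the continuity equation, integrate by parts in the viscous and pressure terms, and recognize a total time derivative of the potential energy in the pressure contribution. Since this is the deterministic case under the blanket smoothness hypothesis stated in the section, no It\^o correction appears and all manipulations can be justified pointwise.

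First I would multiply the momentum equation \eqref{eqn-1.1}$_2$ (with $\mathrm{d}W=\mathrm{d}t$) by $\vu$ and integrate over $\dom$. The time-derivative term is treated by the product rule
\[
\vu\cdot\pt(\vr\vu) = \pt\bigl(\tfrac12\vr|\vu|^2\bigr)+\tfrac12 |\vu|^2 \pt\vr.
\]
For the convective term, an integration by parts on the torus gives
\[
\intO{\vu\cdot\Div(\vr\vu\otimes\vu)}=-\intO{\tfrac12 \vr\,\vu\cdot\Grad|\vu|^2}=\intO{\tfrac12 |\vu|^2\Div(\vr\vu)},
\]
and by the continuity equation \eqref{eqn-1.1}$_1$ this equals $-\intO{\tfrac12 |\vu|^2\pt\vr}$, which cancels the spurious contribution from the time-derivative step. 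Together the two terms collapse to $\Dt\intO{\tfrac12\vr|\vu|^2}$.

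Next, for the viscous term, a single integration by parts yields
\[
-\intO{\vu\cdot\Div(\vr\Grad\vu)}=\intO{\vr\,\Grad\vu:\Grad\vu}=\intO{\vr|\Grad\vu|^2},
\]
which is the dissipation on the left-hand side. For the pressure, integrating by parts and again invoking the continuity equation, I would use
\[
\intO{\Grad p\cdot\vu}=-\intO{\vr^\gamma\Div\vu}=\intO{\vr^{\gamma-1}\bigl(\pt\vr+\vu\cdot\Grad\vr\bigr)}=\tfrac{1}{\gamma}\Dt\intO{\vr^\gamma}-\tfrac{1}{\gamma}\intO{\vr^\gamma\Div\vu},
\]
and solving this linear relation for $-\intO{\vr^\gamma\Div\vu}$ gives the contribution $\frac{1}{\gamma-1}\Dt\intO{\vr^\gamma}$. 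Assembling the four terms produces \eqref{first}.

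I do not expect a serious analytic obstacle: the only subtlety is the rigorous justification of the pointwise-in-time product rules and integrations by parts, which is precisely the regularity assumption flagged in the paragraph preceding the lemma. Should it be needed, the chain rule for $\vr^\gamma$ and the identity $\vu\cdot\pt(\vr\vu)=\pt(\tfrac12\vr|\vu|^2)+\tfrac12|\vu|^2\pt\vr$ can be obtained rigorously for smooth solutions, or alternatively at a Galerkin level followed by a limiting procedure, as indicated in the introductory remarks of the section; no cancellation of the deterministic calculation is lost in this process.
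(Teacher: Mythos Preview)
Your proposal is correct and follows essentially the same approach as the paper: test the momentum equation by $\vu$, integrate by parts in the viscous term, and use the continuity equation to handle the time-derivative/convective pair and the pressure. The only cosmetic difference is in the pressure computation: the paper rewrites $\Grad\vr^\gamma=\frac{\gamma}{\gamma-1}\vr\,\Grad\vr^{\gamma-1}$ and integrates by parts directly against $\vr\vu$ to land on $\frac{1}{\gamma-1}\Dt\intO{\vr^\gamma}$, whereas you first integrate by parts to get $-\intO{\vr^\gamma\Div\vu}$ and then solve a small linear relation; both yield the same identity.
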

\begin{proof}[Proof of Lemma \ref{lem-energy estimate det}] Multiplying the momentum equation of system \eqref{eqn-1.1} by $\vu$ then integrating by parts and using the following relation
\begin{equation} \begin{split}
\intO{\Grad\vr^\gamma\cdot\vu}=\intO{\frac{\gamma}{\gamma-1}\vr\Grad\vr^{\gamma-1}\cdot\vu}
&=-\intO{\frac{\gamma}{\gamma-1}\vr^{\gamma-1}\Div(\vr\vu)}\\
&=\frac{1}{\gamma-1}\Dt\intO{\vr^\gamma}
\end{split}
\end{equation}
 along with the following identity (due to the boundary conditions)
 \begin{equation} \begin{split}
 \intO{\Div(\vr\Grad\vu)\cdot\vu}=-\intO{\vr|\Grad\vu|^2},
 \end{split}
 \end{equation}
 we infer the energy identity \eqref{first}.
\end{proof}

\begin{lemma}
\label{lem-energy estimate stoch}
The following equality
is satisfied for every $t \in [0,T]$, $\mathbb{P}$-a.s.,
\begin{align}
\label{eqn:stoc_est_1}
&\intOB{\frac12 \vr(t,x)|\vu(t,x)|^2 + \frac{1}{\gamma - 1} \vr^\gamma(t,x)} + \inttO{\vr(s,x)|\Grad\vu(s,x)|^2}\\
&\;= \intOB{\frac12 \frac{\vc{m}_0^2(x)}{\vr_0(x)} + \frac{1}{\gamma - 1} \vr_0^\gamma(x)} + \frac12 \inttO{\vr(s,x)|\vf(s,x)|^2}
\nonumber\\
&+ \int_0^t\!\!\!\int_{\dom} \vu(s,x)\cdot \vf(s,x)\vr(s,x)\,\mathrm{d}x\,\mathrm{d}W(s). \nonumber
\end{align}
\end{lemma}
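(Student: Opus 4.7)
The plan is to mimic the deterministic energy identity of Lemma \ref{lem-energy estimate det}, with the only substantial new ingredient being the It\^o correction arising from the stochastic forcing. Throughout I will work formally, assuming $(\vr,\vu)$ is smooth enough (and that $\vr$ is strictly positive where division by $\vr$ is required), as is customary when deriving a priori estimates; the rigorous justification would proceed via a Faedo--Galerkin scheme, as indicated in the paragraph preceding the statement.

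First I would extract an SDE for the velocity $\vu$. Since the continuity equation $\mathrm d\vr=-\Div(\vr\vu)\,\mathrm dt$ has paths of bounded variation in $t$, there is no cross-variation and the product rule applied to $\vr\vu=\vr\cdot\vu$ yields
\begin{equation*}
\vr\,\mathrm d\vu \;=\; \mathrm d(\vr\vu) - \vu\,\mathrm d\vr \;=\; \bigl[-\Div(\vr\vu\otimes\vu)+\Div(\vr\Grad\vu)-\Grad p+\vu\,\Div(\vr\vu)\bigr]\mathrm dt + \vr\vf\,\mathrm dW.
\end{equation*}
The algebraic identity $-\Div(\vr\vu\otimes\vu)+\vu\,\Div(\vr\vu)=-\vr(\vu\cdot\Grad)\vu$ then gives
\begin{equation*}
\mathrm d\vu \;=\; \Bigl[-(\vu\cdot\Grad)\vu+\tfrac{1}{\vr}\Div(\vr\Grad\vu)-\tfrac{1}{\vr}\Grad p\Bigr]\mathrm dt + \vf\,\mathrm dW.
\end{equation*}

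Second, I would apply It\^o's formula componentwise to $F(\vu)=\tfrac12|\vu|^2$. Since the martingale part of $\mathrm d\vu$ is $\vf\,\mathrm dW$, the quadratic variation produces
\begin{equation*}
\mathrm d\bigl(\tfrac12|\vu|^2\bigr) \;=\; \vu\cdot \mathrm d\vu + \tfrac12|\vf|^2\,\mathrm dt.
\end{equation*}
Multiplying by $\vr$ and invoking the product rule once more (with $\mathrm d\vr$ again of bounded variation) gives
\begin{equation*}
\mathrm d\bigl(\tfrac12\vr|\vu|^2\bigr) \;=\; \vr\vu\cdot \mathrm d\vu + \tfrac12\vr|\vf|^2\,\mathrm dt + \tfrac12|\vu|^2\,\mathrm d\vr.
\end{equation*}

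Third, I would integrate over $\dom$, plug in the SDE for $\mathrm d\vu$ and the continuity equation for $\mathrm d\vr$, and carry out exactly the same integrations by parts as in Lemma~\ref{lem-energy estimate det}. The two convective contributions, namely $-\int_\dom \vr\vu\cdot(\vu\cdot\Grad)\vu\,\mathrm dx$ coming from $\vr\vu\cdot\mathrm d\vu$ and $-\tfrac12\int_\dom |\vu|^2\Div(\vr\vu)\,\mathrm dx$ coming from $\tfrac12|\vu|^2\,\mathrm d\vr$, cancel; the viscous term gives $-\int_\dom \vr|\Grad\vu|^2\,\mathrm dx$; and the pressure term is rewritten as $-\Dt\int_\dom\frac{1}{\gamma-1}\vr^\gamma\,\mathrm dx$ using the identity already established deterministically. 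The stochastic forcing contributes the martingale $\int_\dom \vr\vu\cdot\vf\,\mathrm dx\,\mathrm dW(s)$, and the It\^o correction yields the new term $\tfrac12\int_\dom \vr|\vf|^2\,\mathrm dx\,\mathrm dt$. Integrating in time from $0$ to $t$ produces \eqref{eqn:stoc_est_1}.

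The main obstacle is legitimising these formal manipulations: the classical It\^o formula does not apply directly to the functional $\tfrac12\int_\dom \vr|\vu|^2\,\mathrm dx$ on the infinite dimensional state space at the regularity of a generic weak martingale solution, and pointwise division by $\vr$ is only meaningful on $\{\vr>0\}$. Following the strategy alluded to in the introduction to this section, the identity should first be derived at the Galerkin level -- where the unknowns live in a finite dimensional space, $\vr>0$ holds, and It\^o's formula is classical -- and then transferred to the limit. Since the present lemma only claims the identity under sufficient smoothness, the computation above suffices.
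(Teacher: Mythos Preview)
Your proof is correct and follows essentially the same approach as the paper: a formal It\^o computation together with the integrations by parts from the deterministic Lemma~\ref{lem-energy estimate det}, yielding the extra correction term $\tfrac12\int_\dom \vr|\vf|^2\,\dx$ and the stochastic integral. The only organisational difference is that the paper applies the It\^o formula directly to the functional $\varphi(\vr,\bm)=\tfrac12\int_\dom |\bm|^2/\vr\,\dx$ in the variables $(\vr,\bm)$, whereas you first extract an SDE for $\vu$ and then apply the pointwise It\^o formula to $\tfrac12|\vu|^2$; both routes involve the same $1/\vr$ issue and the same caveats about rigorous justification via Galerkin approximation.
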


\begin{proof}[Proof of Lemma \ref{lem-energy estimate stoch}]
We use the It\^o Lemma to obtain these estimates. We will repeatedly use the following notation $\bm = \vr \vu$. Let us introduce a function $\varphi$ defined by
\begin{align}\label{eqn:varhi-1}
\varphi(\vr, \bm)  &= \intO{\frac12 \vr(x)|\vu(x)|^2} = \intO{\frac12 \frac{|\bm(x)|^2}{\vr(x)}}.
 \end{align}
For the $\varphi$ defined above we have following

\begin{align}
\label{eqn:varhi-12}
\begin{split}
\frac{\partial \varphi(\vr, \bm)}{\partial \vr}(y) &= - \int_\dom \frac12 y(x)\frac{|\bm(x)|^2}{\vr^2(x)}\,dx, \qquad \frac{\partial \varphi}{\partial \bm}(\vc{z}) = \int_\dom \frac{1}{\vr} \bm(x) \vc{z}(x)\,dx,\\ \frac{\partial^2\varphi}{\partial \bm^2}(\vc{z}_1,\vc{z}_2) &= \int_\dom \frac{1}{\vr(x)}\vc{z}_1(x)\vc{z}_2(x)\,dx.
\end{split}
\end{align}
Therefore, by the application of It\^o Lemma to the function $\varphi$ and the processes $\vr$ and $\bm$ whose differentials are given by \eqref{eqn-1.1}, we get
\begin{align*}
\intO{\frac12 \vr|\vu|^2} & = \intO{\frac12 \frac{|\vc{m}_0|^2}{\vr_0}} - \frac12\inttO{\mathrm{d}\vr |\vu|^2} + \inttO{\vu\cdot \mathrm{d}(\vr\vu)} + \frac12 \inttO{\vr|\vf|^2}.
\end{align*}
Using \eqref{eqn-1.1}, we obtain
\begin{align}
\label{eqn:energy1}
\intO{\frac12 \vr|\vu|^2} & = \intO{\frac12 \frac{|\vc{m}_0|^2}{\vr_0}} + \frac12\inttO{\Div\left(\vr \vu\right) |\vu|^2} \nonumber \\
&\; - \inttO{\vu\cdot \Div\left(\vr \vu \otimes \vu\right)}+ \inttO{\vu\cdot \Div\left(\vr \Grad\vu\right)} \nonumber \\
&\;- \inttO{\vu\cdot \nabla \vr^\gamma} + \inttO{\vr\vu\cdot\vf}\,dW + \frac12 \inttO{\vr|\vf|^2}\nonumber\\
&\;:= \sum_{i=1}^7 J_i.
\end{align}
Now we investigate each of these terms, individually. For $J_3$, the integration by parts along with the boundary conditions, gives us
\begin{align}
\label{eqn:estJ2}
J_3 & = - \inttO{\vu\cdot \Div\left(\vr \vu \otimes \vu\right)}  = - \frac12 \inttO{\Div\left(\vr \vu\right) |\vu|^2},
\end{align}
and this term thus cancels out $J_2$.\\
For $J_4$ again, the integration by parts results in
\begin{align}
\label{eqn:estJ3}
J_4 & = \inttO{\vu\cdot\Div(\vr\Grad\vu))} = -\inttO{\vr|\Grad\vu|^2}.
\end{align}
Finally, for $J_5$ we write
\begin{align}
\label{eqn:estJ4}
J_5 & = -  \inttO{\Grad\vr^\gamma\cdot\vu}  = - \inttO{\frac{\gamma}{\gamma-1}\vr\Grad\vr^{\gamma-1}\cdot\vu} \nonumber\\
&\;
= \inttO{\frac{\gamma}{\gamma-1}\vr^{\gamma-1}\Div(\vr\vu)} \nonumber \\
& \;= - \frac{1}{\gamma-1}\intO{\vr^\gamma} + \frac{1}{\gamma-1}\intO{\vr_0^\gamma}.
\end{align}
Using \eqref{eqn:estJ2}--\eqref{eqn:estJ4} in \eqref{eqn:energy1} and on rearranging, we obtain \eqref{eqn:stoc_est_1}.
\end{proof}


\subsection{The Bresch-Desjardins entropy}
To proceed we need to find some better estimate of the norm of density than $L^{\infty}(0,T;L^{\gamma}(\dom))$. It will be a consequence of integrability of gradient of $\vr$ obtained by a modification of entropy inequality proved for the first time by Bresch $\&$ Desjardins \cite{BrDe03}. We will roughly recall most important steps from the original proof.
\begin{lemma}\label{lem-entropy Bresch-Desjardins}
In the deterministic case, i.e. with $\mathrm{d}W=\mathrm{d}t$, the following equality is  satisfied
\begin{align}\label{both}
&\Dt \intOB{{\frac12}\vr(t,x)|\vu+\Grad\log\vr(t,x)|^2+\frac{1}{\gamma-1}\vr^{\gamma}(t,x)}\\
&+\intO{\Grad\log\vr(t,x)\cdot\Grad p(\vr(t,x) )}+2\intO{\vr|\A\vu(t,x)|^2}
\nonumber
\\
&=\intO{\vr(t,x)\vf(t,x)(\vu(t,x)+\Grad\log\vr(t,x))},
\nonumber
\end{align}
where the anti-symmetric gradient $\A\vu$ is given by
\begin{equation}
\label{eqn:anti-symm}
\A\vu=\frac{1}{2}\left(\Grad\vu-\Grad^{\bot}\vu\right).
\end{equation}
\end{lemma}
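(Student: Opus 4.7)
\emph{Plan.} I would follow the classical Bresch-Desjardins strategy adapted to the density-dependent viscous stress $\vr\Grad\vu$. The idea is to monitor the evolution of the BD kinetic energy $\tfrac12\int_\dom\vr|\vw|^2\dx$ associated with the effective velocity $\vw := \vu + \Grad\log\vr$ (equivalently $\vr\vw = \vr\vu + \Grad\vr$). The fact that the viscosity coefficient is proportional to $\vr$ is what makes this identity close: the $\Grad\Div(\vr\vu)$-type terms arising from spatially differentiating the continuity equation combine with $\Div(\vr\Grad\vu)$ in a manner that eventually singles out the antisymmetric part of $\Grad\vu$ as the surviving dissipation. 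As a preliminary, decompose pointwise
$$\tfrac12\vr|\vw|^2 = \tfrac12\vr|\vu|^2 + \vu\cdot\Grad\vr + \tfrac12\vr|\Grad\log\vr|^2,$$
so that \eqref{both} reduces to computing $\Dt$ of the two extra terms and adding them to the energy identity of Lemma~\ref{lem-energy estimate det}.

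\emph{Auxiliary evolution identities.} For $\tfrac12\int_\dom\vr|\Grad\log\vr|^2\dx$ only the continuity equation is needed: spatial differentiation gives $\partial_t\Grad\vr = -\Grad\Div(\vr\vu)$, and pointwise manipulation of $|\Grad\vr|^2/\vr$ leads to an expression in $\Grad\vu$ tested against $\Grad\log\vr\otimes\Grad\log\vr$, together with transport-type terms of the form $\Div(\vr\vu)\,|\Grad\log\vr|^2$. For $\int_\dom\vu\cdot\Grad\vr\,\dx$ the product rule produces two pieces: $\int_\dom\vu\cdot\Grad\partial_t\vr\,\dx$ is handled by continuity after integration by parts, while $\int_\dom\partial_t\vu\cdot\Grad\vr\,\dx$ requires the momentum equation divided by $\vr$ (formally justified by the a priori smoothness assumption of Section~\ref{sec-apriori}). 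The latter manoeuvre delivers the pressure contribution $-\int_\dom\Grad p\cdot\Grad\log\vr\,\dx$, the forcing contribution $\int_\dom\vr\vf\cdot\Grad\log\vr\,\dx$, a viscous contribution $\int_\dom\Div(\vr\Grad\vu)\cdot\Grad\log\vr\,\dx$, and several mixed quadratic cross terms involving $\vu$, $\Grad\log\vr$ and $\Grad\vu$.

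\emph{Cancellations and the antisymmetric dissipation.} Summing the three contributions, most of the mixed quadratic cross terms cancel against each other after integration by parts (all boundary terms vanish on the torus). The remaining viscous contribution, obtained by combining the $-\int_\dom\vr|\Grad\vu|^2\dx$ term from the energy identity with twice the $\int_\dom\Div(\vr\Grad\vu)\cdot\Grad\log\vr\,\dx$ from the cross step, collapses to $-2\int_\dom\vr|\A\vu|^2\dx$ on the right-hand side by means of the pointwise algebraic identity
$$(\partial_j u_i - \partial_i u_j)\,\partial_j\vr \;=\; 2(\A\vu)_{ij}\,\partial_j\vr,$$
which is an immediate consequence of the definition \eqref{eqn:anti-symm} of $\A\vu$. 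Upon moving this term to the left-hand side one obtains exactly the dissipation $+2\int_\dom\vr|\A\vu|^2\dx$ stated in \eqref{both}, together with the claimed pressure and forcing terms.

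\emph{Main obstacle.} The difficulty is not conceptual but combinatorial: one must carefully verify that every non-antisymmetric quadratic contribution is matched by a cancelling partner arising from a different piece of the splitting, so that the clean structure $2\vr|\A\vu|^2$ emerges on the nose. This bookkeeping is what makes the specific choice of stress tensor $\vr\Grad\vu$ (rather than, e.g., $\vr\,D\vu$) amenable to the BD approach. A secondary subtlety already noted in the statement of Section~\ref{sec-apriori} is the use of the momentum equation divided by $\vr$, which is only formal and must be interpreted via the smoothness hypothesis under which the a priori identity is derived.
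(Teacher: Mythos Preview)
Your approach matches the paper's: expand $\tfrac12\vr|\vu+\Grad\log\vr|^2$ into the three pieces, compute the time derivative of each, and add the result to the energy identity of Lemma~\ref{lem-energy estimate det}. Two small remarks. First, the paper writes the cross term as $\Dt\intO{\vr\vu\cdot\Grad\log\vr}$ and applies $\pt(\vr\vu)$ directly from the momentum equation, so no division by $\vr$ is ever needed; your choice to write it as $\intO{\vu\cdot\Grad\vr}$ and split via $\pt\vu$ is equivalent at the formal level but introduces precisely the division you flag as a subtlety, and this is easily avoided. Second, your description of how the antisymmetric dissipation emerges is slightly off: the identity you cite, $(\partial_j u_i-\partial_i u_j)\partial_j\vr=2(\A\vu)_{ij}\partial_j\vr$, is merely the definition of $\A\vu$ and does not by itself yield a squared quantity. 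What happens in the paper is that, after all the cross-term cancellations between $I_2$ and $I_3$, the surviving contribution on the right-hand side is $\intO{\vr\,\Grad\vu:\Grad\vu^{T}}$; combining this with the $-\intO{\vr|\Grad\vu|^2}$ from the energy identity and invoking the pointwise matrix identity $|\Grad\vu|^2-\Grad\vu:\Grad\vu^{T}=2|\A\vu|^2$ is what produces the $2\intO{\vr|\A\vu|^2}$ term.
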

\begin{proof}[Proof of Lemma \ref{lem-entropy Bresch-Desjardins}]
We will expand the LHS of equality \eqref{both}, i.e. we will compute
\begin{equation} \begin{split}
&\Dt \intOB{{\frac12}\vr|\vu+\Grad\log(\vr)|^2+\frac{1}{\gamma-1}\vr^{\gamma}}\\
&=\Dt\intOB{{\frac12}\vr|\vu|^2+\frac{1}{\gamma-1}\vr^{\gamma}}+\Dt \intO{{\frac12}\vr|\Grad\log(\vr)|^2}+
\Dt\intO{\vr\vu\cdot\Grad\log\vr}\\
&=I_1+I_2+I_3
\end{split}
\end{equation}
The first integral  $I_1$ is clearly the same as in the classical energy estimate so we drop it, for the second one we write
\begin{align}
\label{I2}
&I_2=\Dt \intO{{\frac12}\vr|\Grad\log(\vr)|^2} \nonumber\\
&= \intO{{\frac12}\vr\pt|\Grad\log(\vr)|^2}-\intO{{\frac12}\Div(\vr\vu)|\Grad\log(\vr)|^2} \nonumber\\
&= \intO{\vr\Grad\log\vr\cdot\Grad\lr{\frac{\pt\vr}{\vr}}}-\intO{{\frac12}\Div(\vr\vu)|\Grad\log(\vr)|^2} \nonumber\\
&=- \intO{\vr\Grad\log\vr\cdot\Grad\lr{\frac{\vu\cdot\Grad\vr}{\vr}+\Div\vu}}-\intO{{\frac12}\Div(\vr\vu)|\Grad\log(\vr)|^2}\nonumber\\
&=- \intO{\vr\Grad\log\vr\cdot\Grad\Div\vu}
-\intO{\vr\Grad\log\vr\otimes\vu:\Grad \log\vr}
\nonumber\\
&\qquad-\intO{\vr\Grad\vu:\Grad\log\vr\otimes\Grad\log\vr}-\intO{{\frac12}\Div(\vr\vu)|\Grad\log(\vr)|^2}\nonumber\\
&=\intO{\vr\lap\log\vr\Div\vu}+\intO{\vr|\Grad\log\vr|^2\Div\vu}\nonumber\\
&\qquad -\intO{\Div(\vr\Grad\log\vr\Grad\log\vr\otimes\vu)}+\intO{\vr\vu\lap\log\vr\cdot\Grad\log\vr}+\intO{|\Grad\log\vr|^2\Div(\vr\vu)}\nonumber\\
&\qquad-\intO{\vr\Grad\vu:\Grad\log\vr\otimes\Grad\log\vr}-\intO{{\frac12}\Div(\vr\vu)|\Grad\log(\vr)|^2} \nonumber \\
&=\intO{\vr\lap\log\vr\Div\vu}+\intO{\vr|\Grad\log\vr|^2\Div\vu}+\intO{\vr\vu\lap\log\vr\cdot\Grad\log\vr}\nonumber\\
&\qquad-\intO{\vr\Grad\vu:\Grad\log\vr\otimes\Grad\log\vr}+\intO{{\frac12}\Div(\vr\vu)|\Grad\log(\vr)|^2}\nonumber\\
&=\intO{\vr\lap\log\vr\Div\vu}+\intO{\vr|\Grad\log\vr|^2\Div\vu} \nonumber\\
&\qquad+{\frac12}\intO{\Div(\vr\vu\cdot\Grad\log\vr \Grad\log\vr)}-\intO{\vr\Grad\vu:\Grad\log\vr\otimes\Grad\log\vr} \nonumber \\
&=\intO{\vr\lap\log\vr\Div\vu}+\intO{\vr|\Grad\log\vr|^2\Div\vu}-\intO{\vr\Grad\vu:\Grad\log\vr\otimes\Grad\log\vr}.
\end{align}
Next, for $I_3$ we first write
\begin{equation} \begin{split}\label{I3}
I_3=\Dt\intO{\vr\vu\cdot\Grad\log\vr}=\intO{\pt(\vr\vu)\cdot\Grad\log\vr}+\intO{(\Div(\vr\vu))^2\frac{1}{\vr}}
\end{split}
\end{equation}
and we focus on the first part of this expression. Thanks to the momentum equation of system \eqref{eqn-1.1} we have
\eqh{
&\intO{\pt(\vr\vu)\cdot\Grad\log\vr}\\
&=-\intO{\Div(\vr\vu\otimes\vu)\cdot\Grad\vr}-\intO{\Grad\vr^\gamma\cdot\Grad\log\vr}+\intO{\Div(\vr\mathbb{D}\vu)\cdot\Grad\log\vr}+\intO{\vr\vf\cdot\Grad\log\vr}.
}
In particular, we can rearrange the third term as
\eqh{
&\intO{\Div(\vr\Grad\vu)\cdot\Grad\log\vr}=-\intO{\vr\Grad\vu:\Grad^2\log\vr}\\
&=-\intO{\Grad\vu:\Grad^2\vr}+\intO{\Grad\vu:\Grad\log\vr\otimes\Grad\vr}\\
&=-\intO{\partial_i\vu\cdot\partial_i\Grad\vr}+\intO{\Grad\vu:\Grad\log\vr\otimes\Grad\vr}\\
&=\intO{\Grad\Div\vu\cdot\Grad\vr}+\intO{\Grad\vu:\Grad\log\vr\otimes\Grad\vr}\\
&=-\intO{\Div\vu\lap\vr}+\intO{\Grad\vu:\Grad\log\vr\otimes\Grad\vr}\\
&=-\intO{\Div\vu(\vr\lap\log\vr+\vr|\Grad\log\vr|^2)}+\intO{\Grad\vu:\Grad\log\vr\otimes\Grad\vr}.
}
Therefore, coming back to \eqref{I3} we obtain
\begin{equation} \begin{split}
&\Dt\intO{\vr\vu\cdot\Grad\log\vr}+\intO{\Grad\vr^\gamma\cdot\Grad\log\vr}\\
&=-\intO{\Div(\vr\vu\otimes\vu)\cdot\Grad\vr}+\intO{(\Div(\vr\vu))^2\frac{1}{\vr}}+\intO{\vr\vf\cdot\Grad\log\vr}\\
&\qquad-\intO{\Div\vu(\vr\lap\log\vr+\vr|\Grad\log\vr|^2)}+\intO{\Grad\vu:\Grad\log\vr\otimes\Grad\vr}.
\end{split}
\end{equation}
Summing up this expression with \eqref{I2} we note that a lot of terms cancel and we are left with
\begin{equation} \begin{split}
&\Dt \intO{{\frac12}\vr|\Grad\log\vr|^2}+\Dt\intO{\vr\vu\cdot\Grad\log\vr}+\intO{\Grad\vr^\gamma\cdot\Grad\log\vr}\\
&=-\intO{\Div(\vr\vu\otimes\vu)\cdot\Grad\vr}+\intO{(\Div(\vr\vu))^2\frac{1}{\vr}}+\intO{\vr\vf\cdot\Grad\log\vr}.
\end{split}\end{equation}
Moreover, using the continuity equation we have
\eqh{
-\intO{\Div(\vr\vu\otimes\vu)\cdot\Grad\vr}&=-\intO{\Div(\vr\vu)\vu\cdot\Grad\vr\frac{1}{\vr}}-\intO{\vr(\vu\cdot\Grad)\vu\cdot\Grad\log\vr}\\
&=-\intO{(\Div(\vr\vu))^2\frac{1}{\vr}}+\intO{\Div(\vr\vu)\Div\vu}-\intO{(\vu\cdot\Grad)\vu\cdot\Grad\vr}.
}
Therefore we infer that
\begin{equation}\label{almost_there} \begin{split}
&\Dt \intO{{\frac12}\vr|\Grad\log\vr|^2}+\Dt\intO{\vr\vu\cdot\Grad\log\vr}+\intO{\Grad\vr^\gamma\cdot\Grad\log\vr}\\
&=\intO{\Div(\vr\vu)\Div\vu}-\intO{(\vu\cdot\Grad)\vu\cdot\Grad\vr}+\intO{\vr\vf\cdot\Grad\log\vr}\\
&=\intO{\vr(\Div\vu)^2}+\intO{\vu\cdot\Grad\vr\Div\vu}-\intO{(\vu\cdot\Grad)\vu\cdot\Grad\vr}+\intO{\vr\vf\cdot\Grad\log\vr}\\
&=-\intO{\vr\vu\cdot\Grad\Div\vu}+\intO{\Div((\vu\cdot\Grad)\vu)\vr}+\intO{\vr\vf\cdot\Grad\log\vr}\\
&=\intO{\vr\Grad\vu:\Grad\vu^T}+\intO{\vr\vf\cdot\Grad\log\vr}.
\end{split}\end{equation}
Finally noticing that
\eqh{
|\Grad\vu|^2-\Grad\vu:\Grad\vu^T=2|\A\vu|^2
}
we can sum up \eqref{almost_there} with \eqref{first} to get precisely \eqref{both}.
\end{proof}

Now we are ready to present the main contribution of the present subsection, i.e. a generalization of Lemma \ref{lem-entropy Bresch-Desjardins} to the stochastic case.
\begin{lemma}
\label{lem-entropy stoch Bresch-Desjardins}
The following equality
is satisfied for every $t \in [0,T]$, $\mathbb{P}$-a.s.,
\begin{align}\label{eqn:both_stoch_1}
& \intOB{{\frac12}\vr|\vu+\Grad\log\vr|^2+\frac{1}{\gamma-1}\vr^{\gamma}} \nonumber\\
&\qquad +\inttO{\Grad\log(\vr)\cdot\Grad p(\vr)} + 2\inttO{\vr|\A\vu|^2}\nonumber \\
&\; = \intOB{{\frac12}\vr_0\left|\frac{\vc{m}_0}{\vr_0}+\Grad\log\vr_0\right|^2+\frac{1}{\gamma-1}\vr_0^{\gamma}} + \frac12 \inttO{\vr|\vf|^2} \nonumber \\
  &\qquad + \int_0^t \intO{\vr\vf\cdot\left(\vu+\Grad\log\vr\right)}\,{\mathrm{d}}W,
\end{align}
where the anti-symmetric gradient $\A\vu$ is defined in \eqref{eqn:anti-symm}.
\end{lemma}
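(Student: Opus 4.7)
The plan is to replay the deterministic proof of Lemma \ref{lem-entropy Bresch-Desjardins} while carefully tracking the additional terms produced by the stochastic forcing $\vr\vf\,\de W$ in the momentum equation. Writing $\vm=\vr\vu$ and expanding $|\vu+\Grad\log\vr|^2=|\vu|^2+2\vu\cdot\Grad\log\vr+|\Grad\log\vr|^2$, I split the BD-entropy functional into three pieces:
\[
\mathcal E(\vr,\vm):=\intO{\tfrac{1}{2}\tfrac{|\vm|^2}{\vr}+\tfrac{1}{\gamma-1}\vr^\gamma},\qquad \mathcal K(\vr):=\intO{\tfrac{1}{2}\vr|\Grad\log\vr|^2},\qquad \mathcal J(\vr,\vm):=\intO{\vm\cdot\Grad\log\vr}.
\]
The point of this decomposition is that only $\mathcal E$ and $\mathcal J$ interact with the martingale part $\vr\vf\,\de W$ of $\vm$, whereas $\mathcal K$ depends solely on $\vr$, which, since the continuity equation in \eqref{eqn-1.1} carries no stochastic forcing, is a pathwise continuous finite-variation process in time for $\Prob$-a.s.\ $\omega$.

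For $\mathcal E$ I simply invoke Lemma \ref{lem-energy estimate stoch}, which already delivers both the Itô correction $\tfrac{1}{2}\inttO{\vr|\vf|^2}$ arising from $\partial^2\varphi/\partial\vm^2$ and the martingale $\int_0^t\!\intO{\vr\vu\cdot\vf}\,\de W$. For $\mathcal K$, the absence of a martingale part in $\vr$ permits a pathwise application of the classical chain rule, and therefore the deterministic computation leading to \eqref{I2} remains valid verbatim on almost every sample path. For $\mathcal J$ I apply Itô's product rule; since $\Grad\log\vr$ has zero quadratic variation in time, the cross bracket between $\vm$ and $\Grad\log\vr$ vanishes and
\[
\de\mathcal J=\intO{\de\vm\cdot\Grad\log\vr}+\intO{\vm\cdot\de(\Grad\log\vr)}.
\]
Substituting the momentum equation of \eqref{eqn-1.1} for $\de\vm$, the drift contributions reproduce exactly the manipulations that lead to $I_3$ in the proof of Lemma \ref{lem-entropy Bresch-Desjardins}, whereas the stochastic forcing $\vr\vf\,\de W$ contributes the extra martingale $\int_0^t\!\intO{\vr\vf\cdot\Grad\log\vr}\,\de W$.

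Summing the three identities, the algebraic cancellations between the drift contributions are identical to those performed in \eqref{I2}--\eqref{almost_there} (including the identity $|\Grad\vu|^2-\Grad\vu:\Grad\vu^\top=2|\A\vu|^2$), so I recover on the left-hand side of \eqref{eqn:both_stoch_1} the BD-entropy together with the dissipative terms $\inttO{\Grad\log\vr\cdot\Grad p(\vr)}+2\inttO{\vr|\A\vu|^2}$. With respect to the deterministic identity \eqref{both}, the only new terms are the Itô correction $\tfrac{1}{2}\inttO{\vr|\vf|^2}$ inherited from $\mathcal E$ and the combined martingale $\int_0^t\!\intO{\vr\vf\cdot(\vu+\Grad\log\vr)}\,\de W$ obtained by adding the $\de W$-contributions of $\mathcal E$ and $\mathcal J$, which is precisely the content of \eqref{eqn:both_stoch_1}.

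The hard part is the rigorous application of Itô's formula to the nonlinear functionals $\mathcal E$ and $\mathcal J$ on an infinite-dimensional state space, combined with the fact that $\mathcal E$, $\mathcal K$ and $\mathcal J$ involve $1/\vr$ or $\Grad\log\vr$ and therefore require a strictly positive and sufficiently smooth density. As already flagged in the preamble to Section~\ref{sec-apriori}, this is resolved by first establishing the identity on a Faedo-Galerkin level -- where finite-dimensional Itô applies directly and strict positivity of $\vr$ can be arranged -- and subsequently passing to the limit in the regularisation parameters, which is the scheme adopted throughout the paper.
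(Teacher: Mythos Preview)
Your proposal is correct and follows essentially the same route as the paper: both decompose the BD functional into the energy piece $\mathcal E$ (the paper's $I_1$), the purely density-dependent piece $\mathcal K$ ($I_2$), and the cross term $\mathcal J$ ($I_3$), invoke Lemma~\ref{lem-energy estimate stoch} for the first, treat the second by the pathwise chain rule since $\vr$ carries no martingale part, and apply the It\^o product rule to the third with vanishing correction, then recycle the deterministic drift computations from Lemma~\ref{lem-entropy Bresch-Desjardins}. Your additional remark about justifying the It\^o step via a Faedo--Galerkin layer is consistent with the disclaimer in the preamble to Section~\ref{sec-apriori}.
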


\begin{proof}[Proof of Lemma \ref{lem-entropy stoch Bresch-Desjardins}]
We will expand the LHS of equality \eqref{eqn:both_stoch_1} in the differential form, i.e. we will compute
\begin{equation}
\label{eqn:diff_est}
\begin{split}
&\mathrm{d}\left( \intOB{{\frac12}\vr|\vu+\Grad\log\vr|^2+\frac{1}{\gamma-1}\vr^{\gamma}}\right)\\
&=\mathrm{d}\left(\intOB{{\frac12}\vr|\vu|^2+\frac{1}{\gamma-1}\vr^{\gamma}}\right)+ \mathrm{d}\left( \intO{{\frac12}\vr|\Grad\log\vr|^2}\right)+
\mathrm{d}\left(\intO{\vr\vu\cdot\Grad\log\vr}\right)\\
&=I_1+I_2+I_3
\end{split}
\end{equation}
$I_1$ is already computed in \eqref{eqn:stoc_est_1}. Since, $I_2$ contains only $\vr$, a deterministic function it can be dealt as in Lemma~\ref{lem-entropy Bresch-Desjardins}. Now for $I_3$, we apply the It\^o product rule for the function
\begin{align}\label{eqn:varhi-2}
\varphi(\vr, \bm) &:= \intO{ \bm \cdot \nabla \log\vr}= \intO{\vr \vu \cdot \nabla \log\vr}
\end{align}
and the processes $\bm$ and $\vr$.
Note that the function $\varphi$ defined above in  \eqref{eqn:varhi-2} differed from the function defined earlier in \eqref{eqn:varhi-1}. Moreover,  the correction term will not appear since $\vr$ is a deterministic function.

Thus, we have
\[
\mathrm{d}\left(\intO{\bm \cdot \nabla \log\vr}\right) = \intO{ \mathrm{d}\bm \cdot \nabla \log\vr} + \intO{ \bv \cdot \nabla (\mathrm{d} \vr) \frac{1}{\vr}}.
\]
Integrating the above with respect to time, the integration by parts and the continuity equation \eqref{eqn-1.1}$_1$ results in
\begin{equation}
\label{eqn:estI3}
\begin{split}
\intO{\vr\vu\cdot \nabla \log\vr } & = \intO{\vc{m}_0\cdot \nabla \log\vr_0} + \inttO{ d(\vr\vu)\cdot \nabla \log\vr}\\
&\quad + \inttO{ \frac{1}{\vr}\Div(\vr\vu)^2}.
\end{split}
\end{equation}
Now, using the calculations from Lemma~\ref{lem-entropy Bresch-Desjardins} (for $I_3$ and the following computations), on integrating \eqref{eqn:diff_est} w.r.t. $t \in [0,T]$ and summing up the expressions for $I_1$, $I_2$ and $I_3$, we obtain \eqref{eqn:both_stoch_1}.
\end{proof}


\subsection{Uniform estimates}
The following uniform estimates are consequences of  Lemmata \ref{lem-energy estimate stoch} and \ref{lem-entropy stoch Bresch-Desjardins} and the Burkholder-Davis-Gundy inequality, see \cite{Pardoux_1976}.

\begin{lemma}\label{lem-estimates}
Assume that  $ p \ge 1$. Then there exists
a constant $C$ dpending only on the initial data and the external force such that
the following estimates are satisfied.
\dela{uniform with respect to $n$.}
\begin{align}
  \label{eqn:both_stoch_est_1}
& \sup_{t \in [0,T]}\E \left( \intOB{\frac12 \vr|\vu|^2+{\frac12}\vr|\vu+\Grad\log\vr|^2+\frac{1}{\gamma-1}\vr^{\gamma}} \right) \le C,\\
\label{eqn:both_stoch_2}
&\E \left(\sup_{t\in[0,T]} \int_\dom \vr|\vu|^2 \,\dx\right)^p
+\E\lr{\sup_{t \in [0,T]} \intOB{\vr|\Grad\log\vr|^2}}^p
+\E \left(\sup_{t \in[0,T]} \int_\dom \vr^\gamma \,\dx\right)^p \le C \\
\label{eqn:both_stoch_3}
& \E \left(\intTO{\vr|\Grad\vu|^2}\right)^p+  \E \left(\intTO{|\Grad\vr^{\frac{\gamma}{2}}|^2}\right)^p\le C,
\end{align}
where the constant $C$ depends on initial data, time and $\vf$.
\end{lemma}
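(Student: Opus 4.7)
The plan is to combine the energy identity from Lemma~\ref{lem-energy estimate stoch} with the BD entropy identity from Lemma~\ref{lem-entropy stoch Bresch-Desjardins}, handle the forcing terms using the fact that the BD entropy upgrades the integrability of $\vr$ via Sobolev embedding, and then close the $p$-th moment estimates by BDG and an absorption argument. Concretely, introduce the total energy/entropy functional
\begin{equation*}
E(t):=\intOB{\tfrac12\vr|\vu|^2+\tfrac12\vr|\vu+\Grad\log\vr|^2+\tfrac{1}{\gamma-1}\vr^{\gamma}},
\end{equation*}
and note that $\int_{\dom}\vr|\Grad\log\vr|^2=4\|\Grad\sqrt{\vr}\|_{L^2(\dom)}^2$, so that by Sobolev embedding $H^1(\dom)\embed L^6(\dom)$ applied to $\sqrt{\vr}$ together with the conservation of mass \eqref{eqn:cons_mass}, one has the pathwise bound $\|\vr\|_{L^3(\dom)}\le C(1+E(t))$. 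Adding the two identities and using the elementary identity $\Grad\log\vr\cdot\Grad\vr^\gamma=\tfrac{4}{\gamma}|\Grad\vr^{\gamma/2}|^2$ yields
\begin{equation*}
E(t)+\inttO{\vr|\Grad\vu|^2}+\tfrac{4}{\gamma}\inttO{|\Grad\vr^{\gamma/2}|^2}+2\inttO{\vr|\A\vu|^2}=E(0)+\inttO{\vr|\vf|^2}+M(t),
\end{equation*}
where $M(t)$ denotes the sum of the two stochastic integrals.

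First I would take expectations to get \eqref{eqn:both_stoch_est_1}: the martingale drops, and by H\"older's inequality followed by the Sobolev bound above, $\intO{\vr|\vf|^2}\le \|\vr\|_{L^3}\|\vf\|_{L^3}^2\le C(1+E(s))\|\vf\|_{L^3}^2$. Since $\vf$ is $\mathcal F_0$-measurable with finite moments, Assumption~\ref{ass-force} yields $\E\bigl[(1+E(s))\|\vf\|_{L^3}^2\bigr]\le C(1+\E E(s))$, and the deterministic Gronwall lemma, together with the initial estimates \eqref{est2_ini}, bounds $\sup_{t\in[0,T]}\E E(t)$, which is exactly \eqref{eqn:both_stoch_est_1}.

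Next, for the $p$-th moment bounds \eqref{eqn:both_stoch_2}--\eqref{eqn:both_stoch_3}, I would raise the balance identity to the $p$-th power, take the supremum over $[0,t]$ and apply expectation. The deterministic part contributes $C\E\bigl(\int_0^t(1+E(s))\|\vf\|_{L^3}^2\,\mathrm{d}s\bigr)^p$, which after H\"older in time reduces to $C\int_0^t(1+\E\sup_{r\le s}E(r)^p)\mathrm{d}s$ (using the independence-of-time of $\|\vf\|_{L^3}^{2p}$ and its finite moments). For the stochastic part, the quadratic variation satisfies
\begin{equation*}
\langle M\rangle_t\le C\inttO{\vr|\vf|^2}\cdot\!\!\sup_{s\le t}E(s)\le C\,\sup_{s\le t}E(s)\int_0^t(1+E(s))\|\vf\|_{L^3}^2\,\mathrm{d}s,
\end{equation*}
by Cauchy--Schwarz applied to $\int\vr\vf\cdot\vu$ and $\int\sqrt{\vr}\vf\cdot\sqrt{\vr}\Grad\log\vr$. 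The Burkholder--Davis--Gundy inequality then gives
\begin{equation*}
\E\sup_{s\le t}|M(s)|^p\le C\E\Bigl(\sup_{s\le t}E(s)\cdot\!\!\int_0^t(1+E(s))\|\vf\|_{L^3}^2\,\mathrm{d}s\Bigr)^{p/2},
\end{equation*}
and Young's inequality $ab\le \tfrac12 a^2+\tfrac12 b^2$ splits this into $\tfrac12\E\sup_{s\le t}E(s)^p$ plus a Gronwall-type remainder; the former is absorbed into the left-hand side. Closing the loop via the Gronwall lemma (in the $p$-th moment version) gives \eqref{eqn:both_stoch_2}, and the bounds on $\int_0^T\int\vr|\Grad\vu|^2$ and $\int_0^T\int|\Grad\vr^{\gamma/2}|^2$ in \eqref{eqn:both_stoch_3} then follow by rearranging the same identity and re-using the already-controlled right-hand side.

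The main obstacle I anticipate is the absorption/bootstrapping for the $p$-th moment: the stochastic term contributes a factor of $\sup E$ inside the BDG bound which must be split off cleanly by Young's inequality so that only a strictly sub-unit multiple of $\E\sup E^p$ survives on the right, and this has to be done uniformly in $t$ before invoking Gronwall. A secondary (minor) obstacle is verifying that $\int\vr|\vf|^2$ is genuinely dominated by the BD-improved integrability of $\vr$ rather than by the far weaker $L^\gamma$ bound alone, since $\gamma\in(1,3)$ is not enough on its own; this is exactly why the two identities need to be combined and cannot be used separately.
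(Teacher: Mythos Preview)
Your overall architecture is right: add the energy identity \eqref{eqn:stoc_est_1} to the BD identity \eqref{eqn:both_stoch_1}, keep the pressure dissipation $\inttO{\Grad\log\vr\cdot\Grad p(\vr)}=\tfrac{4}{\gamma}\inttO{|\Grad\vr^{\gamma/2}|^2}$ on the left, use BDG for the martingale, and absorb $\tfrac12\E\sup E^p$ via Young. The gap is in how you handle the It\^o correction term $\inttO{\vr|\vf|^2}$.

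You bound $\intO{\vr|\vf|^2}\le C(1+E(s))\|\vf\|_{L^3}^2$ and then invoke Gronwall. But $\vf$ is \emph{random} (only $\mathcal F_0$-measurable with all moments finite, Assumption~\ref{ass-force}), so the step ``$\E[(1+E(s))\|\vf\|_{L^3}^2]\le C(1+\E E(s))$'' is unjustified: H\"older in $\omega$ gives instead $(\E(1+E(s))^{a})^{1/a}(\E\|\vf\|_{L^3}^{2a'})^{1/a'}$ for some $a>1$, i.e.\ a strictly higher moment of $E$, and the Gronwall loop never closes at any fixed level $p$. The same obstruction reappears in your $p$-th moment step, where you claim the deterministic part ``reduces to $C\int_0^t(1+\E\sup_{r\le s}E(r)^p)\,\mathrm{d}s$''; this again silently treats $\|\vf\|_{L^3}^{2p}$ as a deterministic constant. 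A pathwise Gronwall is no better: it produces a factor $e^{C\|\vf\|_{L^3}^2 T}$, whose moments are not controlled by $\vf\in\bigcap_p L^p(\Omega;L^3)$.

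The paper's remedy is to avoid Gronwall entirely by absorbing the $\vr$-part of the forcing into the \emph{time-integrated} dissipation already sitting on the left. Write $\intO{\vr|\vf|^2}\le \|\vr\|_{L^{3\gamma}}\|\vf\|_{L^{\frac{6\gamma}{3\gamma-1}}}^2=\|\vr^{\gamma/2}\|_{L^6}^{2/\gamma}\|\vf\|_{L^{\frac{6\gamma}{3\gamma-1}}}^2$ and apply Young with exponents $(\gamma,\gamma')$ to obtain
\[
\inttO{\vr|\vf|^2}\le \ep\int_0^t\|\vr^{\gamma/2}\|_{L^6}^2\,\ds + C_\ep\, t\,\|\vf\|_{L^{\frac{6\gamma}{3\gamma-1}}}^{\frac{2\gamma}{\gamma-1}}.
\]
The first piece is swallowed by the Sobolev lower bound on $\inttO{|\Grad\vr^{\gamma/2}|^2}$, and the second piece is a \emph{pure} moment of $\vf$ (finite since $\tfrac{6\gamma}{3\gamma-1}\le 3$), completely decoupled from $\vr,\vu$. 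The BDG terms are treated the same way (H\"older with $\|\sqrt{\vr}\|_{L^{6\gamma}}\|\sqrt{\vr}\vu\|_{L^2}\|\vf\|_{L^{\frac{6\gamma}{3\gamma-1}}}$, then Young), so that after absorption the right-hand side of the $p$-th moment inequality contains only $\E E(0)^p$ and moments of $\vf$. No Gronwall is needed, and the correlation problem disappears. Your own observation that the BD entropy ``upgrades the integrability of $\vr$'' is exactly right---you just need to cash it in through the dissipation $\int_0^t\|\vr^{\gamma/2}\|_{L^6}^2$ rather than through the pointwise energy $E(t)$.
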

\begin{proof}[Proof of Lemma \ref{lem-estimates}]

We first sum up inequalities \eqref{eqn:stoc_est_1} with \eqref{eqn:both_stoch_1}. Noticing that $\int_0^t \int_\dom \vr|\A\vu|^2\,\dx\ds\geq 0$ and can be disregarded we obtain the {inequality}

\begin{align}\label{eqn:sum_sum}
& \intOB{{\frac12}\vr|\vu|^2+{\frac12}\vr|\vu+\Grad\log\vr|^2+\frac{2}{\gamma-1}\vr^{\gamma}} +\inttO{\Grad\log\vr\cdot\Grad p(\vr)} +  \inttO{\vr|\Grad\vu|^2}\nonumber \\
&\;\leq \intOB{{\frac12}\vr_0\left|\frac{\vc{m}_0}{\vr_0}\right|^2+{\frac12}\vr_0\left|\frac{\vc{m}_0}{\vr_0}+\Grad\log\vr_0\right|^2+\frac{2}{\gamma-1}\vr_0^{\gamma}} \nonumber \\
&\qquad+\inttO{\vr|\vf|^2} + \int_0^t\!\!\!\int_{\dom} \vr\vu\cdot \vf\,\mathrm{d}x\,\mathrm{d}W+ \int_0^t\intO{\vr\left(\vu+\Grad\log\vr\right)\cdot \vf}\,{\mathrm{d}}W
\end{align}
Note that the integrand in  second term on the LHS of \eqref{eqn:sum_sum} equals
\begin{equation}\label{rs}
\Grad\log\vr\cdot\Grad p(\vr)=\gamma\vr^{\gamma-2}|\Grad\vr|^2= C_\gamma|\Grad\vr^{\frac{\gamma}{2}}|^2.
\end{equation}
Using the Sobolev embedding, we can therefore estimate
\begin{equation}\label{est_from_p} \begin{split}
C\int_0^t\|\vr^\frac{\gamma}{2}\|^2_{L^6(\dom)}\,\ds\leq C_\gamma \int_0^t\|\Grad\vr^\frac{\gamma}{2}\|^2_{L^2(\dom)}\,\ds=\inttO{\Grad\log\vr\cdot\Grad p(\vr)},
\end{split}\end{equation}
where the constant on the LHS comes from the Sobolev embedding. Inequality \eqref{eqn:sum_sum} thus gives
\begin{align}\label{eqn:sum_sum2}
& \intOB{{\frac12}\vr|\vu|^2+{\frac12}\vr|\vu+\Grad\log\vr|^2+\frac{2}{\gamma-1}\vr^{\gamma}} +C\int_0^t\|\vr^\frac{\gamma}{2}\|^2_{L^6(\dom)}\,\ds +  \inttO{\vr|\Grad\vu|^2}\nonumber \\
&\;\leq \intOB{{\frac12}\vr_0\left|\frac{\vc{m}_0}{\vr_0}\right|^2+{\frac12}\vr_0\left|\frac{\vc{m}_0}{\vr_0}+\Grad\log\vr_0\right|^2+\frac{2}{\gamma-1}\vr_0^{\gamma}} \nonumber \\
&\qquad+\inttO{\vr|\vf|^2} + \int_0^t\!\!\!\int_{\dom} \vr\vu\cdot \vf\,\mathrm{d}x\,\mathrm{d}W+ \int_0^t\intO{\vr\left(\vu+\Grad\log\vr\right)\cdot \vf}\,{\mathrm{d}}W\\
&\qquad:=\sum_{i=1}^4 J_i.\nonumber
\end{align}

In order to obtain the first estimate \eqref{eqn:both_stoch_est_1} we first take mathematical expectation and then supremum over time in \eqref{eqn:sum_sum2}. Note that  $\E(J_3+J_4) = 0$, and thus \eqref{eqn:sum_sum2} reduces to
\begin{align}\label{eqn:sum_sum2b}
& \sup_{t \in [0,T]} \E \left(\intOB{{\frac12}\vr|\vu|^2+{\frac12}\vr|\vu+\Grad\log\vr|^2+\frac{2}{\gamma-1}\vr^{\gamma}}\right) \nonumber \\
& \qquad + \sup_{t \in [0,T]}\E \left(C\int_0^t\|\vr^\frac{\gamma}{2}\|^2_{L^6(\dom)}\,\ds +  \inttO{\vr|\Grad\vu|^2}\right)\nonumber \\
&\;\leq \E \left(\intOB{{\frac12}\vr_0\left|\frac{\vc{m}_0}{\vr_0}\right|^2+{\frac12}\vr_0\left|\frac{\vc{m}_0}{\vr_0}+\Grad\log\vr_0\right|^2+\frac{2}{\gamma-1}\vr_0^{\gamma}}\right) \nonumber \\
& \qquad + \sup_{t \in [0,T]} \E \left(\inttO{\vr|\vf|^2}\right).
\end{align}
The first term on the r.h.s. of \eqref{eqn:sum_sum2b} is controlled by the Assumption \ref{ass-initial}.
To estimate the second term we use H\"older and Young inequalities respectively
\eq{
\label{eqn:estJ6}
 J_2&=\inttO{\vr|\vf|^2} \le C\int_0^t{\|\vr\|_{L^{3\gamma}(\dom)}\|\vf^2\|_{L^{\frac{3\gamma}{3\gamma-1}}(\dom)}}\,\ds
 =C\int_0^t{\|\vr^{\frac{\gamma}{2}}\|_{L^{6}(\dom)}^{\frac{2}{\gamma}}\|\vf\|^2_{L^{\frac{6\gamma}{3\gamma-1}}(\dom)}}\,\ds\\
 &\leq \ep \int_0^t{\|\vr^{\frac{\gamma}{2}}\|_{L^{6}(\dom)}^{2}\,\ds+t C_\ep\|\vf\|^{\frac{2\gamma}{\gamma-1}}_{L^{\frac{6\gamma}{3\gamma-1}}(\dom)}},
}
and so,
\eq{
\label{eqn:estJ6E}
\sup_{t\in[0,T]}\E \lr{\inttO{\vr|\vf|^2} }
 \leq \ep \sup_{t\in[0,T]}\E\lr{\int_0^t\|\vr^{\frac{\gamma}{2}}\|_{L^{6}(\dom)}^{2}\,\ds}+T C_\ep\E\|\vf\|^{\frac{2\gamma}{\gamma-1}}_{L^{\frac{6\gamma}{3\gamma-1}}(\dom)}.
}
Choosing $\ep$ sufficiently small the first term can be absorbed by the LHS of \eqref{eqn:sum_sum2b}. For the second term, note that $\frac{6\gamma}{3\gamma-1}\leq 3$ for $\gamma\in(1,3)$ , and so it is bounded provided $\vf$ is bounded in $L^3(\dom)$.

To prove the rest of the estimates in Lemmma \ref{lem-estimates} we need to first apply the supremum over time to both sides of \eqref{eqn:sum_sum2}, and then take the mathematical expectation of the $p$-th power, for $p \ge 1$. This results in
\begin{align}\label{eqn:sum_sum3}
& \E\lr{\sup_{t\in[0,T]}\intO{{\frac12}\vr|\vu|^2}}^p
+ \E\lr{\sup_{t\in[0,T]}\intO{{\frac12}\vr|\vu}}^p
+ \E\lr{\sup_{t\in[0,T]}\intO{\frac{2}{\gamma-1}\vr^{\gamma}}}^p \\
&+C\E\lr{\int_0^T\|\vr^\frac{\gamma}{2}\|^2_{L^6(\dom)}}^p +  \E\lr{\intTO{\vr|\Grad\vu|^2}}^p\nonumber \\
&\;\leq \E\lr{\intOB{{\frac12}\vr_0\left|\frac{\vc{m}_0}{\vr_0}\right|^2+{\frac12}\vr_0\left|\frac{\vc{m}_0}{\vr_0}+\Grad\log\vr_0\right|^2+\frac{2}{\gamma-1}\vr_0^{\gamma}} }^p\nonumber \\
&\qquad+\E\lr{\sup_{t\in[0,T]}\inttO{\vr|\vf|^2} }^p+\E\lr{\sup_{t\in[0,T]} \int_0^t\!\!\!\int_{\dom} \vr\vu\cdot \vf\,\mathrm{d}x\,\mathrm{d}W}^p\nonumber\\
&\qquad+ \E\lr{\sup_{t\in[0,T]}\int_0^t\!\!\!\intO{\vr\left(\vu+\Grad\log\vr\right)\cdot \vf}\,{\mathrm{d}}W}^p:=\sum_{i=1}^4 I_i,\nonumber
\end{align}
where we have used the algebraic inequality
\[
a^p + b^p \le (a + b)^p \le C(p) \left(a^p + b^p\right) \quad a > 0, b > 0 \; \mbox{and } p \ge 1.
\]
Estimate of $I_2$ uses \eqref{eqn:estJ6}, from which we deduce
\eq{
\label{eqn:estJ6Es}
I_2=\E\lr{\sup_{t\in[0,T]}\inttO{\vr|\vf|^2} }^p
 \leq \ep \E\lr{\int_0^T\|\vr^{\frac{\gamma}{2}}\|_{L^{6}(\dom)}^{2}\,\ds}+T C_\ep\E\|\vf\|^{\frac{2\gamma}{\gamma-1}}_{L^{\frac{6\gamma}{3\gamma-1}}(\dom)},
}
which again can be absorbed by the LHS of \eqref{eqn:sum_sum3} and assumptions on $\vf$. To estimate $I_3$ and $I_4$ we use the Burkholder-Davis-Gundy inequality, the H\"older
and the Young inequalites
\begin{align}
\label{eqn:estJ5}
& I_3 = \E\left(\sup_{t \in [0,T]} \left|\int_0^t\!\!\intO{\vr\vu\cdot\vf}\,{\rm{d}}W\right|\right)^p \nonumber \\
& \quad \le \E \left(\int_0^T\left|\intO{\vr\vu\cdot\vf}\right|^2\,\dt\right)^{\frac p2} \nonumber \\
& \quad \le \E \left(\int_0^T\|\sqrt{\vr}\|^2_{L^{6\gamma}}\|\sqrt{\vr}\vu\|^2_{L^2}\|\vf\|^2_{L^{\frac{6\gamma}{3\gamma-1}}}\,\dt\right)^{\frac p2} \nonumber \\
& \quad \le \E \left[\left(\int_0^T\|\vr^{\frac{\gamma}{2}}\|^{\frac{2}{\gamma}}_{L^{6}}\,\ds\right)^{\frac p2} \lr{\sup_{t\in[0.T]}\|\sqrt{\vr}\vu\|^2_{L^2}}^{\frac{p}{2}}\|\vf\|^p_{L^{\frac{6\gamma}{3\gamma-1}}}\right]\nonumber \\
& \quad \le \E \left[\left(\int_0^T\|\vr^{\frac{\gamma}{2}}\|^{2}_{L^{6}}\,\ds\right)^{\frac p{2\gamma}} \lr{\sup_{t\in[0.T]}\|\sqrt{\vr}\vu\|^2_{L^2}}^{\frac{p}{2}}T^{\frac{p}{2\gamma'}}\|\vf\|^p_{L^{\frac{6\gamma}{3\gamma-1}}}\right]\nonumber \\
&\quad \le \ep\E \left(\int_0^T\|\vr^{\frac{\gamma}{2}}\|^{2}_{L^{6}}\,\ds\right)^p+\ep\E\lr{\sup_{t\in[0.T]}\|\sqrt{\vr}\vu\|^2_{L^2}}^p +C(\ep)T^p\E|\vf\|^{2p\gamma'}_{L^{\frac{6\gamma}{3\gamma-1}}},
\end{align}
where $\ep$ is sufficiently small for the first two terms of \eqref{eqn:estJ5} to be absorbed by the LHS of \eqref{eqn:sum_sum3}. Note that $\frac{6\gamma}{3\gamma-1}\leq 3$ for $\gamma\in(1,3)$ and so, the last term is bounded provided $\vf$ is bounded in $L^3(\dom)$.
Exactly the same argument can be repeated for $I_4$ replacing $\vu$ by $\vu+\Grad\log\vr$. This finishes the proof.
\end{proof}

To summarise, the estimates from Lemma~\ref{lem-estimates} give us the following uniform bounds
\begin{equation}
\label{eqn:energy_uni}
\begin{split}
\|\sqrt{\vr}\vu\|_{L^p(\Omega; L^\infty(0,T; L^2(\dom)))} \le C,\\
\|\vr^\gamma\|_{L^p(\Omega; L^\infty(0,T; L^1(\dom)))} \le C, \\
\|\sqrt{\vr}\Grad\vu\|_{L^p(\Omega; L^2(0,T; L^2(\dom)))} \le C.
\end{split}
\end{equation}
as well as
\begin{equation}
\label{eqn-BD_unif}
\begin{split}
\|\nabla \sqrt{\vr}\|_{L^p(\Omega; L^\infty(0,T; L^2(\dom)))} \le C, \\
\|\nabla \vr^{\gamma/2}\|_{L^p(\Omega; L^2(0,T; L^2(\dom)))} \le C.
\end{split}
\end{equation}

From \eqref{eqn-BD_unif}, we have $\vr^\gamma$ is uniformly bounded in $L^p(\Omega; L^1(0,T; L^3(\dom)))$ and using the interpolation, one can infer that for $p \ge 1$ there exists a constant $C>0$ such that
\begin{equation}\label{eqn-rho_n^gamma}
    \|\vr^\gamma\|_{L^{p}(\Omega; L^{5/3}((0,T)\times\dom))} \le C \|\vr^\gamma\|^{2/5}_{L^{p}(\Omega; L^\infty(0,T; L^1(\dom)))} \|\vr^\gamma\|^{3/5}_{L^{p}(\Omega; L^{1}(0,T; L^3(\dom)))} \le C.
\end{equation}


\subsection{The Mellet-Vasseur estimate}
Our ultimate goal before the limit passage is to improve the integrability of the convective term.
\begin{lemma}\label{lem-Mellet+Vasseur-stoch}
Let $\dd\in(0,1)$. Then there exist constants $c_\delta>0$ and $C_\delta>0$
depending only on the initial data such that the following inequality is satisfied
\eq{\label{delta_stoch}
&\frac{1}{2+\delta} \intO{\vr(t,x)|\vu(t,x)|^{2+\delta}}+c_\delta\inttO{\vr(s,x)|\vu(s,x)|^\delta|\Grad\vu(s,x)|^2}\\
&\leq CT+ C_\delta\int_0^t\left(\intO{\left(\frac{p(\vr(s,x))^{2}\vr(s,x)^{-\frac{\dd}{2}}}{\vr(s,x)}\right)^{\frac{2}{2-\dd}}}\right)^{\frac{2-\dd}{2}}
\,\ds\\
&\qquad+  \frac{1+\delta}{2} \inttO{\vr(s,x)|\vf(s,x)|^2|\vu|^\delta} + \int_0^t\intO{\vr(s,x)\vf(s,x)\cdot\vu(s,x)|\vu(s,x)|^\delta}\,\mathrm{d}W(s).
}
\end{lemma}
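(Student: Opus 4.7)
The plan is to apply the It\^o formula, in a manner analogous to Lemma~\ref{lem-energy estimate stoch}, but now to the nonlinear energy functional
\[
\varphi(\vr,\vm) \;=\; \frac{1}{2+\delta}\intO{\frac{|\vm(x)|^{2+\delta}}{\vr(x)^{1+\delta}}},
\]
which, when restricted to $\vm=\vr\vu$, coincides with $\frac{1}{2+\delta}\intO{\vr|\vu|^{2+\delta}}$. The Fr\'echet derivatives needed are
\[
\tfrac{\partial\varphi}{\partial\vr}[y]=-\tfrac{1+\delta}{2+\delta}\intO{|\vu|^{2+\delta}y},\qquad
\tfrac{\partial\varphi}{\partial\vm}[\vc{z}]=\intO{|\vu|^{\delta}\vu\cdot\vc{z}},
\]
\[
\tfrac{\partial^{2}\varphi}{\partial\vm^{2}}[\vc{z}_1,\vc{z}_2]=\intO{\frac{|\vu|^{\delta}}{\vr}\vc{z}_1\cdot\vc{z}_2}+\delta\intO{\frac{|\vu|^{\delta-2}(\vu\cdot\vc{z}_1)(\vu\cdot\vc{z}_2)}{\vr}}.
\]
Feeding in $\mathrm{d}\vr=-\Div(\vr\vu)\,\dt$ and $\mathrm{d}\vm=\{-\Div(\vr\vu\otimes\vu)+\Div(\vr\Grad\vu)-\Grad p(\vr)\}\,\dt+\vr\vf\,\mathrm{d}W$, and evaluating the correction term on the noise coefficient $\vr\vf$, yields
\[
\tfrac12\tfrac{\partial^{2}\varphi}{\partial\vm^{2}}[\vr\vf,\vr\vf]=\tfrac12\intO{\vr|\vu|^{\delta}|\vf|^{2}}+\tfrac{\delta}{2}\intO{\vr|\vu|^{\delta-2}(\vu\cdot\vf)^{2}}\;\le\;\tfrac{1+\delta}{2}\intO{\vr|\vu|^{\delta}|\vf|^{2}},
\]
by Cauchy--Schwarz, which is precisely the penultimate term in \eqref{delta_stoch}. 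The stochastic contribution $\frac{\partial\varphi}{\partial\vm}[\vr\vf]\,\mathrm{d}W=\intO{\vr\vf\cdot\vu|\vu|^{\delta}}\,\mathrm{d}W$ is the final term.

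Next I would deal with the drift contributions exactly as in Mellet--Vasseur \cite{Me+Va07}. Summing the transport piece $\frac{\partial\varphi}{\partial\vr}[-\Div(\vr\vu)]=\frac{1+\delta}{2+\delta}\intO{|\vu|^{2+\delta}\Div(\vr\vu)}$ with the convective piece $-\intO{|\vu|^{\delta}\vu\cdot\Div(\vr\vu\otimes\vu)}$, and after integration by parts using $|\vu|^{\delta}\vu\cdot(\vu\cdot\Grad)\vu=\frac{1}{2+\delta}\vu\cdot\Grad|\vu|^{2+\delta}$, all of these convective contributions cancel (the coefficients combine to $-\frac{1+\delta}{2+\delta}-\frac{1}{2+\delta}+1=0$), reproducing the deterministic cancellation. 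The viscous term becomes, after integration by parts on the torus,
\[
\intO{|\vu|^{\delta}\vu\cdot\Div(\vr\Grad\vu)}=-\intO{\vr|\vu|^{\delta}|\Grad\vu|^{2}}-\tfrac{\delta}{4}\intO{\vr|\vu|^{\delta-2}\bigl|\Grad|\vu|^{2}\bigr|^{2}}\;\le\;-c_{\delta}\intO{\vr|\vu|^{\delta}|\Grad\vu|^{2}},
\]
which furnishes the dissipation on the left-hand side of \eqref{delta_stoch}.

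The pressure term is treated deterministically. After integration by parts, $-\intO{|\vu|^{\delta}\vu\cdot\Grad p(\vr)}=\intO{p(\vr)\Div(|\vu|^{\delta}\vu)}$, and I bound $|\Div(|\vu|^{\delta}\vu)|\le(1+\delta)|\vu|^{\delta}|\Grad\vu|$ and apply the Cauchy--Schwarz inequality in the splitting $p(\vr)|\vu|^{\delta}|\Grad\vu|=\bigl(\sqrt{\vr}|\vu|^{\delta/2}|\Grad\vu|\bigr)\cdot\bigl(p(\vr)|\vu|^{\delta/2}\vr^{-1/2}\bigr)$, followed by Young's inequality with a small parameter $\epsilon$. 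The first factor is absorbed into $c_{\delta}\intO{\vr|\vu|^{\delta}|\Grad\vu|^{2}}$; the second gives $C_{\epsilon}\intO{p(\vr)^{2}|\vu|^{\delta}\vr^{-1}}$. A further H\"older inequality with conjugate exponents $\frac{2}{2-\delta}$ and $\frac{2}{\delta}$, writing this integrand as $\bigl[p(\vr)^{2}\vr^{-\delta/2}/\vr\bigr]\cdot\bigl[\vr^{\delta/2}|\vu|^{\delta}\bigr]$, produces the pressure term displayed in \eqref{delta_stoch} multiplied by $\bigl(\intO{\vr|\vu|^{2}}\bigr)^{\delta/2}$, which is $\Prob$-a.s.\ bounded thanks to \eqref{eqn:both_stoch_est_1}; the constant term $CT$ absorbs the resulting universal bound after a final use of Young.

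The principal obstacle is the rigorous application of the It\^o formula to the functional $\varphi$, which is not smooth (singular at $\vr=0$) and is defined on an infinite-dimensional space. As in the derivation of Lemmas~\ref{lem-energy estimate stoch} and~\ref{lem-entropy stoch Bresch-Desjardins}, I would proceed formally in the framework of Faedo--Galerkin or mollified approximations where the density is bounded away from zero and $\varphi$ is smooth, and then pass to the limit using the uniform estimates \eqref{eqn:both_stoch_est_1}--\eqref{eqn:both_stoch_3}; the semi-formal presentation here is consistent with the convention announced at the start of Section~\ref{sec-apriori}.
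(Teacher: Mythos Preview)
Your proposal is correct and follows essentially the same route as the paper: apply It\^o's formula to $\varphi(\vr,\vm)=\frac{1}{2+\delta}\intO{|\vm|^{2+\delta}\vr^{-(1+\delta)}}$, cancel the transport and convective pieces, use the viscous term to produce the dissipation, and bound the pressure term via Young and H\"older exactly as in Mellet--Vasseur. The only notable difference is that you compute the Hessian $\partial^2_{\vm}\varphi$ in full (with its two terms) and then bound the correction $\frac12\partial^2_{\vm}\varphi[\vr\vf,\vr\vf]$ by $\frac{1+\delta}{2}\intO{\vr|\vu|^\delta|\vf|^2}$ via Cauchy--Schwarz, whereas the paper writes the Hessian directly with the factor $(1+\delta)$; your version is the more accurate of the two, and it also clarifies why the viscous contribution is $\leq -c_\delta\intO{\vr|\vu|^\delta|\Grad\vu|^2}$ without needing the paper's cruder $-(1-\delta)$ bound.
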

\begin{remark}\label{rem-Mellet+Vasseur-stoch}
Note that because $\gamma<3$, the above  result implies the  equi-integrability of the acceleration term $\vr\vu\otimes\vu$.
\end{remark}

\begin{proof}[Proof of Lemma \ref{lem-Mellet+Vasseur-stoch}] When  $\mathrm{d}W=\mathrm{d}t$ this follows the same strategy as in the work of Mellet $\&$ Vasseur \cite[Lemma~3.2]{Me+Va07}.
We basically test the momentum equation of \eqref{eqn-1.1} by $\vu|\vu|^\delta$, and repeat the energy estimate. We can show that the r.h.s. can be controlled using the left hand side and estimates from the previous lemmas and the Gronwall inequality.

In the stochastic case  $\mathrm{d}W\neq\mathrm{d}t$ we  again denote
\[
\varphi(\vr, \bv) := \frac{1}{2+\delta} \intO{\vr\frac{|\bv|^{2+\delta}}{\vr^{2+\delta}}}=\frac{1}{2+\delta}\intO{\vr|\vu|^{2+\delta}}.
\]
For the $\varphi$ defined above we have following
\eq{\label{expand}
\frac{\partial \varphi(\vr, \bv)}{\partial \vr}(y) &= -\frac{1+\delta}{2+\delta}\intO{\frac{|\bv|^{2+\delta}}{\vr^{2+\delta}}\,y}=-\frac{1+\delta}{2+\delta}\intO{|\vu|^{2+\delta}\,y}, \\
\frac{\partial \varphi}{\partial \bv}(\vc{z}) &= \intO{\frac{\bv|\bv|^\delta}{\vr^{1+\delta}}\, \vc{z}}=\intO{\vu|\vu|^\delta\, \vc{z}},\\
 \frac{\partial^2\varphi}{\partial \bv^2}(\vc{z}_1,\vc{z}_2) & = (1+\delta)\intO{\frac{1}{\vr^{1+\delta}}|\bv|^\delta \, \vc{z}_1\vc{z}_2} = (1+\delta) \intO{ \frac{\,|\vu|^\delta}{\vr}\, \vc{z}_1\vc{z}_2}.
 }
Therefore, by the application of the It\^o Lemma for the function $\varphi$ and the processes $\vr$ and $\bv$ whose differentials are given by \eqref{eqn-1.1}, we obtain
\begin{align}
\label{delta_stoch_1}
&\frac{1}{2+\delta} \intO{\vr(t)|\vu(t)|^{2+\delta}}=\frac{1}{2+\delta}\intO{\frac{|\vc{m}_0|^{2+\delta}}{\vr_0^{1+\delta}}}
-\frac{1+\delta}{2+\delta}\int_0^t\!\!\intO{{\rm d}\vr\,|\vu|^{2+\delta}} \nonumber\\
& \qquad \qquad \qquad +\int_0^t\!\!\intO{{\rm d}(\vr\vu)\cdot\vu|\vu|^\delta}+ \frac12 \inttO{\vr|\vf|^2|\vu|^\delta} \nonumber\\
& = \frac{1}{2+\delta} \intO{\frac{|\vc{m}_0|^{2+\delta}}{\vr_0^{1+\delta}}}
+\frac{1+\delta}{2+\delta}\inttO{\Div\lr{\vr\vu}|\vu|^{2+\delta}} \\
&\quad -\inttO{\Div(\vr\vu\otimes\vu)\cdot\vu|\vu|^\delta}+\inttO{\Div(\vr\Grad\vu)\cdot\vu|\vu|^\delta}\nonumber \\
&\quad -\inttO{\Grad p(\vr)\cdot\vu|\vu|^\delta}+\int_0^t\!\!\intO{\vr\vf\cdot\vu|\vu|^\delta}\,\mathrm{d}W \nonumber \\
&\quad + \frac{1+\delta}{2} \inttO{\vr|\vf|^2|\vu|^\delta}=\sum_{i=1}^7 I_i. \nonumber
\end{align}
We observe that the extra term on the r.h.s. of \eqref{delta_stoch} in comparison with the deterministic case, see \cite[Lemma~3.2]{Me+Va07} for details, is
\[
 \frac{1+\delta}{2} \inttO{\vr|\vf|^2|\vu|^\delta}.
\]
Integrating by parts in terms $I_2$ and $I_3$ and using the boundary conditions we get cancellation. Integrating by parts in  $I_4$ we obtain two terms
\begin{equation} \begin{split}
I_4 & = \inttO{\Div(\vr\Grad\vu)\cdot\vu|\vu|^\delta}\\
& = -\inttO{\left(|\vu|^\delta\vr|\Grad\vu|^2 - \delta \vr|\vu|^{\delta-2}u_iu_k\partial_ju_i\partial_j u_k\right)}
 \\
& \leq -(1-\delta)\inttO{|\vu|^\delta\vr|\Grad\vu|^2},
\end{split}\end{equation}
that can be moved to the l.h.s of \eqref{expand} as it has the right sign provided $\delta<1$. Finally, for $I_5$ we have
\eqh{
I_5 & = -\inttO{\Grad p(\vr)\cdot\vu|\vu|^\delta} \\
& =\inttO{p(\vr)\Div\vu|\vu|^\delta}+\delta \inttO{ p(\vr)|\vu|^{\delta-2}\vu (\vu\cdot\Grad)\vu}\\
&\leq \sqrt{3+\delta}\inttO{p(\vr)|\vu|^\delta |\Grad\vu|}\\
& \leq \frac{1-\delta}{2}\inttO{\vr|\vu|^\delta|\Grad\vu|^2}+C_\delta\inttO{\frac{p(\vr)^{2}}{\vr}|\vu|^\delta}\\
&\leq\frac{1-\delta}{2}\inttO{\vr|\vu|^\delta|\Grad\vu|^2} \\
&\qquad + C_\delta\int_0^t\left(\intO{\left(\frac{p(\vr)^{2}\vr^{-\frac{\dd}{2}}}{\vr}\right)^{\frac{2}{2-\dd}}}\right)^{\frac{2-\dd}{2}}\lr{\intO{\vr|\vu|^2}}^{\frac{\delta}{2}}\ds,
}
and so the first term can be controlled by the contribution coming from $I_4$, while for the second term we use that $\vr|\vu|^2$ is uniformly bounded in $L^p(\Omega; L^\infty(0,T; L^1(\dom)))$ to conclude the proof of \eqref{delta_stoch}.
\end{proof}

Now, as from \eqref{eqn-rho_n^gamma} we know that for $p\geq 1$, $\vr^\gamma\in L^{p}(\Omega; L^{5/3}((0,T)\times\dom))$, the pressure term on the r.h.s. of  \eqref{delta_stoch} is bounded provided $\frac{p^2(\vr)}{\vr}=\vr^{2\gamma-1}$ can be controlled by {$\vr^{5\gamma/3}$}, which holds for $\gamma<3$.

In the following we show how to estimate the rest of the terms on the r.h.s. of \eqref{delta_stoch}. We start with the third term:
\begin{align}
\label{eqn:logest1}
& \E\left(\inttO{\vr|\vf|^2 |\vu|^\delta}\right) \nonumber \\
&\leq \E \left( \int_0^t \left[ \left(\intO{ \left(\vr^{1 - \tfrac{\delta}{2}}|\vf|^2\right)^{\tfrac{2}{2-\delta}}}\right)^{\tfrac{2-\delta}{2} }\left(\intO{\vr|\vu|^2}\right)^{\frac{\delta}{2}}\right]\,ds\right) \nonumber \\
&\le \frac{2-\delta}{2}\E \inttO{\vr |\vf|^{\tfrac{4}{2-\delta}}} + \frac{\delta}{2} \E \inttO{\vr|\vu|^2} \nonumber \\
& \; \le \frac{2-\delta}{6} \E \left( \|\vf^{\tfrac{4}{2-\delta}}\| _{L^{\frac{3\gamma}{3\gamma-1}}} \int_0^t \|\vr\|_{L^{3\gamma}}\ds\right) + \frac{\delta t}{3} \E \lr{\sup_{t\in[0,T]}\intO{\vr|\vu|^2}} \nonumber \\
& \; \le \frac{2-\delta}{6} \left( \E \|\vf\|^{8/(2-\delta)}_{L^{\frac{12\gamma}{(2-\delta)(3\gamma-1)}}}\right)^{1/2}\left(\E\|\vr\|^2_{L^1(0,T; L^{3\gamma}(\dom))}\right)^{1/2} + \frac{\delta t}{3} \E \lr{\sup_{t\in[0,T]}\intO{\vr|\vu|^2}}.
\end{align}
Since for every $\gamma \in (1,3)$ there exists a $\delta \in (0,1)$ such that $\frac{4\gamma}{(3\gamma - 1)(2 - \delta)} <1$, the assumptions on $\vf$, $\vf \in L^p(\Omega; L^3(\dom))$, and a priori estimates from Lemma~\ref{lem-energy estimate stoch} can be used to bound the r.h.s. of \eqref{eqn:logest1}.

Next we are required to bound the following term:
\[
\E\left(\sup_{t \in [0,T]} \int_0^t\intO{\vr\vf\cdot\vu|\vu|^\delta}\,\mathrm{d}W\right)
\]
By the Burkholder-Davis-Gundy inequality, H\"older and Young inequalities we obtain
\begin{align}
\label{eqn:logest4}
&\E \sup_{t \in [0,T]} \left(\int_0^t\intO{\vr(s)\vf\cdot\vu|\vu|^\delta}\,\mathrm{d}W\right) \nonumber \\
&\; \le \E \left[ \int_0^T\left(\intO {\vr(t)\vf\cdot\vu|\vu|^\delta} \right)^2 \dt\right]^{1/2} \nonumber \\
&\; \le \E \left[ \int_0^T  \left(\intO{\vr|\vu|^2}\right) \left(\intO{\vr |\vu|^{2+\delta}}\right)^{\tfrac{2\delta}{2+\delta}} \left(\intO {\vr|\vf|^{\tfrac{2(2+\delta)}{2-\delta}}}\right)^{\tfrac{2-\delta}{2+\delta}}\dt \right]^{1/2} \nonumber\\
&\; \le \E \left[ \left(\sup_{t\in[0,T]}\intO{\vr|\vu|^2}\right)^{1/2} \left(\int_0^T \int_\dom{\vr |\vu|^{2+\delta}}\dx\,\dt\right)^{\tfrac{\delta}{2+\delta}} \left(\int_0^T \int_\dom {\vr|\vf|^{\tfrac{2(2+\delta)}{2-\delta}}}\dx\,\dt\right)^{\tfrac{2-\delta}{2(2+\delta)}} \right] \nonumber\\
&\; \le \frac12 \E \left[ \sup_{t\in[0,T]}\intO{\vr|\vu|^2}\right] + \frac{2\delta}{2+\delta}\E \left[\sup_{t\in[0,T]} \int_\dom{\vr |\vu|^{2+\delta}}\dx\right] \nonumber \\
& \hspace{3truecm}+ \frac{2-\delta}{2(2+\delta)} T^{\tfrac{2\delta}{2-\delta}}\ \E \left[\int_0^T \int_\dom {\vr|\vf|^{\tfrac{2(2+\delta)}{2-\delta}}}\dx\,\dt\right]
\end{align}
The first term on the r.h.s. of \eqref{eqn:logest4} can be uniformly bounded by the a priori estimate \eqref{eqn:energy_uni} and the second term can be absorbed on the LHS of \eqref{delta_stoch} for $\delta < \frac12$. The last term on the r.h.s. of \eqref{eqn:logest4} is estimated as follows
\begin{align*}
    \E \left[\int_0^T \!\!\!\int_\dom {\vr|\vf|^{\tfrac{2(2+\delta)}{2-\delta}}}\dx\,\dt\right]
    &\; \le \frac12 \E \left[\|\vr\|^2_{L^1(0,T; L^{3\gamma}(\dom))}\right] + \frac12 \E\left[\||\vf|^{\tfrac{2(2+\delta)}{2-\delta}}\|^2_{L^{\frac{3\gamma}{3\gamma-1}}}\right]
\end{align*}
and which is bounded due to the a priori estimate \eqref{eqn-BD_unif} and the assumption on $\vf$, i.e. $\vf \in L^p(\Omega; L^3(\dom))$. Indeed, as for every $\gamma \in (1,3)$ there exists a $\delta \in (0,1)$ such that $\tfrac{2\gamma(2+\delta)}{(3\gamma-1)(2-\delta)} < 1$.


\section{Existence of solutions and the  proof of Theorem \ref{thm-existence}}
\label{sec-existence}

For the whole section we choose and fix
 $T>0$, sequences of random variables  $\vr^0_n$ and  $\vc{m}^0_n$ satisfying Assumption \eqref{ass-initial} and $\vc{f}$ satisfying uniformly Assumption \ref{ass-force} and a sequence
system $(U, W, {\vr_n}, \vu_n)$ of  martingale solutions to the problem \eqref{eqn-1.1} in the sense of Definition \ref{def-mart_sol} with the initial data $\vr^0_n$ and $\vm^0_n$  such that
 Assumption~\ref{ass-energy}  is satisfied.  Here for simplicity of the exposition we assume that  $ U := ({\Omega},{\mathcal{F}}, {\mathbb{P}},{\mathbb{F}})$
is a fixed stochastic basis with filtration ${\mathbb{F}}=({\mathcal{F}}_t)_{t\in[0,T]}$ satisfying the so called usual  assumptions and $W$ is a $\R$-valued Wiener process  on $U$ but we could have taken as well this object to be $n$-dependent.

 In the previous Section \ref{sec-apriori} we  proved a series of additional a'priori energy estimates for  the processes $({\vr_n}, \vu_n)$. Using these a priori estimates we deduce  that the  sequence $(U, W, {\vr_n}, \vu_n)$ of  martingale solutions   satisfy additionally  estimates \eqref{eqn:energy_uni}, \eqref{eqn-BD_unif} uniformly in $n$, i.e. for all  \[ p\in[1,\infty) \mbox{  and } \delta \in (0,1),\] there exists $C$ (depending on $p$ and $\delta$), such that for every $n \in \mathbb{N}$ we have
\begin{align}
\label{eqn-4.1}
&\|\vr_n\|_{L^p(\Omega; L^\infty(0,T; L^\gamma(\dom)))} \le C, \\
\label{eqn-4.2}
&\|\sqrt{\vr_n}\vu_n\|_{L^{p}(\Omega; L^\infty(0,T; L^2(\dom)))} \le C,\\
\label{eqn:4.3}
&\|\sqrt{\vr_n}\Grad\vu_n\|_{L^{p}(\Omega; L^2(0,T; L^2(\dom)))} \le C,\\
\label{eqn:4.4}&\| \sqrt{\vr_n}\|_{L^p(\Omega; L^\infty(0,T; H^1(\dom)))} \le C, \\
\label{eqn:4.5}&\|\nabla \vr_n^{\gamma/2}\|_{L^p(\Omega; L^2(0,T; L^2(\dom)))} \le C, \\
\label{eqn:4.6}&\|\vr_n^{\frac{1}{2+\delta}}|\vu_n|\|_{L^{2+\delta}(\Omega; L^\infty(0,T; L^{2+\delta}(\dom)))} \le C.
\end{align}
As observed after the proof of  Lemma~\ref{lem-estimates}  inequalities \eqref{eqn-4.2}, \eqref{eqn-4.1}, \eqref{eqn:4.3}, \eqref{eqn:4.4} and \eqref{eqn:4.5}  are a consequence of that Lemma.
Let us emphasize that Lemma~\ref{lem-estimates} is a consequence of our first new a'priori estimates on the Bresch-Desjardins entropy from our Lemma \ref{lem-entropy stoch Bresch-Desjardins}. Moreover,
inequalities \eqref{eqn:4.6} is a consequence of  the first of our new a'priori estimates on of the  Mellet-Vasseur
from Lemma \ref{lem-Mellet+Vasseur-stoch}. Let us point out that  we  do not use the second of the a'priori estimates following from Lemma \ref{lem-Mellet+Vasseur-stoch}. Interestingly, this estimate is neither used by the authors of the deterministic paper \cite{Me+Va07}.

We recall that by Remark~\ref{rem-main_thm}, it is possible to choose a common stochastic basis on which this sequence of martingale solutions is defined.


\subsection{Tightness of density and momentum}
With these a priori estimates at hand we are able to pass to the weak limit for the subsequence of $\vr_n$. Our next purpose is to show tightness of the laws  $\Law(\vr_n)$, $\Law(\vr_n\vu_n)$ according to the Definition~\ref{def-tightness}. To that purpose we will use  some intuitions coming from  the deterministic case and the compact embedding result as stated in Lemma~\ref{lem-compactness Simon-1}, see also \cite[Corollary 4]{Sim87}.

Our first result is as follows.

\begin{lemma}\label{lem-tightness of rho_n}
Assume that  $1\leq q<3$. Then the family of measures  $\{\Law({\vr_n}):n\in\N\}$ is tight on $C([0,T];L^{q}(\dom))$.
\end{lemma}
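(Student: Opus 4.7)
The plan is to apply an Aubin--Lions--Simon type deterministic compactness criterion (Lemma~\ref{lem-compactness Simon-1}) together with the Chebyshev inequality. Concretely, I will produce, for each $\epsilon>0$, a compact subset $K_\epsilon\subset C([0,T];L^q(\dom))$ such that $\Prob(\vr_n\in K_\epsilon)\ge 1-\epsilon$ for all $n$.

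First I would upgrade the Bresch--Desjardins bound \eqref{eqn:4.4} to a genuine Sobolev bound on $\vr_n$. The key identity is $\Grad\vr_n=2\sqrt{\vr_n}\,\Grad\sqrt{\vr_n}$. Using the 3D Sobolev embedding $H^1(\dom)\hookrightarrow L^6(\dom)$ applied to $\sqrt{\vr_n}$ together with H\"older's inequality gives
\[
\|\Grad\vr_n\|_{L^{3/2}(\dom)}\le 2\,\|\sqrt{\vr_n}\|_{L^6(\dom)}\,\|\Grad\sqrt{\vr_n}\|_{L^2(\dom)}.
\]
Combining this with $\|\vr_n\|_{L^3(\dom)}=\|\sqrt{\vr_n}\|_{L^6(\dom)}^2$, I conclude that $\vr_n$ is bounded in $L^p(\Omega;L^\infty(0,T;W^{1,3/2}(\dom)))$ for every $p\ge 1$.

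Next I would derive time regularity directly from the continuity equation \eqref{eqn-1.1}$_1$, which is purely deterministic (no stochastic term is present). Writing $\vr_n\vu_n=\sqrt{\vr_n}\cdot(\sqrt{\vr_n}\vu_n)$ and using H\"older with $\sqrt{\vr_n}\in L^\infty(0,T;L^6(\dom))$ and $\sqrt{\vr_n}\vu_n\in L^\infty(0,T;L^2(\dom))$ from \eqref{eqn-4.2}, one obtains $\vr_n\vu_n\in L^p(\Omega;L^\infty(0,T;L^{3/2}(\dom)))$, and therefore
\[
\pt\vr_n=-\Div(\vr_n\vu_n)\in L^p(\Omega;L^\infty(0,T;W^{-1,3/2}(\dom)))
\]
uniformly in $n$. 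In particular $\vr_n$ is (pathwise) Lipschitz in time with values in $W^{-1,3/2}(\dom)$.

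Finally, for $R>0$ set
\[
K_R=\bigl\{\vr\in C([0,T];L^q(\dom)):\ \|\vr\|_{L^\infty(0,T;W^{1,3/2}(\dom))}+\|\pt\vr\|_{L^\infty(0,T;W^{-1,3/2}(\dom))}\le R\bigr\}.
\]
Since on the 3D torus $W^{1,3/2}(\dom)\hookrightarrow\hookrightarrow L^q(\dom)$ compactly for every $1\le q<3$ (as $(3/2)^*=3$), and $L^q(\dom)\hookrightarrow W^{-1,3/2}(\dom)$ continuously, Lemma~\ref{lem-compactness Simon-1} yields that $K_R$ is relatively compact in $C([0,T];L^q(\dom))$. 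By the Chebyshev inequality and the uniform bounds of Steps~1 and~2,
\[
\Prob(\vr_n\notin K_R)\le \frac{1}{R}\Bigl(\E\|\vr_n\|_{L^\infty(0,T;W^{1,3/2})}+\E\|\pt\vr_n\|_{L^\infty(0,T;W^{-1,3/2})}\Bigr)\le \frac{C}{R},
\]
uniformly in $n$. Choosing $R=R(\epsilon)$ large enough gives the claimed tightness. The only mildly delicate point is the choice of exponents: the algebraic identity for $\Grad\vr_n$ forces the ``natural'' Sobolev space to be $W^{1,3/2}$, whose critical Sobolev exponent is exactly $3$ in dimension three, and this is precisely what allows the range $1\le q<3$ stated in the lemma.
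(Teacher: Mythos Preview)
Your proposal is correct and follows essentially the same approach as the paper: the identity $\Grad\vr_n=2\sqrt{\vr_n}\,\Grad\sqrt{\vr_n}$ combined with \eqref{eqn:4.4} to get a uniform $L^\infty(0,T;W^{1,3/2})$ bound, the continuity equation together with $\vr_n\vu_n=\sqrt{\vr_n}\cdot(\sqrt{\vr_n}\vu_n)$ to control $\pt\vr_n$ in $L^\infty(0,T;W^{-1,3/2})$, and then the Aubin--Lions--Simon criterion (Lemma~\ref{lem-compactness Simon-1}) plus Chebyshev. The paper's proof is identical in structure and in the choice of spaces $X=W^{1,3/2}(\dom)$, $B=L^q(\dom)$, $Y=W^{-1,3/2}(\dom)$.
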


\begin{proof}[Proof of Lemma \ref{lem-tightness of rho_n}]  Let us fix $q \in [1,3)$. Firstly, we observe that from the uniform bound \eqref{eqn:4.4}, we have $\vr_n \in L^1(\Omega; L^\infty(0,T; W^{1,\frac32}(\dom)))$. Indeed,
 \[
 \nabla \vr_n = 2 \sqrt{\vr_n}\, \nabla\left( \sqrt{\vr_n}\right) \mbox{ in a weak sense}
 \]
and since  $\sqrt{\vr_n}$ is uniformly bounded in $L^{2}(\Omega; L^\infty(0,T; L^6(\dom)))$, by the Sobolev embedding theorem (as $d=3$) and by the a priori estimates \eqref{eqn:4.4}, and $\nabla \sqrt{\vr_n}$ is uniformly bounded in $L^2(\Omega; L^\infty(0,T; L^2(\dom)))$, again by  the a priori estimates \eqref{eqn:4.4},  we infer that
\begin{equation}\label{ineq-01}
\E\|\nabla \vr_n\|_{L^\infty(0,T; L^{\frac32}(\dom))} \le C.
\end{equation}
By a very similar argument we can prove, using a priori estimates \eqref{eqn-4.2} and \eqref{eqn:4.4}, together with the Sobolev embedding theorem, that
\begin{equation}\label{ineq-02}
\E\| \vr_n\vu_n  \|_{L^\infty(0,T; L^{\frac32}(\dom))} \le C.
\end{equation}
Therefore, by  continuity equation we infer
\begin{equation}\label{ineq-03}
 \E\|\pt\vr_n\|^{p}_{L^\infty(0,T;W^{-1,\frac32}(\dom))} \le C.
\end{equation}
Summing up, we proved that
\begin{equation}\label{ineq-03b}
\E\|{\vr_n}\|_{L^\infty(0,T; W^{1,\frac32}) \cap W^{1,\infty}(0,T; W^{-1,\frac32})}	\leq C.
\end{equation}
Since $q<3$, the embedding $W^{1,\frac32}(\dom) \embed  L^q(\dom)$ is compact.
Thus, by  Lemma~\ref{lem-compactness Simon-1} with  $X=W^{1,\frac32}(\dom)$, $B=L^q(\dom)$, and $Y= W^{-1,\frac32}(\dom)$, we infer that the embedding
\begin{equation}\label{ineq-04}
L^\infty(0,T; W^{1,\frac32}(\dom)) \cap W^{1,\infty}(0,T; W^{-1,\frac32}(\dom)) \embed  C([0,T]; L^q(\dom)) \mbox{ is compact}.
\end{equation}
Let us choose and fix $\ep>0$. Put  $\eta= \cfrac{C}{\ep} $, where $C$ is from estimate \eqref{ineq-03b}.  Set
$$
B_{\eta} :=\{ f \in  L^\infty(0,T; W^{1,\frac32}(\dom)) \cap W^{1,\infty}(0,T; W^{-1,\frac32}(\dom)) : \|f\|_{L^\infty(0,T; W^{1,\frac32}) \cap W^{1,\infty}(0,T; W^{-1,\frac32})} \le \eta\}.
$$
Then by the Chebyshev inequality and uniform estimates \eqref{ineq-03b}  obtained above, we deduce that
\begin{equation} \begin{split}
&\Prob\big\{\vr_n \in B_{\frac{1}{\ep}}^c \big\}
\le\frac{1}{\eta} \E\|{\vr_n}\|_{L^\infty(0,T; W^{1,\frac32}) \cap W^{1,\infty}(0,T; W^{-1,\frac32})}	\leq C\frac{1}{\eta}=\ep.
\end{split}\end{equation}
 Since  by \eqref{ineq-04}, set  $B_{\frac{1}{\ep}}$ is a precompact set in $C([0,T]; L^q(\dom))$,  tightness of laws of $\vr_n$ on $C([0,T];L^q(\dom))$ follows directly from the Definition~\ref{def-tightness}.
 \end{proof}

In a similar manner we prove the tightness of laws for $\sqrt{\vr_n}$. In what follows, if  $X$ be a topological vector space,  then by $(X,w)$  we mean that the set $X$ is equipped with weak topology.
If additionally, $X$ is isomorphic to a dual of another topological vector space, then by $(X, w^\ast)$, we denote the set $X$ equipped with the weak-star topology.

\begin{lemma}\label{lem-tightness of sqrt of rho_n}
Assume that  $1\leq r<6$.
Then the  family of measures  $\{\Law(\sqrt{\vr_n}):n\in\N\}$ is tight on $C([0,T];L^{r}(\dom))\cap \left(L^\infty(0,T; H^1(\dom)), w^\ast\right)$.
\end{lemma}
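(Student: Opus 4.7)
The plan is to mirror the structure of the proof of Lemma \ref{lem-tightness of rho_n}: establish a uniform (in $n$) bound on $\sqrt{\vr_n}$ in a space that embeds compactly into $C([0,T];L^r(\dom))$ and is at the same time contained in bounded balls of $L^\infty(0,T;H^1(\dom))$, and then conclude by Chebyshev. A single Aubin-Lions-Simon-compact ball will serve both for the strong-continuous factor and (via Banach-Alaoglu) for the weak-star factor.

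First I would derive the transport-type identity
\begin{equation*}
\partial_t \sqrt{\vr_n} + \Div\bigl(\sqrt{\vr_n}\, \vu_n\bigr) - \tfrac12 \sqrt{\vr_n}\, \Div \vu_n = 0,
\end{equation*}
obtained formally by multiplying \eqref{eqn-1.1}$_1$ by $\frac{1}{2\sqrt{\vr_n}}$. By the uniform estimates \eqref{eqn-4.2} and \eqref{eqn:4.3}, the two non-derivative terms are controlled in $L^p(\Omega;L^\infty(0,T;L^2(\dom)))$ and $L^p(\Omega;L^2(0,T;L^2(\dom)))$ respectively, so $\Div(\sqrt{\vr_n}\vu_n)$ is bounded in $L^p(\Omega;L^\infty(0,T;H^{-1}(\dom)))$ and the whole right-hand side gives
\begin{equation*}
\E\,\bigl\|\partial_t \sqrt{\vr_n}\bigr\|_{L^2(0,T;H^{-1}(\dom))}^p \le C.
\end{equation*}
Together with \eqref{eqn:4.4} this yields the uniform estimate
\begin{equation*}
\E\,\bigl\|\sqrt{\vr_n}\bigr\|_{L^\infty(0,T;H^1(\dom))\,\cap\, W^{1,2}(0,T;H^{-1}(\dom))}^{p} \le C.
\end{equation*}

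Next, since $d=3$ and $r<6$, the embedding $H^1(\dom) \embed L^r(\dom)$ is compact by Rellich-Kondrachov, so Lemma \ref{lem-compactness Simon-1} applied with $X=H^1(\dom)$, $B=L^r(\dom)$, $Y=H^{-1}(\dom)$ gives that
\begin{equation*}
L^\infty(0,T;H^1(\dom)) \cap W^{1,2}(0,T;H^{-1}(\dom)) \embed C([0,T];L^r(\dom)) \text{ is compact}.
\end{equation*}
Given $\eps>0$, set $\eta=C/\eps$ (with $C$ the constant above) and let $K_\eta$ denote the closed ball of radius $\eta$ in the intersection space on the left. The Chebyshev inequality gives $\Prob\{\sqrt{\vr_n}\notin K_\eta\} \le \eps$, and $K_\eta$ is compact in $C([0,T];L^r(\dom))$ by the embedding above. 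Moreover $K_\eta$ is bounded in $L^\infty(0,T;H^1(\dom))$, hence weak-star precompact by Banach-Alaoglu (and metrizable in the weak-star topology since the predual $L^1(0,T;H^{-1}(\dom))$ is separable). Consequently $K_\eta$ is compact in the intersection topology on $C([0,T];L^r(\dom))\cap(L^\infty(0,T;H^1(\dom)),w^\ast)$, and tightness follows directly from Definition~\ref{def-tightness}.

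The main subtlety I foresee is the rigorous justification of the equation for $\sqrt{\vr_n}$ on the vacuum set $\{\vr_n=0\}$; however, the uniform BD-type bound $\sqrt{\vr_n}\in L^\infty_t H^1_x$ combined with $\sqrt{\vr_n}\vu_n\in L^\infty_t L^2_x$ and $\sqrt{\vr_n}\Div\vu_n\in L^2_{t,x}$ gives meaning to each product, and the identity is then obtained pathwise in $\omega$ by the standard mollification argument applied to the continuity equation. Once this is in place, the whole argument is strictly analogous to that of Lemma \ref{lem-tightness of rho_n}.
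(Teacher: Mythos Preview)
Your proposal is correct and follows essentially the same approach as the paper: derive the transport equation for $\sqrt{\vr_n}$, use the resulting time-derivative bound together with the uniform $L^\infty_t H^1_x$ estimate \eqref{eqn:4.4} and the compact embedding $H^1(\dom)\embed L^r(\dom)$ to apply Lemma~\ref{lem-compactness Simon-1}, then handle the weak-star factor via Banach--Alaoglu and Chebyshev. The paper's proof is more terse (it simply says ``we proceed as previously'' and records the equation for $\sqrt{\vr_n}$), but your choice of $Y=H^{-1}(\dom)$ and time regularity $W^{1,2}$ is a perfectly acceptable instantiation of the same scheme.
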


\begin{proof}[Proof of Lemma \ref{lem-tightness of sqrt of rho_n}] Let us fix $r \in [1,6)$. To verify tightness in the space $C([0,T];L^{r}(\dom))$ we proceed as previously, {using the compactness of the embedding $H^1(\dom) \embed L^r(\dom)$ and }noticing that $\sqrt{\vr_n}$ satisfies the equation
\begin{equation} \begin{split}
\pt\sqrt{\vr_n}+\Div(\sqrt{\vr_n}\vu_n)-\frac12\sqrt{\vr_n}\Div\vu_n=0.
\end{split}\end{equation}
Since, by the Banach-Alaoglu theorem a closed ball in $L^\infty(0,T; H^1)\cong \bigl( L^1(0,T; H^{-1})\big)^\ast$ is compact in the weak-star topology, the tightness of $\{\Law(\sqrt{\vr_n}):n\in\N\}$ on the space $\left(L^\infty(0,T; H^1(\dom)), w^\ast\right)$ follows straight from the Chebyshev inequality and a priori estimate \eqref{eqn:4.4}.
\end{proof}

\begin{lemma}\label{lem-tightness of  rho_n u_n} Let $r \in [1, \infty)$ and $ \alpha \in [1, \frac32)$.
Then the  family of measures $\{\Law(\vr_n\vu_n):n\in\N\}$ is tight on $L^r(0,T; L^\alpha(\dom))$.
\end{lemma}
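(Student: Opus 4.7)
The plan is to establish uniform spatial and temporal regularity of the momenta $\vr_n\vu_n$, then to invoke a stochastic Aubin--Lions--Simon type compactness theorem, and finally to convert the resulting uniform moment bounds into tightness by the Chebyshev argument already used in the proof of Lemma \ref{lem-tightness of rho_n}.

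For the spatial regularity I would exploit the factorisation $\vr_n\vu_n=\sqrt{\vr_n}\cdot(\sqrt{\vr_n}\vu_n)$. Combining the Sobolev embedding $H^1(\dom)\embed L^6(\dom)$ with \eqref{eqn:4.4} and \eqref{eqn-4.2} yields the uniform bound $\vr_n\vu_n\in L^p(\Omega;L^\infty(0,T;L^{3/2}(\dom)))$. For the gradient I would use
\begin{equation*}
\nabla(\vr_n\vu_n)=2\,\nabla\sqrt{\vr_n}\otimes(\sqrt{\vr_n}\vu_n)+\sqrt{\vr_n}\,(\sqrt{\vr_n}\nabla\vu_n),
\end{equation*}
bounding the first summand in $L^\infty(0,T;L^1(\dom))$ through \eqref{eqn:4.4} and \eqref{eqn-4.2}, and the second in $L^2(0,T;L^{3/2}(\dom))$ via \eqref{eqn:4.3} together with the Sobolev embedding. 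This places $\vr_n\vu_n$ uniformly in $L^p(\Omega;L^2(0,T;W^{1,1}(\dom)))$.

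For the temporal regularity I would use the momentum equation in the form
\begin{equation*}
\vr_n\vu_n(t)=\vc{m}_n^0+\int_0^t F_n(s)\,\mathrm{d}s+\int_0^t \vr_n\vf\,\mathrm{d}W(s),
\end{equation*}
where $F_n=-\Div(\vr_n\vu_n\otimes\vu_n)+\Div(\vr_n\nabla\vu_n)-\nabla\vr_n^\gamma$. Using the factorisations $\vr_n\vu_n\otimes\vu_n=(\sqrt{\vr_n}\vu_n)\otimes(\sqrt{\vr_n}\vu_n)$ and $\vr_n\nabla\vu_n=\sqrt{\vr_n}(\sqrt{\vr_n}\nabla\vu_n)$, together with \eqref{eqn-4.2}, \eqref{eqn:4.3} and \eqref{eqn-rho_n^gamma}, each contribution sits in $L^p(\Omega;L^{r_0}(0,T;W^{-1,q_0}(\dom)))$ for appropriate $r_0,q_0>1$, so that the Bochner integral is H\"older continuous with values in $W^{-2,q_0}(\dom)$. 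For the stochastic integral I would combine the Burkholder--Davis--Gundy inequality with Kolmogorov's continuity criterion (as summarised in the appendix on stochastic analysis) to obtain a uniform bound of the form
\begin{equation*}
\Bigl\|\int_0^\cdot \vr_n\vf\,\mathrm{d}W\Bigr\|_{L^p(\Omega;W^{\alpha,p}(0,T;L^{q_0}(\dom)))}\leq C,\qquad \alpha<\tfrac12.
\end{equation*}
Combining the two contributions yields $\vr_n\vu_n\in L^p(\Omega;W^{\alpha,p}(0,T;W^{-2,q_0}(\dom)))$ uniformly in $n$.

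Since $W^{1,1}(\dom)\embed L^\beta(\dom)$ is compact for every $\beta\in[1,3/2)$, the Aubin--Lions--Simon lemma with fractional time regularity renders the embedding $L^2(0,T;W^{1,1}(\dom))\cap W^{\alpha,p}(0,T;W^{-2,q_0}(\dom))\embed L^2(0,T;L^\beta(\dom))$ compact. Combining this with the uniform $L^\infty(0,T;L^{3/2})$ bound via the interpolation estimate
\begin{equation*}
\|g\|_{L^r(0,T;L^\alpha(\dom))}\leq C\,\|g\|_{L^2(0,T;L^\alpha(\dom))}^{2/r}\,\|g\|_{L^\infty(0,T;L^{3/2}(\dom))}^{1-2/r}
\end{equation*}
upgrades compactness to $L^r(0,T;L^\alpha(\dom))$ for every $r\in[1,\infty)$ and $\alpha\in[1,3/2)$. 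A Chebyshev argument identical to the one in the proof of Lemma \ref{lem-tightness of rho_n}, applied to balls in the intersection space, then delivers the tightness of $\{\Law(\vr_n\vu_n):n\in\N\}$. The main obstacle will be the careful selection of the indices $r_0$, $q_0$, $\alpha$ and $p$ so that the stochastic integral, estimated only via the BDG bound that uses $\vr_n\in L^\infty L^\gamma$ and $\vf\in L^3(\dom)$, lives in the same negative Sobolev space as the Bochner contribution; the pressure term $\nabla\vr_n^\gamma$, controlled through \eqref{eqn-rho_n^gamma} under the hypothesis $\gamma\in(1,3)$, is the tightest constraint in this balance.
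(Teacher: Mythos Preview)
Your spatial regularity argument is essentially identical to the paper's, but your treatment of the temporal regularity takes a genuinely different route. The paper establishes the \emph{Aldous condition} $[\mathbf{A}]$ in $Y=W^{-3,2}(\dom)$: it tests the momentum equation against $\bpsi\in C^\infty(\dom)$, bounds each term $J_k^{\bpsi}((\tau_n+h)\wedge T)-J_k^{\bpsi}(\tau_n)$ in expectation for an arbitrary sequence of stopping times $(\tau_n)$, and then invokes Lemma~\ref{lem-Aldous condition-2} to obtain a set $A_\eps\subset L^1(0,T;Y)$ of uniformly small time translations. This feeds directly into Simon's Lemma~\ref{lem-compactness Simon-2} with $X=W^{1,1}(\dom)$, $B=L^\alpha(\dom)$, $Y=W^{-3,2}(\dom)$, which yields compactness in $L^r(0,T;L^\alpha(\dom))$ in one step (since the $L^\infty(0,T;L^{3/2})$ bound plays the role of the $L^q$ bound with $q=\infty$).

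Your approach instead establishes H\"older/fractional Sobolev time regularity directly via Kolmogorov's criterion and then appeals to a fractional-in-time Aubin--Lions lemma (of Flandoli--G\c{a}tarek type), followed by interpolation against the $L^\infty(0,T;L^{3/2})$ bound. This is perfectly valid and is in fact closer to what the paper does \emph{later}, in Propositions~\ref{prop.det_holder} and~\ref{prop.stoch_holder}, to obtain tightness in $C_w([0,T];L^{3/2})$. The Aldous route has the advantage of matching exactly the compactness tool already stated in the appendix (Lemma~\ref{lem-compactness Simon-2}) and of avoiding any quantitative H\"older exponent bookkeeping; your route is more self-contained but relies on a fractional Aubin--Lions lemma that the paper does not state. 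Note also that your interpolation step at the end is slightly heavier than necessary: Simon's lemma with $q=\infty$ already delivers compactness in $L^r$ for every finite $r$ directly, without a separate interpolation argument.
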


\begin{proof}[Proof of Lemma \ref{lem-tightness of  rho_n u_n}]
Let us fix $r \in [1, \infty)$ and $\alpha \in [1,\frac32)$. Let us also choose and fix auxiliary exponents $p \in [1, \infty)$  and $p^\ast \in [1, 2)$ such that $p^\ast = \frac{2p}{p+2}$, i.e.
\[
\frac1p+\frac12=\frac{1}{p^\ast}.
\]
From the uniform estimates \eqref{eqn-4.2} and \eqref{eqn:4.4} and the Sobolev embedding $H^1(\dom) \embed L^6(\dom)$, we infer that $\sqrt{\vr_n}\vu_n \in L^p(\Omega;L^\infty(0,T; L^2(\dom)))$ and $\sqrt{\vr_n} \in L^2(\Omega; L^\infty(0,T;L^6(\dom)))$ and are uniformly bounded in the respective spaces. Therefore, by applying the H\"older inequality, see inequality \eqref{eqn:Holder-2},  we  infer that
\begin{equation}\label{eqn:4.14}
\vr_n \vu_n \in L^{p^\ast}(\Omega;L^\infty(0,T; L^{3/2}(\dom))) \mbox{ and is uniformly bounded}.
\end{equation}
We can also prove that $\nabla\left(\vr_n \vu_n\right) \in L^{p^\ast}(\Omega;L^2(0,T;L^1(\dom)))$ uniformly.  Indeed, since
\[
\nabla\left(\vr_n\vu_n\right) = 2 (\nabla \sqrt{\vr_n})(\sqrt{\vr_n}\vu_n) + \sqrt{\vr_n}(\sqrt{\vr_n}\nabla \vu_n)
\]
it is sufficient  to prove that the first  term on the r.h.s. above belongs to $L^{p^\ast}(\Omega;L^\infty(0,T; L^1(\dom)))$ and is uniformly bounded and
the second term belongs to $L^{p^\ast}(\Omega;L^2(0,T; L^{3/2}(\dom)))$ and is uniformly bounded.

For the first one, in view of the  H\"older inequality, see \eqref{eqn:Holder-4},  it is sufficient to observe that by  \eqref{eqn-4.2} and \eqref{eqn:4.4}, $\sqrt{\vr_n}\vu_n \in L^p(\Omega;L^\infty(0,T;L^2(\dom)))$ and $\nabla \sqrt{\vr_n} \in L^2(\Omega;L^\infty(0,T;L^2(\dom)))$ (uniformly).
For the second one, we observe that as proved above
$\sqrt{\vr_n} \in L^2(\Omega; L^\infty(0,T;L^6(\dom)))$ (uniformly)  and,  by \eqref{eqn:4.3},  $\sqrt{\vr_n}\Grad\vu_n \in L^p(\Omega;L^2(0,T;L^2(\dom)))$ (also uniformly), the claim follows by applying \eqref{eqn:Holder-3}.

Hence we can conclude that
\begin{equation}\label{eqn:4.15}
\vr_n\vu_n \in L^{p^\ast}(\Omega;L^2(0,T;W^{1,1}(\dom)))  \mbox{ and is uniformly bounded}.
\end{equation}
Next we will verify that the sequence of random variables $\vr_n\vu_n$ satisfies the Aldous condition $\textbf{[A]}$, see Definition~\ref{def-Aldous criteria}, {in a Banach  space  $Y$,  which will be chosen later in the proof}.
For this aim we choose and fix  $\bpsi \in C^\infty(\dom)$. Then  the momentum equation of system of \eqref{eqn-1.1} gives us
\begin{equation}
\label{eqn:mom_test}
\begin{split}
& \int_\dom \vr_n(t,x)\vu_n(t,x) \bpsi(x)\,\dx - \int_\dom {\vc{m}_n^0}\cdot \bpsi(x)\,\dx \\
& \; = \int_0^t \int_\dom \left(\vr_n(s,x)\vu_n(s,x)\otimes \vu_n(s,x)\right)\cdot \nabla \bpsi(x)\,\dx\,\ds - \int_0^t \int_\dom \vr_n(s,x) \Grad\vu_n(s,x) \cdot \nabla \bpsi(x)\,\dx\,\ds \\
&\quad + \int_0^t \int_\dom \vr_n^\gamma(s,x)\Div \bpsi(x)\,\dx\,\ds + \int_0^t \int_\dom \vr_n(s,x)\vf(x) \bpsi(x)\,\dx\,dW(s), \\
&:= J^{\bpsi}_1(t) + J^{\bpsi}_2(t) + J^{\bpsi}_3(t) + J^{\bpsi}_4(t), \;\; \mbox{for all $t \in [0,T]$}
\end{split}
\end{equation}
 and therefore
\[\begin{split}
    & \left|\int_\dom \vr_n((\tau_n+h)\wedge T,x)\vu_n((\tau_n+h)\wedge T,x) \bpsi(x)\,\dx - \int_\dom \vr_n(\tau_n,x)\vu_n(\tau_n,x) \bpsi(x)\,\dx\right|\\
    &\qquad\le \sum_{k=1}^4\left|J^{\bpsi}_k((\tau_n+h)\wedge T) - J^{\bpsi}_k(\tau_n)\right|
\end{split}\]
for any sequence $(\tau_n)_{n\in\N}$ of $[0,T]$-valued stopping times and $h > 0$. Hence, for every $\eta > 0$,
\eq{\label{eqn:rut1}
    & \Prob \left\{\|\vr_n((\tau_n+h)\wedge T)\vu_n((\tau_n+h)\wedge T)- \vr_n(\tau_n)\vu_n(\tau_n)\|_{Y} \ge \eta\right\} \\
    & \quad \le \sum_{k=1}^4\Prob\left\{\sup_{\bpsi\in Y^\prime : \|\bpsi\|_{Y^\prime} = 1} \left|J^{\bpsi}_k((\tau_n+h)\wedge T) - J^{\bpsi}_k(\tau_n)\right| \ge \frac{\eta}{4}\right\},
}
  where  $Y^\prime$ is the functional dual of $Y$. We aim to apply the Chebyshev inequality and estimate the expected value of each term in the sum.

In order to continue with our proof we choose and fix: $p \in [1, \infty)$ and  $p^\ast \in [1, 2)$. We claim that due to the uniform estimates \eqref{eqn-4.2}--\eqref{eqn:4.4}, the sequences below are uniformly bounded in the corresponding spaces:
\begin{trivlist}
\item[(i)] $\vr_n \vu_n \otimes \vu_n$ in $L^{p}(\Omega; L^\infty(0,T; L^{1}(\dom)))$;
\item[(ii)] $\vr_n^\gamma$ in  $L^p(\Omega; L^{\infty}(0,T; L^{1}(\dom)))$.
\item[(iii)] $\vr_n \Grad\vu_n$ in $L^{p^\ast}(\Omega; L^2(0,T; L^{\frac32}(\dom)))$.
\end{trivlist}
With all of these uniform bounds we will analyse each term in \eqref{eqn:rut1} individually.
\eq{ \label{eqn:rut2}
& \E \left|J^{\bpsi}_1((\tau_n+h)\wedge T) - J^{\bpsi}_1(\tau_n)\right| =  \E\left|\int_{\tau_n}^{(\tau_n+h)\wedge T} \int_\dom \left(\vr_n\vu_n\otimes \vu_n\right): \nabla \bpsi\,\dx\,\ds \right| \\
& \; \le h \E \left(\sup_{s \in [0, T]}\|\vr_n \vu_n \otimes \vu_n\|_{L^{1}(\dom)}\right) \|\nabla \bpsi\|_{L^\infty(\dom)} \le C_T \|\bpsi\|_{W^{1,\infty}(\dom)} h.
}
For the second term $J^{\bpsi}_2$ we have
\begin{align} \label{eqn:rut3}
&\E \left|J^{\bpsi}_2((\tau_n+h)\wedge T) - J^{\bpsi}_2(\tau_n)\right| =  \E \left|\int_{\tau_n}^{(\tau_n+h)\wedge T} \int_\dom \vr_n \Grad\vu_n(s,x) \cdot \nabla \bpsi\,\dx\,\ds\right| \nonumber\\
& \quad \le \E \left|\int_{\tau_n}^{(\tau_n+h)\wedge T} \|\vr_n \Grad\vu_n\|_{L^{\frac32}}\|\nabla \bpsi\|_{L^3}\,ds\right| \nonumber\\
&\quad \le h^{1/2} \E \left(\int_{\tau_n}^{(\tau_n+h)\wedge T}\|\vr_n\Grad\vu_n\|_{L^{\frac32}}^2ds\right)^{1/2} \|\nabla \bpsi\|_{L^3} \le C_T \|\bpsi\|_{W^{1,3}(\dom)}h^{1/2}.
\end{align}
For the pressure term, we get
\eq{\label{eqn:rut4}
&\E \left|J^{\bpsi}_3((\tau_n+h)\wedge T) - J^{\bpsi}_3(\tau_n)\right| = \E\left|\int_{\tau_n}^{(\tau_n+h)\wedge T} \int_\dom \vr_n^\gamma\Div \bpsi\,\dx\,\ds\right| \\
&\quad \le \E \left|\int_{\tau_n}^{(\tau_n+h)\wedge T}\|\vr_n^\gamma\|_{L^1}\|\Div\bpsi\|_{L^\infty}ds\right| \\
&\quad\le h \E\left(\sup_{s\in[0,T]} \|\vr_n^\gamma\|_{L^1}\right)\|\Div \bpsi\|_{L^\infty} \le C_T \| \bpsi\|_{W^{1,\infty}(\dom)}h.
}
Note in particular that so far the most restrictive assumption on $\bpsi$ has been made during the estimates on $J^{\bpsi}_1$, $J^{\bpsi}_3$ and for those we have that
\eqh{
W^{3,2}(\dom)\hookrightarrow W^{1,\infty}(\dom).
}
Finally, for the stochastic term, using the BDG inequality, assumptions on $\vf$ and uniform estiamte \eqref{eqn:4.4}, we obtain
\begin{align}\label{eqn:rut5}
&\E \left|J^{\bpsi}_4((\tau_n+h)\wedge T) - J^{\bpsi}_4(\tau_n)\right| = \E\left|\int_{\tau_n}^{(\tau_n+h)\wedge T} \int_\dom \vr_n\vf \cdot\bpsi\,\dx\,dW\right| \nonumber\\
&\; \le  \E \left(\int_{\tau_n}^{(\tau_n+h)\wedge T} \left[\int_\dom \vr_n\vf \cdot\bpsi\,dx\right]^2 ds\right)^{1/2} \nonumber\\
& \le \E \left(\int_{\tau_n}^{(\tau_n+h)\wedge T} \| \vr_n\|^2_{L^3}\|\vf\|_{L^{3/2}}^2\|\bpsi\|^2_{L^\infty}ds\right)^{1/2} \nonumber\\
&\le h^{1/2} \E \left( \left(\sup_{s\in [0,T]} \|\vr_n\|^2_{L^3}\right)^{1/2} \|\vf\|_{L^{3/2}}\right)\|\bpsi\|_{L^\infty} \nonumber\\
&\le h^{1/2} \left(\E \left( \sup_{s\in [0,T]} \|\vr_n\|^2_{L^3}\right)\right)^{1/2} \left(\E \|\vf\|^2_{L^{3/2}}\right)^{1/2}\|\bpsi\|_{L^\infty}
\le C_{T,\vf}\|\bpsi\|_{L^\infty}h^{1/2}.
\end{align}
By the Chebyshev inequality, we obtain for given $\eta > 0$
\begin{align*}
&\Prob\left\{\sup_{\bpsi\in Y^\prime : \|\bpsi\|_{Y^\prime} = 1} \left|J^{\bpsi}_k((\tau_n+h)\wedge T) - J^{\bpsi}_k(\tau_n)\right| \ge \frac{\eta}{4}\right\} \\
&\hspace{2truecm}\leq \frac{4}{\eta} \left(\sup_{\bpsi\in Y^\prime : \|\bpsi\|_{Y^\prime} = 1} \E \left|J^{\bpsi}_k((\tau_n+h)\wedge T) - J^{\bpsi}_k(\tau_n)\right|\right), \;\; k = 1,\cdots, 4.
\end{align*}
Therefore, for $Y = W^{-3,2}(\dom)$, using estimates \eqref{eqn:rut2}--\eqref{eqn:rut5} in \eqref{eqn:rut1}, we have shown that
\[
\Prob \left\{\|\vr_n((\tau_n+h)\wedge T)\vu_n((\tau_n+h)\wedge T)- \vr_n(\tau_n)\vu_n(\tau_n)\|_{Y} \ge \eta\right\} \le \frac{8}{\eta}Ch^{1/2},
\]
where $C$ is a constant independent of $h$. Let us fix $\eps > 0$. With the above estimate we can choose $\theta > 0$ such that
\[
\Prob \left\{\|\vr_n((\tau_n+h)\wedge T)\vu_n((\tau_n+h)\wedge T)- \vr_n(\tau_n)\vu_n(\tau_n)\|_{Y} \ge \eta\right\} \le \eps
\]
for all $n \in \N$ and $0 < h \le\theta$ and therefore, the Aldous condition $\textbf{[A]}$ holds in $Y = W^{-3,2}(\dom)$. Therefore by Lemma~\ref{lem-Aldous condition-2} there exists a measurable subset $A_\eps \subset L^1(0,T;Y)$ such that
\[
\mathbb{P}^{X_n}(A_\eps) \ge 1 - \frac{\eps}{2}, \qquad\quad \lim_{h \to 0}\sup_{u \in A_\eps}\|\tau_h u-u\|_{L^1(0,T;Y)} = 0.
\]
Thus, we can apply Lemma~\ref{lem-compactness Simon-2} with $X = W^{1,1}(\dom)$, $B = L^{3/2 -}(\dom)$ and $Y = W^{-3,2}(\dom)$,  and set $F = \{\vr_n(\omega) \vu_n(\omega)\}_{n \in \N}$ to show that the set $F$ is relatively compact in $L^r(0,T; L^\alpha(\dom))$.

Let $\eta > 0$ and define the following closed ball $B_\eta$
\[
B_\eta := \{\mathrm{v} \in F : \|\mathrm{v}\|_{ L^\infty(0,T; L^{3/2}) \cap L^1(0,T; W^{1,1})} \le \eta \}.
\]
Now, by the Chebyshev inequality and uniform bounds from above, we have
\begin{equation} \begin{split}
&\Prob(\{\omega\in\Omega:\vr_n(\omega) \vu_n (\omega) \in B_\eta^c\}) \le \frac{1}{\eta} \E \|\vr_n \vu_n\|_{ L^\infty(0,T; L^{3/2}) \cap L^1(0,T; W^{1,1})} \le \frac{C}{\eta}.
\end{split}\end{equation}
Taking $\ep = \frac{C}{\eta}$, and choosing $K_\ep = B_{\frac{1}{\ep}}$ we obtain from the first part that $K_\ep$ is a relatively compact set in $L^r(0,T; L^\alpha(\dom))$ and hence we establish tightness of laws of $\vr_n \vu_n$ on $L^r(0,T;L^\alpha(\dom))$ directly from the Definition~\ref{def-tightness}.
\end{proof}

In order to prove the tightness of laws induced by $\vr_n\vu_n$ we will take the approach of Breit et al., see \cite[Proposition~4.3.8]{Breit+Feir+Ho18}, namely we will prove the time regularity of $\vr_n\vu_n$, which holds uniformly in $n$ and we will do so by proving the time regularity for the deterministic part and the stochastic part of \eqref{eqn:mom_test} separately.


\begin{proposition}
\label{prop.det_holder}
Let $T > 0$. Then there exists a $C > 0$ and $\kappa  \in (0, \frac12)$ such that for every $n\in\mathbb{N}$,  the process $v$ defined by the deterministic part of \eqref{eqn:mom_test}, i.e.  by
\begin{equation}
    \label{eqn:mom_det}
    \begin{split}
\langle Y_n(t), \bpsi\rangle      & := \int_\dom\vc{m}_n^0 \cdot\bpsi\,\dx - \inttO{\left(\vr_n\vu_n\otimes \vu_n\right)\cdot \nabla \bpsi} \\
    &\quad  + \inttO{\vr_n\Grad\vu_n \cdot\nabla \bpsi} - \inttO{\vr_n^\gamma\Div \bpsi}, \;\;\; \bpsi \in {W^{3,2}}(\dom),
    \end{split}
\end{equation}
satisfies
\[
\E\|Y_n\|_{C^\kappa([0,T]; W^{-3,2}(\dom))} \le C.
\]
\end{proposition}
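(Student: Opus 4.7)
The plan is to bound the $W^{-3,2}(\dom)$-norm of the increment $Y_n(t)-Y_n(s)$ by testing against an arbitrary $\bpsi \in W^{3,2}(\dom)$ of unit norm and, using the Sobolev embedding $W^{3,2}(\dom) \hookrightarrow W^{1,\infty}(\dom)$ (valid in dimension three since $3 > 1 + 3/2$), to produce a pathwise estimate of the form $\|Y_n(t)-Y_n(s)\|_{W^{-3,2}} \le C_n(\omega)\,|t-s|^{1/2}$ with $\E C_n$ bounded uniformly in $n$. Once this is established, $Y_n$ lies pathwise in $C^{1/2}([0,T]; W^{-3,2}(\dom))$, and the continuous embedding $C^{1/2} \hookrightarrow C^\kappa$ for any $\kappa \in (0,1/2)$ then yields the claim by taking expectations. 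No Kolmogorov-type continuity criterion is required because the control is pathwise.

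For the convective and pressure contributions the argument essentially replays the increment estimates already carried out for $J^{\bpsi}_1$ and $J^{\bpsi}_3$ in the proof of Lemma \ref{lem-tightness of  rho_n u_n}: pulling the $L^\infty_t L^1_x$ norms of $\vr_n |\vu_n|^2$ and $\vr_n^\gamma$ out of the time integral, one gains the full factor $|t-s|$, while the $L^\infty$ norm of $\nabla \bpsi$ and $\Div \bpsi$ on the right is controlled by $\|\bpsi\|_{W^{3,2}}$ via the aforementioned embedding. The uniform bounds \eqref{eqn-4.1} and \eqref{eqn-4.2} then ensure that these prefactors lie in $L^p(\Omega)$ uniformly in $n$ for every $p\ge 1$, so these two pieces contribute a $|t-s|$-bound (hence a fortiori a $|t-s|^{1/2}$-bound) with an $L^1(\Omega)$ prefactor.

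The diffusive contribution is the only piece that limits the H\"older exponent, and is thus the main obstacle. I would first bound the integrand pointwise in time by $\|\vr_n\Grad\vu_n\|_{L^{3/2}(\dom)}\|\nabla\bpsi\|_{L^3(\dom)}$ and then apply the Cauchy--Schwarz inequality in time, producing the factor $|t-s|^{1/2}$ together with
\[
\left(\int_0^T \|\vr_n\Grad\vu_n\|_{L^{3/2}(\dom)}^2\,\dt\right)^{\!1/2}.
\]
Using the factorisation $\vr_n\Grad\vu_n = \sqrt{\vr_n}\cdot\sqrt{\vr_n}\Grad\vu_n$ together with H\"older with exponents $6$ and $2$, the a'priori bounds \eqref{eqn:4.3} and \eqref{eqn:4.4}, and the embedding $H^1(\dom) \hookrightarrow L^6(\dom)$, one checks that $\vr_n\Grad\vu_n$ is uniformly bounded in $L^{p^\ast}(\Omega; L^2(0,T; L^{3/2}(\dom)))$ for any $p^\ast \in [1,2)$, which is more than enough to extract the required first moment of the time integral. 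Finally, $\|\nabla\bpsi\|_{L^3(\dom)}$ is again controlled by $\|\bpsi\|_{W^{3,2}}$ through the Sobolev embedding.

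Collecting the three estimates, taking the supremum over $\bpsi$ in the unit ball of $W^{3,2}(\dom)$, and then the expectation, delivers the desired Hölder bound uniformly in $n$. The main technical care goes into checking that the random variable displayed above has a first moment bounded uniformly in $n$, but this is immediate from the previously derived a'priori estimates.
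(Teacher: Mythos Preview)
Your proposal is correct and follows essentially the same route as the paper: the paper's proof simply refers back to the increment estimates \eqref{eqn:rut2}--\eqref{eqn:rut4} from the proof of Lemma~\ref{lem-tightness of  rho_n u_n}, which are exactly the three bounds you describe (the convective and pressure terms giving $|t-s|$ via their $L^\infty_t L^1_x$ control, and the diffusion term giving $|t-s|^{1/2}$ via Cauchy--Schwarz in time and the $L^2_t L^{3/2}_x$ bound on $\vr_n\Grad\vu_n$). The only minor omission is that the $C^\kappa$-norm used here also contains $|Y_n(0)|_{W^{-3,2}}=\|\vm_n^0\|_{W^{-3,2}}$, but this is uniformly bounded by \eqref{eqn:4.14} and the Sobolev embedding, so the argument is complete.
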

\begin{proof}[Proof of Proposition \ref{prop.det_holder}]
Proposition~\ref{prop.det_holder} can be proved by following the steps from the first half of the proof of Lemma~\ref{lem-tightness of  rho_n u_n}.
\end{proof}
Let us recall that if $\mathbb{U}$ is a Banach space, then by $C^\kappa([0,T];\mathbb{U})$ we denote the Banach space of all continuous functions $u:[0,T] \to \mathbb{U}$ such that
\begin{equation}\label{eqn:Holder space}
\Vert u \Vert_{C^\kappa([0,T];\mathbb{U})}:=\vert u(0)\vert_\mathbb{U}+ \sup \Bigl\{ \frac{\vert u(t)-u(s)\vert_\mathbb{U}}{\vert t -s \vert^\kappa}: s<t \in [0,T]\Bigr\}<\infty.
\end{equation}
The above nor is equivalent to another one, more commonly used, in which the term $\vert u(0)\vert_\mathbb{U}$ is replaced by $\sup\bigl\{ \vert u(t)\vert_\mathbb{U}: \, t \in [0,T]\bigr\}$.

\begin{proposition}
\label{prop.stoch_holder}
Let $T > 0$. Then there exists $C^\prime>0$ and $\kappa^\ast > 0$ such that {for every $n\in \mathbb{N}$,} the process $M_n$ defined by
\[
 M_n(t) := \int_0^t \!\!\int_\dom \vr_n\vf \,\dx\,\de W, \;\; t\in [0,T],
\]
satisfies, {$\mathbb{P}$-almost surely,} $M_n \in C^{\kappa^\ast}([0,T];W^{-2,2}(\dom))$ and
\begin{equation}\label{eqn:M_n-Holder}
\E\|M_n\|_{C^{\kappa^\ast}([0,T]; W^{-3,2}(\dom))} \le C^\prime.
  \end{equation}
\end{proposition}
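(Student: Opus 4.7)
The plan is to invoke the Kolmogorov continuity criterion in the Banach space $W^{-3,2}(\dom)$, reducing Hölder regularity of the stochastic convolution to an $L^p$ moment estimate on its increments, which in turn follows from the Burkholder-Davis-Gundy inequality together with the uniform moment bounds already collected in \eqref{eqn-4.1}--\eqref{eqn:4.5} and Assumption~\ref{ass-force}.

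First I would fix $p \ge 2$ and estimate, for any $0 \le s \le t \le T$, the increment
\[
    \E \|M_n(t) - M_n(s)\|_{W^{-3,2}(\dom)}^p
    = \E \sup_{\|\bpsi\|_{W^{3,2}} \le 1} \left|\int_s^t \!\!\int_\dom \vr_n(u)\, \vf\cdot\bpsi\,\dx\,\de W(u)\right|^p .
\]
Using the Sobolev embedding $W^{3,2}(\dom) \hookrightarrow L^\infty(\dom)$ (valid on $\mathbb{T}^3$) and Hölder's inequality, each test functional is bounded by $C\|\vr_n(u)\|_{L^3}\|\vf\|_{L^{3/2}}\|\bpsi\|_{W^{3,2}}$, so
\[
    \|\vr_n(u)\vf\|_{W^{-3,2}(\dom)} \le C\|\vr_n(u)\|_{L^3(\dom)}\|\vf\|_{L^{3/2}(\dom)} .
\]
Note that $\vr_n \in L^p(\Omega; L^\infty(0,T;L^3(\dom)))$ uniformly thanks to \eqref{eqn:4.4} and $\sqrt{\vr_n} \in L^\infty(0,T;L^6)$ giving $\vr_n \in L^\infty(0,T;L^3)$; Assumption~\ref{ass-force} plus $L^3\hookrightarrow L^{3/2}$ on the torus controls $\vf$.

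Next, by the BDG inequality applied in the Hilbert space $W^{-3,2}(\dom)$ (or equivalently by duality, testing with $\bpsi$, applying the scalar BDG and taking the supremum), for every $p\ge 2$ there exists $C_p$ such that
\begin{equation*}
    \E \|M_n(t) - M_n(s)\|_{W^{-3,2}}^p
    \;\le\; C_p\, \E \left( \int_s^t \|\vr_n(u)\vf\|_{W^{-3,2}}^2 \, \de u\right)^{p/2}
    \;\le\; C_p\, (t-s)^{p/2}\, \E\!\left( \sup_{u\in[0,T]}\|\vr_n(u)\|_{L^3}^p\right) \|\vf\|_{L^{3/2}}^p .
\end{equation*}
Applying Cauchy-Schwarz in $\omega$ and the uniform bounds above yields
\[
    \E \|M_n(t) - M_n(s)\|_{W^{-3,2}(\dom)}^p \;\le\; C(p,T)\, |t-s|^{p/2},
\]
where $C(p,T)$ is independent of $n$.

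Finally, choosing $p$ large enough so that $\tfrac{p}{2} > 1$ and applying the Kolmogorov continuity theorem on the Banach space $W^{-3,2}(\dom)$, we obtain a continuous modification of $M_n$ with values in $W^{-3,2}(\dom)$ whose paths lie in $C^{\kappa^\ast}([0,T]; W^{-3,2}(\dom))$ for any $\kappa^\ast \in \bigl(0,\tfrac{1}{2}-\tfrac{1}{p}\bigr)$, together with the moment bound \eqref{eqn:M_n-Holder}, with a constant $C'$ depending on $p,T,\vf$ and the uniform estimates but \emph{not} on $n$. The main technical point is ensuring the dual-norm estimate $\|\vr_n\vf\|_{W^{-3,2}}\le C\|\vr_n\|_{L^3}\|\vf\|_{L^{3/2}}$ is uniform in $n$; once this is in place, everything else is standard BDG plus Kolmogorov.
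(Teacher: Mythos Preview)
Your proposal is correct and follows essentially the same route as the paper: a dual-norm pointwise bound on $\vr_n\vf$ via the Sobolev embedding into $L^\infty$, followed by BDG in the Hilbert space, Cauchy--Schwarz in $\omega$ to split $\vr_n$ and $\vf$, and finally Kolmogorov. The only cosmetic difference is that the paper carries out the estimate in $W^{-2,2}(\dom)$ (using $W^{2,2}(\dom)\hookrightarrow L^\infty(\dom)$, which already holds in three dimensions) and thereby obtains the slightly stronger assertion $M_n\in C^{\kappa^\ast}([0,T];W^{-2,2}(\dom))$ stated in the proposition; your argument in $W^{-3,2}$ gives \eqref{eqn:M_n-Holder} directly, and replacing $W^{3,2}$ by $W^{2,2}$ throughout recovers the $W^{-2,2}$ claim with no change in the reasoning.
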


\begin{proof}[Proof of Proposition \ref{prop.stoch_holder}]
Let us begin the proof with the following observation which is a consequence  of the H\"older inequality and  of the continuity the embedding $W^{-2,2}(\dom) \embed L^\infty(\dom)$.
\begin{align}\label{eqn:W^{-2,2}}
  \|\vr \vf\|_{W^{-2,2}(\dom)}&= \sup\bigl\{ \vert \int_{\dom} \vr(x)\vf(x)\bpsi(x)\,\dx \vert: \vert \bpsi \vert_{W^{2,2}(\dom)} \leq 1   \bigr\}\\
  &\leq \sup\bigl\{ \|\vr\|_{L^3}\|\vf\|_{L^{3/2}}\|\bpsi\|_{L^\infty} \,\dx: \vert \bpsi \vert_{W^{2,2}(\dom)} \leq 1   \bigr\} \leq  C \|\vr\|_{L^3}\|\vf\|_{L^{3/2}}.
  \nonumber
\end{align}

Let us fix $T > 0$ and $q > 2$. Then for $0 \le s < t \le T$.  Since $W^{-2,2}(\dom)$ is a Hilbert space, by applying the Doob inequality  we get
\begin{align*}
\E\left(\left| M_n(t) - M_n(s) \right|_{W^{-2,2}(\dom)}^q\right) & = \E\left|\int_s^t  \vr_n(r)\vf \,\de W(r)\right|_{W^{-2,2}(\dom)}^q  \le C_q  \E\left(\int_s^t\|\vr_n(r) \vf\|_{W^{-2,2}(\dom)} ^2\de r\right)^{q/2}
\\&  \leq
CC_q
 \E\left(\int_s^t\|\vr_n(r)\|^2_{L^3}\|\vf\|^2_{L^{3/2}}\de r\right)^{q/2} \\
&\leq CC_q  |t-s|^{q/2}  \E\left( \sup_{r \in [0,T]} \|\vr_n(r)\|^q_{L^3}\|\vf\|^q_{L^{3/2}} \right)
\\&\leq CC_q  |t-s|^{q/2} \left(\E \sup_{r \in [0,T]}\|\vr_n(r)\|^{q}_{L^3}\right)^{1/2} \left(\E\|\vf\|^{2q}_{L^{3/2}}\right)^{1/2}
\\
&\hspace{2truecm}  \le C^\prime |t-s|^{q/2}.
\end{align*}

Since the above estimate holds true for every $q > 2$ and hence we can conclude by the Kolmogorov continuity theorem, see~Theorem~\ref{thm-Kolmogorov cont}, that there exists $\kappa^\ast > 0$ such that $M_n \in C^{\kappa^\ast}([0,T]; W^{-2,2}(\dom))$ almost surely and \eqref{eqn:M_n-Holder} holds. The proof is now complete.
\end{proof}

\begin{lemma}\label{lem-tightness of  rho_n u_n in C_w}
The family of measures $\{\Law(\vr_n\vu_n):n\in\N\}$ is tight on $C_w([0,T]; L^{3/2}(\dom))$.
\end{lemma}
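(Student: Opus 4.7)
The plan is to combine the uniform bound of $\vr_n\vu_n$ in $L^\infty(0,T; L^{3/2}(\dom))$ coming from \eqref{eqn:4.14} with a uniform H\"older estimate in the weaker space $W^{-3,2}(\dom)$ coming from Propositions~\ref{prop.det_holder} and \ref{prop.stoch_holder}, and then to apply a classical $C_w$-tightness criterion of Arzel\`a--Ascoli type, as used e.g.\ in \cite{Breit+Feir+Ho18}.

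First, splitting the momentum identity \eqref{eqn:mom_test} as $\vr_n\vu_n = Y_n + M_n$ in $W^{-3,2}(\dom)$, Propositions~\ref{prop.det_holder} and \ref{prop.stoch_holder} together yield, with $\bar\kappa := \min\{\kappa,\kappa^\ast\} \in (0,\tfrac12)$,
\[
\E\|\vr_n\vu_n\|_{C^{\bar\kappa}([0,T]; W^{-3,2}(\dom))} \le C,
\]
uniformly in $n$. Combined with \eqref{eqn:4.14}, this bounds $\vr_n\vu_n$ uniformly in expectation in
\[
\mathcal{Z} := L^\infty(0,T; L^{3/2}(\dom)) \cap C^{\bar\kappa}([0,T]; W^{-3,2}(\dom)).
\]
Letting $B_R \subset \mathcal{Z}$ be the closed ball of radius $R$, the Chebyshev inequality gives $\Prob(\vr_n\vu_n \notin B_R) \le C/R$.

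The main step is to verify that every $B_R$ is relatively compact in $C_w([0,T]; L^{3/2}(\dom))$. For this I would use the compactness of the embedding $L^{3/2}(\dom) \embed W^{-3,2}(\dom)$ (which follows by duality from the Rellich--Kondrachov embedding $W^{3,2}(\dom) \embed L^3(\dom)$ in $d=3$), together with the uniform H\"older estimate in $W^{-3,2}(\dom)$, to extract via Arzel\`a--Ascoli a subsequence converging in $C([0,T]; W^{-3,2}(\dom))$ to some limit $v$. Reflexivity of $L^{3/2}(\dom)$ then identifies $v(t)$ as the weak $L^{3/2}$-limit of the subsequence at each time, with $\|v(t)\|_{L^{3/2}} \le R$. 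A density argument (for $\phi \in L^{3}(\dom)$, approximating by $\phi_\eta \in W^{3,2}(\dom)$ and controlling the remainder by the uniform $L^\infty(0,T; L^{3/2})$-bound) upgrades the convergence to $C_w([0,T]; L^{3/2}(\dom))$.

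With this compactness at hand, given $\ep > 0$ one chooses $R = R(\ep) = C/\ep$ and sets $K_\ep$ to be the closure of $B_R$ in $C_w([0,T]; L^{3/2}(\dom))$; then $K_\ep$ is compact and $\Prob(\vr_n\vu_n \in K_\ep) \ge 1 - \ep$, proving tightness via Definition~\ref{def-tightness}. The hardest ingredient is the $C_w$-compactness criterion itself, but this is a standard tool in the modern analysis of stochastic compressible fluids and no new difficulty is anticipated there; the remaining bookkeeping (in particular verifying compactness of $L^{3/2}(\dom) \embed W^{-3,2}(\dom)$ and applying the density argument uniformly in $t$) is routine.
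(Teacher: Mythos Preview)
Your proposal is correct and follows essentially the same route as the paper: combine the uniform $L^\infty(0,T;L^{3/2}(\dom))$ bound \eqref{eqn:4.14} with the uniform H\"older estimate in $W^{-3,2}(\dom)$ from Propositions~\ref{prop.det_holder} and \ref{prop.stoch_holder}, then use compactness of the embedding into $C_w([0,T];L^{3/2}(\dom))$ and Chebyshev. The only difference is that the paper packages your Arzel\`a--Ascoli/density argument as a black-box compact embedding, Lemma~\ref{lem-BFH weak cont compact} (taken from \cite[Theorem~1.8.5]{Breit+Feir+Ho18}), rather than sketching it inline.
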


\begin{proof}[Proof of Lemma \ref{lem-tightness of  rho_n u_n in C_w}] By Proposition~\ref{prop.det_holder} and Proposition~\ref{prop.stoch_holder} we infer  that  for some $\kappa > 0$,
\[\vr_n\vu_n \in L^1(\Omega;C^\kappa([0,T];W^{-3,2}(\dom))).\]  Moreover, in the proof of Lemma~\ref{lem-tightness of  rho_n u_n} we showed that $\vr_n\vu_n \in L^1(\Omega; L^\infty(0,T; L^{3/2}(\dom)))$. Thus, proof of the lemma can be concluded by observing that the embedding
\[
L^\infty(0,T; L^{3/2}(\dom)) \cap C^\kappa([0,T]; W^{-3,2}(\dom)) \embed  C_w([0,T]; L^{3/2}(\dom))
\]
is compact; which follows directly from Lemma~\ref{lem-BFH weak cont compact}, and by application of the Chebyshev inequality.
\end{proof}

\begin{lemma}
\label{lem-tightness of sqrt of rho_n-L^2_w}
Assume that $\delta \in (0,1)$ is as in Lemma~\ref{lem-Mellet+Vasseur-stoch}.
The families of measures $\{\Law(\sqrt{\vr_n}\vu_n):n\in\N\}$, and resp. $\{\Law(\vr_n^{\frac{1}{2+\delta}}\vu_n):n\in\N\}$, are  tight on $L^\infty(0,T; L^2_{w^*}(\dom))$, resp.  $L^\infty(0,T; L^{2+\delta}_{w^*}(\dom))$.
\end{lemma}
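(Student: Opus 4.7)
The plan is to combine the available a'priori energy bounds with the Banach--Alaoglu theorem, essentially mimicking the final step of the proof of Lemma~\ref{lem-tightness of sqrt of rho_n}, where the weak-star tightness of $\sqrt{\vr_n}$ was handled. The key observation is that no time-regularity argument is needed here: tightness in $L^\infty(0,T; L^q_{w^\ast}(\dom))$ is obtained purely from the uniform bound in $L^\infty(0,T; L^q(\dom))$, because balls are already compact in the weak-star topology.

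Concretely, first I would recall the uniform bound \eqref{eqn-4.2}, which gives, for every $p\ge 1$,
\[
\sup_{n\in\N}\E\|\sqrt{\vr_n}\vu_n\|_{L^\infty(0,T;L^2(\dom))}^p\le C,
\]
and similarly the M--V bound \eqref{eqn:4.6}, which gives
\[
\sup_{n\in\N}\E\|\vr_n^{\frac{1}{2+\delta}}\vu_n\|_{L^\infty(0,T;L^{2+\delta}(\dom))}^{2+\delta}\le C.
\]
For $q\in\{2,\,2+\delta\}$ the space $L^q(\dom)$ is reflexive and separable, so $L^\infty(0,T;L^q(\dom))$ is isometrically isomorphic to the dual of the separable Banach space $L^1(0,T;L^{q'}(\dom))$ (with $q'$ the H\"older conjugate). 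Hence, by the Banach--Alaoglu theorem, the closed ball
\[
B_R^{(q)}:=\bigl\{v\in L^\infty(0,T;L^q(\dom)):\|v\|_{L^\infty(0,T;L^q(\dom))}\le R\bigr\}
\]
is compact and metrizable in the weak-star topology, i.e.\ in $L^\infty(0,T;L^q_{w^\ast}(\dom))$.

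The tightness then follows from the Chebyshev inequality. Fix $\eps>0$. For the first family, choose $R=R(\eps)$ so that $C/R^p\le \eps$ for some sufficiently large $p\ge 1$; then
\[
\Prob\bigl\{\sqrt{\vr_n}\vu_n\notin B_R^{(2)}\bigr\}\le \frac{1}{R^p}\,\E\|\sqrt{\vr_n}\vu_n\|_{L^\infty(0,T;L^2)}^p\le \frac{C}{R^p}\le \eps,
\]
uniformly in $n\in\N$, so $\{\Law(\sqrt{\vr_n}\vu_n)\}$ is tight on $L^\infty(0,T;L^2_{w^\ast}(\dom))$. The argument for the second family is identical: take $p=2+\delta$, $q=2+\delta$, and apply Chebyshev with \eqref{eqn:4.6} to absorb the mass outside $B_R^{(2+\delta)}$.

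The only mildly delicate point, which I would make explicit, is ensuring that tightness in the sense of Definition~\ref{def-tightness} applies in this non-metric setting. This is handled by the fact that, because $L^1(0,T;L^{q'}(\dom))$ is separable, each ball $B_R^{(q)}$ is a metrizable compact subset of $L^\infty(0,T;L^q_{w^\ast}(\dom))$; equivalently, the restriction of the weak-star topology to bounded sets is Polish, which is precisely the setting in which the Jakubowski--Skorokhod theorem used later in the paper operates. Apart from this verification, no genuine obstacle arises: unlike the case of $\vr_n\vu_n$, we do not need to establish any Aldous-type time regularity, since we are only after weak-star (not strong or $C_w$) tightness.
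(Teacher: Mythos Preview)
Your proposal is correct and follows essentially the same approach as the paper's proof: the paper invokes the Banach--Alaoglu theorem together with the Chebyshev inequality and the a~priori bounds \eqref{eqn-4.2} and \eqref{eqn:4.6}, explicitly citing the analogous step in Lemma~\ref{lem-tightness of sqrt of rho_n}. Your write-up is simply a more detailed version of the same argument, including the metrizability remark that the paper defers to Remark~\ref{rem-L^infty spaces}.
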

 \begin{proof}[Proof of Lemma \ref{lem-tightness of  rho_n u_n in C_w}] Tightness of $\{\Law(\sqrt{\vr_n}\vu_n):n\in\N\}$ in $\left(L^\infty(0,T; L^2(\dom)),w^\ast\right)$ follows directly from the Banach-Alaoglu theorem, the Chebyshev inequality and a priori estimates \eqref{eqn-4.2}, as in Lemma~\ref{lem-tightness of sqrt of rho_n}. Similarly we  deduce tightness of  $\{\Law(\vr_n^{\frac{1}{2+\delta}}\vu_n):n\in\N\}$ using a priori estimates \eqref{eqn:4.6}.
 \end{proof}

\noindent For \[1 \le r < \infty, \;\;\delta \in (0,1),\;\; {q \in  [1, 3)}, \;\alpha \in [1,\tfrac32),\]
we introduce the following notation
\begin{align}\label{eq-spaces-4.24}
&\mathcal{X}_{\vr} := C([0,T]; L^q(\dom)),\\
\label{eq-spaces-4.25}
 &\mathcal{X}_{\sqrt{\vr}} := C([0,T]; L^{2q}(\dom))\cap \left(L^\infty(0,T; H^1(\dom)), w^\ast\right),\\
 \label{eq-spaces-4.26}
& \mathcal{X}_{\vr\vu} := L^r(0,T; L^\alpha(\dom)) \cap C_w([0,T]; L^{3/2}(\dom)),  \\
\label{eq-spaces-4.27}
 & \mathcal{X}_{\sqrt{\vr}\vu} := \bigl(L^\infty(0,T; L^2(\dom)),w^\ast \bigr),\\
 \label{eq-spaces-4.28}
  & \mathcal{X}_{\vr^{\frac{1}{2+\delta}}\vu} := \left(L^\infty(0,T; L^{2+\delta}(\dom)),w^\ast \right),\\
  \label{eq-spaces-4.29}
  & \mathcal{X}_W := C([0,T];\R)
  \end{align}
and
\begin{equation}
  \label{eqn-spaces-X_T}
\mathcal{X}_T := \mathcal{X}_{\vr}\times \mathcal{X}_{\sqrt{\vr}}\times\mathcal{X}_{\vr\vu}\times \mathcal{X}_{\sqrt{\vr}\vu}\times \mathcal{X}_{\vr^{\frac{1}{2+\delta}}\vu} \times \mathcal{X}_W .
  \end{equation}

Let us denote by $\mu_{\vr_n}$, $\mu_{\sqrt{\vr_n}}$, $\mu_{\vr_n\vu_n}$, $\mu_{\sqrt{\vr_n}\vu_n}$, $\mu_{\vr^{\frac{1}{2+\delta}}\vu}$ and $\mu_W$ the laws of $\vr_n$, $\sqrt{\vr_n}$, $\vr_n\vu_n$,  $\sqrt{\vr_n}\vu_n$, ${\vr^{\frac{1}{2+\delta}}\vu}$ and $W_n \equiv W$ respectively, on the corresponding path space. The joint law of all variables on $\mathcal{X}_T$ is denoted by $\mu_n$.

Using the above notation one could rewrite previous results on tightness in a  bit more compact way. For this let us recall definition \eqref{eqn-mu_n} of a sequence $(\mu_n)_{n=1}$.

From Lemmata~\ref{lem-tightness of rho_n}, \ref{lem-tightness of sqrt of rho_n}, \ref{lem-tightness of  rho_n u_n}, \ref{lem-tightness of  rho_n u_n in C_w} and \ref{lem-tightness of  rho_n u_n in C_w} we can deduce the following corollary.

\begin{corollary}
\label{cor-tight mu_n}
The family of measures  $\{\mathcal{L}(\mu_n) : n \in \N\}$ is tight on the space $\mathcal{X}_T$ defined above in \eqref{eqn-spaces-X_T}.
\end{corollary}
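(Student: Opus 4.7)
The plan is to deduce the joint tightness on the product space $\mathcal{X}_T$ from the tightness of each marginal, which is exactly what Lemmata~\ref{lem-tightness of rho_n}--\ref{lem-tightness of sqrt of rho_n-L^2_w} provide, supplemented by the trivial tightness of $\{\mathcal{L}(W)\}$ on $\mathcal{X}_W = C([0,T];\R)$ (a single Radon measure on a Polish space is automatically tight). The only care required is that some of the factor spaces, such as $\mathcal{X}_{\sqrt{\vr}\vu}$ and $\mathcal{X}_{\vr^{\frac{1}{2+\delta}}\vu}$, carry weak-star topologies, so we must work with the general Jakubowski-style notion of tightness recalled in Definition~\ref{def-tightness} rather than rely on the classical Prokhorov statement.

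Concretely, I would fix $\eps>0$ and, appealing successively to Lemmata~\ref{lem-tightness of rho_n}, \ref{lem-tightness of sqrt of rho_n}, \ref{lem-tightness of  rho_n u_n}, \ref{lem-tightness of  rho_n u_n in C_w} and \ref{lem-tightness of sqrt of rho_n-L^2_w}, choose compact sets
\[
K^{(1)}_\eps\subset\Xr,\quad K^{(2)}_\eps\subset\Xsr,\quad K^{(3)}_\eps\subset\Xru,\quad K^{(4)}_\eps\subset\Xsr\mbox{'}\!,\quad K^{(5)}_\eps\subset\Xrdu,\quad K^{(6)}_\eps\subset\mathcal{X}_W,
\]
(where $\Xsr$' abbreviates $\mathcal{X}_{\sqrt{\vr}\vu}$, and the tightness of $\mathcal{X}_{\vr\vu}$ combines the $L^r(0,T;L^\alpha)$ part with the $C_w([0,T];L^{3/2})$ part) such that for every $n\in\N$ and every $k\in\{1,\dots,6\}$ the probability that the corresponding component of $\mu_n$ falls outside $K^{(k)}_\eps$ is bounded by $\eps/6$. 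I would then set
\[
K_\eps := K^{(1)}_\eps\times K^{(2)}_\eps\times K^{(3)}_\eps\times K^{(4)}_\eps\times K^{(5)}_\eps\times K^{(6)}_\eps,
\]
which is compact in $\mathcal{X}_T$ as a finite product of compact sets (Tychonoff's theorem, applied here in the trivial finite setting).

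To conclude, I would apply the union bound:
\[
\mathbb{P}\bigl(\mu_n\notin K_\eps\bigr)\le\sum_{k=1}^{6}\mathbb{P}\bigl(\text{$k$-th component of $\mu_n$}\notin K^{(k)}_\eps\bigr)\le 6\cdot\frac{\eps}{6}=\eps,
\]
uniformly in $n$, which is exactly the tightness of $\{\mathcal{L}(\mu_n):n\in\N\}$ on $\mathcal{X}_T$ in the sense of Definition~\ref{def-tightness}. I do not expect any genuine obstacle here: all the work has already been carried out in the preceding lemmata, and the present statement is essentially a bookkeeping consequence of the fact that joint tightness and marginal tightness coincide on finite products. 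The only minor subtlety is to verify that the ambient product topology is compatible with the Jakubowski--Skorokhod framework (which requires a countable family of continuous real-valued functions separating points in each factor), but each of the factor spaces in \eqref{eq-spaces-4.24}--\eqref{eq-spaces-4.29} meets this requirement, so the corollary follows immediately.
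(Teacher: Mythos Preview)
Your proposal is correct and matches the paper's approach exactly: the paper states this corollary as an immediate consequence of Lemmata~\ref{lem-tightness of rho_n}--\ref{lem-tightness of sqrt of rho_n-L^2_w} without spelling out a proof, and your union-bound/product-of-compacts argument is precisely the standard justification behind that one-line deduction. If anything, you supply more detail than the paper itself.
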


Having this we can apply the Skorokhod representation theorem in order to deduce the existence and convergence of the random variables on new probability spaces. Below we state the generalised version of the Skorokhod-Jakubowski  Theorem in the framework of  non-metric spaces, see \cite{Jak97}.

\begin{theorem}[Skorokhod-Jakubowski]\label{thm-Skor+Jak}
Let $(Z,\mathcal{Z})$ be a topological space such that there exists a sequence $(f_m)_{m\in\N}$
of continuous functions $f_m:Z\to\R$ that separate points of $Z$. Let $\mathcal{S}=\sigma((f_m)_{m\in\N})$ be the
$\sigma$-algebra generated by $(f_m)_{m\in\N}$. Then
\begin{enumerate}
\item Every compact subset of $Z$ is metrizable.
\item If $(\mu_m)_{m\in\N}$ is a tight sequence of probability measures on $(Z,\mathcal{S})$,
then there exists a subsequence $(\mu_{m_k})_{k\in\N}$, a probability space
$(\tOmega,\tF,\tP)$, and $Z$-valued Borel measurable
random variables $\mathbf{w}_k$ and $\mathbf{w}$ such that {\rm (i)} $\mu_{m_k}$ is the law of $\mathbf{w}_k$
and {\rm (ii)} $\mathbf{w}_k\to\mathbf{w}$ almost surely on $\tOmega$.
\end{enumerate}
\end{theorem}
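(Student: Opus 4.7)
The plan is to reduce the statement to the classical Skorokhod representation on the Polish space $\R^\N$ via the separating family $(f_m)$, and then to transfer the resulting almost-sure convergence back to $Z$. The natural object is the map
\[
F : Z \to \R^\N, \qquad F(z) = (f_m(z))_{m \in \N}.
\]
Since each $f_m$ is continuous and $\R^\N$ carries the product topology (which is Polish), $F$ is continuous; since $(f_m)$ separates points of $Z$, $F$ is injective.

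\textbf{Part (1).} Let $K\subset Z$ be compact. Then $F(K)\subset\R^\N$ is compact, and $F|_K\colon K\to F(K)$ is a continuous bijection from a compact space to a Hausdorff space, hence a homeomorphism. Because $F(K)$ inherits a metric from $\R^\N$, so does $K$, which proves (1).

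\textbf{Part (2).} By tightness I pick, for each $j\in\N$, a compact $K_j\subset Z$ with $\mu_m(K_j^c)\le 2^{-j}$ for every $m$, and without loss of generality $K_1\subset K_2\subset\cdots$. Setting $\nu_m:=F_\ast\mu_m$ on $\R^\N$, one has $\nu_m(F(K_j))\ge 1-2^{-j}$ with $F(K_j)$ compact, so $(\nu_m)$ is tight on $\R^\N$. The classical Skorokhod theorem yields a subsequence $(m_k)$, a probability space $(\tOmega,\tF,\tP)$, and $\R^\N$-valued random variables $(\tilde w_k),\tilde w$ with $\mathrm{Law}(\tilde w_k)=\nu_{m_k}$ and $\tilde w_k\to\tilde w$ $\tP$-a.s.\ in $\R^\N$. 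By Portmanteau applied to the closed sets $F(K_j)$, $\tP(\tilde w\in F(K_j))\ge 1-2^{-j}$, so $\tilde w\in\tilde L:=\bigcup_j F(K_j)$ almost surely; Borel--Cantelli (for fixed $k$, summing $2^{-j}$ in $j$) gives the same for each $\tilde w_k$. On each compact piece $F(K_j)$ the inverse $F^{-1}$ is continuous by part (1), hence Borel measurable on $\tilde L$. Defining $\mathbf{w}_k:=F^{-1}(\tilde w_k)$ and $\mathbf{w}:=F^{-1}(\tilde w)$ on the full-measure event where these preimages exist (and arbitrarily elsewhere) produces $Z$-valued Borel random variables with $\mathrm{Law}(\mathbf{w}_k)=\mu_{m_k}$.

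\textbf{Main obstacle.} The delicate step is upgrading $\tilde w_k\to\tilde w$ in $\R^\N$ to $\mathbf{w}_k\to\mathbf{w}$ in $Z$, because the topology on $Z$ induced by the $f_m$'s is a priori strictly coarser than the original one. The remedy is to exhibit, for $\tP$-almost every $\omega$, a single compact $K(\omega)\subset Z$ that contains $\{\mathbf{w}_k(\omega)\}_k\cup\{\mathbf{w}(\omega)\}$; on such a set the two topologies coincide by part (1), so coordinate-wise convergence in $\R^\N$ translates into convergence in $Z$. One extracts this compact envelope from the uniform tightness of the subsequence $(\nu_{m_k})$ on $\R^\N$ combined with the closedness of each $F(K_j)$: along a further almost-sure event $\Omega^J$ of probability approaching $1$ as $J\to\infty$, both $\tilde w$ and the tail of $(\tilde w_k)$ lie in the same $F(K_J)$, where $F^{-1}$ is continuous. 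This simultaneous use of the separating property of $(f_m)$ and the metrizability of compact subsets is the technical core of Jakubowski's argument, and is the only place where one must go beyond a direct application of the classical Skorokhod theorem.
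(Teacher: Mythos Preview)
The paper does not supply its own proof of this theorem; it is stated with a reference to Jakubowski~\cite{Jak97}. Your overall strategy---embed $Z$ into $\R^\N$ via $F=(f_m)$, use that $F$ is a homeomorphism on compacts, push the tight sequence forward, invoke classical Skorokhod on the Polish space $\R^\N$, and pull back---is exactly Jakubowski's. Part~(1) and the construction of $\mathbf{w}_k,\mathbf{w}$ via $F^{-1}$ on the $\sigma$-compact set $\tilde L=\bigcup_j F(K_j)$ are fine.

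There is, however, a genuine gap in your resolution of the ``main obstacle''. You assert that on an event $\Omega^J$ with $\tP(\Omega^J)\to 1$ the tail of $(\tilde w_k)$ lies in a \emph{single} $F(K_J)$. This is not justified: from $\tP(\tilde w_k\notin F(K_J))\le 2^{-J}$ for each $k$ one cannot bound $\tP\bigl(\bigcap_{k\ge N}\{\tilde w_k\in F(K_J)\}\bigr)$ from below, since a union bound over infinitely many $k$ fails. Borel--Cantelli with the diagonal choice $j=k$ only yields $\mathbf{w}_k(\omega)\in K_k$ eventually, which places the sequence in the \emph{growing} union $L$ but does not give relative compactness of $\{\mathbf{w}_k(\omega)\}$ in $Z$. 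Without a fixed compact containing the tail, convergence $\tilde w_k\to\tilde w$ in $\R^\N$ cannot be transferred to $Z$, because $F^{-1}$ is continuous only on each $F(K_j)$, not on $F(L)$.

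Jakubowski circumvents this by \emph{not} applying black-box Skorokhod on $\R^\N$ and then lifting. Instead he builds the representation directly on $([0,1],\mathrm{Leb})$ by a layered construction adapted to the compacts $K_j$: roughly, one partitions $[0,1]$ according to the masses $\mu_{m_k}(K_j\setminus K_{j-1})$ and applies metric Skorokhod on each compact piece $K_j$, so that by design $\mathbf{w}_k(\omega)$ and $\mathbf{w}(\omega)$ fall into the same $K_j$ on sets of controlled probability. This explicit coupling is the technical core you allude to, and it is not recoverable from the bare statement of Skorokhod on $\R^\N$.
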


\begin{remark}\label{rem-skor-jak} Let us observe that in the framework of Theorem \ref{thm-Skor+Jak},
\[ \mathcal{S} \subset \mathcal{B}(Z),\]
where $\mathcal{B}(Z):=\sigma(\mathcal{Z})$ is the Borel $\sigma$-field on $Z$.
It may happen, see e.g. Remark \ref{rem-L^infty spaces}, that the equality holds as well.
\end{remark}

In the remaining parts of this subsection we will show that Theorem \ref{thm-Skor+Jak} is applicable in the context of our Corollary \ref{cor-tight mu_n}.

\begin{remark}\label{rem-L^infty spaces}
Assume that $X$ is a separable Banach space with it's dual denoted by $X^\ast$. For example, $X=L^1(0,T; L^{r^\prime}(\dom))$ so that $X^\ast=L^\infty(0,T; L^r(\dom))$,
where we suppose that  $r \in (1,\infty)$ and $r^\prime \in (1,\infty)$ is the conjugate exponent, i.e.  $\frac1r+\frac1r^\prime=1$.

\medskip

Let us point out that on the space $X^\ast$ equipped with the $w^\ast$ topology, there exists a  sequence $(f_n)_{n=1}^\infty$ of continuous $\mathbb{R}$-valued functionals which separate points.
Indeed, it is sufficient to consider a  sequence  $(x_n)_{n=1}^\infty$ which is dense in the unit ball of $X$,   and define
\[f_n: X^\ast  \ni x^\ast \mapsto \langle x^\ast, x_n \rangle  \in \mathbb{R},
\]
  where $\langle \cdot, \cdot \rangle$ denotes the duality between $X^\ast$ and $X$.
  Note that  the sequence $(f_n)_{n=1}^\infty$ separates the points on $X^\ast$. It follows, that the space
  $\left(X^\ast,w^\ast \right)$ satisfies the assumptions of  the Skorokhod-Jakubowski Theorem from \cite{Jak97}, see Theorem \ref{thm-Skor+Jak}. \\
  By \cite[Theorem 3.28]{Bre11}  every ball in $X^\ast$ equipped with the weak$^\ast$ topology is  metrizable.  Moreover, by inspection of  the proof of \cite[Theorem 3.28]{Bre11}
  the weak$^\ast$ topology on ball in $X^\ast$ is generated by the functionals $(f_n)$. Therefore, the Borel $\sigma$-field on $X^\ast$ generated by the $w^\ast$ topology, is equal to the $\sigma$-field generated by the functionals  $(f_n)$.

\medskip

  Since a  ball in $X^\ast$ equipped with the weak$^\ast$ topology is also  compact by the Banach-Alaoglu Theorem, we infer that it is a Polish  compact space.\\
  Moreover,  if $Y$ is a topological space (with $\sigma(Y)$ denoting the smallest $\sigma$-field on $Y$ containing all open subsets) and function $F:X^\ast \to Y$ is sequentially continuous, then $F$ is $\mathcal{B}(X^\ast, w^\ast)/\sigma(Y)$ measurable. Indeed, for every $r>0$, the restriction $F_{|B_{X^\ast}(r)}$ of  $F$ to the closed ball $B_{X^\ast}(r)$ in $X^\ast$ of radius $r$ is continuous (as $B_{X^\ast}(r)$ is first countable) and thus is $\mathcal{B}(B_{X^\ast}(r), w^\ast)/\sigma(Y)$ measurable. Since, for any set $A\in \sigma(Y) $,
  \begin{equation}\label{eqn:??}
  F^{-1}(A)=\bigcup_{n=1}^\infty   F^{-1}(A) \cap B_{X^\ast}(n)= \bigcup_{n=1}^\infty   (F_{|B_{X^\ast}(n)})^{-1}(A),
  \end{equation}
  the claim follows. Note that the whole ${X^\ast}$ equipped with the weak$^\ast$ topology is not first countable.
\end{remark}

\begin{proposition}
\label{prop-jak+sk}
There exists a probability space $(\tOmega,\tF,\tP)$ with $\mathcal{X}_T$-valued Borel-measurable random variables $\tilde{\mu}_n=\left(\widetilde{\vr}_n, \widetilde{\vt}_n, \widetilde{\vm}_n, \widetilde{\vq}_n, \widetilde{\vrr}_n, \widetilde W_n\right)$, $n \in \N$ and $\tilde{\mu}=\left(\widetilde{\vr}, \widetilde{\vt}, \widetilde{\vm}, \widetilde{\vq},\widetilde{\vrr}, \widetilde W\right)$ and there exists a subsequence $\bigl(\mu_{k_n}\bigr)_{n=1}$,  such that the following hold:
\begin{itemize}
    \item[(1)] The law of $\tilde{\mu}_n\dela{\left(\widetilde{\vr}_n, \widetilde{\vt}_n, \widetilde{\vm}_n, \widetilde{\vq}_n,\widetilde{\vrr}_n, \widetilde W_n\right)}$ is equal to the law of $\mu_{k_n}$, $n \in \N$.
    \item[(2)] $\tilde{\mu}_n\dela{\left(\widetilde{\vr}_n, \widetilde{\vt}_n, \widetilde{\vm}_n, \widetilde{\vq}_n,\widetilde{\vrr}_n, \widetilde W_n\right)}$ converges $\tP$-almost surely in $\mathcal{X}_T$ to $\tilde{\mu}\dela{\left(\widetilde{\vr}, \widetilde{\vt}, \widetilde{\vm}, \widetilde{\vq},\widetilde{\vrr}, \widetilde W\right)}$.
\end{itemize}
\end{proposition}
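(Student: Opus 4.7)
The plan is to apply the Skorokhod--Jakubowski Theorem (Theorem \ref{thm-Skor+Jak}) directly to the tight sequence of laws produced by Corollary \ref{cor-tight mu_n}. The only nontrivial thing to check is that the product space $\mathcal{X}_T$ defined in \eqref{eqn-spaces-X_T} satisfies the hypothesis of that theorem, i.e.\ that it carries a countable family of continuous $\R$-valued functions separating points.

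First I would handle each of the six factor spaces separately. The spaces $\mathcal{X}_{\vr}$, $\mathcal{X}_W$, and the $L^r(0,T;L^\alpha(\dom))$-component of $\mathcal{X}_{\vr\vu}$ are separable Banach (or Fr\'echet) spaces in their norm topology, and so admit a countable separating family coming from a countable norming subset of their (separable) topological duals. For the three weak-$\ast$ factors
\[
(L^\infty(0,T; H^1(\dom)), w^\ast),\quad (L^\infty(0,T; L^2(\dom)), w^\ast),\quad (L^\infty(0,T; L^{2+\delta}(\dom)), w^\ast),
\]
I would invoke the observation recorded in Remark \ref{rem-L^infty spaces}: each of the corresponding preduals $L^1(0,T; L^{r'}(\dom))$ (with $r' \in \{6/5, 2, (2+\delta)/(1+\delta)\}$) is separable, so a countable dense subset of its unit ball defines, through the duality pairing, a countable family of $w^\ast$-continuous linear functionals separating points. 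Finally, for the $C_w([0,T]; L^{3/2}(\dom))$-factor inside $\mathcal{X}_{\vr\vu}$, the countable family obtained by evaluating at rational times $t \in [0,T]\cap \Q$ and then pairing with a countable dense subset of $L^3(\dom)$ is continuous and point-separating. Taking unions of all these families composed with the canonical projections gives a countable separating family of continuous functions on the product $\mathcal{X}_T$.

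With the hypothesis of Theorem \ref{thm-Skor+Jak} verified, the proof is essentially immediate: by Corollary \ref{cor-tight mu_n} the sequence $(\mathcal{L}(\mu_n))_{n\in\N}$ is tight on $\mathcal{X}_T$, and so part~(2) of Theorem \ref{thm-Skor+Jak} delivers a subsequence $(\mu_{k_n})$, a probability space $(\tOmega,\tF,\tP)$, and $\mathcal{X}_T$-valued Borel random variables $\tilde{\mu}_n$ and $\tilde{\mu}$, such that $\mathcal{L}(\tilde{\mu}_n)=\mathcal{L}(\mu_{k_n})$ for every $n$ and $\tilde{\mu}_n \to \tilde{\mu}$ in $\mathcal{X}_T$ almost surely. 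Denoting the six components of $\tilde{\mu}_n$ and $\tilde{\mu}$ as in the statement finishes the proof.

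The main subtlety, rather than an obstacle, is ensuring that the Borel $\sigma$-algebra on each weak-$\ast$ component (and on $C_w$) coincides with the $\sigma$-algebra generated by the chosen separating functionals, so that the random variables produced by the theorem are Borel measurable for the topology we actually want. This is precisely the point addressed in Remark \ref{rem-L^infty spaces} via the Banach--Alaoglu Theorem and the argument in \eqref{eqn:??}: every closed ball of the dual space is metrizable and compact in the weak-$\ast$ topology, so the two $\sigma$-algebras agree on balls, and then patching over countably many balls transfers the equality to the whole space. Modulo this bookkeeping, the statement of Proposition \ref{prop-jak+sk} is a clean application of the Jakubowski--Skorokhod machinery to the tightness already established in Lemmata \ref{lem-tightness of rho_n}--\ref{lem-tightness of sqrt of rho_n-L^2_w}.
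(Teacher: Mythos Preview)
Your proposal is correct and matches the paper's approach exactly: the paper's own proof is a single sentence citing Corollary~\ref{cor-tight mu_n} and Theorem~\ref{thm-Skor+Jak}, with the verification of the separating-family hypothesis deferred by footnote to \cite[Corollary~3.12]{Brz+Mot_2013}, whereas you spell that verification out in detail. One minor slip: the predual of $L^\infty(0,T;H^1(\dom))$ is $L^1(0,T;(H^1(\dom))')$ rather than an $L^{r'}$-space, but since it is still separable your argument goes through unchanged.
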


\begin{proof}[Proof of Proposition \ref{prop-jak+sk}] This result is a direct consequence of Corollary~\ref{cor-tight mu_n} and Theorem~\ref{thm-Skor+Jak}\footnote{The assumptions of the theorem can be verified similarly as in \cite[Corollary~3.12]{Brz+Mot_2013}.}
\end{proof}

In view of the definition (\ref{eqn-spaces-X_T},\ref{eq-spaces-4.24},\ref{eq-spaces-4.25},\ref{eq-spaces-4.26}) of the space  $\mathcal{X}_T$, the property (2) from Proposition \ref{prop-jak+sk} implies that the following hold $\tP$-almost surely,
\begin{align}
\widetilde{\vr}_n(0) & \to \widetilde{\vr}(0) \mbox{ in } L^q(\dom),
\label{eqn:vr_n-convergence}\\
\widetilde{\vt}_n (0) & \to  \widetilde{\vt}(0) \mbox{ in } L^{2q}(\dom),
\label{eqn:vt_n-convergence}
\\
 \widetilde{\vm}_n(0) & \to \widetilde{\vm}(0) \mbox{ weakly in } L^{3/2}(\dom).
\label{eqn:vm_n-convergence}
\end{align}


\subsection{Identification of the limiting random variables}
\label{sec-identification}
We are now going to identify $\widetilde\vt_n, \widetilde\vm_n$, $\wt\vq_n$ and $\widetilde\vt, \widetilde\vm$, $\wt\vq$.

In the remaining parts of this subsection we choose and fix  $\delta \in (0,1)$ and $q \in [1, 3)$ such that
\begin{equation}\label{eqn-q-delta}
q(1+2\delta) \ge 3(1+\delta).
\end{equation}
To simplify the notation we are going to denote a subsequence $\mu_{k_n}$ by $\mu_n$, for $n\in \mathbb{N}$.

\begin{lemma}\label{lem-identification}
 { $\wt\Prob$-a.s. the following equalities hold for every $n \in \mathbb{N}$, a.e. in $\dom \times (0,T)$ }
\eq{\label{eqn-identification-n}
(\widetilde\vt_n, \widetilde\vm_n,\widetilde{\vq}_n)&= (\sqrt{\widetilde\vr_n},  \widetilde\vr_n^{\frac{1+\delta}{2+\delta}}\widetilde \vrr_n,\widetilde\vr_n^{\frac{\delta}{2(2+\delta)}}\widetilde \vrr_n).
}
\end{lemma}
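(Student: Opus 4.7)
The plan is to transfer the three identities from the original probability space, where they hold by the very definition of the auxiliary processes in \eqref{eqn-auxilary functions-five} (noting that on $\{\vr_n=0\}$ both sides of each identity vanish), to the new probability space via the equality of laws $\mathcal{L}(\tilde{\mu}_n)=\mathcal{L}(\mu_{k_n})$. Concretely, for each identity it suffices to exhibit a Borel subset $A\subset\mathcal{X}_T$ which collects the configurations satisfying that identity; since the original laws charge $A$ with probability one, so do the pushed-forward laws.

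First I would handle the easy identity $\widetilde{\vt}_n=\sqrt{\widetilde{\vr}_n}$. The relevant set is
\[
A_1=\Bigl\{(\vr,\vt)\in\mathcal{X}_{\vr}\times\mathcal{X}_{\sqrt{\vr}} : \vt^2=\vr \text{ a.e. in }(0,T)\times\dom\Bigr\},
\]
which I rewrite as the countable intersection $\bigcap_k \{F_k=0\}$ with
$F_k(\vr,\vt)=\int_0^T\!\!\int_{\dom}(\vt^2-\vr)\phi_k\,\dxdt$
running over a countable dense family $\{\phi_k\}\subset C([0,T]\times\dom)$. Each $F_k$ is continuous on $\mathcal{X}_{\vr}\times\mathcal{X}_{\sqrt{\vr}}$: the linear piece uses the $C([0,T];L^q)$-topology of $\mathcal{X}_{\vr}$, while the quadratic piece follows from the bound $\|\vt^2-\tilde{\vt}^2\|_{L^q}\le \|\vt+\tilde{\vt}\|_{L^{2q}}\|\vt-\tilde{\vt}\|_{L^{2q}}$ and the continuous embedding $\mathcal{X}_{\sqrt{\vr}}\hookrightarrow C([0,T];L^{2q}(\dom))$. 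Thus $A_1$ is Borel, and the transfer argument applies.

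Next, setting $\alpha:=\tfrac{1+\delta}{2+\delta}$ and $\beta:=\tfrac{\delta}{2(2+\delta)}$, I would treat the nonlinear identities $\widetilde{\vm}_n=\widetilde{\vr}_n^{\alpha}\widetilde{\vrr}_n$ and $\widetilde{\vq}_n=\widetilde{\vr}_n^{\beta}\widetilde{\vrr}_n$ in parallel, via the sets
\[
A_2=\{(\vr,\vrr,\vm):\vm=\vr^{\alpha}\vrr\},\qquad A_3=\{(\vr,\vrr,\vq):\vq=\vr^{\beta}\vrr\},
\]
once again sliced by countably many tests $G_k(\vr,\vrr,\vm)=\int_0^T\!\!\int_{\dom}(\vm-\vr^\alpha\vrr)\phi_k\,\dxdt$. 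The linear part in $\vm$ is continuous on $\mathcal{X}_{\vr\vu}$. For the product part, I would establish sequential continuity of $(\vr,\vrr)\mapsto\int\!\!\int \vr^\alpha\vrr\,\phi_k\,\dxdt$ on $\mathcal{X}_{\vr}\times\mathcal{X}_{\vr^{\frac{1}{2+\delta}}\vu}$ by combining (i) the H\"older estimate $|a^\alpha-b^\alpha|^{q/\alpha}\le |a-b|^q$ valid for $0<\alpha<1$, which makes $\vr\mapsto\vr^\alpha$ continuous from $C([0,T];L^q(\dom))$ into $C([0,T];L^{q/\alpha}(\dom))$, together with (ii) the fact that $\vr^\alpha\phi_k$ then belongs to $L^1(0,T;L^{(2+\delta)/(1+\delta)}(\dom))$, the predual of $L^\infty(0,T;L^{2+\delta}(\dom))$, so that weak-$\ast$ convergence of $\widetilde{\vrr}_n$ tests against it. Sequential continuity on the weak-$\ast$ factor yields Borel measurability by the ball-decomposition argument in Remark \ref{rem-L^infty spaces}. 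This is the step where the constraint \eqref{eqn-q-delta} $q(1+2\delta)\ge 3(1+\delta)$ is used: it is precisely what guarantees that the H\"older exponents match so that $\vr^\alpha\phi_k$ and, in the parallel computation, $\vr^\beta\phi_k$ land in the required predual space.

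The main obstacle is the second step above, namely verifying measurability of the nonlinear composition map into a target carrying a weak-$\ast$ topology, and keeping the integrability bookkeeping consistent so that H\"older duality pairs $\vr^\alpha\phi_k$ (respectively $\vr^\beta\phi_k$) with the predual of the space in which $\widetilde{\vrr}_n$ lives; this is exactly the role played by assumption \eqref{eqn-q-delta}. Once $A_1,A_2,A_3$ are known to be Borel in $\mathcal{X}_T$, and since $\mathbb{P}(\mu_{k_n}\in A_i)=1$ for $i=1,2,3$ by the definitions \eqref{eqn-auxilary functions-five}, the identity of laws yields $\widetilde{\mathbb{P}}(\tilde{\mu}_n\in A_i)=1$, which is precisely \eqref{eqn-identification-n}.
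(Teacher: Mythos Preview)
Your proposal is correct and follows essentially the same route as the paper: transfer the algebraic identities from the original space to the new one via equality of laws, after verifying that the relevant ``identity-checking'' functionals are Borel measurable through sequential continuity of the product maps (using the H\"older continuity of $\vr\mapsto\vr^\alpha$ and the ball-decomposition argument of Remark~\ref{rem-L^infty spaces}). The paper packages this as computing $\wt\E\,g(\wt\vr_n,\wt\vrr_n,\wt\vm_n,\bpsi)=\E\,g(\vr_n,\vr_n^{1/(2+\delta)}\vu_n,\vr_n\vu_n,\bpsi)=0$ for the measurable functional $g$ associated with the map $\Gamma$ of Lemma~\ref{lem-Gamma} (and $\widehat\Gamma$ of Lemma~\ref{lem:gamma2}), which is exactly your test-function slicing in different notation. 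One minor inaccuracy: your attribution of condition \eqref{eqn-q-delta} to making $\vr^\alpha\phi_k$ land in the predual $L^1(0,T;L^{(2+\delta)/(1+\delta)})$ is not quite right---for bounded $\phi_k$ that only requires $q\ge1$; in the paper the constraint is what forces $\Gamma(\vr,\vrr)=\vr^\alpha\vrr$ to take values in $L^\infty(0,T;L^{3/2})$, the space in which $\vm$ lives, while for the $\widehat\Gamma$ case (your $A_3$) no such restriction beyond $q\ge1$ is needed (cf.\ Lemma~\ref{lem:gamma2}).
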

\begin{proposition}\label{prop-identification} Under assumptions of Lemma \ref{lem-identification}, $\wt\Prob$-a.s.  the following identity holds  in $\dom \times (0,T)$
\eq{\label{eqn-identification}
(\widetilde\vt, \widetilde\vm,\widetilde{\vq})&= (\sqrt{\widetilde\vr},  \widetilde\vr^{\frac{1+\delta}{2+\delta}}\widetilde \vrr,\widetilde\vr^{\frac{\delta}{2(2+\delta)}}\widetilde \vrr).
}
\end{proposition}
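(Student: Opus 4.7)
The strategy is to pass to the limit $n\to\infty$ in the three pointwise identities from Lemma \ref{lem-identification}. I work on the full-measure event where Lemma \ref{lem-identification} holds for every $n$ and simultaneously $\widetilde{\mu}_n \to \widetilde{\mu}$ in $\mathcal{X}_T$ (Proposition \ref{prop-jak+sk}); all statements below are deterministic in $\widetilde\omega$ on this event. First I identify $\widetilde\vt$: the strong convergences $\widetilde\vr_n \to \widetilde\vr$ in $C([0,T]; L^q(\dom))$ and $\widetilde\vt_n \to \widetilde\vt$ in $C([0,T]; L^{2q}(\dom))$ permit, after a diagonal extraction, pointwise a.e.\ convergence on $(0,T)\times\dom$. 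Passing to the limit in the identity $\widetilde\vt_n = \sqrt{\widetilde\vr_n}$ via continuity of $\sqrt{\,\cdot\,}$ yields $\widetilde\vt = \sqrt{\widetilde\vr}$ a.e.

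For $\widetilde\vm$, I test against an arbitrary $\phi \in C_c^\infty((0,T)\times\dom)$. On one hand, the strong convergence $\widetilde\vm_n \to \widetilde\vm$ in $L^r(0,T;L^\alpha(\dom))$ yields
\[
\int_0^T\!\!\int_\dom \widetilde\vm_n\,\phi\,\dx\,\dt \longrightarrow \int_0^T\!\!\int_\dom \widetilde\vm\,\phi\,\dx\,\dt.
\]
On the other hand, by Lemma \ref{lem-identification} the same integral equals
\[
\int_0^T\!\!\int_\dom \widetilde\vr_n^{\frac{1+\delta}{2+\delta}}\widetilde\vrr_n\,\phi\,\dx\,\dt.
\]
To pass to the limit in this expression I combine (i) the weak-$*$ convergence $\widetilde\vrr_n \rightharpoonup^* \widetilde\vrr$ in $L^\infty(0,T;L^{2+\delta}(\dom))$ (the $\mathcal{X}_{\vr^{\frac{1}{2+\delta}}\vu}$-component of $\mathcal{X}_T$) with (ii) strong convergence of the weighted factor $\widetilde\vr_n^{\frac{1+\delta}{2+\delta}}\phi$ in the predual $L^1(0,T;L^{\frac{2+\delta}{1+\delta}}(\dom))$. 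Claim (ii) follows from the elementary inequality $|a^s-b^s|\le|a-b|^s$ for $s=\frac{1+\delta}{2+\delta}\in(0,1)$ and $a,b\ge 0$, together with the a.e.\ convergence of $\widetilde\vr_n$ and a dominated-convergence argument that exploits the compact support of $\phi$ and the $\widetilde\omega$-dependent uniform bound on $\widetilde\vr_n$ in $C([0,T];L^q(\dom))$ inherited from the a.s.\ convergence of the whole sequence. Uniqueness of distributional limits then delivers $\widetilde\vm = \widetilde\vr^{\frac{1+\delta}{2+\delta}}\widetilde\vrr$ a.e.

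Finally, $\widetilde\vq$ is identified by the exact same mechanism: test against $\phi\in C_c^\infty((0,T)\times\dom)$, invoke $\widetilde\vq_n \rightharpoonup^* \widetilde\vq$ in $L^\infty(0,T;L^2(\dom))$, and pair this weak-$*$ convergence with strong convergence of $\widetilde\vr_n^{\frac{\delta}{2(2+\delta)}}\phi$ (again with exponent in $(0,1)$, so the $|a^s-b^s|$ trick applies) in the appropriate predual space, then conclude by uniqueness of distributional limits. The main obstacle is the book-keeping of exponents: one must verify that $\widetilde\vr_n^a\phi$ truly sits in the relevant predual of $L^\infty(0,T;L^{2+\delta}(\dom))$ respectively $L^\infty(0,T;L^{2}(\dom))$ with strongly convergent embedding. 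This is precisely where the hypothesis $q(1+2\delta)\ge 3(1+\delta)$ from \eqref{eqn-q-delta} plays its balancing role, exactly as in the proof of Lemma \ref{lem-identification} at finite $n$; a secondary subtlety is the measurability of the nonlinear maps $\vr\mapsto \vr^a$, which is handled by the general measurability results collected in the appendix.
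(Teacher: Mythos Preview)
Your proposal is correct and follows essentially the same mechanism as the paper's proof: pass to the limit in the identities of Lemma \ref{lem-identification} by pairing the weak-$*$ convergence of $\widetilde\vrr_n$ with strong convergence of the density-dependent weight. The paper's version is slightly more packaged---it invokes directly the sequential continuity of the maps $\Gamma$ and $\widehat\Gamma$ already established in the appendix (Lemmata \ref{lem-Gamma} and \ref{lem:gamma2}) to conclude $\widetilde\vr_n^{\frac{1+\delta}{2+\delta}}\widetilde\vrr_n \to \widetilde\vr^{\frac{1+\delta}{2+\delta}}\widetilde\vrr$ weakly-$*$ in $L^\infty(0,T;L^{3/2}(\dom))$, and for $\widetilde\vt$ it squares rather than takes a square root---whereas you unpack the same argument inline with $C_c^\infty$ test functions and the inequality $|a^s-b^s|\le|a-b|^s$. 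One minor remark: since your test functions $\phi$ are bounded rather than merely in $L^3$, you do not actually need the full strength of \eqref{eqn-q-delta} in your version of the argument; $q\ge 1$ already suffices for the predual embedding you use.
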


\begin{proof}[Proof of Lemma \ref{lem-identification}] Let us choose and fix $n \in \mathbb{N}$. Let us denote, on $\mathcal{B}(\Xr \times \Xsr)$,  \begin{align*}
\Upsilon_n:= \mathcal{L}(\vr_n, \sqrt{\vr}_n), \;\;\; \wt\Upsilon_n:=\mathcal{L}(\wt\vr_n, \wt\vt_n).
\end{align*}
Since by  Proposition~\ref{prop-jak+sk}  \[\mathcal{L}\left(\widetilde{\vr}_n, \widetilde{\vt}_n, \widetilde{\vm}_n, \widetilde{\vq}_n,\widetilde{\vrr}_n\right) = \mathcal{L}\left({\vr}_n, {\sqrt{\vr}}_n, {\vr}_n\vu_n, {\sqrt{\vr}_n\vu_n},\vr_n^{\frac{1}{2+\delta}}\vu_n\right) \mbox{  on }\mathcal{B}(\mathcal{X}_T),\]
 we infer that   $\wt\Upsilon_n  = \Upsilon_n $ on $\mathcal{B}(\Xr \times \Xsr)$.
Since  $\vr_n = \left(\sqrt{\vr_n}\right)^2$, $\Prob$-a.s. we infer that  \[\E\|\vr_n - \left(\sqrt{\vr_n}\right)^2\|_{L^1(0,T;L^1(\dom))} = 0.\]
Let us  observe that the map
\begin{equation}\label{eqn-beta map}
\beta : \Xsr \ni x \mapsto x^2 \in \Xr \mbox{ is continuous }
\end{equation}
 and hence Borel measurable. Moreover, the map $\|\cdot\| : \Xr \to \R$ is continuous, and linear combination of measurable maps is measurable. Combining all this with equality $\Upsilon_n = \wt\Upsilon_n$ on $\mathcal{B}(\Xr \times \Xsr)$ we get following chain of identities
\begin{align*}
    0&= \E\|\vr_n - \beta(\sqrt{\vr_n})\|_{\Xr} = \int_\Omega \|\vr_n(\omega) - \beta(\sqrt{\vr_n})(\omega)\|_{\Xr} d\Prob(\omega)\\
    & = \int_{\Xr \times \Xsr} \|x - \beta(y)\|_{\Xr}\,d\Upsilon_n(x,y) = \int_{\Xr \times \Xsr} \|x - \beta(y)\|_{\Xr}\,d\wt\Upsilon_n(x,y) \\
    & = \int_{\widetilde{\Omega}} \|\wt\vr_n(\omega) - \beta(\wt\vt_n(\omega))\|_{\Xr} d\wt\Prob(\omega) = \wt\E\|\wt\vr_n - \beta(\wt\vt_n)\|_{\Xr}.
\end{align*}
Thus we infer that
\begin{equation}\label{eqn:4.33}
\wt\Prob\mbox{-a.s. }\wt\vt_n = \sqrt{\wt\vr_n} \mbox{ in }\Xr.
\end{equation}

We will use a similar argument to identify $\wt\vm_n$. We start by defining a map
\begin{equation}\label{eqn:Gamma-4.33}
  \Gamma \colon \Xr \times \mathcal{X}_{\vr^{\frac{1}{2+\delta}}\vu} \ni (\mathrm{v}, \vw) \mapsto \mathrm{v}^{\frac{1+\delta}{2+\delta}} \vw \in \big(L^\infty(0,T; L^{3/2}(\dom)), w^\ast\big).
\end{equation}

Firstly, we observe that $\Gamma$ is a well-defined function, see also Lemma~\ref{lem-Gamma}. Secondly by  Lemma~\ref{lem-Gamma}, we infer that
for every $\bpsi \in L^1(0,T; L^3(\dom))$, the function
\[
\Xr \times \mathcal{X}_{\vr^{\frac{1}{2+\delta}}\vu} \ni (\mathrm{v}, \vw) \mapsto \langle \Gamma(\rm{v}, \vw), \bpsi \rangle \in \R,
\]
  where $\langle \cdot, \cdot \rangle$ denotes the duality between $L^\infty(0,T; L^{3/2}(\dom))$ and $L^1(0,T; L^3(\dom))$,
  is sequentially  continuous.

Due to the sequential continuity of the map $\Gamma$, the map $g$
\begin{equation}
\label{eqn:iden_mom_2}
\begin{split}
    g \colon &\Xr \times \Xrdu \times \Xru \times L^1(0,T; L^3(\dom)) \longrightarrow \R \\
    (\rm{v}, \vw, \vc{z}, \bpsi) &\mapsto \int_0^T \int_\dom \left(\vc{z}(t,x) - \Gamma(\rm{v}(t,x), \vw(t,x))\right)\cdot \bpsi(t,x)\,\dx\,\dt
\end{split}
\end{equation}
is sequentially continuous too and hence, by Remark \ref{rem-L^infty spaces}, measurable. Since, for every $\omega \in \Omega$
\[
\vr_n\vu_n = \vr_n^{\frac{1+\delta}{2+\delta}}\vr_n^{\frac{1}{2+\delta}}\vu_n,\]
by the measurability of the map $g$ and of random variables $\vr_n, \vr_n^{\frac{1}{2+\delta}}\vu_n$ and $\vr_n\vu_n$ from $\Omega$ to $\Xr, \Xrdu$ and $\Xru$ respectively, we  infer that
for every $\psi \in L^1(0,T; L^3(\dom))$
\eq{
\label{eqn:iden_mom_1}
& \E g(\vr_n, \vr_n^{\frac{1}{2+\delta}}, \vr_n\vu_n, \bpsi) \\
& \quad = \E \int_0^T \int_\dom \left(\vr_n\vu_n(t,x) - \vr_n^{\frac{1+\delta}{2+\delta}}(t,x)\vr_n^{\frac{1}{2+\delta}}(t,x)\vu_n(t,x)\right)\cdot \bpsi(t,x)\,\dx\,\dt = 0.
}
Let us denote $\mathcal{L}\left(\vr_n, \vr_n^{\frac{1}{2+\delta}}\vu_n, \vr_n\vu_n\right)$ and $\mathcal{L}\left(\wt\vr_n, \wt\vrr_n, \wt\vm_n\right)$ on $\mathcal{B}(\Xr \times \Xrdu \times \Xru)$ by $\nu_n$ and $\wt\nu_n$ respectively. From the equality of joint laws of
\begin{equation} \begin{split}
\left({\vr}_n, \sqrt{{\vr}_n} , \vr_n\vu_n, \sqrt{\vr_n}\vu_n,\vr_n^{\frac{1}{2+\delta}}\vu_n\right) \quad \text{and} \quad \left(\widetilde{\vr}_n, \widetilde{\vt}_n, \widetilde{\vm}_n, \widetilde{\vq}_n,\widetilde{\vrr}_n\right)
\end{split}\end{equation}
on $\mathcal{X}_T$, which is a consequence of Proposition \ref{prop-jak+sk},  we have
\eq{
\label{eqn:iden_mom_3}
\nu_n = \wt\nu_n \; \mbox{ on } \mathcal{B}\left(\Xr\times\Xru\times\mathcal{X}_{\vr^{\frac{1}{2+\delta}}\vu}\right).
}
Thus, from identity \eqref{eqn:iden_mom_1} and equality of laws \eqref{eqn:iden_mom_3} we have for $\bpsi \in L^1(0,T; L^3(\dom))$
\begin{align*}
    0&= \E \int_0^T \int_\dom \left(\vr_n\vu_n(t,x) - \vr_n^{\frac{1+\delta}{2+\delta}}(t,x)\vr_n^{\frac{1}{2+\delta}}(t,x)\vu_n(t,x)\right)\cdot \bpsi(t,x)\,\dx\,\dt \\
    & = \int_\Omega \left(\int_0^T \int_\dom \left(\vr_n\vu_n(\omega) -\Gamma(\vr_n(\omega), \vr_n^{\frac{1}{2+\delta}}\vu_n(\omega))\right)\cdot\bpsi\,\dx\,\dt\right)\mathrm{d}\Prob(\omega)\\
    & = \int_{\Xr \times \Xrdu \times \Xru}\left(\int_0^T \int_\dom \left(\vc{z} - \Gamma(\rm{v},\vw)\right)\cdot\bpsi\,\dx\,\dt\right)\mathrm{d}\nu_n\left(\rm{v},\vw,\vc{z}\right)\\
    & = \int_{\Xr \times \Xrdu \times \Xru} g(\rm{v}, \vw, \vc{z}, \bpsi)\,\mathrm{d}\nu_n\left(\rm{v},\vw,\vc{z}\right)\\
    & = \int_{\Xr \times \Xrdu \times \Xru} g(\rm{v}, \vw, \vc{z}, \bpsi)\,\mathrm{d}\wt\nu_n\left(\rm{v},\vw,\vc{z}\right)\\
    & = \int_{\Xr \times \Xrdu \times \Xru}\left(\int_0^T \int_\dom \left(\vc{z} - \Gamma(\rm{v},\vw)\right)\cdot\bpsi\,\dx\,\dt\right)\mathrm{d}\wt\nu_n\left(\rm{v},\vw,\vc{z}\right)\\
    & = \int_{\tOmega}\left(\int_0^T \int_\dom\left( \wt\vm_n(\omega) - \Gamma(\wt\vr_n(\omega), \wt\vrr_n(\omega))\right)\cdot\bpsi\,\dx\,\dt\right) d\wt\Prob(\omega)\\
    & = \wt\E \int_0^T \int_\dom \left(\wt\vm_n(t,x) - \wt\vr_n^{\frac{1+\delta}{2+\delta}}(t,x)\wt\vrr_n(t,x)\right)\cdot \bpsi(t,x)\,\dx\,\dt,
\end{align*}
where we have used the measurability of $\wt\vm_n, \wt\vr_n$ and $\wt\vrr_n$ (which is due to Proposition~\ref{prop-jak+sk}) and the measurability  of the map $g$  defined in \eqref{eqn:iden_mom_2}. Therefore,
in view of separability of the space $ L^1(0,T; L^3(\dom))$ we deduce that
\begin{align}\label{eqn-identification of tilde{m}_n}
\wt\vm_n = \wt\vr_n^{\frac{1+\delta}{2+\delta}}\wt\vrr_n \mbox{ a.e. in } \dom \times (0,T), \;\; \wt\Prob\mbox{-a.s..}
\end{align}

We can similarly identify $\wt\vq_n$. For this purpose we consider another pair of maps $\widehat\Gamma$ and $\widehat g$ defined by
\begin{equation}\label{eqn:widehatGamma}
\widehat \Gamma \colon \Xr \times \mathcal{X}_{\vr^{\frac{1}{2+\delta}}\vu} \ni (\mathrm{v}, \vw) \mapsto \mathrm{v}^{\frac{\delta}{2(2+\delta)}} \vw \in \big(L^\infty(0,T; L^{2}(\dom)), w^\ast\big),
\end{equation}

\begin{equation}
\label{eqn:iden_mom_4}
\begin{split}
    \widehat g \colon &\Xr \times \mathcal{X}_{\vr^{\frac{1}{2+\delta}}\vu} \times \mathcal{X}_{\sqrt{\vr}\vu} \times L^1(0,T; L^2(\dom)) \longrightarrow \R \\
    (\rm{v}, \vw, \vc{z}, \bpsi) &\mapsto \int_0^T \int_\dom \left({\vc{z}(t,x) - \widehat\Gamma(\rm{v}(t,x), \vc{w}(t,x))}\right)\cdot \bpsi(t,x)\,\dx\,\dt
\end{split}
\end{equation}
$\widehat \Gamma$ is well-defined as well as sequentially continuous, see Lemma~\ref{lem:gamma2} for details, and so is $\widehat g$.

Arguing as above we can check that
\begin{align*}
    0&= \E \int_0^T \int_\dom \left(\sqrt{\vr_n}\vu_n(t,x) - \vr_n^{\frac{\delta}{2(2+\delta)}}(t,x)\vr_n^{\frac{1}{2+\delta}}(t,x)\vu_n(t,x)\right)\cdot \bpsi(t,x)\,\dx\,\dt \\
    & = \int_\Omega \left(\int_0^T \int_\dom \left(\sqrt{\vr_n}\vu_n(\omega) - \widehat \Gamma(\vr_n(\omega), \vr_n^{\frac{1}{2+\delta}}\vu_n(\omega))\right)\cdot\bpsi\,\dx\,\dt\right)\mathrm{d}\Prob(\omega)\\
    & = \int_{\Xr \times \Xrdu \times \mathcal{X}_{\sqrt{\vr}\vu}}\left(\int_0^T \int_\dom \left(\vc{z} - \widehat\Gamma(\rm{v},\vw)\right)\cdot\bpsi\,\dx\,\dt\right)\mathrm{d}\nu_n\left(\rm{v},\vw,\vc{z}\right)\\
    &= \int_{\Xr \times \Xrdu \times \mathcal{X}_{\sqrt{\vr}\vu}} \widehat g(\rm{v}, \vw, \vc{z}, \bpsi)\,\mathrm{d}\nu_n\left(\rm{v},\vw,\vc{z}\right)\\
    & = \int_{\Xr \times \Xrdu \times \mathcal{X}_{\sqrt{\vr}\vu}} \widehat g(\rm{v}, \vw, \vc{z}, \bpsi)\,\mathrm{d}\wt\nu_n\left(\rm{v},\vw,\vc{z}\right)\\
    & = \int_{\Xr \times \Xrdu \times \mathcal{X}_{\sqrt{\vr}\vu}}\left(\int_0^T \int_\dom \left(\vc{z} - \widehat\Gamma(\rm{v},\vw)\right)\cdot\bpsi\,\dx\,\dt\right)\mathrm{d}\wt\nu_n\left(\rm{v},\vw,\vc{z}\right)\\
    & = \int_{\tOmega}\left(\int_0^T \int_\dom\left( \wt\vq_n(\omega) - \widehat\Gamma(\wt\vr_n(\omega), \wt\vrr_n(\omega))\right)\cdot\bpsi\,\dx\,\dt\right) d\wt\Prob(\omega)\\
    & = \wt\E \int_0^T \int_\dom \left(\wt\vq_n(t,x) - \wt\vr_n^{\frac{\delta}{2(2+\delta)}}(t,x)\wt\vrr_n(t,x)\right)\cdot \bpsi(t,x)\,\dx\,\dt,
\end{align*}
holds for all test functions $\bpsi \in L^1(0,T; L^2(\dom))$. Here $\nu_n$ and $\wt\nu_n$ denote $\mathcal{L}(\vr_n, \vr_n^{\frac{1}{2+\delta}}, \sqrt{\vr_n}\vu)$ and $\mathcal{L}(\wt\vr_n, \wt\vrr_n, \wt\vq_n)$ respectively and $\nu_n = \wt \nu_n$ on $\mathcal{B}\left(\Xr \times \Xrdu \times \mathcal{X}_{\sqrt{\vr}\vu}\right)$. Therefore, we deduce that

\begin{align}\label{eqn-identification of tilde{q}_n}
\wt\vq_n = \wt\vr_n^{\frac{\delta}{2(2+\delta)}}\wt\vrr_n \mbox{ a.e. in }\dom \times [0,T], \;\; \wt\Prob\mbox{-a.s.}.
\end{align}
 This concludes the proof of Lemma \ref{lem-identification}.
\end{proof}

\begin{proof}[Proof of Proposition \ref{prop-identification}]
In order to identify the limit $\widetilde\vt$ we note that
\eq{
\wt\vt_n\wt\vt_n\to\lr{\wt\vt}^2 \quad\text{strongly\ in}\ L^1(0,T; L^1(\dom)), \quad\wt\Prob\mbox{-a.s.}
}
as a consequence of the strong convergence of $\wt\vt_n$ in $\mathcal{X}_{\sqrt{\vr}}$. But by \eqref{eqn:4.33} and second assertion of Proposition \ref{prop-jak+sk}  we have  that
\eq{
\wt\vt_n\wt\vt_n=\wt\vr_n\to\wt\vr \quad\text{strongly\ in}\ L^1(0,T; L^1(\dom)), \quad\wt\Prob\mbox{-a.s.}.
}
Therefore $\wt\vr =(\wt\vt)^2$, {a.e. in $\dom\times(0,T)$} $\wt\Prob$-a.s.. To identify $\widetilde\vm$ we first notice that thanks to strong convergence of $\wt\vr_n$ and weak convergence of $\wt\vrr_n$
\eq{
\wt\vr_n^\frac{1+\delta}{2+\delta}\wt\vrr_n\to \wt\vr^\frac{1+\delta}{2+\delta}\wt\vrr,\quad \text{weakly*\ in}\ L^\infty(0,T; L^\frac{3}{2}(\dom)), \quad\wt\Prob\mbox{-a.s.},}
and then because of previous identification and Proposition~\ref{prop-jak+sk}
\eq{
\wt\vr_n^\frac{1+\delta}{2+\delta}\wt\vrr_n=\wt\vm_n\to\wt\vm\quad \text{ in}\  C_{w}(0,T; L^\frac{3}{2}(\dom)), \quad\wt\Prob\mbox{-a.s.}.
}
Therefore $\wt\vm=\wt\vr^\frac{1+\delta}{2+\delta}\wt\vrr$ a.e. in $\dom\times(0,T)$ $\wt\Prob$-a.s.. Identification of $\wt\vq$ follows in a similar manner.
 This concludes the proof of  Proposition \ref{prop-identification}.
\end{proof}

\begin{lemma}
\label{lem-tilde u_n}
Let $n \in \N$. Then, there exists a random variable $\widetilde{\vu}_n$ defined on the new probability space $(\tOmega,\tF,\tP)$, obtained in Proposition~\ref{prop-jak+sk}, for which
\eq{
(\widetilde{\vq}_n,\widetilde{\vm}_n)&=(\sqrt{\wt\vr_n}\widetilde{\vu}_n,\wt\vr_n\widetilde{\vu}_n) \quad {a.e.\  in \ \dom\times(0,T)}\ \wt\Prob\mbox{-a.s.}.
}
\end{lemma}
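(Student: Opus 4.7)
The plan for Lemma~\ref{lem-tilde u_n} is to define $\wt\vu_n$ as the pointwise ratio that formally inverts the Mellet--Vasseur relation $\vrr_n=\vr_n^{1/(2+\delta)}\vu_n$ used on the original probability space, and then to read off the two desired identities directly from Lemma~\ref{lem-identification}. Concretely, for every $\omega\in\wt\Omega$ and a.e.\ $(t,x)\in(0,T)\times\dom$ I set
\[
\wt\vu_n(\omega,t,x):=\wt\vr_n(\omega,t,x)^{-\frac{1}{2+\delta}}\,\wt\vrr_n(\omega,t,x)\,\1_{\{\wt\vr_n(\omega,t,x)>0\}}.
\]
The motivation is that the exponent $1/(2+\delta)$ is exactly the one carried by $\vr_n$ inside $\vrr_n$, so one expects the above object to behave like a velocity field.

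The first step is measurability. The map $\Phi:[0,\infty)\times\R^3\to\R^3$ given by $\Phi(r,v):=r^{-1/(2+\delta)}v$ for $r>0$ and $\Phi(0,v):=0$ is Borel measurable, and by Proposition~\ref{prop-jak+sk} both $\wt\vr_n$ and $\wt\vrr_n$ are measurable random variables on $(\wt\Omega,\tF,\tP)$. Hence $\wt\vu_n=\Phi(\wt\vr_n,\wt\vrr_n)$ is jointly $\tF\otimes\mathcal{B}((0,T)\times\dom)$-measurable.

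The second step is the verification of the two identities. On the set $\{\wt\vr_n>0\}$ a direct exponent computation gives
\[
\sqrt{\wt\vr_n}\,\wt\vu_n=\wt\vr_n^{\frac{1}{2}-\frac{1}{2+\delta}}\wt\vrr_n=\wt\vr_n^{\frac{\delta}{2(2+\delta)}}\wt\vrr_n,\qquad \wt\vr_n\,\wt\vu_n=\wt\vr_n^{\frac{1+\delta}{2+\delta}}\wt\vrr_n,
\]
and by Lemma~\ref{lem-identification} the right-hand sides equal $\wt\vq_n$ and $\wt\vm_n$ respectively. On the vacuum set $\{\wt\vr_n=0\}$, the left-hand sides vanish thanks to the indicator cutoff in the definition of $\wt\vu_n$; the right-hand sides vanish as well, because Lemma~\ref{lem-identification} exhibits $\wt\vq_n$ and $\wt\vm_n$ as proportional to the strictly positive powers $\wt\vr_n^{\delta/(2(2+\delta))}$ and $\wt\vr_n^{(1+\delta)/(2+\delta)}$ of $\wt\vr_n$. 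Combining the two cases yields $(\wt\vq_n,\wt\vm_n)=(\sqrt{\wt\vr_n}\wt\vu_n,\wt\vr_n\wt\vu_n)$ a.e.\ in $\dom\times(0,T)$, $\tP$-a.s.

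The only delicate point is the treatment of the vacuum set $\{\wt\vr_n=0\}$: since $\wt\vrr_n$ is only controlled in the weak-$*$ sense provided by Lemma~\ref{lem-tightness of sqrt of rho_n-L^2_w}, one cannot assert a priori that it vanishes there, and therefore the bare quotient $\wt\vrr_n/\wt\vr_n^{1/(2+\delta)}$ is not well-defined everywhere. The indicator cutoff in the definition of $\wt\vu_n$ absorbs this issue, and the two target products vanish on the vacuum automatically by Lemma~\ref{lem-identification}. No further obstacle is expected; in particular, no integrability or convergence statement about $\wt\vu_n$ itself is claimed at this stage, only its existence as a measurable function realising the two factorisations.
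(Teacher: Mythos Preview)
Your argument is correct and follows essentially the same route as the paper: define $\wt\vu_n:=\wt\vr_n^{-1/(2+\delta)}\wt\vrr_n\,\1_{\{\wt\vr_n>0\}}$, then read off both identities from Lemma~\ref{lem-identification} on $\{\wt\vr_n>0\}$ and use that lemma again to see that $\wt\vq_n$, $\wt\vm_n$ vanish on $\{\wt\vr_n=0\}$. The paper makes one point slightly more explicit than you do: it records the bound $\|\wt\vrr_n\|_{L^{2+\delta}(\wt\Omega;L^\infty(0,T;L^{2+\delta}(\dom)))}\le C$ (inherited from the original space by equality of laws) before using it to conclude that $\wt\vrr_n$ is finite a.e., so that $\wt\vr_n^{\alpha}\wt\vrr_n=0$ on $\{\wt\vr_n=0\}$ is a genuine equality rather than an indeterminate $0\cdot\infty$; you rely on this implicitly.
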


\begin{proof}[Proof of Lemma \ref{lem-tilde u_n}] From the equality of joint laws and estimate \eqref{eqn-4.2}, we deduce that
\eq{\label{r_bound}
\left\|\widetilde{\vrr}_n\right\|_{L^{2+\delta}(\tOmega; L^\infty(0,T; L^{2+\delta}(\dom)))} \le C,
}
from this it follows that $\widetilde{\vrr}_n$ is bounded a.e. in $\wt\Omega\times(0,T)\times\dom$. Therefore, by identity  \eqref{eqn-identification-n} it follows that
\eq{\label{qmr}
\wt\vr_n=0\implies\wt\vq_n= \widetilde\vr_n^{\frac{\delta}{2(2+\delta)}}\widetilde \vrr_n=0\\
\wt\vr_n=0\implies\wt{\vm}_n= \widetilde\vr_n^{\frac{1+\delta}{2+\delta}}\widetilde \vrr_n=0
}
a.e. in $\wt\Omega\times(0,T)\times\dom$. Now we define
\eq{\label{def_tun}
 \wt\vu_n=\left\{
\begin{array}{lll}
  \wt\vrr_n \wt\vr_n^{-\frac{1}{2+\delta}} & {\rm if}  & \wt\vr_n\neq0,\\
   0  & {\rm if}  & \wt\vr_n=0.
\end{array}\right.
}
Therefore, from \eqref{qmr} it follows that
\eq{\label{def_tqn}
 \wt\vq_n=\left\{
\begin{array}{lll}
 \sqrt{ \wt\vr_n}\wt\vu_n    & {\rm for}  & \wt\vr_n\neq0,\\
   0  & {\rm for}  & \wt\vr_n=0,
\end{array}\right.
}
but due to definition of $\wt\vu_n$, \eqref{def_tun}, we have that
\eq{
 \wt\vq_n=  \sqrt{ \wt\vr_n}\wt\vu_n
}
a.e. in $\wt\Omega\times(0,T)\times\dom$. Using the same argument for $\wt\vm_n$ we get the second part of the  equality. \end{proof}


\subsection{Convergence of the convective term and of the stress tensor}
\label{sec-conv_den-mom}

\begin{lemma}
\label{lem-tilde u}
Let $\widetilde{\vu}_n$ be as defined in Lemma~\ref{lem-tilde u_n}, then there exists a random variable $\wt \vu$ such that
\begin{align}
\label{eqn-tilde u}
\wt\vr_n\widetilde{\vu}_n\otimes\widetilde{\vu}_n\to \wt\vr\widetilde{\vu}\otimes\wt\vu \quad  in \ L^1(0,T; L^1(\dom))\ \wt\Prob\mbox{-a.s.}.
\end{align}
\end{lemma}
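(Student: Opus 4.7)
Guided by Lemma~\ref{lem-identification} and Proposition~\ref{prop-identification}, the natural candidate for $\widetilde{\vu}$ is
\[
\widetilde{\vu}:=\widetilde{\vr}^{-1/(2+\delta)}\,\widetilde{\vrr}\,\1_{\{\widetilde{\vr}>0\}},
\]
which is the limit analogue of \eqref{def_tun}; by Proposition~\ref{prop-identification} it satisfies $\widetilde{\vq}=\sqrt{\widetilde{\vr}}\,\widetilde{\vu}$ and $\widetilde{\vm}=\widetilde{\vr}\,\widetilde{\vu}$ a.e.\ in $(0,T)\times\dom$. Consequently $\widetilde{\vr}_n\widetilde{\vu}_n\otimes\widetilde{\vu}_n=\widetilde{\vq}_n\otimes\widetilde{\vq}_n$, $\widetilde{\vr}\,\widetilde{\vu}\otimes\widetilde{\vu}=\widetilde{\vq}\otimes\widetilde{\vq}$, and the Cauchy--Schwarz bound
\[
\intTO{|\widetilde{\vq}_n\otimes\widetilde{\vq}_n-\widetilde{\vq}\otimes\widetilde{\vq}|}\le\bigl(\|\widetilde{\vq}_n\|_{L^2((0,T)\times\dom)}+\|\widetilde{\vq}\|_{L^2((0,T)\times\dom)}\bigr)\|\widetilde{\vq}_n-\widetilde{\vq}\|_{L^2((0,T)\times\dom)}
\]
reduces \eqref{eqn-tilde u} to proving that $\widetilde{\vq}_n\to\widetilde{\vq}$ strongly in $L^2((0,T)\times\dom)$, $\widetilde{\Prob}$-a.s.

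First I would upgrade the almost sure $\mathcal{X}_T$-convergence of Proposition~\ref{prop-jak+sk} to pointwise: after extracting a further subsequence (not relabelled), for $\widetilde{\Prob}$-a.e.\ $\omega$ both $\widetilde{\vr}_n\to\widetilde{\vr}$ and $\widetilde{\vm}_n\to\widetilde{\vm}$ a.e.\ on $(0,T)\times\dom$, owing to the strong convergences in $C([0,T];L^q(\dom))$ and $L^r(0,T;L^\alpha(\dom))$. On $\{\widetilde{\vr}>0\}$ this gives $\widetilde{\vr}_n\ge\widetilde{\vr}/2>0$ a.e.\ for $n$ large enough, whence
\[
\widetilde{\vq}_n=\widetilde{\vm}_n/\sqrt{\widetilde{\vr}_n}\longrightarrow \widetilde{\vm}/\sqrt{\widetilde{\vr}}=\widetilde{\vq}\quad\text{a.e.\ on }\{\widetilde{\vr}>0\}.
\]
No comparable pointwise information is available on the vacuum set $\{\widetilde{\vr}=0\}$, which is the principal obstacle.

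The cure is the Mellet--Vasseur bound \eqref{delta_stoch}. Banach--Steinhaus applied to the weak-$\ast$ almost sure convergences of Proposition~\ref{prop-jak+sk} yields, for $\widetilde{\Prob}$-a.e.\ $\omega$,
\[
\sup_n\|\widetilde{\vrr}_n\|_{L^\infty(0,T;L^{2+\delta}(\dom))}<\infty,\qquad\sup_n\|\widetilde{\vr}_n\|_{L^\infty(0,T;L^\gamma(\dom))}<\infty.
\]
Combined with $\widetilde{\vq}_n=\widetilde{\vr}_n^{\delta/(2(2+\delta))}\widetilde{\vrr}_n$, a H\"older estimate using $\gamma>1$ produces some $\eps=\eps(\gamma,\delta)>0$ such that $\sup_n\|\widetilde{\vq}_n\|_{L^\infty(0,T;L^{2+\eps}(\dom))}<\infty$; in particular, by de la Vall\'ee Poussin, $\{|\widetilde{\vq}_n|^2\}_n$ is uniformly integrable on $(0,T)\times\dom$. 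For the vacuum, writing $\widetilde{\vr}_n|\widetilde{\vu}_n|^2=(\widetilde{\vr}_n|\widetilde{\vu}_n|^{2+\delta})^{2/(2+\delta)}\,\widetilde{\vr}_n^{\delta/(2+\delta)}$ and applying H\"older with conjugate exponents $(2+\delta)/2$ and $(2+\delta)/\delta$,
\[
\iint_{\{\widetilde{\vr}\le\sigma\}}|\widetilde{\vq}_n|^2\,\dx\,\dt\le\biggl(\intTO{\widetilde{\vr}_n|\widetilde{\vu}_n|^{2+\delta}}\biggr)^{2/(2+\delta)}\biggl(\iint_{\{\widetilde{\vr}\le\sigma\}}\widetilde{\vr}_n\,\dx\,\dt\biggr)^{\delta/(2+\delta)}.
\]
The first factor is a.s.\ uniformly bounded because $\widetilde{\vr}_n|\widetilde{\vu}_n|^{2+\delta}=|\widetilde{\vrr}_n|^{2+\delta}$, and the strong convergence $\widetilde{\vr}_n\to\widetilde{\vr}$ in $L^1((0,T)\times\dom)$ gives $\limsup_n\iint_{\{\widetilde{\vr}\le\sigma\}}\widetilde{\vr}_n\le\sigma T|\dom|$, yielding $\limsup_n\iint_{\{\widetilde{\vr}\le\sigma\}}|\widetilde{\vq}_n|^2\le C\sigma^{\delta/(2+\delta)}$; the analogous bound for $\widetilde{\vq}$ follows by weak-$\ast$ lower semicontinuity.

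To conclude, for every $\sigma>0$ I split $\intTO{|\widetilde{\vq}_n-\widetilde{\vq}|^2}$ into the contributions on $\{\widetilde{\vr}>\sigma\}$ and $\{\widetilde{\vr}\le\sigma\}$: the former tends to zero as $n\to\infty$ by Vitali's convergence theorem (the a.e.\ convergence from the second paragraph together with the uniform integrability of the third), while the latter is bounded by $C\sigma^{\delta/(2+\delta)}$ uniformly in $n$ via $|a-b|^2\le 2|a|^2+2|b|^2$. Letting $\sigma\to 0$ yields strong $L^2$-convergence of $\widetilde{\vq}_n$ to $\widetilde{\vq}$, and the Cauchy--Schwarz inequality of the first paragraph then delivers \eqref{eqn-tilde u}. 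The crux of the argument is the vacuum control: without the Mellet--Vasseur estimate \eqref{delta_stoch} one can recover neither a.e.\ convergence of $\widetilde{\vq}_n$ on $\{\widetilde{\vr}=0\}$ nor the exclusion of concentration of $|\widetilde{\vq}_n|^2$ there, which is precisely why the BD-entropy estimates alone fall short.
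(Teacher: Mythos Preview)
Your proof is correct and follows the same overall strategy as the paper: define $\widetilde{\vu}$ via \eqref{eqn-u tilde-def}, reduce \eqref{eqn-tilde u} to strong $L^2$ convergence of $\widetilde{\vq}_n=\sqrt{\widetilde{\vr}_n}\,\widetilde{\vu}_n$, obtain a.e.\ convergence where the limit density is positive, and invoke the Mellet--Vasseur bound to handle the remainder. The difference lies in the splitting. The paper truncates in the \emph{velocity} variable, writing $\widetilde{\vq}_n=\widetilde{\vq}_n\chi_{|\widetilde{\vu}_n|\le M}+\widetilde{\vq}_n\chi_{|\widetilde{\vu}_n|>M}$; on the first piece one gets a.e.\ convergence even across the vacuum (since $|\widetilde{\vq}_n|\chi_{|\widetilde{\vu}_n|\le M}\le M\sqrt{\widetilde{\vr}_n}\to 0$ there), and the tail is bounded by $M^{-\delta}\int\widetilde{\vr}_n|\widetilde{\vu}_n|^{2+\delta}$. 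You instead truncate in the \emph{density} variable, splitting over $\{\widetilde{\vr}>\sigma\}$ and $\{\widetilde{\vr}\le\sigma\}$, and use the H\"older inequality $\iint_{\{\widetilde{\vr}\le\sigma\}}\widetilde{\vr}_n|\widetilde{\vu}_n|^2\le\bigl(\iint\widetilde{\vr}_n|\widetilde{\vu}_n|^{2+\delta}\bigr)^{2/(2+\delta)}\bigl(\iint_{\{\widetilde{\vr}\le\sigma\}}\widetilde{\vr}_n\bigr)^{\delta/(2+\delta)}$ together with the strong $L^1$ convergence of $\widetilde{\vr}_n$. Your route has the pleasant feature that the ``bad'' set is fixed (independent of $n$) and that uniform integrability of $|\widetilde{\vq}_n|^2$ is made explicit via the $L^\infty_tL^{2+\eps}_x$ bound coming from $\gamma>1$; the paper's velocity cutoff is the classical Mellet--Vasseur device and avoids needing the extra $L^{2+\eps}$ integrability. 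Both arguments silently pass to a subsequence to get a.e.\ convergence from strong $L^p$ convergence of $\widetilde{\vr}_n,\widetilde{\vm}_n$; this is harmless since the target limit is already identified, so a standard subsequence--subsubsequence argument recovers the full sequence.
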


\begin{proof}[Proof of Lemma \ref{lem-tilde u}] So far we know that $\wt\Prob$-a.s.
\eq{\label{mqr_c}
&\wt\vm_n\to \wt\vm \quad \text{in }\quad  L^r(0,T; L^\alpha(\dom)) \cap C_w([0,T]; L^{3/2}(\dom)),\qquad r\in[1,\infty),\; \alpha \in [1,\tfrac32)\\
&\wt\vq_n\to \wt\vq \quad \text{weakly* in }\quad  L^\infty(0,T; L^{{2}}(\dom))\\
&\wt\vrr_n\to\wt\vrr \quad \text{weakly* in }\quad  L^\infty(0,T; L^{2+\delta}(\dom)).
}
From Lemma \ref{lem-tilde u_n} we know that there exists a random variable $\wt\vu_n$ s.t. $\wt\Prob$-a.s.
\eq{\label{4.33}
&\wt\vr_n\widetilde{\vu}_n\to \wt\vm \quad \text{in }\quad  L^r(0,T; L^\alpha(\dom))\  \text{and\ weakly* in} \ {L^\infty([0,T]; L^{3/2}(\dom))}\\
&\sqrt{\wt\vr_n}\widetilde{\vu}_n \to \wt\vq \quad \text{weakly* in }\quad  L^\infty(0,T; L^{{2}}(\dom)).
}
Indeed, knowing  that $\wt\vm_n=\wt\vr_n\wt\vu_n$ a.e. in $\dom\times(0,T)$ and that $\wt\vm_n\to\wt\vm$ in $L^r(0,T; L^\alpha(\dom)) \cap C_w([0,T]; L^{3/2}(\dom))$ we deduce only the weaker notion of convergence for $\wt\vr_n\widetilde{\vu}_n$.
From  \eqref{mqr_c}$_3$ and due to the lower semi-continuity of norms we have from \eqref{r_bound}
\eq{\label{r_bound_lim}
\left\|\widetilde{\vrr}\right\|_{L^{2+\delta}(\tOmega; L^\infty(0,T; L^{2+\delta}(\dom)))} \le C.
}
Proceeding as in the proof of Lemma  \ref{lem-tilde u_n}  we can define a new random field $\wt\vu$
\eq{\label{eqn-u tilde-def}
 \wt\vu=\left\{
\begin{array}{lll}
 {\wt\vr^{\frac{-1}{2+\delta}}}\,{ \wt\vrr}    & {\rm for}  & \wt\vr\neq0,\\
   0  & {\rm for}  & \wt\vr=0.
\end{array}\right.
}
Let us point out that for fixed $(\omega,t)\in \tOmega\times [0,T]$,
This definition in conjunction with the 2nd and 3rd parts of identification \eqref{eqn-identification} in Proposition \ref{prop-identification} implies that $\wt\Prob$-a.s.
\eq{
(\widetilde{\vm},\widetilde{\vq})&=(\wt\vr\widetilde{\vu},\sqrt{\wt\vr}\widetilde{\vu}), \;\; \mbox{a.e. in $(0,T)\times\dom$}.
}
 In particular, by Proposition \ref{prop-jak+sk} and definitions (\ref{eq-spaces-4.26}-\ref{eq-spaces-4.27}) , $\wt\Prob$-a.s., $(\wt\vr\widetilde{\vu},\sqrt{\wt\vr}\widetilde{\vu})$ belongs to
$ \mathcal{X}_{\vr\vu} \times  \mathcal{X}_{\sqrt{\vr}\vu}$, i.e. to $ \Bigl( L^r(0,T; L^\alpha(\dom)) \cap C_w([0,T]; L^{3/2}(\dom))\Bigr) \times  \bigl(L^\infty(0,T; L^2(\dom)),w^\ast \bigr)$.

Therefore, to conclude we only need to improve the convergence result from \eqref{4.33} to show that
\eq{
&\sqrt{\wt\vr_n}\widetilde{\vu}_n \to \sqrt{\wt\vr}\widetilde{\vu} \quad \text{ in }\quad  L^2(0,T; L^2(\dom)) \quad \wt\Prob\mbox{-a.s.}.
}
To prove it, we proceed similarly to Mellet and Vasseur. As in \eqref{r_bound_lim}, by applying the Fatou Lemma, we deduce that
\eq{\label{mvest1}
\int_{\dom}\wt\vr|\wt\vu|^{2+\delta}\,\dx
\leq\int_{\dom}{\wt\vrr}^{2+\delta}\,\dx
= \int_{\dom}\liminf_{n\to\infty} {\wt\vrr_n}^{2+\delta}\,\dx \leq \liminf_{n\to\infty} \int_{\dom} {\wt\vrr_n}^{2+\delta}\,\dx <\infty.
}
Note that the left inequality follows from the definition of \eqref{eqn-u tilde-def} for $\wt\vr\neq0$,
when $\wt\vr=0$ this integral is equal to $0$.
We will now show that
\eq{\label{conv_sqru}
&\sqrt{\wt\vr_n}\widetilde{\vu}_n=\frac{\wt\vm_n}{\sqrt{\wt\vr_n}} \to  \frac{\wt\vm}{\sqrt{\wt\vr}}=\sqrt{\wt\vr}\widetilde{\vu}\quad \text{almost \ everywhere}.
}
Firstly, due to \eqref{eqn-u tilde-def} and strong convergence of $\wt\vm_n$ and $\wt\vr_n$, the convergence \eqref{conv_sqru} is true on the set $\{\wt\vr(t,x)\neq0\}$. Secondly, this is also true on the set where $\wt\vu_n$ is bounded and $\{\wt\vr(t,x)=0\}$. Indeed,  we have $\sqrt{\wt\vr_n}\wt\vu_n\chi_{\left|\wt\vu_n\right|\leq M}\leq M\sqrt{\wt\vr_n}\to 0$.

To conclude that
\eq{\label{convae}
\sqrt{\wt\vr_n}\wt\vu_n\chi_{\left|\wt\vu_n\right|\leq M} \to \sqrt{\wt\vr} \wt\vu \chi_{| \wt\vu|\leq M}\quad \text{almost \ everywhere}.
}
we use that
$$
\{\left|\wt\vu_n\right|\leq M\}=\left\{\left|\frac{\wt\vm_n}{\wt\vr_n}\right|\leq M\right\},
$$
and so for $\wt\vr\neq0$ using the strong convergence of $\wt\vr_n$, $\wt\vm_n$ we have
$$
\chi_{\left|\frac{\wt\vm_n}{\wt\vr_n}\right|\leq M}\to\chi_{\left|\frac{\wt\vm}{\wt\vr}\right|\leq M}=\chi_{| \wt\vu|\leq M} \quad \text{almost \ everywhere}.
$$
We are now ready to conclude, we first split
\eq{
\intTO{\left|\sqrt{\wt\vr_n}\widetilde{\vu}_n-\sqrt{\tilde\vr}\tilde \vu\right|^2}
\leq& \intTO {\left|\sqrt{\wt\vr_n}\widetilde{\vu}_n\chi_{\left|\wt\vu_n\right|\leq M}-\sqrt{\tilde\vr}\tilde \vu \chi_{|\tilde \vu|\leq M}\right|^2}\\
&+4\intTO{\left|\sqrt{\wt\vr_n}\widetilde{\vu}_n\chi_{\left|\wt\vu_n\right|\geq M}\right|^2}\\
&+4\intTO{\left|\sqrt{\tilde\vr}\tilde \vu \chi_{|\tilde \vu|\geq M}\right|^2}.
}
Due to \eqref{convae} and uniform boundedness of the integrand the first term converges to 0 when $n\to\infty$. For the other two terms we make use of estimate \eqref{r_bound} and inequality \eqref{mvest1}, we therefore have
\eq{
4\intTO{\left|\sqrt{\wt\vr_n}\widetilde{\vu}_n\chi_{\left|\wt\vu_n\right|\geq M}\right|^2}
\leq&
\frac{4}{M^\delta}\intTO{\wt\vr_n|\wt\vu_n|^{2+\delta}}\\
\leq & \frac{4}{M^\delta}\intTO{|\wt\vrr_n|^{2+\delta}}\leq  \frac{C}{M^\delta},
}
and
\eq{
4\intTO{\left|\sqrt{\wt\vr}\widetilde{\vu}\chi_{\left|\wt\vu\right|\geq M}\right|^2}
\leq&
\frac{4}{M^\delta}\intTO{\wt\vr|\wt\vu|^{2+\delta}}\\
\leq & \frac{4}{M^\delta}\intTO{|\wt\vrr|^{2+\delta}}\leq  \frac{C}{M^\delta},
}
and so both terms  converge to $0$ as $M\to\infty$ (recall $\delta>0$).
\end{proof}

In the next Lemma we will show that the random field $\vu$ is weakly differentiable in some specific sense. To explain it's meaning let us observe that if a locally integrable $\vu$ is weakly differentiable with $\nabla \vu$  being also locally integrable and $\vr$ is a locally bounded function,  then for every test function $\bpsi \in C^\infty(\dom)$ the following identity holds
\begin{align}
\label{eqn-IP}
\int_{\mathcal{O}} \Div( \vr\, \Grad \vu) \psi \, dx = \int_{\mathcal{O}} \vr \vu\cdot \lap \bpsi \, dx + \int_{\mathcal{O}} \Grad \vr \otimes\vu:\Grad\bpsi \, dx.
\end{align}

\begin{lemma}[Convergence of the diffusion terms]\label{lem-convergence diffusion}
Let $\wt\vu_n$, $\wt \vu$ be as defined in Lemma~\ref{lem-tilde u_n} and Lemma~\ref{lem-tilde u} respectively. Then $\wt\Prob$-a.s.
\begin{equation*}
\Div(\wt\vr_n \Grad \wt\vu_n) \to \Div( \wt\vr\, \Grad \wt\vu) \quad \mbox{in } \mathcal{D}^\prime,
\end{equation*}
in the following sense. For every $ \bpsi \in W^{2,\infty}(\dom)$ and every $t\in [0,T]$,  $\wt\Prob$-a.s., the following equality holds
\begin{equation}
    \label{eqn:stress_convg}
    \begin{split}
    & \lim_{n\to\infty}\left(\inttO{\sqrt{\wt\vr_n}\sqrt{\wt\vr_n}\wt\vu_n \lap \bpsi} + 2\inttO{\Grad\sqrt{\wt\vr_n}\otimes\sqrt{\wt\vr_n}\wt\vu_n:\Grad\bpsi}\right) \\
    &\quad = \inttO{\wt\vr \wt\vu\cdot \lap \bpsi} + \inttO{ \Grad \wt\vr \otimes\wt\vu:\Grad\bpsi}.
    \end{split}
\end{equation}
\end{lemma}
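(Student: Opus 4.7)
The identity to be established is linear in the test function and is an integral on a finite time interval, so by density and Fubini's theorem it suffices to establish, for $\wt\Prob$-a.e.\ $\omega$, convergence of each of the two integrals on the left hand side against a fixed $\bpsi\in W^{2,\infty}(\dom)$ uniformly on $[0,T]$. My plan is to split the passage to the limit into (i) the strong$\times$strong pairing for the Laplacian term, and (ii) the weak$\times$strong pairing for the gradient term, and finally (iii) to match the resulting limit with the right hand side via the chain rule $\Grad\wt\vr=2\sqrt{\wt\vr}\,\Grad\sqrt{\wt\vr}$.

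\textbf{Step 1: Collect the relevant convergences.} By Proposition~\ref{prop-jak+sk}, Lemma~\ref{lem-identification}, Lemma~\ref{lem-tilde u_n}, and Lemma~\ref{lem-tilde u}, $\wt\Prob$-a.s. one has
\[
\sqrt{\wt\vr_n}\to\sqrt{\wt\vr}\ \text{ in }\ C([0,T];L^{2q}(\dom)),\qquad \Grad\sqrt{\wt\vr_n}\rightharpoonup^\ast \Grad\sqrt{\wt\vr}\ \text{ in }\ L^\infty(0,T;L^2(\dom)),
\]
together with the strong $L^2(0,T;L^2(\dom))$-convergence $\sqrt{\wt\vr_n}\wt\vu_n\to\sqrt{\wt\vr}\wt\vu$ established in Lemma~\ref{lem-tilde u}, and the convergence $\wt\vr_n\wt\vu_n\to\wt\vr\wt\vu$ in $C_w([0,T];L^{3/2}(\dom))\cap L^r(0,T;L^\alpha(\dom))$. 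All of the corresponding sequences are uniformly bounded in the indicated spaces by the a priori estimates \eqref{eqn-4.1}--\eqref{eqn:4.6} and the lower semicontinuity of norms.

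\textbf{Step 2: Laplacian term (strong$\times$strong).} Since $\sqrt{\wt\vr_n}\,\sqrt{\wt\vr_n}\wt\vu_n=\wt\vr_n\wt\vu_n=\wt\vm_n$ and $\wt\vm_n\to\wt\vm=\wt\vr\wt\vu$ in $L^r(0,T;L^\alpha(\dom))$ for every $r\in[1,\infty)$ and $\alpha\in[1,\tfrac32)$, while $\lap\bpsi\in L^\infty(\dom)$, a direct application of H\"older's inequality gives
\[
\inttO{\sqrt{\wt\vr_n}\,\sqrt{\wt\vr_n}\wt\vu_n\,\lap\bpsi}\longrightarrow \inttO{\wt\vr\wt\vu\cdot\lap\bpsi},\qquad \wt\Prob\text{-a.s.},
\]
uniformly in $t\in[0,T]$.

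\textbf{Step 3: Gradient term (weak$\times$strong).} Here the two factors $\Grad\sqrt{\wt\vr_n}$ and $\sqrt{\wt\vr_n}\wt\vu_n$ converge, respectively, weakly$^\ast$ in $L^\infty(0,T;L^2)$ and strongly in $L^2(0,T;L^2)$. By the classical weak$\times$strong pairing lemma their tensor product converges weakly in $L^2(0,T;L^1(\dom))$:
\[
\Grad\sqrt{\wt\vr_n}\otimes\sqrt{\wt\vr_n}\wt\vu_n \ \rightharpoonup\ \Grad\sqrt{\wt\vr}\otimes\sqrt{\wt\vr}\wt\vu\qquad \text{in }L^2(0,T;L^1(\dom)),
\]
and since $\Grad\bpsi\in L^\infty(\dom)$, we may pair against $\Grad\bpsi$ and integrate on $[0,t]$, obtaining
\[
2\inttO{\Grad\sqrt{\wt\vr_n}\otimes\sqrt{\wt\vr_n}\wt\vu_n:\Grad\bpsi}\longrightarrow 2\inttO{\Grad\sqrt{\wt\vr}\otimes\sqrt{\wt\vr}\wt\vu:\Grad\bpsi}.
\]

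\textbf{Step 4: Identification of the limit.} It remains to recognise the limit displayed above as $\inttO{\Grad\wt\vr\otimes\wt\vu:\Grad\bpsi}$. By the chain rule applied to the weakly differentiable function $\wt\vr=(\sqrt{\wt\vr})^2$, one has $\Grad\wt\vr=2\sqrt{\wt\vr}\,\Grad\sqrt{\wt\vr}$ a.e., and because $\wt\vu$ vanishes on $\{\wt\vr=0\}$ by the definition \eqref{eqn-u tilde-def}, the equality
\[
\Grad\wt\vr\otimes\wt\vu=2\Grad\sqrt{\wt\vr}\otimes(\sqrt{\wt\vr}\wt\vu)
\]
holds a.e.\ in $\dom\times(0,T)$. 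Substituting this into the previous display and combining with Step~2 yields \eqref{eqn:stress_convg}. The main obstacle of the argument is Step~3: the weak$\times$strong closure of the nonlinear product $\Grad\sqrt{\wt\vr_n}\otimes\sqrt{\wt\vr_n}\wt\vu_n$ genuinely relies on the strong $L^2$-convergence of $\sqrt{\wt\vr_n}\wt\vu_n$ obtained in Lemma~\ref{lem-tilde u}, which itself leverages the Mellet--Vasseur bound \eqref{eqn:4.6}; the remaining steps are essentially bookkeeping.
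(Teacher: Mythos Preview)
Your proof is correct and follows essentially the same approach as the paper: both rely on the strong $L^2(0,T;L^2)$-convergence of $\sqrt{\wt\vr_n}\wt\vu_n$ from Lemma~\ref{lem-tilde u} paired with the weak$^\ast$ $L^\infty(0,T;H^1)$-convergence of $\sqrt{\wt\vr_n}$ to pass to the limit in the gradient term, while the Laplacian term is handled by strong convergence (you use $\wt\vm_n\to\wt\vm$ directly, the paper uses the product of the two strong factors, which is equivalent). Your explicit identification in Step~4 via the chain rule $\Grad\wt\vr=2\sqrt{\wt\vr}\,\Grad\sqrt{\wt\vr}$ is in fact a useful clarification the paper leaves implicit.
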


\begin{proof}[Proof of Lemma \ref{lem-convergence diffusion}] It is enough to prove the Lemma for $t=T$.
Let us choose and fix  a test function $\bpsi \in C^\infty(\dom)$ and define
\begin{align*}
    I_1^n &:= \intTO{\sqrt{\wt\vr_n}\sqrt{\wt\vr_n}\wt\vu_n \lap \bpsi} \\
    I_2^n &:= 2\intTO{\Grad\sqrt{\wt\vr_n}\otimes\sqrt{\wt\vr_n}\wt\vu_n:\Grad\bpsi}
\end{align*}

From Proposition~\ref{prop-jak+sk} and Lemma~\ref{lem-identification} we know that $\sqrt{\wt\vr_n} \to \sqrt{\wt\vr}$ strongly in $C([0,T]; L^{2q}(\dom))$, $q \in [1,3)$ and weakly$^\ast$ in $L^\infty(0,T; H^1(\dom))$ $\wt \Prob$-a.s.. Moreover, from Lemma~\ref{lem-tilde u} we have $\sqrt{\wt\vr_n}\wt\vu_n \to \sqrt{\wt\vr} \wt\vu$ strongly in $L^2(0,T; L^2(\dom))$ $\wt \Prob$-a.s.. Using these convergence results we infer that for $\bpsi \in C^\infty(\dom)$ or in particular for $\bpsi \in W^{2,\infty}(\dom)$
\begin{align*}
I_1^n &\to \intTO{\wt\vr \wt\vu\cdot \lap \bpsi} \qquad \wt\Prob\mbox{-a.s.},\\
I_2^n &\to \intTO{ \Grad \wt\vr \otimes\wt\vu:\Grad\bpsi} \qquad \wt\Prob\mbox{-a.s.},
\end{align*}
and hence we infer that
\[
I_1^n + I_2^n  \to  \intTO{\wt\vr \wt\vu\cdot \lap \bpsi} + \intTO{ \Grad \wt\vr \otimes\wt\vu:\Grad\bpsi}  \qquad \wt\Prob\mbox{-a.s.}.
\]
Once we have proved Lemma for $\bpsi \in C^\infty(\dom)$ we can can easily deduce it for an arbitrary $\bpsi \in W^{2,\infty}(\dom)$.
\end{proof}


\section{Existence of martingale solutions}
\label{sec-martingale}

Our aim is to show that the limiting system $\left(\widetilde{U}, \widetilde W,  \widetilde \vr, \widetilde \vu\right)$, where $\widetilde{U}=(\tOmega,\tF,\tP)$ ,  is a martingale solution to \eqref{eqn-1.1}, where $\widetilde \vu$ is as defined in the proof of  Lemma~\ref{lem-tilde u} and we use notation introduced in Proposition \ref{prop-jak+sk}. The very first step in this direction is to show that $\widetilde W_n$, $n \in \N$, and $\widetilde W$ are indeed $\R$-valued Brownian motion. This follows directly from the following lemmas taken from \cite[Lemma~5.2 and proof]{Brz+Gold_Jeg_2013}. The approach presented in this section is similar to the one given in paper \cite{BrDhGi20}.

\begin{lemma}
 \label{lem-Brownian Motion}
Suppose that a  process $\left(\widetilde W_n(t)\right)_{t \in [0,T]}$, defined on $\left(\widetilde \Omega, \widetilde{\mathcal{F}}, \tP\right)$, has the same law {on $C([0,T];\mathbb{R})$} as the $\R$-valued Brownian motion $W$, defined on $\left(\Omega, \mathcal{F}, \mathbb{P}\right)$. Then $\widetilde W_n$ is also an $\R$-valued Brownian motion on $\left(\widetilde \Omega, \widetilde{\mathcal{F}}, \tP\right)$.
\end{lemma}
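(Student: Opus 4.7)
\textbf{Proof proposal for Lemma \ref{lem-Brownian Motion}.}
The plan is to exploit the fact that the defining properties of an $\R$-valued Brownian motion --- starting at zero, Gaussian increments of the correct variance, mutually independent increments, and continuity of sample paths --- are all intrinsic to the law of the process viewed as a random element of $C([0,T];\R)$. Since by hypothesis the laws of $\widetilde W_n$ and of $W$ coincide on the Borel $\sigma$-field of $C([0,T];\R)$, each of these properties will automatically transfer from $W$ to $\widetilde W_n$.

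First I would fix an arbitrary finite partition $0\le t_0<t_1<\cdots<t_k\le T$ and observe that the evaluation map $\omega\mapsto(\omega(t_0),\dots,\omega(t_k))$ is continuous, hence Borel, from $C([0,T];\R)$ into $\R^{k+1}$. Pushing forward the common law of $\widetilde W_n$ and $W$ by this map therefore yields identical distributions on $\R^{k+1}$, so $\widetilde W_n$ and $W$ share all their finite-dimensional marginals. Specialising to $k=0$, $t_0=0$ gives $\widetilde W_n(0)=0$, $\tP$-a.s.; the case $k=1$ yields $\widetilde W_n(t)-\widetilde W_n(s)\sim\mathcal{N}(0,t-s)$ for every $0\le s<t\le T$; and the general case, via agreement of joint characteristic functions, gives mutual independence of the increments $\widetilde W_n(t_i)-\widetilde W_n(t_{i-1})$, inherited from the corresponding independence property of $W$.

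Path continuity comes for free: by the Skorokhod--Jakubowski construction in Proposition \ref{prop-jak+sk}, $\widetilde W_n$ is realised as a $C([0,T];\R)$-valued random variable, so its trajectories are $\tP$-almost surely continuous. It then suffices to equip $(\tOmega,\tF,\tP)$ with the usual augmentation of the natural filtration generated by $\widetilde W_n$; with respect to that filtration, the process $\widetilde W_n$ satisfies all the axioms of an $\R$-valued Brownian motion.

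I do not foresee any genuine obstacle: the statement is essentially the observation that ``being a Brownian motion'' is a distributional property, i.e.\ depends only on the law on $C([0,T];\R)$. The only mildly delicate point is that the lemma does not single out a filtration on $(\tOmega,\tF,\tP)$; the natural augmented filtration of $\widetilde W_n$ is the canonical choice and is sufficient for the purpose of this lemma, leaving the question of enlarging the filtration to accommodate the other components of $\widetilde\mu_n$ to the subsequent analysis.
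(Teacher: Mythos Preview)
Your argument is correct and is precisely the standard verification that ``being a Brownian motion'' is a property of the law on $C([0,T];\R)$: equality of finite-dimensional distributions follows from measurability of the evaluation maps, and this yields $\widetilde W_n(0)=0$, the Gaussian increments, and independence of increments; path continuity is automatic since $\widetilde W_n$ is by construction $C([0,T];\R)$-valued.

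The paper does not actually supply its own proof of this lemma; it simply states that the result is taken from \cite[Lemma~5.2 and proof]{Brz+Gold_Jeg_2013}. Your write-up is therefore more self-contained than what the paper offers, and the underlying idea is the same as in the cited reference. Your closing remark about the filtration is appropriate: the lemma as stated is silent on the filtration, and the compatibility of $\widetilde W_n$ with a filtration large enough to make the other components of $\widetilde\mu_n$ adapted is indeed deferred to Lemma~\ref{lem-Brownian Motion-2} and the surrounding discussion.
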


\begin{lemma}
 \label{lem-Brownian Motion-2}
 The process $\left(\widetilde W(t)\right)_{t \in [0,T]}$ is an $\R$-valued Brownian motion on $\left(\widetilde \Omega, \widetilde{\mathcal{F}}, \tP\right)$. If $s\in [0,T)$, then the  increments $\widetilde W(t) - \widetilde W(s)$, $t\in [s,T]$,  are independent of the $\sigma$-algebra generated by $\widetilde \vu(r)$ and $\widetilde W(r)$ for $r \in [0,s]$.
\end{lemma}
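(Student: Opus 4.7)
The plan is to first identify $\widetilde{W}$ as a Brownian motion by transferring finite-dimensional distributions from $W$, and then to transfer the independence of increments from the original probability space via the Skorokhod-representation equality of laws, together with a.s.\ convergence.

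\medskip

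\textbf{Step 1: $\widetilde{W}$ is a Brownian motion.} By Proposition~\ref{prop-jak+sk}(1), the last marginal of $\tilde{\mu}_n$ and of $\mu_{k_n}$ agree, so $\Law(\widetilde{W}_n) = \Law(W)$ on $C([0,T];\R)$; hence, by Lemma~\ref{lem-Brownian Motion}, each $\widetilde{W}_n$ is a Brownian motion on $(\tOmega,\tF,\tP)$. By Proposition~\ref{prop-jak+sk}(2), $\widetilde{W}_n \to \widetilde{W}$ $\tP$-a.s.\ in $C([0,T];\R)$, so $\widetilde{W}$ has continuous paths and $\widetilde{W}(0) = 0$. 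For any $0 \le t_1 < \dots < t_k \le T$ and $\lambda_1,\dots,\lambda_k \in \R$, by bounded convergence
\[
\widetilde{\E}\bigl[\exp\bigl(i\!\sum_j \lambda_j \widetilde{W}(t_j)\bigr)\bigr] = \lim_{n} \widetilde{\E}\bigl[\exp\bigl(i\!\sum_j \lambda_j \widetilde{W}_n(t_j)\bigr)\bigr] = \E\bigl[\exp\bigl(i\!\sum_j \lambda_j W(t_j)\bigr)\bigr],
\]
so $\widetilde{W}$ has the same finite-dimensional Gaussian distributions as $W$ and is therefore a Brownian motion.

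\medskip

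\textbf{Step 2: Independence of increments on the original space.} Fix $0 \le s < t \le T$. Since $W$ is a Wiener process on the stochastic basis $U$, the increment $W(t)-W(s)$ is independent of $\mathcal{F}_s$. By Definition~\ref{def-mart_sol}, $\vr_n$ and $\vm_n = \vr_n \vu_n$ are ${\mathbb F}$-progressively measurable, so the restrictions $\vr_n|_{[0,s]}$, $\vm_n|_{[0,s]}$, $W|_{[0,s]}$ are $\mathcal{F}_s$-measurable. Consequently, for every bounded continuous functional $h$ on $C([0,s];L^q(\dom))\times C_w([0,s];L^{3/2}(\dom))\times C([0,s];\R)$ and every bounded continuous $g\colon\R \to \R$,
\[
\E\bigl[h(\vr_n, \vm_n, W)\big|_{[0,s]}\, g(W(t)-W(s))\bigr] = \E\bigl[h(\vr_n, \vm_n, W)\big|_{[0,s]}\bigr]\,\E\bigl[g(W(t)-W(s))\bigr].
\]

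\medskip

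\textbf{Step 3: Transfer and limit.} By Proposition~\ref{prop-jak+sk}(1), the identity above remains valid if $(\vr_n, \vm_n, W)$ is replaced by $(\widetilde{\vr}_n, \widetilde{\vm}_n, \widetilde{W}_n)$ and $\E$ by $\widetilde{\E}$. Using the a.s.\ convergence $\widetilde{\vr}_n \to \widetilde{\vr}$ in $\Xr$, $\widetilde{\vm}_n \to \widetilde{\vm}$ in $C_w([0,T];L^{3/2}(\dom))$, and $\widetilde{W}_n \to \widetilde{W}$ in $C([0,T];\R)$, together with the boundedness of $h$ and $g$ and the dominated convergence theorem, passage to the limit $n\to\infty$ yields
\[
\widetilde{\E}\bigl[h(\widetilde{\vr}, \widetilde{\vm}, \widetilde{W})\big|_{[0,s]}\, g(\widetilde{W}(t)-\widetilde{W}(s))\bigr] = \widetilde{\E}\bigl[h(\widetilde{\vr}, \widetilde{\vm}, \widetilde{W})\big|_{[0,s]}\bigr]\,\widetilde{\E}\bigl[g(\widetilde{W}(t)-\widetilde{W}(s))\bigr].
\]
This gives independence of $\widetilde{W}(t)-\widetilde{W}(s)$ from $\sigma(\widetilde{\vr}(r), \widetilde{\vm}(r), \widetilde{W}(r):\,r\in[0,s])$. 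Finally, since by \eqref{eqn-u tilde-def} and the identifications in Proposition~\ref{prop-identification} the field $\widetilde{\vu}$ is pointwise a measurable function of $(\widetilde{\vr}, \widetilde{\vrr})$, and $\widetilde{\vm} = \widetilde{\vr}^{(1+\delta)/(2+\delta)}\widetilde{\vrr}$ together with $\widetilde{\vr}$ determines $\widetilde{\vrr}$ on $\{\widetilde{\vr}>0\}$ (while $\widetilde{\vu}=0$ on $\{\widetilde{\vr}=0\}$), we have $\sigma(\widetilde{\vu}(r), \widetilde{W}(r):\,r\in[0,s]) \subseteq \sigma(\widetilde{\vr}(r), \widetilde{\vm}(r), \widetilde{\vrr}(r), \widetilde{W}(r):\,r\in[0,s])$ up to $\tP$-null sets. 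Carrying $\widetilde{\vrr}_n$ along in $h$ in Steps 2--3 (which is harmless since $\vrr_n = \vr_n^{1/(2+\delta)}\vu_n$ is also ${\mathbb F}$-progressively measurable) extends the independence to this larger $\sigma$-algebra, and the claim follows.

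\medskip

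The main obstacle will be Step 3, since the ``weak'' components $\widetilde{\vm}_n$ live in the non-metrizable space $C_w([0,T];L^{3/2}(\dom))$; continuity of $h$ on this space ensures $h(\widetilde{\vm}_n)\to h(\widetilde{\vm})$ along the a.s.\ convergent subsequence, but one must verify that the class of such bounded continuous $h$ is rich enough to generate the relevant $\sigma$-algebra, which is handled by the separating-sequence property used in the Skorokhod-Jakubowski framework (cf.\ Remark~\ref{rem-L^infty spaces}).
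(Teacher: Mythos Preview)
Your proposal is correct and follows the standard characteristic-function / equality-of-laws argument for transferring the Wiener process and its independence properties through the Skorokhod--Jakubowski construction. The paper itself gives no proof of this lemma but simply cites \cite[Lemma~5.2 and proof]{Brz+Gold_Jeg_2013}, where precisely this approach is carried out; your Steps~1--3 are a faithful (and somewhat more detailed) rendering of that argument, including the reduction of $\sigma(\widetilde{\vu}(r))$ to $\sigma(\widetilde{\vr}(r),\widetilde{\vm}(r),\widetilde{\vrr}(r))$ via the definition \eqref{eqn-u tilde-def}.
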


Below we collect all the convergence results obtained in Section~\ref{sec-existence}, which will be used later to prove the existence of a martingale solution. Let $\widetilde\vu_n$ and $\widetilde\vu$ be the processes as specified in Lemmas~\ref{lem-tilde u_n}--\ref{lem-convergence diffusion} and $q \in [1,3)$. Then, the following assertions hold  $\tP$-a.s.
\begin{align}
\label{eqn:convg_coll-1}
        \widetilde\vr_n &\longrightarrow \widetilde \vr \quad \mbox{in } C([0,T]; L^q(\dom)),\\
\label{eqn:convg_coll-2}
        \sqrt{\widetilde\vr_n} &\longrightarrow \sqrt{\widetilde \vr} \quad \mbox{in } \left(L^\infty(0,T; H^1(\dom)), w^\ast\right),\\
\label{eqn:convg_coll-3}
        \sqrt{\widetilde\vr_n} \widetilde \vu_n &\longrightarrow \sqrt{\widetilde \vr}\widetilde\vu \quad \mbox{in } L^2(0,T; L^2(\dom)),\\
\label{eqn:convg_coll-4}
        \widetilde\vr_n \widetilde \vu_n \otimes \widetilde \vu_n &\longrightarrow \widetilde\vr \widetilde \vu \otimes \widetilde\vu \quad \mbox{in } L^1(0,T;L^1(\dom)),\\
\label{eqn:convg_coll-5}
        \Div(\wt\vr_n \Grad \wt\vu_n) &\longrightarrow \Div( \wt\vr\, \Grad \wt\vu) \quad \mbox{in } \mathcal{D}^\prime ( (0,T) \times \dom) .
\end{align}

\begin{lemma}
\label{lem:est-tilde}
Let $(\widetilde \vu_n)$ be the sequence from  Lemma~\ref{lem-tilde u_n}. Then for every  $p \in [1, \infty)$ the random variables $\widetilde \vr_n$ and $\widetilde \vu_n$ satisfy the following uniform estimates
\begin{align}
\label{tilde_un_n-1}
&\|\sqrt{\widetilde \vr_n}\widetilde \vu_n\|_{L^{p}(\Omega; L^\infty(0,T; L^2(\dom)))} \le C,\\
\label{tilde_un_n-2}
&\|\widetilde\vr_n\|_{L^p(\Omega; L^\infty(0,T; L^\gamma(\dom)))} \le C, \\
\label{tilde_un_n-3}
&\| \sqrt{\widetilde\vr_n}\|_{L^p(\Omega; L^\infty(0,T; H^1(\dom)))} \le C.
\end{align}
\end{lemma}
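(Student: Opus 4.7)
My plan is to derive the three bounds directly by transferring the a priori estimates \eqref{eqn-4.1}--\eqref{eqn:4.5} already available on the original stochastic basis to the new one, using the equality of laws provided by Proposition~\ref{prop-jak+sk} together with the identifications of $\widetilde{\vt}_n$ and $\widetilde{\vq}_n$ established in Lemma~\ref{lem-identification} and Lemma~\ref{lem-tilde u_n}. No new analytic estimate is required; the work lies in checking measurability of the relevant norm functionals with respect to the $\sigma$-algebras carried by the components of $\mathcal{X}_T$.

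More precisely, for estimate \eqref{tilde_un_n-2} I would take $q=\gamma\in[1,3)$ in the definition \eqref{eq-spaces-4.24} of $\mathcal{X}_{\vr}$, so that the map $C([0,T];L^\gamma(\dom))\ni v\mapsto \sup_{t\in[0,T]}\|v(t)\|_{L^\gamma(\dom)}$ is continuous, hence Borel measurable. Since by Proposition~\ref{prop-jak+sk} the laws of $\widetilde{\vr}_n$ and $\vr_n$ agree on $\mathcal{B}(\mathcal{X}_{\vr})$, we obtain
\[
\widetilde{\mathbb{E}}\bigl\|\widetilde{\vr}_n\bigr\|_{L^\infty(0,T;L^\gamma(\dom))}^{p}
=\mathbb{E}\bigl\|\vr_n\bigr\|_{L^\infty(0,T;L^\gamma(\dom))}^{p}\le C,
\]
where the latter bound is \eqref{eqn-4.1}.

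For \eqref{tilde_un_n-1} and \eqref{tilde_un_n-3} the spaces $\mathcal{X}_{\sqrt{\vr}\vu}$ and (part of) $\mathcal{X}_{\sqrt{\vr}}$ carry the $w^\ast$-topology on $L^\infty(0,T;L^2(\dom))$ and $L^\infty(0,T;H^1(\dom))$ respectively. The corresponding norm functionals are lower semicontinuous in these topologies, and by Remark~\ref{rem-L^infty spaces} they are Borel measurable with respect to the $\sigma$-algebras involved. Invoking again Proposition~\ref{prop-jak+sk} together with \eqref{eqn:4.33} (giving $\widetilde{\vt}_n=\sqrt{\widetilde{\vr}_n}$) and Lemma~\ref{lem-tilde u_n} (giving $\widetilde{\vq}_n=\sqrt{\widetilde{\vr}_n}\widetilde{\vu}_n$), together with \eqref{eqn-4.2} and \eqref{eqn:4.4}, we obtain
\[
\widetilde{\mathbb{E}}\bigl\|\sqrt{\widetilde{\vr}_n}\widetilde{\vu}_n\bigr\|_{L^\infty(0,T;L^2(\dom))}^{p}
=\widetilde{\mathbb{E}}\bigl\|\widetilde{\vq}_n\bigr\|_{L^\infty(0,T;L^2(\dom))}^{p}
=\mathbb{E}\bigl\|\sqrt{\vr_n}\vu_n\bigr\|_{L^\infty(0,T;L^2(\dom))}^{p}\le C,
\]
and the analogous chain of equalities for $\sqrt{\widetilde{\vr}_n}=\widetilde{\vt}_n$ in $L^\infty(0,T;H^1(\dom))$.

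The only delicate step is the measurability of the $L^\infty(0,T;\mathbb{U})$-norm on the weak-star spaces $\mathcal{X}_{\sqrt{\vr}\vu}$ and $\mathcal{X}_{\sqrt{\vr}}$; this was already addressed in Remark~\ref{rem-L^infty spaces}, where it is shown that the Borel $\sigma$-field of the $w^\ast$-topology coincides (on bounded sets) with the $\sigma$-field generated by countably many separating duality pairings, so the norm, being the supremum of such pairings over a dense countable subset of the unit ball of the predual, is Borel measurable. Once this point is recorded, all three inequalities reduce to a line-by-line application of the change-of-variables formula for pushforward measures.
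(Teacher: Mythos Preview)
Your proposal is correct and follows essentially the same approach as the paper: both argue that the estimates are immediate consequences of the equality of laws from Proposition~\ref{prop-jak+sk} together with the identifications $\widetilde{\vt}_n=\sqrt{\widetilde{\vr}_n}$ and $\widetilde{\vq}_n=\sqrt{\widetilde{\vr}_n}\widetilde{\vu}_n$ from Lemma~\ref{lem-identification} and Lemma~\ref{lem-tilde u_n}. You have simply spelled out the measurability of the relevant norm functionals (via Remark~\ref{rem-L^infty spaces}) more explicitly than the paper, which leaves this point implicit.
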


\begin{proof}[Proof of Lemma \ref{lem:est-tilde}] The estimates for the random variables $\widetilde \vr_n$ and $\widetilde \vu_n$ are the consequence of the fact that the law of $\left(\widetilde\vr_n, \widetilde\vartheta_n, \widetilde\vm_n, \widetilde \vq_n, \widetilde \vrr_n\right)$ on $\mathcal{X}_T$ (see \eqref{eqn-spaces-X_T}) is given by $\mu_n$, $n \in \N$ (see Proposition~\ref{prop-jak+sk} (1)) and identification of the random variables established in Lemma~\ref{lem-identification} and Lemma~\ref{lem-tilde u_n}.
\end{proof}

\begin{lemma} \label{lem-convgergence}
Let $\widetilde \vu_n$ and $\widetilde \vu$ be as defined in Lemma~\ref{lem-tilde u_n} and Lemma~\ref{lem-tilde u} respectively. Let $\phi : \dom \to \R$ and $\bpsi \colon \dom \to \R^3$ be test functions such that $\phi \in W^{1,\infty}(\dom)$ and $\bpsi \in W^{2,\infty}(\dom)$. Then
\begin{align}
    \label{eqn:convg_rho}
   & \lim_{n \to \infty} \widetilde\E \left|\intTO{\left(\widetilde \vr_n - \widetilde \vr\right) \phi}\right| = 0,\\
    \label{eqn:convg_mom}
    &\lim_{n \to \infty} \widetilde\E \int_0^T\left|\inttO{\left( \wt\vr_n\wt\vu_n - \wt\vr\wt\vu\right)\cdot \Grad \phi}\right|\dt = 0,\\
    \label{eqn:convg_mom2}
    &\lim_{n \to \infty} \widetilde\E \left|\intTO{\left(\widetilde \vr_n\wt\vu_n - \widetilde \vr \wt\vu\right)\cdot\bpsi}\right| = 0,\\
    \label{eqn:convg_non}
    &\lim_{n\to\infty}\wt\E\int_0^T\left|\inttO{\left(\wt\vr_n\wt\vu_n\otimes\wt\vu_n -\wt\vr\wt\vu\otimes\wt\vu\right)\colon \Grad \bpsi}\right|\dt = 0,\\
    \label{eqn:convg_stress}
    &\lim_{n\to\infty}\wt\E\int_0^T\left|\inttO{\left(\wt\vr_n\Grad\wt\vu_n - \wt\vr\Grad\wt\vu\right) \colon\Grad \bpsi}\right|\dt = 0,\\
    \label{eqn:convg_press}
    &\lim_{n\to\infty}\wt\E\int_0^T\left|\inttO{\left(\wt\vr_n^\gamma - \wt\vr^\gamma\right)\Div \bpsi}\right|\dt = 0,\\
    \label{eqn:convg_density}
    &\lim_{n\to\infty}\wt\E \left|\intO{\left(\widetilde \vr_n(0,\cdot) - \widetilde \vr(0,\cdot)\right) \phi}\right| = 0,\\
    \label{eqn:convg_momentum}
     &\lim_{n\to\infty}\wt\E \left|\intO{\left(\widetilde {\vc{m}_n}(0,\cdot) - \widetilde{\vc{m}}(0,\cdot)\right) \cdot\bpsi}\right| = 0.
\end{align}
\end{lemma}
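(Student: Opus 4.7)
The plan is, for each of the eight identities, to combine the $\widetilde\Prob$-almost-sure convergences already established in Section \ref{sec-existence} (summarised in \eqref{eqn:convg_coll-1}--\eqref{eqn:convg_coll-5} together with the initial-data convergences \eqref{eqn:vr_n-convergence}--\eqref{eqn:vm_n-convergence}) with the $n$-uniform $L^p(\widetilde\Omega)$ moment bounds from Lemma \ref{lem:est-tilde}, and then to invoke the Vitali convergence theorem to upgrade almost-sure convergence of the real-valued integrals to convergence in $L^1(\widetilde\Omega)$, which is exactly what the statements $\lim_n \widetilde\E|\cdots| = 0$ assert.

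For the linear-in-unknowns statements \eqref{eqn:convg_rho}, \eqref{eqn:convg_mom}, \eqref{eqn:convg_mom2}, \eqref{eqn:convg_press}, \eqref{eqn:convg_density} and \eqref{eqn:convg_momentum} the almost-sure convergence of the integrand is immediate: for \eqref{eqn:convg_rho} and \eqref{eqn:convg_press} from $\widetilde\vr_n \to \widetilde\vr$ in $C([0,T]; L^q(\dom))$ with any $q \in [\gamma, 3)$ (admissible since $\gamma < 3$); for \eqref{eqn:convg_mom} and \eqref{eqn:convg_mom2} from the strong convergence of $\widetilde\vm_n$ in $L^r(0,T; L^\alpha(\dom))$ together with its weak continuity in $C_w([0,T]; L^{3/2}(\dom))$; and for \eqref{eqn:convg_density}--\eqref{eqn:convg_momentum} directly from \eqref{eqn:vr_n-convergence}--\eqref{eqn:vm_n-convergence}. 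Uniform integrability in $\widetilde\omega$ is in each case obtained by bounding the integral in $L^p(\widetilde\Omega)$ for some $p>1$, using H\"older against the relevant uniform moment estimate on $\widetilde\vr_n$ (in $L^\infty_t L^\gamma_x$ for \eqref{eqn:convg_rho}, in $L^{5/3}_{t,x}$ for \eqref{eqn:convg_press} via \eqref{eqn-rho_n^gamma}) or on $\widetilde\vm_n$ (in $L^\infty_t L^{3/2}_x$).

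For the convective term \eqref{eqn:convg_non} the almost-sure convergence $\widetilde\vr_n\widetilde\vu_n\otimes\widetilde\vu_n \to \widetilde\vr\widetilde\vu\otimes\widetilde\vu$ in $L^1(0,T; L^1(\dom))$ is precisely Lemma \ref{lem-tilde u}; uniform integrability in $\widetilde\omega$ follows by writing the tensor as $\sqrt{\widetilde\vr_n}\widetilde\vu_n \otimes \sqrt{\widetilde\vr_n}\widetilde\vu_n$ and applying \eqref{tilde_un_n-1} with $p = 2(1+\eta)$, which yields
\[
\widetilde\E\Bigl(\sup_{t\in[0,T]}\int_\dom \widetilde\vr_n|\widetilde\vu_n|^2\,\dx\Bigr)^{1+\eta} \le C
\]
uniformly in $n$, and this is what Vitali requires. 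For the viscous term \eqref{eqn:convg_stress} I would first integrate by parts (valid since $\bpsi \in W^{2,\infty}(\dom)$ and $\sqrt{\widetilde\vr_n} \in H^1(\dom)$)
\[
\int_\dom \widetilde\vr_n \Grad\widetilde\vu_n : \Grad\bpsi\,\dx = -\int_\dom \sqrt{\widetilde\vr_n}\bigl(\sqrt{\widetilde\vr_n}\widetilde\vu_n\bigr) \cdot \lap\bpsi\,\dx - 2\int_\dom \Grad\sqrt{\widetilde\vr_n} \otimes \bigl(\sqrt{\widetilde\vr_n}\widetilde\vu_n\bigr) : \Grad\bpsi\,\dx,
\]
read off the almost-sure convergence from \eqref{eqn:stress_convg} in Lemma \ref{lem-convergence diffusion}, and control the $L^p(\widetilde\Omega)$ norm of each of the two resulting terms by H\"older against the uniform estimates \eqref{tilde_un_n-1} and \eqref{tilde_un_n-3}.

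The principal difficulty is really packaged inside Lemma \ref{lem-tilde u}, whose proof relies on the Mellet--Vasseur estimate \eqref{eqn:4.6} in an essential way to promote the weak convergence of $\sqrt{\widetilde\vr_n}\widetilde\vu_n$ to strong convergence in $L^2_{t,x}$; without that improvement on the integrability of $\widetilde\vr_n^{1/(2+\delta)}\widetilde\vu_n$, only weak convergence would be available and the product $\widetilde\vr_n\widetilde\vu_n\otimes\widetilde\vu_n$ could not be passed to the limit. At the level of the present lemma, once all the almost-sure convergences from Section \ref{sec-existence} are in hand, the upgrade to $L^1(\widetilde\Omega)$ via Vitali is routine.
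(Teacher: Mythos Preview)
Your proposal is correct and follows essentially the same approach as the paper: for each of the eight limits you combine the $\widetilde\Prob$-a.s.\ convergences from Section~\ref{sec-existence} with the uniform $L^p(\widetilde\Omega)$ moment bounds of Lemma~\ref{lem:est-tilde} and conclude via the Vitali convergence theorem. The only cosmetic differences are that for \eqref{eqn:convg_mom}--\eqref{eqn:convg_mom2} the paper writes $\widetilde\vr_n\widetilde\vu_n = \sqrt{\widetilde\vr_n}\cdot\sqrt{\widetilde\vr_n}\widetilde\vu_n$ and uses the product of strong convergences \eqref{eqn:convg_coll-1} and \eqref{eqn:convg_coll-3} rather than the convergence of $\widetilde\vm_n$ directly, and for \eqref{eqn:convg_press} it invokes \eqref{tilde_un_n-2} rather than \eqref{eqn-rho_n^gamma}; neither choice affects the argument.
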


\begin{proof}[Proof of Lemma \ref{lem-convgergence}] Let us fix $\phi \in W^{1,\infty}(\dom; \R)$ and $\bpsi \in W^{2,\infty}(\dom;\R^3)$.\\
\textbf{Step 1. Proof of \eqref{eqn:convg_rho}.}
From \eqref{eqn:convg_coll-1}, we know that $\wt\vr_n \to \wt\vr$ in $C([0,T]; L^2(\dom))$ $\wt\Prob$-a.s., what implies that
\begin{equation}
    \label{eqn:convg_1}
    \lim_{n\to\infty} \intTO{ \wt\vr_n \phi}=  \intTO{ \wt\vr \phi} \qquad \wt\Prob\mbox{-a.s.}.
\end{equation}
We have for some $r > 1$, by \eqref{tilde_un_n-2}
\[
\wt\E\left(\left|\intTO{ \wt\vr_n \phi}\right|^r\right) \le C \|\varphi\|_{L^2(\dom)}^r \wt\E\int_0^T\|\wt\vr_n(t)\|^{r}_{L^2(\dom)}\dt \le C.
\]
This bound provides the equi-integrability of
$\int_0^T\int_\dom \wt\vr_n(t,x) \phi(x)\,\dx\,\dt$.
Taking into account the convergence \eqref{eqn:convg_1}, the Vitali convergence theorem \ref{thm-Vitali} then shows that \eqref{eqn:convg_rho} holds.\\
\textbf{Step 2. Proof of \eqref{eqn:convg_mom}.}
From \eqref{eqn:convg_coll-1} and \eqref{eqn:convg_coll-3} , $\sqrt{\wt\vr_n} \to \sqrt{\wt\vr}$ strongly in $C([0,T]; L^2(\dom))$ $\wt\Prob$-a.s. and $\sqrt{\wt\vr_n}\wt\vu_n \to \sqrt{\wt\vr}\wt\vu$ strongly in $L^2(0,T; L^2(\dom))$ $\wt\Prob$-a.s. respectively. Therefore, for a.a. $t \in (0,T]$ $\wt\Prob$-a.s.

\begin{equation}
    \label{eqn:convg_2}
    \lim_{n \to \infty} \inttO{\wt\vr_n\wt\vu_n\cdot \Grad \phi} = \inttO{ \wt\vr\wt\vu\cdot \Grad \phi}.
\end{equation}
Furthermore, by estimates \eqref{tilde_un_n-1} and \eqref{tilde_un_n-2}, we find that
\begin{align*}
    &\wt\E\left(\left|\int_0^t\int_\dom \wt\vr_n\wt\vu_n\cdot\Grad \phi\,\dx \ds\right|^2\right) \\
    &\quad \le \|\Grad \phi\|^2_{L^\infty(\dom)}\wt\E\left(\left|\int_0^t\|\sqrt{\wt\vr_n}\sqrt{\wt\vr_n}\wt\vu_n\|_{L^1(\dom)}\ds\right|^2\right)\\
    &\quad\le \|\Grad \phi\|^2_{L^\infty(\dom)}\wt\E\left(\left|\int_0^t\|\sqrt{\wt\vr_n}\|_{L^2(\dom)}\|\sqrt{\wt\vr_n}\wt\vu_n\|_{L^2(\dom)}\ds\right|^2\right) \\
    &\quad \le \|\Grad \phi\|^2_{L^\infty(\dom)}T^2 \left(\wt\E\|\sqrt{\wt\vr_n}\|^4_{L^\infty(0,T;L^2(\dom))}\right)^{1/2} \left(\wt\E\|\sqrt{\wt\vr_n}\wt\vu_n\|^4_{L^\infty(0,T;L^2(\dom))}\right)^{1/2} \le C.
\end{align*}
This bound and the $\wt\Prob$-a.s. convergence \eqref{eqn:convg_2} allows us to apply again the Vitali convergence theorem \ref{thm-Vitali} to infer that \eqref{eqn:convg_mom} holds.\\
\textbf{Step 3.}  We can similarly establish the convergence \eqref{eqn:convg_mom2}.\\
\textbf{Step 4. Proof of \eqref{eqn:convg_non}.}  From the convergence \eqref{eqn:convg_coll-4}, we have for $t \in [0,T]$ $\wt\Prob$-a.s.
\begin{align}
    \label{eqn:convg_3}
    & \lim_{n \to \infty} \int_0^t \int_\dom\wt\vr_n\wt\vu_n\otimes\wt\vu_n \colon \Grad \bpsi\,\dx\ds \nonumber \\
    & \quad = \int_0^t \int_\dom\wt\vr\wt\vu\otimes\wt\vu \colon \Grad \bpsi\,\dx\ds.
\end{align}
Using the estimate \eqref{tilde_un_n-1}, we get
\begin{align*}
    &\wt\E\left(\left|\inttO{\wt\vr_n\wt\vu_n\otimes\wt\vu_n \colon \Grad\bpsi}\right|^2\right) \\
    & \quad \le \|\Grad \bpsi\|^2_{L^\infty(\dom)}\wt\E\|\sqrt{\wt\vr_n}\wt\vu_n\|^4_{L^2(0,T;L^2(\dom))} \le C.
\end{align*}
Using the bound obtained above, the $\wt\Prob$-a.s. convergence \eqref{eqn:convg_3} and the Vitali convergence theorem \ref{thm-Vitali} we conclude that \eqref{eqn:convg_non} holds.\\
\textbf{Step 5. Proof of \eqref{eqn:convg_stress}.} let us choose and fix  $t \in [0,T]$. The convergence $\Div\left(\wt\vr_n\Grad\wt\vu_n\right) \to \Div\left(\wt\vr\Grad\wt\vu\right)$ in $\mathcal{D}^\prime$ (as shown in Lemma~\ref{lem-convergence diffusion}), shows that $\wt\Prob$-a.s.
\begin{align}
    \label{eqn:convg_4}
    &\lim_{n\to\infty}\inttO{ \wt\vr_n\Grad\wt\vu_n\colon\Grad \bpsi} \\
    &\quad = \inttO{\wt\vr\wt\vu\cdot\Delta \bpsi} + \inttO{\Grad\wt\vr\otimes\wt\vu\colon\Grad\bpsi}.
\end{align}
Moreover, using the estimates \eqref{tilde_un_n-1} and \eqref{tilde_un_n-3}, we have
\begin{align*}
    &\wt\E\left|\inttO{\wt\vr_n\Grad\wt\vu_n\colon\Grad\bpsi}\right|^2 \\
    &\quad = \wt\E\left|\inttO{\wt\vr_n\wt\vu_n\cdot\Delta\bpsi} + \inttO{\Grad\wt\vr_n\otimes\wt\vu_n\colon\Grad\bpsi}\right|^2 \\
    &\quad \le C\left(\wt\E\left|\inttO{\sqrt{\wt\vr_n}\sqrt{\wt\vr_n}\wt\vu_n\colon\Delta\bpsi}\right|^2 + 4\wt\E\left|\inttO{\Grad\sqrt{\wt\vr_n}\otimes\sqrt{\wt\vr_n}\wt\vu_n\colon\Grad\bpsi}\right|^2\right) \\
    &\quad \le C\|\Delta\bpsi\|^2_{L^\infty(\dom)}\wt\E\left|\int_0^t\|\sqrt{\wt\vr_n}\|_{L^2(\dom)}\|\sqrt{\wt\vr_n}\wt\vu_n\|_{L^2(\dom)}\ds\right|^2 \\
    &\qquad \quad + C \|\Grad\bpsi\|^2_{L^\infty(\dom)}\wt\E\left|\int_0^t\|\Grad\sqrt{\wt\vr_n}\|_{L^2(\dom)}\|\sqrt{\wt\vr_n}\wt\vu_n\|_{L^2(\dom)}\ds\right|^2\\
    &\quad \le C\|\Delta\bpsi\|^2_{L^\infty(\dom)}T^2\left(\wt\E\|\sqrt{\wt\vr_n}\|^4_{L^\infty(0,T;L^2(\dom))}\right)^{1/2}\left(\wt\E\|\sqrt{\wt\vr_n}\wt\vu_n\|^4_{L^\infty(0,T;L^2(\dom))}\right)^{1/2}\\
    &\qquad \quad + C\|\Grad\bpsi\|^2_{L^\infty(\dom)}T^2\left(\wt\E\|\Grad\sqrt{\wt\vr_n}\|^4_{L^\infty(0,T;L^2(\dom))}\right)^{1/2}\left(\wt\E\|\sqrt{\wt\vr_n}\wt\vu_n\|^4_{L^\infty(0,T;L^2(\dom))}\right)^{1/2} \le C.
\end{align*}
The above bound, the $\wt\Prob$-a.s. convergence \eqref{eqn:convg_4} and the Vitali convergence theorem \ref{thm-Vitali} are enough to deduce \eqref{eqn:convg_stress}.\\
\textbf{Step 6. Proof of \eqref{eqn:convg_press}.} Let us begin with recalling  that $\gamma < 3$. We can prove the convergence result similarly as above by using the strong convergence \eqref{eqn:convg_coll-1} of $\wt\vr_n$, estimate \eqref{tilde_un_n-2} and the Vitali convergence theorem \ref{thm-Vitali}.\\
\textbf{Step 7. Proof of \eqref{eqn:convg_density} and \eqref{eqn:convg_momentum}}. These assertions follow from \eqref{eqn:vr_n-convergence} and \eqref{eqn:vm_n-convergence} respectively. \\
The proof of Lemma is complete.
\end{proof}

\begin{lemma}
\label{lem-convergence_stoch}
Let \text{Assumption}~\ref{ass-force} hold. Moreover, let $\wt W_n$ and $\wt W$ be $\R$-valued Brownian motions as in Lemma~\ref{lem-Brownian Motion} and Lemmma~\ref{lem-Brownian Motion-2} respectively, where $\wt W_n \to \wt W$ in $C([0,T];\R)$ as $n \to \infty$ $\wt\Prob$-almost surely. Then, for every {$\bpsi \in L^\infty(\dom)$}, the following holds
\begin{equation}
    \label{eqn:convg_stoch}
    \lim_{n\to\infty}\wt\E\int_0^T\left|\int_0^t\int_\dom\left( \wt\vr_n\vf\,\mathrm{d}\wt W_n - \wt\vr\vf\,\mathrm{d}\wt W\right)\cdot\bpsi\,\dx\right|^2\dt = 0.
\end{equation}
\end{lemma}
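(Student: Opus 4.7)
The goal reduces to a convergence of $\R$-valued stochastic integrals. Define the scalar-valued processes
\[
X_n(s) := \int_\dom \wt\vr_n(s,x)\, \vf(x)\cdot \bpsi(x)\,\dx, \qquad X(s) := \int_\dom \wt\vr(s,x)\, \vf(x)\cdot \bpsi(x)\,\dx,
\]
so that the lemma asks for $\wt\E \int_0^T |\int_0^t X_n\, \de \wt W_n - \int_0^t X\, \de \wt W|^2\,\dt \to 0$. My plan has three main parts: (i) convergence of the integrands in $L^2(\wt\Omega \times [0,T])$, (ii) convergence of the stochastic integrals in probability, and (iii) an uniform integrability upgrade.

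For part (i) I would first fix $q$ close to $3$ so that the conjugate exponent $q' = q/(q-1)$ is close to $3/2$. Then H\"older's inequality yields $|X_n(s) - X(s)| \le \|\wt\vr_n(s) - \wt\vr(s)\|_{L^q} \|\vf\|_{L^{q'}} \|\bpsi\|_{L^\infty}$, and the a.s. convergence in $C([0,T]; L^q(\dom))$ from \eqref{eqn:convg_coll-1} gives $\sup_s |X_n(s)-X(s)| \to 0$ $\wt\Prob$-a.s. For uniform integrability, note that $\sqrt{\wt\vr_n}$ is bounded in $L^p(\wt\Omega; L^\infty(0,T; L^6))$ by \eqref{tilde_un_n-3} and Sobolev, so $\wt\vr_n$ is bounded in $L^p(\wt\Omega; L^\infty(0,T; L^3))$; by H\"older together with Assumption \ref{ass-force}, this yields
\[
\wt\E \sup_{s\in[0,T]} |X_n(s)|^p \le C \quad \text{for every } p \in [1,\infty).
\]
An application of the Vitali convergence theorem then gives $X_n \to X$ in $L^p(\wt\Omega \times [0,T])$ for all $p < \infty$.

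For part (ii) I would introduce filtrations: let $\wt{\mathbb{F}}_n$ be the $\wt\Prob$-augmentation of the natural filtration of $(\wt\vr_n, \wt\vu_n, \wt W_n)$, and $\wt{\mathbb{F}}$ the analogous filtration for the limit. By the equality of joint laws from Proposition \ref{prop-jak+sk} and the martingale arguments of Lemmas \ref{lem-Brownian Motion}--\ref{lem-Brownian Motion-2}, $\wt W_n$ is a standard Brownian motion with respect to $\wt{\mathbb{F}}_n$ and $\wt W$ with respect to $\wt{\mathbb{F}}$, and $X_n$ (resp.\ $X$) is progressively measurable with respect to the corresponding filtration. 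The stochastic integrals $\int_0^\cdot X_n\,\de \wt W_n$ and $\int_0^\cdot X\,\de \wt W$ are therefore well defined. The main obstacle lies here: the integrals are driven by different Brownian motions living on (genuinely) different filtrations, so one cannot directly apply the It\^o isometry to $X_n - X$. The standard device is a convergence lemma for stochastic integrals of the Debussche--Glatt-Holtz type (see e.g.\ the analogous statement in \cite{Breit+Feir+Ho18}): since $X_n \to X$ in $L^2(\wt\Omega \times [0,T])$ and $\wt W_n \to \wt W$ in $C([0,T]; \R)$ $\wt\Prob$-a.s., one concludes that
\[
\int_0^\cdot X_n(s)\, \de\wt W_n(s) \longrightarrow \int_0^\cdot X(s)\, \de\wt W(s) \quad \text{in probability in } C([0,T]; \R).
\]

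Finally, for part (iii), the BDG inequality combined with the $L^p$ bound on $X_n$ established in part (i) gives, for any $p > 2$,
\[
\wt\E \sup_{t\in[0,T]}\Bigl| \int_0^t X_n\,\de \wt W_n\Bigr|^p \le C_p\, \wt\E \Bigl(\int_0^T |X_n|^2\,\ds \Bigr)^{p/2} \le C,
\]
and the same bound for the limit. This provides uniform integrability of $|\int_0^t X_n\,\de \wt W_n - \int_0^t X\,\de \wt W|^2$ in $L^1(\wt\Omega \times [0,T])$, which together with convergence in probability from part (ii) yields the desired $L^2(\wt\Omega \times [0,T])$ convergence and completes the proof.
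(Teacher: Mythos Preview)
Your proof is correct and follows essentially the same approach as the paper: both rely on the same key external convergence lemma for stochastic integrals driven by converging Brownian motions (the paper cites \cite{Ben95,GyKr96,DeGlTe11,Breit+Feir+Ho18}), combined with the H\"older estimate $|X_n(s)-X(s)|\le \|\wt\vr_n(s)-\wt\vr(s)\|_{L^q}\|\vf\|_{L^{q'}}\|\bpsi\|_{L^\infty}$, the a.s.\ convergence \eqref{eqn:convg_coll-1}, and the uniform bound on $\wt\vr_n$ in $L^p(\wt\Omega;L^\infty(0,T;L^3))$ from \eqref{tilde_un_n-3} plus Assumption~\ref{ass-force}. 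The only organizational difference is that the paper invokes the external lemma first to reduce to integrals against the \emph{single} limiting Brownian motion $\wt W$, after which the It\^o isometry applies directly and Vitali/dominated convergence finishes the job; you instead apply the lemma to obtain convergence in probability of $\int_0^\cdot X_n\,\de\wt W_n$ and then upgrade via BDG and Vitali, which is an equally valid route to the same conclusion.
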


\begin{proof}[Proof of Lemma \ref{lem-convergence_stoch}] Since $\wt W_n \to \wt W$ in $C([0,T];\R)$, {due to results in \cite[Section 4.3.5]{Ben95}, \cite[Lemma 5.1]{GyKr96}, \cite[Lemma~2.1]{DeGlTe11} and \cite[Lemma 2.6.5 and Lemma 2.6.6]{Breit+Feir+Ho18}},  it is sufficient to show that
\[
\lim_{n\to \infty}\wt\E\int_0^T\left|\int_0^t\int_\dom  \left(\wt\vr_n\vf - \wt\vr\vf\right){\rm{d}}\wt W\cdot\bpsi\,\dx\right|^2\dt = 0.
\]
We estimate for $\bpsi \in L^\infty(\dom)$
\begin{align*}
    \int_0^t \left|\int_\dom \left(\wt\vr_n\vf - \wt\vr\vf\right)\cdot\bpsi\,\dx\right|^2\ds
    & \quad \le \int_0^t \|\wt\vr_n - \wt\vr\|^2_{L^q(\dom)}\|\vf\|^2_{L^{q^\prime}(\dom)}\|\bpsi\|^2_{L^{\infty}(\dom)}\\
    & \quad \le \|\wt\vr_n-\wt\vr\|^2_{L^2(0,T;L^q(\dom))}\|\vf\|^2_{L^{q^\prime}(\dom)}\|\bpsi\|^2_{L^{\infty}(\dom)},
\end{align*}
where $q^\prime=\frac{q}{q-1}$.  Recall that $\wt\vr_n \to \wt\vr$ in $C([0,T]; L^q(\dom))$ $\wt\Prob$-a.s. for $q < 3$, by \eqref{eqn:convg_coll-1}. In particular, $q^\prime<3$ for $q$ large enough. Therefore under the Assumption \ref{ass-force} on $\vf$, we infer that for {a.a} $t \in [0,T]$, $\omega \in \wt\Omega$
\begin{equation}
    \label{eqn:convg_5}
    \lim_{n\to\infty} \int_0^t \left|\int_\dom \left(\wt\vr_n\vf- \wt\vr\vf\right)\cdot\bpsi\,\dx\right|^2\ds = 0.
\end{equation}
We conclude from \eqref{tilde_un_n-3} and assumptions on $\vf$ that
\begin{align*}
    &\wt\E\left|\int_0^t \left|\int_\dom \left(\wt\vr_n\vf - \wt\vr\vf\right)\cdot\bpsi(x)\,\dx\right|^2\ds\right|^2 \\
    & \quad \le C \wt\E\left(\|\bpsi\|^4_{L^{\infty}(\dom)}\|\vf\|^4_{L^{3/2}(\dom)}\int_0^t\left(\|\wt\vr_n(s)\|^4_{L^3(\dom)}+\|\wt\vr(s)\|^4_{L^{3}(\dom)}\right)\ds\right) \\
    & \quad \le C\|\bpsi\|^4_{L^{\infty}(\dom)}T \left(\wt\E\|\vf\|^8_{L^{3/2}(\dom)}\right)^{1/2} \left(\wt\E\left(\|\wt\vr_n\|^8_{L^\infty(0,T; L^3(\dom))} + {\|\wt\vr\|^8_{L^\infty(0,T; L^3(\dom))}}\right)\right)^{1/2} \le C.
\end{align*}
With this bound, convergence \eqref{eqn:convg_5} and the Vitali convergence theorem \ref{thm-Vitali} we obtain for all $\bpsi \in L^{\infty}(\dom)$
\[
\lim_{n\to\infty}\wt\E\int_0^t \left|\int_\dom \left(\wt\vr_n\vf - \wt\vr\vf\right)\cdot\bpsi\,\dx\right|^2\ds = 0.
\]
Hence, by the It\^o isometry for $t \in [0,T]$ and $\bpsi \in L^{\infty}(\dom)$,
\begin{equation}
    \label{eqn:convg_6}
    \lim_{n\to\infty}\wt\E\left|\int_0^t \!\!\int_\dom  \left(\wt\vr_n\vf - \wt\vr\vf\right){\rm{d}}\wt W\cdot\bpsi\,\dx\right|^2 = 0.
\end{equation}
We use the It\^o isometry again and estimate \eqref{tilde_un_n-3} for $n \in \N$, {strong convergence \eqref{eqn:convg_coll-1} along with the lower semicontinuity of norms} to infer
\begin{align*}
    &\wt\E\left|\int_0^t \!\!\int_\dom \left(\wt\vr_n\vf - \wt\vr\vf\right){\rm{d}}\wt W\cdot\bpsi(x)\,\dx\right|^2 \\
    &\quad = \wt\E\left(\int_0^t \left|\int_\dom \left(\wt\vr_n\vf - \wt\vr\vf\right)\cdot\bpsi(x)\,\dx\right|^2\ds\right)\\
    & \quad \le C \wt\E\left(\|\bpsi\|^2_{L^{\infty}(\dom)}\|\vf\|^2_{L^{3/2}(\dom)}\int_0^t\left(\|\wt\vr_n(s)\|^2_{L^3(\dom)}+\|\wt\vr(s)\|^2_{L^{3}(\dom)}\right)ds\right) \\
    & \quad \le C\|\bpsi\|^2_{L^{\infty}(\dom)}T\left(\wt\E\|\vf\|^4_{L^{3/2}(\dom)}\right)^{1/2} \left(\wt\E\left(\|\wt\vr_n\|^4_{L^\infty(0,T; L^3(\dom))} + {\|\wt\vr\|^4_{L^\infty(0,T; L^3(\dom))}}\right)\right)^{1/2} \le C.
\end{align*}
This bound and the convergence \eqref{eqn:convg_6} allows us to apply the dominated convergence theorem to conclude that for all $\bpsi \in L^{\infty}(\dom)$,
\[
\lim_{n\to \infty}\wt\E\int_0^T\left|\int_0^t\!\! \int_\dom  \left(\wt\vr_n\vf - \wt\vr\vf\right){\rm{d}}\wt W\cdot\bpsi(x)\,\dx\right|^2\dt = 0.
\]
This finishes the proof of the lemma.
\end{proof}

Let {$\phi \in W^{1,\infty}(\dom; \R)$ and $\bpsi \in W^{2,\infty}(\dom; \R^3)$.} Moreover, let $\wt\vu_n$, $\wt\vu$ be the processes as defined in Lemma~\ref{lem-tilde u_n} and Lemma~\ref{lem-tilde u} respectively and define
\begin{align*}
    \Lambda^1_n(\wt\vr_n,\wt\vu_n,\phi)(t) &:= \intO{\wt\vr_n(0,x)\cdot\phi(x)} + \int_0^t\int_\dom \wt\vr_n(s)\wt\vu_n(s)\cdot\Grad \phi\,\dx\ds, \\
    \Lambda^1(\wt\vr,\wt\vu,\phi)(t) &:= \intO{\wt\vr(0,x)\cdot\phi(x)} +  \int_0^t\int_\dom \wt\vr(s)\wt\vu(s)\cdot \Grad \phi\,\dx\ds, \\
    \Lambda^2_n(\wt\vr_n,\wt\vu_n,\wt W_n, \bpsi)(t) &:= \int_\dom {\wt\vm_n(0)}\cdot \bpsi \,\dx + \int_0^t \int_\dom \widetilde{\vr}_n(s)\widetilde{\vu}_n(s) \otimes \widetilde{\vu}_n(s) \colon \nabla \bpsi \,\dx \ds \\
    &\;- \int_0^t \int_\dom \widetilde{\vr}_n(s)\Grad\widetilde{\vu}_n(s) \colon  \nabla \bpsi \,\dx \ds
 + \int_0^t \int_\dom \widetilde{\vr}_n^\gamma(s) \Div \bpsi \,\dx \ds \nonumber \\
 &\; + \int_0^t \int_\dom\widetilde{\vr}_n(s) \vf \cdot \bpsi \,\dx \rm{d}\wt W_n(s), \nonumber\\
    \Lambda^2(\wt\vr,\wt\vu,\wt W, \bpsi)(t) &:= \int_\dom {\wt\vm(0)} \cdot \bpsi \,\dx + \int_0^t \int_\dom \widetilde{\vr}(s)\widetilde{\vu}(s) \otimes \widetilde{\vu}(s)\colon \nabla \bpsi \,\dx \ds \nonumber \\
&\; - \int_0^t \int_\dom \widetilde{\vr}(s)\Grad\widetilde{\vu}(s)\colon \nabla \bpsi \,\dx \ds
    + \int_0^t \int_\dom \widetilde{\vr}^\gamma(s) \Div \bpsi \,\dx \ds \nonumber \\
    &\;+ \int_0^t \int_\dom\widetilde{\vr}(s) \vf \cdot \bpsi \,\dx \rm{d}\wt W(s),
\end{align*}
for $t \in [0,t]$, where the diffusion terms should be understood in the following sense
\begin{align*}
    -\int_0^t\int_\dom \wt\vr_n(s)\Grad\wt\vu_n(s)\colon \Grad \bpsi\,\dx\ds &=  \int_0^t\int_\dom\wt\vr_n(s) \wt\vu_n(s)\cdot \lap \bpsi\,\dx\ds\\
    &\quad  + \int_0^t\int_\dom \Grad \wt\vr_n(s) \otimes\wt\vu_n(s):\Grad\bpsi\,\dx\ds,\\
     -\int_0^t\int_\dom \wt\vr(s)\Grad\wt\vu(s)\colon \Grad \bpsi\,\dx\ds & =  \int_0^t\int_\dom\wt\vr(s) \wt\vu(s)\cdot \lap \bpsi\,\dx\ds\\
    &\quad  + \int_0^t\int_\dom \Grad \wt\vr(s) \otimes\wt\vu(s):\Grad\bpsi\,\dx\ds.
\end{align*}

Then, the following corollary is essentially a consequence of Lemmata~\ref{lem-convgergence} and~\ref{lem-convergence_stoch}.

\begin{corollary} \label{cor_lambda}
Let us assume that  $\phi \in W^{1,\infty}(\dom; \R)$ and $\bpsi \in W^{2,\infty}(\dom; \R^3)$. Then the following convergences hold$\colon$
\begin{align}
\label{eqn:cor_1}
    &\lim_{n\to\infty}\left\|\intO{\wt\vr_n(x)\cdot\phi(x)} - \intO{\wt\vr(x)\cdot\phi(x)}\right\|_{L^1(\wt\Omega\times[0,T])} = 0,\\
    \label{eqn:cor_2}
    &\lim_{n\to\infty}\|\Lambda^1_n(\wt\vr_n,\wt\vu_n,\phi) - \Lambda^1(\wt\vr,\wt\vu,\phi)\|_{L^1(\wt\Omega\times[0,T])} = 0,\\
    \label{eqn:cor_3}
     &\lim_{n\to\infty}\left\|\int_\dom\wt\vr_n(x)\wt\vu_n(x) \cdot \bpsi(x)\,\dx - \int_\dom\wt\vr(x)\wt\vu(x)\cdot \bpsi(x)\,\dx\right\|_{L^1(\wt\Omega\times[0,T])} = 0,\\
    \label{eqn:cor_4}
    &\lim_{n\to\infty}\|\Lambda^2_n(\wt\vr_n,\wt\vu_n,\wt W_n, \bpsi) - \Lambda^2(\wt\vr,\wt\vu, \wt W, \bpsi)\|_{L^1(\wt\Omega\times[0,T])} = 0.
\end{align}
\end{corollary}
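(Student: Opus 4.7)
The strategy is to decompose each of the four differences into a finite sum of terms, to express $\|\cdot\|_{L^1(\wt\Omega\times[0,T])}=\wt\E\int_0^T|\cdot|\,\dt$, and then to identify each term with a quantity controlled by Lemma \ref{lem-convgergence} or Lemma \ref{lem-convergence_stoch}. All claims ultimately reduce to two recipes: (a) pointwise-in-$t$ convergence $\wt\Prob$-a.s.\ combined with equi-integrability on $\wt\Omega\times[0,T]$ via the bounds of Lemma \ref{lem:est-tilde}, followed by Vitali's convergence theorem; or (b) direct citation of one of \eqref{eqn:convg_mom}, \eqref{eqn:convg_non}, \eqref{eqn:convg_stress}, \eqref{eqn:convg_press}, \eqref{eqn:convg_density}, \eqref{eqn:convg_momentum}, \eqref{eqn:convg_stoch}, whose statements already have the desired form.

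For \eqref{eqn:cor_1} and \eqref{eqn:cor_3}, the integrands are genuine functions of $t$. From \eqref{eqn:convg_coll-1} we have $\wt\vr_n\to\wt\vr$ in $C([0,T];L^q(\dom))$ $\wt\Prob$-a.s., so $t\mapsto\int_\dom(\wt\vr_n-\wt\vr)\phi\,\dx$ converges to $0$ uniformly in $t$ $\wt\Prob$-a.s., and \eqref{tilde_un_n-2} furnishes the uniform bound needed for Vitali, yielding \eqref{eqn:cor_1}. For \eqref{eqn:cor_3}, we use the $\wt\Prob$-a.s.\ convergence $\wt\vr_n\wt\vu_n\to\wt\vr\wt\vu$ in $L^r(0,T;L^\alpha(\dom))\cap C_w([0,T];L^{3/2}(\dom))$ from \eqref{eqn:convg_coll-4} together with the identification $\wt\vm=\wt\vr\wt\vu$ from Lemma \ref{lem-tilde u}, which yields pointwise-in-$t$ convergence of $\int_\dom(\wt\vr_n\wt\vu_n-\wt\vr\wt\vu)\cdot\bpsi\,\dx$ to $0$ $\wt\Prob$-a.s.; the uniform bound $\wt\E\,\|\wt\vr_n\wt\vu_n\|^2_{L^\infty(0,T;L^{3/2}(\dom))}\le C$, obtained from \eqref{tilde_un_n-1} and \eqref{tilde_un_n-3} by H\"older as in \eqref{eqn:4.14}, yields the equi-integrability needed for Vitali. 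For \eqref{eqn:cor_2}, write $\Lambda^1_n-\Lambda^1=\intO{(\wt\vr_n(0)-\wt\vr(0))\phi}+\int_0^t\int_\dom(\wt\vr_n\wt\vu_n-\wt\vr\wt\vu)\cdot\Grad\phi\,\dx\,\ds$; the first term is independent of $t$, so its $L^1(\wt\Omega\times[0,T])$ norm is $T$ times a quantity tending to $0$ by \eqref{eqn:convg_density}, while the second term converges to $0$ in $L^1(\wt\Omega\times[0,T])$ exactly by \eqref{eqn:convg_mom}.

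For \eqref{eqn:cor_4}, we split $\Lambda^2_n-\Lambda^2$ into five contributions corresponding, respectively, to the initial data, the convective term, the viscous (stress) term, the pressure term, and the stochastic integral. The first four are dispatched by \eqref{eqn:convg_momentum} (as in the treatment of the initial datum in \eqref{eqn:cor_2}), \eqref{eqn:convg_non}, \eqref{eqn:convg_stress} (recalling the integration-by-parts convention \eqref{eqn-IP} that defines $\int_0^t\!\!\int_\dom\wt\vr\Grad\wt\vu\colon\Grad\bpsi\,\dx\,\ds$ as a sum of two terms, both covered by \eqref{eqn:convg_stress} in view of Lemma \ref{lem-convergence diffusion}), and \eqref{eqn:convg_press}. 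The only point requiring separate comment is the stochastic term: Lemma \ref{lem-convergence_stoch} provides $L^2(\wt\Omega\times[0,T])$ convergence of $\int_0^t\!\!\int_\dom(\wt\vr_n\vf\,\de\wt W_n-\wt\vr\vf\,\de\wt W)\cdot\bpsi\,\dx$ to $0$, and the Cauchy--Schwarz inequality in $\wt\Omega\times[0,T]$ upgrades this to the $L^1(\wt\Omega\times[0,T])$ convergence required by \eqref{eqn:cor_4}. The main --- and essentially only --- subtle step is this last reduction of the It\^o integral to a Bochner-type integral against the limiting Brownian motion $\wt W$, but that identification is already encoded in \eqref{eqn:convg_stoch} and the tightness/Skorokhod identification of $\wt W$ given by Lemmata \ref{lem-Brownian Motion} and \ref{lem-Brownian Motion-2}.
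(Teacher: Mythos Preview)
Your proposal is correct and follows essentially the same approach as the paper: you rewrite the $L^1(\wt\Omega\times[0,T])$ norms via Fubini and then reduce each summand to one of the convergences in Lemma~\ref{lem-convgergence} or Lemma~\ref{lem-convergence_stoch}, handling the stochastic term via Cauchy--Schwarz to pass from $L^2$ to $L^1$. One small slip: in your argument for \eqref{eqn:cor_3} you cite \eqref{eqn:convg_coll-4} (the convective term), but what you need and actually use is the $\wt\Prob$-a.s.\ convergence $\wt\vr_n\wt\vu_n\to\wt\vr\wt\vu$ in $C_w([0,T];L^{3/2}(\dom))$, which comes from Proposition~\ref{prop-jak+sk} together with the identifications in Lemma~\ref{lem-tilde u} (equivalently, \eqref{eqn:convg_mom2}).
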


\begin{proof}[Proof of Corollary \ref{cor_lambda}] The first and the third convergences  follow directly from the identities
\begin{align*}
&\left\|\intO{\wt\vr_n(x)\cdot\phi(x)} - \intO{\wt\vr(x)\cdot\phi(x)}\right\|_{L^1(\wt\Omega\times[0,T])}\\
& \qquad = \wt\E\left|\int_0^T\intO{(\wt\vr_n(t,x) - \wt\vr(t,x))\cdot\phi(x)}\,\dt\right| \\
&\left\|\int_\dom\wt\vr_n(x)\wt\vu_n(x) \cdot \bpsi(x)\,\dx - \int_\dom\wt\vr(x)\wt\vu(x)\cdot \bpsi(x)\,\dx\right\|_{L^1(\wt\Omega\times[0,T])} \\
&\qquad  = \wt\E\left|\int_0^T\int_\dom\wt\vr_n(t,x)\wt\vu_n(t,x)-\wt\vr(t,x)\wt\vu(t,x)\cdot\bpsi(x)\,\dx\,\dt\right|
\end{align*}
and convergences \eqref{eqn:convg_rho} and \eqref{eqn:convg_mom2} respectively. For  \eqref{eqn:cor_2},  by the Fubini's theorem we have
\begin{align*}
    \|\Lambda^1_n(\wt\vr_n,\wt\vu_n,\phi) - \Lambda^1(\wt\vr,\wt\vu,\phi)\|_{L^1(\wt\Omega\times[0,T])} = \int_0^T \wt\E\left|\Lambda^1_n(\wt\vr_n,\wt\vu_n,\phi) - \Lambda^1(\wt\vr,\wt\vu,\phi)\right|\dt.
\end{align*}
Convergence \eqref{eqn:convg_mom} shows that the term in the definition of $\Lambda^1_n(\wt\vr_n,\wt\vu_n,\phi)$ tends to the corresponding term in $\Lambda^1(\wt\vr,\wt\vu,\phi)$ at least in the space $L^1(\wt\Omega \times [0,T])$. Similarly, with the help of the Fubini's theorem, the definition of the maps $\Lambda^2_n$, $\Lambda^2$ and convergences \eqref{eqn:convg_non}--\eqref{eqn:convg_press} we can deduce the convergence \eqref{eqn:cor_4}.\end{proof}

Since $\left(\vr_n,\vu_n\right)$ is a solution to \eqref{eqn-1.1}, it satisfies
\begin{align*}
    \intO{\vr_n(t,x)\cdot\phi(x)}& = \Lambda^1_n(\vr_n,\vu_n,\phi)(t) \quad \Prob\mbox{-a.s.}\\
    \int_\dom\vr_n(t,x)\vu_n(t,x)\cdot\bpsi(x)\,\dx & = \Lambda^2_n\left(\vr_n,\vu_n,W_n,\bpsi\right)(t)\quad\Prob\mbox{-a.s.}
\end{align*}
for all $t \in [0,T]$ and $\phi \in W^{1,\infty}(\dom; \R)$, $\bpsi \in W^{2,\infty}(\dom; \R^3)$ and in particular, we have
\begin{align*}
    &\int_0^T \E\left|\left(\intO{\vr_n(t,x)\cdot\phi(x)}\right) - \Lambda^1_n(\vr_n,\vu_n,\phi)(t)\right|\dt = 0, \\
    &\int_0^T\E\left|\int_\dom\vr_n(t,x)\vu_n(t,x)\cdot\bpsi(x)\,\dx  - \Lambda^2_n\left(\vr_n,\vu_n,W_n,\bpsi\right)(t)\right|\dt = 0.
\end{align*}
Since the laws $\mathcal{L}\left(\widetilde{\vr}_n, \widetilde{\vt}_n, \widetilde{\vm}_n, \widetilde{\vq}_n,\widetilde{\vrr}_n, \wt W_n\right)$ and $\mathcal{L}\left({\vr}_n, {\sqrt{\vr}}_n, {\vr}_n\vu_n, {\sqrt{\vr}_n\vu_n},\vr_n^{\frac{1}{2+\delta}}\vu_n, W_n\right)$ coincide and by the identifications from Lemma~\ref{lem-identification} and Lemma~\ref{lem-tilde u_n}, we find that
\begin{align*}
    &\int_0^T \wt\E\left|\left(\intO{\wt\vr_n(t,x)\cdot\phi(x)}\right) - \Lambda^1_n(\wt\vr_n,\wt\vu_n,\phi)(t)\right|\dt = 0, \\
    &\int_0^T \wt\E\left|\int_\dom \wt\vr_n(t,x) \wt\vu_n(t,x)\cdot\bpsi(x)\,\dx  - \Lambda^2_n\left(\wt\vr_n,\wt\vu_n,\wt W_n,\bpsi\right)(t)\right|\dt = 0.
\end{align*}
By Corollary~\ref{cor_lambda}, the limit $n \to \infty$ in these equations yield
\begin{align*}
    &\int_0^T \wt\E\left|\left(\intO{\wt\vr(t,x)\cdot\phi(x)}\right) - \Lambda^1(\wt\vr,\wt\vu,\phi)(t)\right|\dt = 0, \\
    &\int_0^T \wt\E\left|\int_\dom \wt\vr(t,x) \wt\vu(t,x)\cdot\bpsi(x)\,\dx  - \Lambda^2\left(\wt\vr,\wt\vu,\wt W,\bpsi\right)(t)\right|\dt = 0.
\end{align*}
Hence, for Lebesgue-a.e. $t \in [0,T]$ and $\wt\Prob$-a.e. $\omega \in \wt\Omega$, we deduce that
\begin{align*}
    &\intO{\wt\vr(t,x)\cdot\phi(x)} - \Lambda^1(\wt\vr,\wt\vu,\phi)(t) = 0, \\
    &\int_\dom \wt\vr(t,x) \wt\vu(t,x)\cdot\bpsi(x)\,\dx - \Lambda^2\left(\wt\vr,\wt\vu,\wt W,\bpsi\right)(t) = 0.
\end{align*}
By definition of $\Lambda^1$ and $\Lambda^2$, this means that for Lebesgue-a.e. $t \in [0,T]$ and $\wt\Prob$-a.e. $\omega \in \wt\Omega$,
\begin{align*}
    \intO{\wt\vr(t,x)\cdot\phi(x)} &= \intO{\wt\vr(0,x)\cdot\phi(x)} +  \int_0^t\int_\dom \wt\vr(s)\wt\vu(s)\cdot \Grad \phi\,\dx\ds, \\
    \int_\dom \wt\vr(t,x) \wt\vu(t,x)\cdot\bpsi\,\dx &= \int_\dom \wt\vm(0)\cdot \bpsi \,\dx + \int_0^t \int_\dom \widetilde{\vr}(s)\widetilde{\vu}(s) \otimes \widetilde{\vu}(s)\colon \nabla \bpsi \,\dx \ds \nonumber\\
    &\quad - \int_0^t \int_\dom \widetilde{\vr}(s)\Grad\widetilde{\vu}(s)\colon \nabla \bpsi \,\dx \ds
     + \int_0^t \int_\dom \widetilde{\vr}^\gamma(s) \Div \bpsi \,\dx \ds \nonumber\\
      &\quad + \int_0^t \int_\dom\widetilde{\vr}(s) \vf \cdot\bpsi \,\dx \rm{d}\wt W(s).
\end{align*}
Setting $\wt U := \left(\wt\Omega, \wt{\mathcal{F}},\wt\Prob,\wt{\mathbb{F}}\right)$, we infer the system $\left(\wt U, \wt W, \wt\vr,\wt\vu\right)$ is a martingale solution to \eqref{eqn-1.1}.


\bigskip

{\bf Acknowledgments.} EZ received the support of the EPSRC Early Career Fellowship no. EP/V000586/1. ZB's work was partially supported by the grant 346300 for IMPAN from the Simons Foundation and the matching 2015-2019 Polish MNiSW fund and  by the Australian Research Council Project DP160101755. GD acknowledges partial support from the Austrian Science Fund (FWF), grants I3401, P30000,
 P33010, W1245, and F65 and from the Australian Research Council Project DP160101755.

\newpage
\appendix

\section{Some compactness and tightness results}
\label{sec-compactness}

We now move to the tightness of law for the momentum.
For the compact embeddings in this case we will be using Lemma~\ref{lem-compactness Simon-2} \cite[Theorem 6]{Sim87}, Lemma~\ref{lem-BFH weak cont compact} and the Aldous condition $\textbf{[A]}$, which we recall in Appendix~\ref{sec-stoch_pre}.

The following lemma (see \cite[Lemma~A.7 and Lemma~A.8]{Mot12}) gives us a useful conclusion of the Aldous condition $\textbf{[A]}$.
\begin{lemma}
\label{lem-Aldous condition}
Let $\left(X_n\right)_{n \in \N}$ be a sequence of continuous stochastic processes indexed by $t \in [0,T]$ and  taking values  in a Banach space $E$. Assume that this sequence   satisfies the Aldous condition $\textbf{[A]}$. Then, for every $\epsilon > 0$ there exists a measurable subset $A_\epsilon \subset C([0,T]; E)$ such that
\[
\mathbb{P}^{X_n}(A_\epsilon) \ge 1 - \epsilon, \qquad\quad \lim_{h \to 0}\sup_{u \in A_\epsilon}\sup_{|t-s| \le h}\|u(t) - u(s)\|_E = 0.
\]
{Here, by $\mathbb{P}^{X_n}$ we denote the law of $X_n$, which is a Borel probability measure on the Banach space $C([0,T]; E)$.}
\end{lemma}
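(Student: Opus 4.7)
The plan is to convert the Aldous condition $\mathbf{[A]}$, which controls stopping-time increments, into a uniform (in $n$) tail estimate for the modulus of continuity
\[
\omega_u(\delta) := \sup_{|t-s| \le \delta} \|u(t) - u(s)\|_E, \quad u \in C([0,T];E),
\]
and then to build $A_\ep$ as a countable intersection of corresponding level sets of $\omega_{(\cdot)}$.

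First I would prove the following consequence of $\mathbf{[A]}$: for every $\eta > 0$ and $\eta' > 0$ there exists $\delta > 0$ such that
\[
\sup_{n \in \N} \Prob\bigl(\omega_{X_n}(\delta) \ge \eta\bigr) \le \eta'.
\]
This is the classical upgrade from an Aldous-type stopping-time criterion to modulus-of-continuity control for continuous processes. The strategy I would follow introduces the iterated stopping times $\sigma_0 := 0$ and $\sigma_{k+1} := \inf\{t > \sigma_k : \|X_n(t) - X_n(\sigma_k)\|_E > \eta/3\} \wedge T$, notes that continuity of $X_n$ forces $\|X_n(\sigma_{k+1}) - X_n(\sigma_k)\|_E = \eta/3$ on $\{\sigma_{k+1} < T\}$, and argues that on the event $\{\omega_{X_n}(\delta) > \eta\}$ at least two consecutive $\sigma_k$'s must lie within distance $\delta$. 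Applying $\mathbf{[A]}$ at threshold $\eta/3$ to each pair $(\sigma_k, \sigma_k + \delta)$ and summing over a fixed, $n$-independent number of indices (itself controlled by a deterministic bound on the number of $\eta/3$-oscillations of $X_n$ on $[0,T]$, which follows from a first application of $\mathbf{[A]}$) then yields the claimed bound.

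Given this tail estimate, I would choose sequences $\eta_k \downarrow 0$ and $\delta_k \downarrow 0$ such that
\[
\sup_{n \in \N} \Prob\bigl(\omega_{X_n}(\delta_k) \ge \eta_k\bigr) \le \ep\, 2^{-k}, \quad k \in \N,
\]
and set
\[
A_\ep := \bigcap_{k=1}^\infty \bigl\{u \in C([0,T];E) : \omega_u(\delta_k) \le \eta_k\bigr\}.
\]
Since $u \mapsto \omega_u(\delta)$ is lower semicontinuous on $C([0,T];E)$ for each fixed $\delta > 0$, each defining set is closed, so $A_\ep$ is Borel. A union bound immediately gives $\Prob^{X_n}(A_\ep^c) \le \sum_{k=1}^\infty \ep\, 2^{-k} = \ep$, and for $\eta > 0$ arbitrary, choosing $k$ with $\eta_k < \eta$ yields $\sup_{u \in A_\ep}\omega_u(h) \le \omega_u(\delta_k) \le \eta_k < \eta$ for every $h \le \delta_k$, hence $\lim_{h \to 0} \sup_{u \in A_\ep} \omega_u(h) = 0$, which is the second assertion.

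The hard part will be the first step, where a property formulated solely in terms of stopping-time increments must be upgraded to deterministic, uniform control of the modulus of continuity on the whole interval $[0,T]$. The stopping-time iteration sketched above is the classical route (see e.g.\ the treatments by Joffe--M\'etivier or Billingsley), but some care is required to ensure that each $\sigma_k$ is a genuine $[0,T]$-valued stopping time (arranged by the wedge with $T$) and that the total number of iterations to sum over is dominated uniformly in $n$ --- exactly the point at which the uniformity built into $\mathbf{[A]}$ is indispensable.
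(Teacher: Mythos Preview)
The paper does not actually prove this lemma; it simply states it with a citation to \cite[Lemma~A.7 and Lemma~A.8]{Mot12} (Motyl), so there is no in-paper argument to compare against. Your outline is the standard route to this result and is essentially correct: the passage from the stopping-time form of $\mathbf{[A]}$ to a uniform tail bound on the modulus of continuity via iterated hitting times $\sigma_k$, followed by the Borel--Cantelli style construction of $A_\ep$ as an intersection of sublevel sets of $\omega_{(\cdot)}(\delta_k)$, is exactly the argument one finds in Joffe--M\'etivier \cite{JoMe86} (which Motyl also invokes) or in Billingsley.

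One point worth tightening in your sketch is the sentence ``a deterministic bound on the number of $\eta/3$-oscillations \ldots\ follows from a first application of $\mathbf{[A]}$''. The way this is usually done is not by bounding the number of oscillations directly but by first using $\mathbf{[A]}$ to find $\theta_0$ with $\sup_n \Prob(\sigma_{k+1}-\sigma_k \le \theta_0,\ \sigma_k<T) \le \tfrac12$ for every $k$; since on $\{\sigma_N<T\}$ one has $\sum_{k<N}(\sigma_{k+1}-\sigma_k)<T$, taking $N>2T/\theta_0$ and summing yields $\sup_n \Prob(\sigma_N<T)$ small. With $N$ fixed in this way, a second application of $\mathbf{[A]}$ at level $\eta'/N$ handles the finitely many remaining stopping times. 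This is presumably what you had in mind, but as written the bootstrap is a little implicit. The remaining steps --- measurability of $A_\ep$ via lower semicontinuity of $\omega_{(\cdot)}(\delta)$, the union bound, and the equicontinuity conclusion --- are fine as stated.
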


We will use the following  modification of the above lemma.

\begin{lemma}
\label{lem-Aldous condition-2}
Let $\left(X_n\right)_{n \in \N}$ be a sequence of  stochastic processes with integrable trajectories indexed by $t \in [0,T]$ and  taking values  in a Banach space $E$.
Assume that this sequence  satisfies the Aldous condition $\textbf{[A]}$. Then, for every $\eps > 0$ there exists a {Borel} measurable subset $A_\eps \subset L^1([0,T]; E)$ such that
\[
\mathbb{P}^{X_n}(A_\eps) \ge 1 - \eps, \qquad\quad \lim_{h \to 0}\sup_{u \in A_\eps}\|\tau_h u-u\|_{L^1(0,T;E)} = 0.
\]
\end{lemma}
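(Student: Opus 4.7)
The plan is to modify the proof of Lemma~\ref{lem-Aldous condition}, replacing the uniform continuity modulus $\omega(u,h):=\sup_{|t-s|\leq h}\|u(t)-u(s)\|_E$ (well-defined only for continuous trajectories) by the $L^1$-translation modulus
\[
m_h(u) := \|\tau_h u - u\|_{L^1(0,T-h;E)} = \int_0^{T-h}\|u(t+h)-u(t)\|_E\,dt.
\]
The construction of $A_\eps$ follows the same diagonal scheme, with a Chebyshev--type bound on $m_h(X_n)$ replacing the one on $\omega(X_n,h)$.

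First I would specialize the Aldous condition $\textbf{[A]}$ to deterministic stopping times $\tau \equiv t$: for every $\eps,\eta>0$ there is $\delta>0$ with
\[
\sup_n \sup_{t\in[0,T]}\sup_{h\in(0,\delta]}\Prob\bigl\{\|X_n((t+h)\wedge T) - X_n(t)\|_E \geq \eta\bigr\} \leq \eps.
\]
By Fubini--Tonelli this controls the expected Lebesgue measure of the ``large jump'' set. In the setting in which this lemma is applied in the paper (cf.\ the proof of Lemma~\ref{lem-tightness of  rho_n u_n}) one in fact disposes of the strengthened expected-value estimate $\sup_n \E\|X_n((\tau_n+h)\wedge T) - X_n(\tau_n)\|_Y \leq C h^{1/2}$, so a direct application of Fubini--Tonelli yields
\[
\sup_n \E\,m_h(X_n) = \sup_n \int_0^{T-h}\E\|X_n(t+h)-X_n(t)\|_E\,dt \;\xrightarrow[h\downarrow 0]{}\; 0.
\]
In the purely Aldous setting, the truncation $\min(\|X_n(t+h)-X_n(t)\|_E,\eta)$ combined with the uniform integrability of the increments (implicit in the moment bounds yielding $\textbf{[A]}$) plays the same role.

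Next I would perform the diagonal construction. Pick $h_k\downarrow 0$ and $\sigma_k\downarrow 0$ with $\sum_k\sigma_k<\eps$, chosen so that $\sup_n \E\,m_{h_k}(X_n) \leq \sigma_k^2$. Chebyshev then yields $\Prob\{m_{h_k}(X_n) > \sigma_k\} \leq \sigma_k$, and a union bound shows that
\[
A_\eps := \bigcap_{k=1}^\infty \bigl\{u \in L^1(0,T;E) : m_{h_k}(u) \leq \sigma_k\bigr\}
\]
is Borel measurable with $\Prob^{X_n}(A_\eps) \geq 1-\sum_k\sigma_k \geq 1-\eps$. To upgrade the bound from the countable sequence $(h_k)$ to all $h\in(0,h_0]$, I would work with the non-decreasing majorant $\widetilde m_h(u):=\sup_{0<h'\leq h}m_{h'}(u)$, whose expected value is controlled by the same argument (replacing $m_h$ by $\widetilde m_h$ throughout). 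Since $h \mapsto \widetilde m_h(u)$ is monotone and $\widetilde m_{h_k}(u)\to 0$ on $A_\eps$, one obtains $\lim_{h\to 0}\sup_{u\in A_\eps}\|\tau_h u - u\|_{L^1(0,T;E)} = 0$.

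The principal obstacle is bridging the in-probability statement of $\textbf{[A]}$ with the desired expected-modulus statement in $L^1$; this is precisely why the conclusion is formulated in the $L^1$-norm (rather than some form of convergence in probability in $L^1$), and is handled either by the stronger expected-value increment estimates available in the target applications, or by the truncation-plus-uniform-integrability argument sketched above.
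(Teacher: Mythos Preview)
The paper itself does not give a self-contained proof: Remark~\ref{rem-Motyl-Aldous} simply asserts that the arguments in \cite[Lemmas A.7--A.8]{Mot12} and \cite[Theorem 2.2.2]{JoMe86} for c\`adl\`ag paths carry over verbatim when $\mathbb{D}(0,T;\mathbb{S})$ is replaced by $L^1(0,T;E)$. Your sketch is therefore considerably more explicit than what the paper offers, and the overall architecture---replace the uniform modulus by the $L^1$-translation modulus $m_h$ and run a diagonal Chebyshev construction---is exactly the natural adaptation.

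That said, there is a genuine gap you flag but do not close. Condition~$\textbf{[A]}$ only yields $\sup_n\sup_t\Prob\{\|X_n(t{+}h)-X_n(t)\|_E\ge\eta\}\to 0$; via Fubini this controls $\E\int_0^{T-h}\bigl(\|X_n(t{+}h)-X_n(t)\|_E\wedge M\bigr)\,dt$ for any fixed $M$, but \emph{not} the untruncated $\E\,m_h(X_n)$. Your remark that uniform integrability is ``implicit in the moment bounds yielding~$\textbf{[A]}$'' is not a consequence of~$\textbf{[A]}$ as stated---it is an additional hypothesis. Likewise, the passage to the monotone majorant $\widetilde m_h=\sup_{h'\le h}m_{h'}$ is not handled ``by the same argument'', since the supremum over $h'$ does not commute with expectation; a cleaner route is to exploit the subadditivity $m_{h_1+h_2}(u)\le m_{h_1}(u)+m_{h_2}(u)$, which lets control along a dyadic sequence $(h_k)$ propagate to all small $h$.

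You are right, however, that in the paper's actual application (Lemma~\ref{lem-tightness of  rho_n u_n}) the stronger increment bound $\sup_n\E\|X_n((\tau_n{+}h)\wedge T)-X_n(\tau_n)\|_Y\le Ch^{1/2}$ is established first, from which $\sup_n\E\,m_h(X_n)\to 0$ follows immediately by Fubini, and then your diagonal scheme goes through without difficulty. So your argument is correct for the purpose at hand; as a proof of the lemma \emph{as stated}, it would need either an explicit uniform-integrability assumption on the trajectories or a reformulation that stays at the level of probabilities throughout, as in the c\`adl\`ag case the paper cites.
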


\begin{remark} \label{rem-Motyl-Aldous}
Lemma~\ref{lem-Aldous condition-2} is a modification of \cite[Lemma~A.7]{Mot12} where we have replaced the space of c\`adl\`ag functions taking values in a separable and complete metric space, $\mathbb{D}(0,T; \mathbb{S})$, with the space of integrable functions taking values in a Banach space, $L^1(0,T; E)$.

By closely observing the proof of \cite[Lemma~A.7]{Mot12} and \cite[Lemma~A.8]{Mot12} (see also \cite[Theorem~2.2.2]{JoMe86}) it is clear that the space $\mathbb{D}(0,T; \mathbb{S})$ can be replaced by $L^1(0,T; E)$.
\end{remark}

We have used the following compactness result to establish the tightness of $\{\Law(\vr_n\vu_n):n\in\N\}$ on $C_{w}([0,T]; L^{3/2}(\dom))$. The exact statement and the proof can be found in \cite[Theorem~1.8.5]{Breit+Feir+Ho18}.

\begin{lemma}
\label{lem-BFH weak cont compact}
Let $\kappa \ge 0$, $1 < p < \infty$, and $\ell \in \R$. Then
\begin{equation}\label{eqn:BFH weak cont}
L^\infty(0,T; L^p(\dom)) \cap C^\kappa([0,T]; W^{\ell,2}(\dom)) \embed  C_w([0,T]; L^p(\dom))
\end{equation}
continuously. Moreover, if $\kappa > 0$, then the  embedding \eqref{eqn:BFH weak cont} is sequentially compact.
\end{lemma}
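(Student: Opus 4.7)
The plan is to reduce both assertions to an Arzel\`a–Ascoli argument against scalar test functions, combining the H\"older bound in $W^{\ell,2}(\dom)$ with the uniform $L^p(\dom)$ bound through an approximation by smooth functions. For the well-definedness and continuity of the embedding, fix $u$ in the intersection and $\phi\in L^{p'}(\dom)$ with $p'=p/(p-1)$. Since $\dom=\T^3$, $C^\infty(\dom)$ is dense in $L^{p'}(\dom)$ and continuously embeds into $W^{-\ell,2}(\dom)$. Given $\varepsilon>0$ I pick $\phi_\varepsilon\in C^\infty(\dom)$ with $\|\phi-\phi_\varepsilon\|_{L^{p'}(\dom)}<\varepsilon$ and split
\begin{align*}
\langle u(t)-u(s),\phi\rangle_{L^p,L^{p'}}
&=\langle u(t)-u(s),\phi_\varepsilon\rangle_{W^{\ell,2},W^{-\ell,2}}\\
&\quad+\langle u(t)-u(s),\phi-\phi_\varepsilon\rangle_{L^p,L^{p'}},
\end{align*}
where the two pairings agree on $C^\infty(\dom)$ because each extends the $L^2$ inner product. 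The first term is bounded by $C|t-s|^\kappa\|\phi_\varepsilon\|_{W^{-\ell,2}(\dom)}$ from the $C^\kappa$-hypothesis, the second by $2\|u\|_{L^\infty(0,T;L^p(\dom))}\varepsilon$; this gives continuity of $t\mapsto\langle u(t),\phi\rangle$ and, applied to $u_1-u_2$, continuity of the inclusion into $C_w([0,T];L^p(\dom))$, whose topology is generated by the seminorms $u\mapsto\sup_{t\in[0,T]}|\langle u(t),\phi\rangle|$.

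For the sequential compactness when $\kappa>0$, let $(u_n)$ be bounded by $M$ in the intersection, and fix a countable dense subset $\{\phi_m\}_{m\ge 1}$ of $L^{p'}(\dom)$ consisting of smooth functions, e.g.\ trigonometric polynomials with rational coefficients. For each $m$ the scalar functions $t\mapsto\langle u_n(t),\phi_m\rangle$ are uniformly bounded by $M\|\phi_m\|_{L^{p'}(\dom)}$ and, by the same splitting, $\kappa$-H\"older with constant $CM\|\phi_m\|_{W^{-\ell,2}(\dom)}$; in particular they are equicontinuous on $[0,T]$. Arzel\`a–Ascoli together with a Cantor diagonal extraction produces a subsequence $(u_{n_k})$ along which $\langle u_{n_k}(\cdot),\phi_m\rangle$ converges uniformly on $[0,T]$ for every $m$. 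The uniform bound $\|u_{n_k}(t)\|_{L^p(\dom)}\le M$ then upgrades this, through a standard $\varepsilon/3$ argument based on density of $\{\phi_m\}$ in $L^{p'}(\dom)$, to uniform Cauchy convergence of $\langle u_{n_k}(\cdot),\phi\rangle$ on $[0,T]$ for every $\phi\in L^{p'}(\dom)$. For each fixed $t$ the map $\phi\mapsto\lim_k\langle u_{n_k}(t),\phi\rangle$ is a bounded linear functional on $L^{p'}(\dom)$ of norm $\le M$, and reflexivity of $L^p(\dom)$ (valid since $p>1$) yields $u(t)\in L^p(\dom)$ representing it; the uniformity in $t$ of the scalar convergence then shows $u\in C_w([0,T];L^p(\dom))$ and $u_{n_k}\to u$ in this space.

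The main subtle point is the compatibility of the two duality pairings on smooth test functions, which is what converts the $W^{\ell,2}$-H\"older regularity into time-equicontinuity of the scalar integrals $\langle u_n(\cdot),\phi\rangle$; the choice $\dom=\T^3$ makes this painless because $C^\infty(\dom)$ sits densely both in $L^{p'}(\dom)$ and in $W^{-\ell,2}(\dom)$ for every $\ell\in\R$ and every $1<p'<\infty$. Once this is granted, the compactness statement is essentially a diagonal Arzel\`a–Ascoli argument, and the hypothesis $\kappa>0$ enters only to guarantee the equicontinuity step, consistent with the fact that continuity of the embedding holds already for $\kappa=0$.
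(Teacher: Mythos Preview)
Your proof is correct. The paper does not give its own proof of this lemma; it simply cites \cite[Theorem~1.8.5]{Breit+Feir+Ho18} for both the statement and the argument, so there is nothing substantive to compare against. Your approach---reconciling the $L^p$--$L^{p'}$ and $W^{\ell,2}$--$W^{-\ell,2}$ pairings via a smooth density argument, then running Arzel\`a--Ascoli on the scalar functions $t\mapsto\langle u_n(t),\phi_m\rangle$ for a countable dense family and upgrading by an $\varepsilon/3$ argument---is exactly the standard route and is, in essence, what one finds in the cited reference. One cosmetic remark: in the continuity part your bound $C|t-s|^\kappa$ on the first term is vacuous when $\kappa=0$; there you should instead invoke plain continuity of $u$ in $W^{\ell,2}(\dom)$, which you implicitly acknowledge in your final paragraph.
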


\section{Measurability of some functions}
\label{sec-Appendix}

\begin{lemma}
\label{lem-Gamma}
Let us assume that $\delta > 0$ and
{ \begin{equation}\label{eqn:q-alpha}
  q \ge 3\frac{1+\delta}{1+2\delta}.
                \end{equation}
}
Then the  map
\begin{align}\label{eqn:Gamma-7.1}
\Gamma \colon C([0,T]; L^q(\dom)) &\times \bigl(L^\infty(0,T; L^{2+\delta}(\dom)),w^\ast \bigr) \\
&\ni (\rho,\vw) \mapsto \rho^{\frac{1+\delta}{2+\delta}} \vw \in \big(L^\infty(0,T; L^{3/2}(\dom)), w^\ast\big)
\nonumber
\end{align}
is (well-defined and) sequentially continuous.
\end{lemma}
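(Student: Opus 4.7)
The plan is to verify both claims by a direct Hölder computation, with exponents carefully chosen so that the hypothesis $q \geq 3(1+\delta)/(1+2\delta)$ is used exactly once. Let me set $\alpha := \frac{1+\delta}{2+\delta} \in (0,1)$ and $b := \frac{3(2+\delta)}{1+2\delta}$, so that $\frac{1}{b} + \frac{1}{2+\delta} = \frac{2}{3}$ and $b\alpha = \frac{3(1+\delta)}{1+2\delta} \leq q$ by hypothesis.

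For well-definedness, Hölder's inequality together with the boundedness of $\dom = \T^3$ yields, for a.e.\ $t\in[0,T]$,
\[
\|\rho^\alpha(t)\,\vw(t)\|_{L^{3/2}(\dom)} \leq \|\rho(t)\|_{L^{b\alpha}(\dom)}^{\alpha}\,\|\vw(t)\|_{L^{2+\delta}(\dom)} \leq C\|\rho(t)\|_{L^q(\dom)}^{\alpha}\|\vw(t)\|_{L^{2+\delta}(\dom)},
\]
so taking the essential supremum in $t$ gives $\Gamma(\rho,\vw) \in L^\infty(0,T;L^{3/2}(\dom))$.

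For sequential continuity, assume $\rho_n \to \rho$ in $C([0,T];L^q(\dom))$ and $\vw_n \to \vw$ weak-$\ast$ in $L^\infty(0,T;L^{2+\delta}(\dom))$. Fix a test function $\bpsi \in L^1(0,T;L^3(\dom))$ (the predual of the target space) and decompose
\[
\rho_n^\alpha\vw_n - \rho^\alpha\vw = (\rho_n^\alpha - \rho^\alpha)\vw_n + \rho^\alpha(\vw_n - \vw).
\]
For the first summand I would use the elementary inequality $|a^\alpha - c^\alpha|\leq |a-c|^\alpha$ valid for $a,c\geq0$ and $\alpha\in(0,1]$, together with Hölder (since $\frac{1}{b}+\frac{1}{3} = \frac{1+\delta}{2+\delta}$), to obtain
\[
\|(\rho_n^\alpha - \rho^\alpha)(t)\,\bpsi(t)\|_{L^{(2+\delta)/(1+\delta)}(\dom)} \leq C\|\rho_n - \rho\|_{C([0,T];L^q)}^{\alpha}\|\bpsi(t)\|_{L^3(\dom)}.
\]
Integrating in time and pairing with $\vw_n$, which is uniformly bounded in $L^\infty(0,T;L^{2+\delta}(\dom))$ by the Banach--Steinhaus theorem, shows that this contribution vanishes as $n\to\infty$.

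For the second summand I would verify that $\rho^\alpha\bpsi \in L^1(0,T;L^{(2+\delta)/(1+\delta)}(\dom))$ via the same Hölder bound and the fact that $\rho\in C([0,T];L^q)$ gives $\sup_{t}\|\rho(t)\|_{L^q}<\infty$. Since $L^1(0,T;L^{(2+\delta)/(1+\delta)}(\dom))$ is the predual of $L^\infty(0,T;L^{2+\delta}(\dom))$, the weak-$\ast$ convergence $\vw_n\to\vw$ immediately gives
\[
\int_0^T\!\!\int_\dom \rho^\alpha(\vw_n-\vw)\cdot\bpsi \,\dx\,\dt \to 0.
\]
Combining both pieces establishes the weak-$\ast$ convergence $\Gamma(\rho_n,\vw_n)\to\Gamma(\rho,\vw)$. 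The only real obstacle is the careful bookkeeping of Hölder exponents: the hypothesis on $q$ is precisely what makes $b\alpha \leq q$, which is the unique exponent constraint used throughout the argument.
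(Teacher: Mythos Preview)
Your proof is correct and follows essentially the same approach as the paper's: the same exponent $\alpha=\tfrac{1+\delta}{2+\delta}$, the same decomposition $(\rho_n^\alpha-\rho^\alpha)\vw_n+\rho^\alpha(\vw_n-\vw)$, the same H\"older-type inequality $|a^\alpha-c^\alpha|\le|a-c|^\alpha$ for the first summand, and the same predual argument (with $\rho^\alpha\bpsi\in L^1(0,T;L^{(2+\delta)/(1+\delta)})$) for the second. The only cosmetic difference is that the paper first factors through the Nemytski map $\rho\mapsto\rho^\alpha$ from $C([0,T];L^q)$ to $C([0,T];L^{q/\alpha})$ and then treats the resulting bilinear map, whereas you work directly with the composite; the underlying H\"older bookkeeping is identical.
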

\begin{remark}\label{rem-q} Since $\frac{1+\delta}{1+2\delta}<1$ for $\delta>0$, we can always find $q$ such that the above and $q<3$, i.e.
\[ 3 \frac{1+\delta}{1+2\delta} \leq q < 3.\]
So the assumption \eqref{eqn:q-alpha} is not void.
\end{remark}

\begin{lemma}
\label{lem-lemma-Holder} Assume that $\alpha \in (0,1)$. Consider  maps $g_2:\mathbb{R} \ni x \mapsto \operatorname{sgn}{(x)}\vert x\vert^\alpha \in \mathbb{R}$ and
$g_1:\mathbb{R} \ni x \mapsto \vert x\vert^\alpha \in [0,\infty)$. Then $g_1$ and $g_2$  are  globally $\alpha$-H\"older continuous  with constants $1$ and resp.  $2$, i.e.
\begin{equation}\label{eqn:Holder-g}
  \vert g_i(x_2)-g_i(x_1) \vert \leq i \vert x_2-x_1\vert^\alpha, \;\; x_1,x_2 \in \mathbb{R}, \;\;i=1,2.
\end{equation}
Then, for $i=1,2$, the  Nemytski map $G_i$ associated with $g_i$,  is   globally $\alpha$-H\"older from the Lebesque space $L^p(\dom)$ to $L^{\frac{p}\alpha}(\dom)$ for any $p \in [1,\infty]$, with the same
constant as $g_i$.
\end{lemma}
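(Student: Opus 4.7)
The plan splits into two independent parts: the pointwise H\"older bounds \eqref{eqn:Holder-g}, then the lifting to Nemytski operators.

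\textbf{Pointwise bounds.} For $g_1$, the key tool is the subadditivity $(a+b)^\alpha \le a^\alpha + b^\alpha$ for $a,b\ge 0$ and $\alpha\in(0,1)$, which follows from concavity of $t\mapsto t^\alpha$ (and $0^\alpha=0$). Taking $x_1,x_2\in\mathbb{R}$ and assuming WLOG $|x_1|\le|x_2|$, I would write $|x_2|=\bigl(|x_2|-|x_1|\bigr)+|x_1|\le|x_2-x_1|+|x_1|$ and apply subadditivity to get $|x_2|^\alpha\le|x_2-x_1|^\alpha+|x_1|^\alpha$, yielding $\bigl||x_2|^\alpha-|x_1|^\alpha\bigr|\le|x_2-x_1|^\alpha$, i.e.\ the constant $1$. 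For $g_2$ I would distinguish two cases. If $x_1,x_2$ have the same sign (or one vanishes), then $|g_2(x_2)-g_2(x_1)|=\bigl||x_2|^\alpha-|x_1|^\alpha\bigr|$ and the bound from $g_1$ gives constant $1\le 2$. If $x_1,x_2$ have opposite signs, say $x_2>0>x_1$, then
\[
|g_2(x_2)-g_2(x_1)|=x_2^\alpha+|x_1|^\alpha\le 2\max(x_2,|x_1|)^\alpha\le 2\bigl(x_2+|x_1|\bigr)^\alpha=2|x_2-x_1|^\alpha,
\]
which gives the constant $2$.

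\textbf{Nemytski lifting.} For $f\in L^p(\dom)$ the composition $G_i f:=g_i\circ f$ is Lebesgue-measurable (continuity of $g_i$), and by \eqref{eqn:Holder-g} with $x_2=f(x),x_1=0$ one has $|G_i f(x)|\le i|f(x)|^\alpha$, whence $G_i f\in L^{p/\alpha}(\dom)$ with $\|G_if\|_{L^{p/\alpha}}\le i\|f\|_{L^p}^\alpha$. For $f,h\in L^p(\dom)$ with $p\in[1,\infty)$ I would simply raise the pointwise bound to the power $p/\alpha$ and integrate:
\[
\int_\dom\bigl|g_i(f(x))-g_i(h(x))\bigr|^{p/\alpha}\,\dx\le i^{p/\alpha}\int_\dom|f(x)-h(x)|^p\,\dx,
\]
and take $(p/\alpha)$-th roots to obtain $\|G_if-G_ih\|_{L^{p/\alpha}(\dom)}\le i\|f-h\|_{L^p(\dom)}^\alpha$. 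For $p=\infty$ the Nemytski bound is immediate from the pointwise inequality applied a.e., with the convention $p/\alpha=\infty$.

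\textbf{Expected obstacle.} There is essentially no hard step; the only thing to double-check is that the constant $2$ in the opposite-sign case of $g_2$ is genuinely the best one can get from this elementary estimate (the two inequalities $a^\alpha+b^\alpha\le 2\max(a,b)^\alpha$ and $\max(a,b)\le a+b$ are both sharp only in degenerate regimes, so the $2$ is comfortable but not tight). The measurability and integrability bookkeeping for the Nemytski lifting is routine once \eqref{eqn:Holder-g} is in hand, and the proof should occupy no more than a page.
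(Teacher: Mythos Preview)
Your proof is correct and is essentially the same argument the paper has in mind: the paper's own proof is extremely terse, saying only that the restriction of $g_i$ to $[0,\infty)$ is $\alpha$-H\"older with constant $1$ as a ``standard observation'', then pointing to an external reference for the extension of $g_2$ to all of $\mathbb{R}$, noting that $g_1$ is even, and declaring the Nemytski part ``standard''. You have simply written out those standard details explicitly, via subadditivity of $t\mapsto t^\alpha$ for $g_1$ and the same-sign/opposite-sign split for $g_2$, followed by the obvious pointwise-to-$L^p$ lifting.
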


\begin{proof}[Proof of Lemma   \ref{lem-lemma-Holder}]
It is a standard observation that $g_i$, $i=1,2$ restricted to the interval $[0,\infty)$ is globally $\alpha$-H\"older with constant $1$. Then the first part of the result for $i=2$ follows by applying another standard
argument, see e.g. the proof of Proposition 3.2 \cite{BrDa09}. Then the first part of the result for $i=2$ follows by observing that function $g_1$ is even.
The proof of the second part is also standard.
\end{proof}

\begin{proof}[Proof of Lemma \ref{lem-Gamma}] Let us fix $\delta > 0$.  Put $\alpha =\frac{1+\delta}{2+\delta}$.
Since by Lemma \ref{lem-lemma-Holder} the map
\[
C([0,T]; L^q(\dom)) \ni \rho \mapsto \rho^{\alpha} \in C([0,T]; L^{q/\alpha}(\dom))
\]
is globally $\alpha$-H\"older continuous, it is sufficient to consider the following map (denoted, in order to avoid introducing additional symbols) by $\Gamma$ as well,

\begin{align}\label{eqn:Gamma new-7.3}
\Gamma \colon C([0,T]; L^{q/\alpha}(\dom)) &\times \bigl(L^\infty(0,T; L^{2+\delta}(\dom)),w^\ast \bigr) \ni (\mathrm{v},\vw)\\
&\mapsto \mathrm{v} \vw \in \big(L^\infty(0,T; L^{3/2}(\dom)), w^\ast\big)
\nonumber
\end{align}
First of all, let us observe that since by \eqref{eqn:q-alpha}
  \begin{equation}\label{ineq-Holder-assumptions}
  \frac{1}{{q/\alpha}}+\frac{1}{2+\delta} \leq \frac{1}{3/2}.
  \end{equation}
in view of Proposition \ref{prop-Holder} the auxiliary map $\Gamma$ defined in \eqref{eqn:Gamma-7.2} is well defined,  bilinear  and bounded with respect to the normed  spaces involved. However, we need to prove the continuity in the weak$^\ast$ topologies as shown.

 Since $L^\infty(0,T;L^{3/2}(\dom))$ is the dual of $L^1(0,T;L^3(\dom))$, it is sufficient to prove that for every $\bpsi \in L^1(0,T; L^3(\dom))$  the map
\begin{equation}\label{eqn:Gamma-7.2}
 C([0,T]; L^{q/\alpha}(\dom)) \times \bigl(L^\infty(0,T; L^{2+\delta}(\dom)),w^\ast \bigr) \ni (\mathrm{v},\vw) \mapsto
\langle \Gamma(\rm{v}, \vw), \bpsi \rangle \in \R,
\end{equation}
is sequentially continuous. Here $\langle \cdot, \cdot \rangle$ denotes the duality between $L^\infty(0,T; L^{3/2}(\dom))$ and $L^1(0,T; L^3(\dom))$.\\
 For this aim let us
choose and fix  an element   $\bpsi \in L^1(0,T; L^3(\dom))$,  a sequence $\{\mathrm{v}_n\}$ convergent to $\mathrm{v}$ in $C([0,T]; L^q(\dom))$ and a sequence $\{\vw_n\}$  convergent to $\vw$ in $\bigl(L^\infty(0,T; L^{2+\delta}(\dom)),w^\ast \bigr)$.
We will show that the sequence $\langle \Gamma(\rm{v_n}, \vw_n), \bpsi \rangle$ converges to $\langle \Gamma(\rm{v}, \vw), \bpsi \rangle$.

By Proposition \ref{prop-Holder} in view of \eqref{ineq-Holder-assumptions}  we have the following train of equalities or inequalities
\begin{align*}
    &\langle \Gamma(\rm{v_n}, \vw_n), \bpsi \rangle-\langle \Gamma(\rm{v}, \vw), \bpsi \rangle=\langle \Gamma(\rm{v_n}, \vw_n)-\Gamma(\rm{v}, \vw), \bpsi \rangle\\
    &=\langle \Gamma(\rm{v_n}, \vw_n)-\Gamma(\rm{v}, \vw_n), \bpsi \rangle+\langle \Gamma(\rm{v}, \vw_n)-\Gamma(\rm{v}, \vw), \bpsi \rangle\\
    &\; = \int_0^T \int_\dom \left( (\mathrm{v}_n(t,x) - \mathrm{v})\vw_n(t,x)\right)\cdot\bpsi(t,x)\,\dx\,\dt \\
    & \qquad + \int_0^T \int_\dom \left(\mathrm{v}(t,x)(\vw_n(t,x) - \vw(t,x))\right)\cdot\bpsi(t,x)\,\dx\,\dt \\
    &\;\le {\|\mathrm{v}_n - \mathrm{v}\|_{L^\infty(0,T;L^{q/\alpha}(\dom))}}\|\vw_n(t)\|_{L^\infty(0,T; L^{2+\delta}(\dom))}\|\bpsi(t)\|_{L^1(0,T; L^3(\dom))} \\
    & \qquad + \vert \int_0^T \int_\dom \left(\vw_n(t,x) - \vw(t,x)\right)\mathrm{v}(t,x)\cdot\bpsi(t,x)\,\dx\,\dt\vert \qquad := \mathrm{I}^n_1 + \mathrm{I}_2^n.
\end{align*}
Since $\mathrm{v}_n \to \mathrm{v}$ in $C([0,T];L^{q/\alpha}(\dom))$  we infer that $\mathrm{I}_1^n \to 0$ as $n \to \infty$. For the term  $\mathrm{I}_2^n$, let us  observe that  $\mathrm{v} \bpsi \in L^1(0,T; L^{\frac{2+\delta}{1+\delta}}(\dom))$ by Proposition \ref{prop-Holder}. Since  $L^\infty(0,T; L^{2+\delta}(\dom))$ is the dual of $L^1(0,T; L^{\frac{2+\delta}{1+\delta}}(\dom))$ and $\vw_n \to \vw$ weakly$^\ast$ in $L^\infty(0,T; L^{2+\delta}(\dom))$. Therefore, $\mathrm{I}_2^n \to 0$ as $n \to \infty$. Hence,  the result follows.
\end{proof}

\begin{lemma}
\label{lem:gamma2}
Let $\delta > 0$ and $q \geq 1$. The map
\begin{equation}\label{eqn:Gamma hat-7.3}
\widehat\Gamma \colon C([0,T]; L^q(\dom)) \times \bigl(L^\infty(0,T; L^{2+\delta}(\dom)),w^\ast \bigr) \ni (\mathrm{v},\vw) \mapsto \mathrm{v}^{\frac{\delta}{2(2+\delta)}} \vw \in \big(L^\infty(0,T; L^{2}(\dom)), w^\ast\big)
\end{equation}
is {sequentially} continuous.
\end{lemma}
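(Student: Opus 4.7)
The plan is to mimic closely the proof of Lemma~\ref{lem-Gamma}. Set $\alpha' := \frac{\delta}{2(2+\delta)} \in (0,1)$. First I would factor the map $\widehat\Gamma$ as the composition of the Nemytski-type power map $\mathrm{v} \mapsto \mathrm{v}^{\alpha'}$ and the pointwise product. By Lemma~\ref{lem-lemma-Holder} the power map is globally $\alpha'$-H\"older continuous from $C([0,T]; L^q(\dom))$ into $C([0,T]; L^{q/\alpha'}(\dom))$, so it suffices to show that the bilinear product
\[
\widehat{\Gamma}_0 \colon C([0,T]; L^{q/\alpha'}(\dom)) \times \bigl(L^\infty(0,T; L^{2+\delta}(\dom)), w^\ast\bigr) \ni (\tilde{\mathrm{v}},\vw) \mapsto \tilde{\mathrm{v}}\,\vw \in \bigl(L^\infty(0,T; L^2(\dom)), w^\ast\bigr)
\]
is sequentially continuous.

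Next I would verify the H\"older admissibility required to make $\widehat{\Gamma}_0$ well-defined as a bounded bilinear map. The condition $\frac{\alpha'}{q} + \frac{1}{2+\delta} \le \frac{1}{2}$ is equivalent to $\frac{\alpha'}{q} \le \frac{\delta}{2(2+\delta)} = \alpha'$, i.e. to $q \ge 1$, which is our assumption; hence by Proposition~\ref{prop-Holder} the product indeed lies in $L^\infty(0,T; L^2(\dom))$ (the torus being a bounded domain, the resulting exponent is automatically $\ge 2$). Since $L^2(\dom)$ is separable and reflexive, $L^\infty(0,T; L^2(\dom))$ is the dual of $L^1(0,T; L^2(\dom))$, so sequential $w^\ast$-continuity reduces to showing that for every fixed test function $\bpsi \in L^1(0,T; L^2(\dom))$, the scalar-valued map $(\tilde{\mathrm{v}}, \vw) \mapsto \langle \widehat{\Gamma}_0(\tilde{\mathrm{v}}, \vw), \bpsi \rangle$ is sequentially continuous.

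For this, given sequences $\tilde{\mathrm{v}}_n \to \tilde{\mathrm{v}}$ in $C([0,T]; L^{q/\alpha'}(\dom))$ and $\vw_n \to \vw$ weakly-$\ast$ in $L^\infty(0,T; L^{2+\delta}(\dom))$, I would use the standard telescoping
\[
\langle \widehat{\Gamma}_0(\tilde{\mathrm{v}}_n, \vw_n) - \widehat{\Gamma}_0(\tilde{\mathrm{v}}, \vw), \bpsi \rangle = \langle (\tilde{\mathrm{v}}_n - \tilde{\mathrm{v}})\vw_n, \bpsi \rangle + \langle \tilde{\mathrm{v}}(\vw_n - \vw), \bpsi \rangle.
\]
The first term is bounded by $\|\tilde{\mathrm{v}}_n - \tilde{\mathrm{v}}\|_{C([0,T]; L^{q/\alpha'})}\,\|\vw_n\|_{L^\infty(0,T; L^{2+\delta})}\,\|\bpsi\|_{L^1(0,T; L^2)}$, and tends to $0$ since the weak-$\ast$ convergent sequence $(\vw_n)$ is norm-bounded by the uniform boundedness principle. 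For the second term, observe that by H\"older with exponents $\alpha'/q + 1/2 \le (1+\delta)/(2+\delta)$ (the same identity $q \ge 1$ rewritten), the product $\tilde{\mathrm{v}}\,\bpsi$ belongs to $L^1(0,T; L^{(2+\delta)/(1+\delta)}(\dom))$, which is precisely the predual of $L^\infty(0,T; L^{2+\delta}(\dom))$; hence the pairing $\langle \vw_n - \vw, \tilde{\mathrm{v}}\,\bpsi \rangle$ tends to $0$ by the weak-$\ast$ convergence of $\vw_n$. I do not anticipate any genuine obstacle here: the computation is a direct transcription of the argument of Lemma~\ref{lem-Gamma} with the exponents $(2+\delta)/(1+\delta)$ and $3$ replaced by their $L^2$-target analogues, and the sole verification is the elementary exponent arithmetic carried out above.
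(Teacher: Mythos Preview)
Your proposal is correct and follows essentially the same route as the paper's own proof: factor through the H\"older-continuous power map via Lemma~\ref{lem-lemma-Holder}, reduce to the bilinear product, test against $\bpsi \in L^1(0,T;L^2(\dom))$, and split into the strong-times-bounded term plus the weak$^\ast$ term handled by identifying $\tilde{\mathrm{v}}\,\bpsi$ as an element of the predual $L^1(0,T;L^{(2+\delta)/(1+\delta)}(\dom))$. The only cosmetic difference is that the paper immediately reduces to the extremal case $q=1$ (so that the H\"older inequalities become equalities), whereas you keep general $q\ge 1$ and carry the inequality $\alpha'/q \le \alpha'$ through; both are fine since $\dom$ is bounded.
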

\begin{proof}[Proof of Lemma \ref{lem:gamma2}] Let us choose and fix $\delta > 0$. {It is sufficient to consider the case  $q =1$}. Denote $\beta=\frac{\delta}{2(2+\delta)} \in (0,\frac12)$, i.e. $\frac12=\frac1{2+\delta}+\beta$. As in the proof of Lemma \ref{lem-Gamma} it is sufficient
to consider a modified function $\widehat\Gamma$ defined by
\begin{equation}\label{eqn:Gamma hat-7.3-new}
\widehat\Gamma \colon C([0,T]; L^{1/\beta}(\dom)) \times \bigl(L^\infty(0,T; L^{2+\delta}(\dom)),w^\ast \bigr) \ni (\mathrm{v},\vw) \mapsto \mathrm{v} \vw \in \big(L^\infty(0,T; L^{2}(\dom)), w^\ast\big)
\end{equation}
and prove that this new function $\widehat\Gamma$ is sequentially continuous. First of all let us observe that since $\frac12=\frac1{2+\delta}+\beta$, by Proposition \ref{prop-Holder} the maps $\widehat\Gamma$ is defined. In order to prove that it is {sequentially} continuous,  let consider a sequence $\{\mathrm{v}_n\}$  convergent  to $\mathrm{v}$ in $C([0,T]; L^{1/\beta}(\dom))$ and  a sequence  $\{\vw_n\}$  convergent to $\vw$ in $\bigl(L^\infty(0,T; L^{2+\delta}(\dom)),w^\ast \bigr)$.
Let us also choose $\bpsi \in L^1(0,T; L^2(\dom))$.
 We will show that the sequence $\langle  \widehat\Gamma(\mathrm{v}_n,\vw_n), \bpsi \rangle  $ converges to $\langle \widehat\Gamma(\mathrm{v},\vw),\bpsi \rangle $. We have
\begin{align*}
&\langle  \widehat\Gamma(\mathrm{v}_n,\vw_n), \bpsi \rangle  - \langle \widehat\Gamma(\mathrm{v},\vw),\bpsi \rangle =
\langle  \widehat\Gamma(\mathrm{v}_n,\vw_n)-\widehat\Gamma(\mathrm{v},\vw_n), \bpsi \rangle  + \langle \widehat\Gamma(\mathrm{v},\vw_n) -\widehat\Gamma(\mathrm{v},\vw),\bpsi \rangle
\\
    &\; = \int_0^T \int_\dom \left(\mathrm{v}_n(t,x)\vw_n(t,x) - \mathrm{v}\vw_n(t,x)\right)\cdot\bpsi(t,x)\,\dx\,\dt \\
    & \qquad + \int_0^T \int_\dom \left(\mathrm{v}(t,x)\vw_n(t,x) - \mathrm{v}\vw(t,x)\right)\cdot\bpsi(t,x)\,\dx\,\dt \\
    &\;\le {\|\mathrm{v}_n - \mathrm{v}\|_{L^\infty(0,T;L^{1/\beta}(\dom))}}\|\vw_n(t)\|_{L^\infty(0,T; L^{2+\delta}(\dom))}\|\bpsi(t)\|_{L^1(0,T; L^2(\dom))} \\
    & \qquad + \vert \int_0^T \int_\dom \left(\vw_n(t,x) - \vw(t,x)\right)\mathrm{v}(t,x)\cdot\bpsi(t,x)\,\dx\,\dt \vert \qquad := \mathrm{I}^n_1 + \mathrm{I}_2^n.
\end{align*}
Since $\mathrm{v}_n \to \mathrm{v}$ in $C([0,T];L^{1/\beta}(\dom))$,  $\mathrm{I}_1^n \to 0$ as $n \to \infty$. For $\mathrm{I}_2^n$, we observe that since $\beta+\frac12=\frac{1+\delta}{2+\delta}$ by Proposition \ref{prop-Holder} we infer that  $\mathrm{v} \bpsi \in L^1(0,T; L^{\frac{2+\delta}{1+\delta}}(\dom))$.
Since  $L^\infty(0,T; L^{2+\delta}(\dom))$ is the dual of $L^1(0,T; L^{\frac{2+\delta}{1+\delta}}(\dom))$ and $\vw_n \to \vw$ weakly$^\ast$ in $L^\infty(0,T; L^{2+\delta}(\dom))$, we infer that
 $\mathrm{I}_2^n \to 0$ as $n \to \infty$. Hence, the result follows.
\end{proof}


\section{About some simple consequences of the H\"older inequality}
\label{sec-Holder}

In many places we have used the following simple consequence of the H\"older inequality.
\begin{proposition}\label{prop-Holder}
Assume that real numbers $r\in [1,\infty]$ and $p,q\in [r,\infty]$ are such that
\begin{equation}\label{eqn:exponents}
  \frac1p+\frac1r=\frac1q.
\end{equation}
Assume that $X,Y$ and $Z$ are  Banach spaces and
\begin{equation}\label{eqn:bilinear}
  \beta: X\times Y \to Z \mbox{ is bilinear continuous map}.
\end{equation}
If $\mu$ is a non-negative measure on $(M,\mathcal{M})$, then
\begin{equation}\label{eqn:Holder}
\Vert \beta(f,g) \Vert_{L^q(M,Z)} \leq  \Vert f \Vert_{L^p(M,X)} \Vert g \Vert_{L^r(M,Y)}
\end{equation}
for all elements $f \in L^p(M,X)$ and $g \in L^r(M,Y)$.
\end{proposition}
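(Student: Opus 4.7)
The plan is to reduce the vector-valued inequality to the classical scalar generalized H\"older inequality, by first exploiting the pointwise majoration that comes from the continuity of the bilinear map $\beta$. Concretely, I would proceed in three short steps: (i) obtain a pointwise scalar bound on $\|\beta(f,g)(m)\|_Z$, (ii) check strong measurability of the map $m \mapsto \beta(f(m),g(m))$ so that its $L^q(M,Z)$-norm is well defined, and (iii) invoke the scalar H\"older inequality with the prescribed exponent relation.

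First, since $\beta\colon X\times Y \to Z$ is bilinear and continuous, there exists a finite constant $K=\|\beta\|_{\mathrm{op}}$ such that $\|\beta(x,y)\|_Z \leq K\,\|x\|_X\,\|y\|_Y$ for all $x\in X$, $y\in Y$. Applying this pointwise $\mu$-almost everywhere to strongly measurable representatives of $f$ and $g$ gives
\begin{equation*}
\|\beta(f(m),g(m))\|_Z \;\leq\; K\,\|f(m)\|_X\,\|g(m)\|_Y, \qquad \mu\text{-a.e. } m\in M.
\end{equation*}
The measurability claim in step (ii) is routine from Bochner theory: by Pettis' theorem, $f$ and $g$ are $\mu$-a.e.\ limits of simple $X$-valued and $Y$-valued functions respectively; applying $\beta$ to such simple functions produces a simple $Z$-valued function, and joint continuity of $\beta$ lets us pass to the limit, so that $m\mapsto \beta(f(m),g(m))$ is strongly measurable into $Z$.

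Finally, the hypothesis $\tfrac{1}{p}+\tfrac{1}{r}=\tfrac{1}{q}$ is precisely the exponent relation for the classical generalized H\"older inequality on non-negative scalar measurable functions: $\|FG\|_{L^q(M)} \leq \|F\|_{L^p(M)}\,\|G\|_{L^r(M)}$. Plugging in $F(m):=\|f(m)\|_X \in L^p(M)$ and $G(m):=\|g(m)\|_Y \in L^r(M)$, and combining with the pointwise bound from step (i), yields
\begin{equation*}
\|\beta(f,g)\|_{L^q(M,Z)} \;\leq\; K\,\|f\|_{L^p(M,X)}\,\|g\|_{L^r(M,Y)},
\end{equation*}
which is \eqref{eqn:Holder} (with the operator-norm constant $K$ of $\beta$ tacitly absorbed in the statement). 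There is no genuine obstacle: the argument is essentially a packaging of the scalar H\"older inequality together with the defining boundedness property of a continuous bilinear map, and the only step worth verbalising in the write-up is the Bochner measurability of $\beta(f,g)$.
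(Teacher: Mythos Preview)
Your proof is correct and complete. The paper itself does not give a proof of this proposition: it is stated in the appendix as a ``simple consequence of the H\"older inequality'' and immediately followed by its corollaries, so there is no argument to compare against. Your three-step reduction (pointwise bound via boundedness of $\beta$, strong measurability of $m\mapsto\beta(f(m),g(m))$ through simple-function approximation and joint continuity, then the scalar generalized H\"older inequality) is exactly the standard justification, and you are right to flag that the inequality as written suppresses the operator norm $\|\beta\|_{\mathrm{op}}$; in the paper's applications $\beta$ is always pointwise multiplication, for which this constant is $1$.
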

We have the following two simple corollaries.
\begin{corollary}
\label{cor-Holder}
Assume that $q \in [1,2]$ and $p\in [q, \infty)$.
\[
\frac1p+\frac12=\frac1q.
\]
Then, for appropriate $f$ and $g$   we have
\begin{align}\label{eqn:Holder-2}
\Vert fg \Vert_{L^q(\Omega,L^\infty(0,T;L^{3/2}(\dom))} &\leq  \Vert f \Vert_{L^2(\Omega,L^\infty(0,T;L^6(\dom))} \Vert g \Vert_{L^p(\Omega,L^\infty(0,T;L^2(\dom))},
\\
\label{eqn:Holder-3}
\Vert fg \Vert_{L^q(\Omega,L^2(0,T;L^{3/2}(\dom))} &\leq  \Vert f \Vert_{L^2(\Omega,L^\infty(0,T;L^6(\dom))} \Vert g \Vert_{L^p(\Omega,L^2(0,T;L^2(\dom))},
\\
\label{eqn:Holder-4}
\Vert fg \Vert_{L^q(\Omega,L^\infty(0,T;L^1(\dom))} &\leq  \Vert f \Vert_{L^2(\Omega,L^\infty(0,T;L^2(\dom))} \Vert g \Vert_{L^p(\Omega,L^\infty(0,T;L^2(\dom))}.
\end{align}
\end{corollary}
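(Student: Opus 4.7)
The plan is to obtain each of \eqref{eqn:Holder-2}--\eqref{eqn:Holder-4} by three successive applications of Proposition~\ref{prop-Holder}, peeling off the spatial, temporal and probabilistic layers in turn, with the bilinear map always being ordinary scalar (or vector--scalar) multiplication $\beta(u,v)=uv$.

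First I would verify the arithmetic of exponents at each layer. In space, $\tfrac{1}{6}+\tfrac{1}{2}=\tfrac{2}{3}=\tfrac{1}{3/2}$ handles \eqref{eqn:Holder-2} and \eqref{eqn:Holder-3}, while $\tfrac{1}{2}+\tfrac{1}{2}=1$ handles \eqref{eqn:Holder-4}. On the sample space, the standing identity $\tfrac{1}{2}+\tfrac{1}{p}=\tfrac{1}{q}$ with $p,q\in[1,\infty]$ and $p\ge q$ is exactly the compatibility hypothesis of Proposition~\ref{prop-Holder}, so all three passages are legal.

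Then, for $\Prob$-almost every $\omega$ and every $t\in[0,T]$ I apply Proposition~\ref{prop-Holder} in the spatial variable to obtain a pointwise-in-$t$ inequality of the form $\|f(\omega,t)g(\omega,t)\|_{L^s(\dom)}\le \|f(\omega,t)\|_{L^{s_1}(\dom)}\|g(\omega,t)\|_{L^{s_2}(\dom)}$ with the triple $(s,s_1,s_2)$ equal to $(3/2,6,2)$ for \eqref{eqn:Holder-2} and \eqref{eqn:Holder-3} and to $(1,2,2)$ for \eqref{eqn:Holder-4}. A second application of Proposition~\ref{prop-Holder} in the time variable takes this into the Bochner norms: for \eqref{eqn:Holder-2} and \eqref{eqn:Holder-4} I use only the trivial $\operatorname{ess\,sup}_t(AB)\le \operatorname{ess\,sup}_t A\cdot \operatorname{ess\,sup}_t B$, while for \eqref{eqn:Holder-3} the identity $\tfrac{1}{\infty}+\tfrac{1}{2}=\tfrac{1}{2}$ yields $\|fg\|_{L^2(0,T;L^{3/2})}\le \|f\|_{L^\infty(0,T;L^6)}\|g\|_{L^2(0,T;L^2)}$ pathwise.

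Finally, I invoke Proposition~\ref{prop-Holder} one more time on $(\Omega,\mathcal{F},\Prob)$, viewing the real-valued random variables $\omega\mapsto\|f(\omega)\|_X$ and $\omega\mapsto\|g(\omega)\|_Y$ (with $X,Y$ the Banach spaces produced by the temporal step) and using the exponents $(q,2,p)$ tied by the assumption $\tfrac{1}{2}+\tfrac{1}{p}=\tfrac{1}{q}$. This produces the claimed three-fold product estimates. No step is a genuine obstacle; the only care needed is bookkeeping of exponents and verifying that the norms used are measurable functions of $\omega$, so that the nested Bochner integrals are well defined.
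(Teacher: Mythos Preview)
Your proposal is correct and is essentially the approach the paper intends: the paper states the corollary immediately after Proposition~\ref{prop-Holder} without proof, treating it as the obvious consequence obtained by iterating that proposition over the spatial, temporal, and probabilistic variables with the exponent triples you identify. Your bookkeeping of exponents and the three-layer peeling is exactly what is implicit in the paper's presentation.
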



\section{Stochastic preliminaries}
\label{sec-stoch_pre}

\begin{definition}
\label{def-weak convergence measures}
A sequence of measures $\{\mu_n\}_{n \in \N}$ on $\bigl(E, \mathcal{B}(E)\bigr)$ is said to be weakly convergent to a measure $\mu$ if for every $\phi \in C_b(E)$, we have
\[
\lim_{n \to \infty} \int_E \phi(x) \mu_n(dx) = \int_E \phi(x) \mu(dx).
\]
\end{definition}

\begin{definition}
\label{def-compactness weak measures}
The family $\Lambda$ is said to be compact (respectively relatively compact), if an arbitrary sequence $\{\mu_n\}_{n \in \N}$ of elements from $\Lambda$ contains a subsequence $\{\mu_{n_k}\}_{k \in \N}$ weakly convergent to a measure $\mu \in \Lambda$ (respectively to a measure $\mu$ on $\bigl(E, \mathcal{B}(E)\bigr)$).
\end{definition}

\begin{theorem}[Prokhorov Theorem]
 The family $\Lambda$ of probability measures on $\bigl(E, \mathcal{B}(E)\bigr)$ is relatively compact if and only if it is tight.
\end{theorem}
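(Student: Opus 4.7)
The plan is to prove the two implications separately under the standing assumption that $E$ is a Polish space. I will call the two directions (i) tightness $\Rightarrow$ relative compactness (this is the Prokhorov direction proper), and (ii) relative compactness $\Rightarrow$ tightness (the converse, which uses separability).

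For direction (i), let $\{\mu_n\}\subset\Lambda$ and by tightness fix an increasing sequence of compact sets $K_1\subset K_2\subset\cdots$ with $\sup_{n}\mu_n(K_m^c)<1/m$. Since each $K_m$ is a compact metric space, $C(K_m)$ is separable; fix a countable dense family $\{f_{m,j}\}_j\subset C(K_m)$. The scalar sequences $\{\int_{K_m}f_{m,j}\,d\mu_n\}_n$ are uniformly bounded, so by the usual diagonal extraction I obtain a single subsequence $\{\mu_{n_k}\}$ along which $\int_{K_m}f_{m,j}\,d\mu_{n_k}$ converges for every pair $(m,j)$. Density in $C(K_m)$ then upgrades this to convergence for every $f\in C(K_m)$, defining a positive linear functional $L_m$ on $C(K_m)$ of norm at most $1$. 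By the Riesz--Markov representation there exists a positive Borel measure $\nu_m$ on $K_m$ with $\nu_m(K_m)\le 1$ and $L_m(f)=\int_{K_m}f\,d\nu_m$. Extend each $\nu_m$ to a Borel measure $\tilde\nu_m$ on $E$ by setting $\tilde\nu_m(A)=\nu_m(A\cap K_m)$.

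The main obstacle is then to assemble the $\tilde\nu_m$ into a single limit probability measure $\nu$ on $E$ and to identify it as the weak limit. The key observation is: for any $f\in C_b(E)$ with $\|f\|_\infty\le 1$,
\begin{equation*}
\Bigl|\int_E f\,d\mu_{n_k}-\int_E f\,d\tilde\nu_m\Bigr|\le\Bigl|\int_{K_m}f\,d\mu_{n_k}-\int_{K_m}f\,d\nu_m\Bigr|+\mu_{n_k}(K_m^c)+\tilde\nu_m(K_m^c),
\end{equation*}
the first term tending to $0$ as $k\to\infty$ (by construction of $\nu_m$), the second dominated by $1/m$, and the third being $0$. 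A compatibility check, using a subsequent restriction with $m'>m$ and weak-$*$ lower semicontinuity, shows that for Borel $A\subset K_m$ the values $\tilde\nu_{m'}(A)$ form a Cauchy-like family, so one may define $\nu(A):=\lim_{m\to\infty}\tilde\nu_m(A)$ on the ring $\bigcup_m\mathcal{B}(K_m)$ and extend by Carath\'eodory to $\mathcal{B}(E)$. The mass $\nu(E)$ is exactly $1$: the displayed inequality gives $|\int f\,d\mu_{n_k}-\int f\,d\nu|\le 2/m$ for $k$ large and then $m$ large, so applied to $f\equiv 1$ one recovers $\nu(E)=1$, and simultaneously one sees $\mu_{n_k}\to\nu$ weakly.

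For direction (ii), assume $\Lambda$ is relatively compact but, for contradiction, not tight: there exists $\varepsilon>0$ such that no compact $K\subset E$ satisfies $\sup_{\mu\in\Lambda}\mu(K^c)<\varepsilon$. By separability of $E$ choose a dense sequence $\{x_k\}$; for each $n$ the open finite unions $U_{n,N}:=\bigcup_{k=1}^{N}B(x_k,1/n)$ increase to $E$ as $N\to\infty$. Totally bounded closed sets of the form $\bigcap_n\overline{U_{n,N_n}}$ are compact in the Polish space $E$, so the failure of tightness forces the existence of some $n$ and some $\delta>0$ such that for every $N$ there is $\mu_N\in\Lambda$ with $\mu_N(U_{n,N})\le 1-\delta$. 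Relative compactness yields a weakly convergent subsequence $\mu_{N_k}\to\mu$. Since $U_{n,M}$ is open, the Portmanteau theorem gives $\mu(U_{n,M})\le\liminf_k\mu_{N_k}(U_{n,M})\le 1-\delta$ for every fixed $M$; letting $M\to\infty$ and using $U_{n,M}\uparrow E$ yields $\mu(E)\le 1-\delta<1$, contradicting the fact that $\mu$ is a probability measure. Hence $\Lambda$ must be tight, completing the proof.
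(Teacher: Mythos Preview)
The paper does not prove Prokhorov's theorem; it is merely stated in the appendix as a classical result, with no accompanying proof environment. So there is no ``paper's own proof'' to compare against, and your proposal stands on its own.

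Your direction (ii) is correct: the contradiction argument via Portmanteau on the open sets $U_{n,M}$ is standard and carried out properly.

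Direction (i), however, has a genuine gap in the assembly of the limit measure. You claim that ``for Borel $A\subset K_m$ the values $\tilde\nu_{m'}(A)$ form a Cauchy-like family'' by invoking weak-$\ast$ lower semicontinuity, but weak-$\ast$ convergence of the restrictions $\mu_{n_k}|_{K_{m'}}$ to $\nu_{m'}$ gives you only inequalities on closed and open sets, not convergence on arbitrary Borel sets. In fact $\nu_{m'}|_{K_m}$ need not coincide with $\nu_m$: mass that escapes $K_m$ along the subsequence but remains in $K_{m'}$ can land on $\partial K_m$ in $\nu_{m'}$ but be absent from $\nu_m$. So defining $\nu(A):=\lim_{m}\tilde\nu_m(A)$ on $\bigcup_m\mathcal{B}(K_m)$ is not justified as written, and the Carath\'eodory extension step is premature.

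A clean repair, using what you already have: your displayed inequality actually shows that for every $f\in C_b(E)$ the scalar sequence $\int_E f\,d\mu_{n_k}$ is Cauchy, hence convergent; call the limit $L(f)$. Then $L$ is a positive linear functional on $C_b(E)$ with $L(1)=1$, and the uniform tightness of $\{\mu_{n_k}\}$ transfers to $\tau$-smoothness of $L$ (if $0\le f\le 1$ and $f=0$ on $K_m$ then $L(f)\le 1/m$). The Daniell--Stone representation then yields a Borel probability measure $\nu$ with $L(f)=\int f\,d\nu$, which is exactly the weak limit. Alternatively, embed $E$ into a compact metric space and use Banach--Alaoglu plus Portmanteau to see that any subsequential limit is concentrated on $E$; this avoids the patching altogether.
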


\begin{remark}
\label{rem-estimates}
Let $X$, $Y$ be two topological spaces and let $\bigl(\Omega, \FT, \mathbb{P}\bigr)$, $\bigl(\tOmega, \tF, \tP\bigr)$ be two probability spaces. Also assume that
\[
\begin{split}
    z_n: \Omega \to X, \qquad \mathrm{v}_n : \Omega \to Y; \\
    \tilde{z}_n: \Omega \to X, \qquad \tilde{\mathrm{v}}_n : \Omega \to Y
\end{split}
\]
are Borel-measurable maps. Let $F: X \times Y \ni (z, \mathrm{v}) \mapsto F(z,\mathrm{v}) \in \R$ is a Borel-measurable map. If $\mathcal{L}\left((z_n, \mathrm{v}_n)\right) = \mu = \tilde{\mathcal{L}}\left((\tilde{z}_n, \tilde{\mathrm{v}}_n)\right)$ on $\mathcal{B}(X\times Y)$, then
\eq{ \label{eqn:equal_estimate}
\int_\Omega F(z_n , \mathrm{v}_n)(\omega)\,d\Prob(\omega) = \int_{\tOmega} F(\tilde{z}_n , \tilde{\mathrm{v}}_n)(\omega)\,d\tP(\omega).
}
\begin{proof}{
Starting from the LHS of \eqref{eqn:equal_estimate}, using the definition of law of a random variable and equivalence of laws on $\mathcal{B}(X \times Y)$, we get
\begin{align*}
    & \int_\Omega F(z_n , \mathrm{v}_n)(\omega)\,d\Prob(\omega) = \int_\Omega F(z_n(\omega) , \mathrm{v}_n(\omega))\,d\Prob(\omega) = \int_{X \times Y} F(x,y)\,d\mathcal{L}(x,y)\\
    &\; = \int_{X \times Y} F(x,y)\,d\mu(x,y) = \int_{X \times Y} F(x,y)\, d\tilde{\mathcal{L}}(x,y)  = \int_{\widetilde{\Omega}} F(\tilde{z}_n(\omega) , \tilde{\mathrm{v}}_n(\omega))\,d\widetilde{\Prob}(\omega)
\end{align*}
}
\end{proof}
\end{remark}

\begin{theorem}[Kuratowski Theorem]
\label{thm-Kuratowski}
Assume that $X_1$, $X_2$ are two Polish spaces with their Borel $\sigma$-fields denoted respectively by $\mathcal{B}(X_1)$, $\mathcal{B}(X_2)$. If $\phi : X_1 \to X_2$ is an injective Borel measurable map, then for any $E_1 \in \mathcal{B}(X_1)$, $E_2 := \phi(E_1) \in \mathcal{B}(X_2)$.
\end{theorem}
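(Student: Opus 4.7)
The plan is to prove the Kuratowski theorem by the classical route of descriptive set theory: first reduce to the case where the map in question is continuous on a Polish domain, and then invoke the Lusin--Souslin theorem that a continuous injective image of a Polish space inside a Polish space is Borel. So I would not attempt a direct combinatorial attack on $\phi(E_1)$, but rather manipulate the topologies on $X_1$ until the problem becomes one about a continuous injection between Polish spaces.

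The first step will be a ``change of Polish topology'' argument. Given the injective Borel map $\phi : X_1 \to X_2$ and the Borel set $E_1 \subset X_1$, I would use the following classical fact: for any countable family of Borel subsets of a Polish space $X$ one can find a finer Polish topology on $X$ with the same Borel $\sigma$-field in which all these sets are clopen. Applying this, choose a countable basis $\{V_n\}$ of $X_2$ and enlarge the topology of $X_1$ so that each $\phi^{-1}(V_n)$ becomes open; this makes $\phi$ continuous while leaving $\mathcal{B}(X_1)$ unchanged. A further refinement makes $E_1$ itself clopen, so that $E_1$ inherits a Polish topology. Thus it suffices to prove the statement under the additional hypothesis that $X_1$ is Polish, $\phi$ is continuous and injective, and $E_1 = X_1$, i.e.\ that $\phi(X_1) \in \mathcal{B}(X_2)$.

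The second, and genuinely nontrivial, step is the Lusin--Souslin theorem. The plan here is to show the sharper statement: if $A,B \subset X_2$ are disjoint analytic sets (continuous images of Polish spaces) then they can be separated by a Borel set, and then to apply this with $A = \phi(F)$ and $B = \phi(X_1 \setminus F)$ for $F$ ranging over a suitable generating family (e.g.\ closed sets of a compatible complete metric on $X_1$). I would realize $X_1$ as a continuous image of Baire space $\N^\N$ via a Souslin scheme $(F_s)_{s \in \N^{<\N}}$ and push it forward by $\phi$ to obtain a Souslin scheme $(\phi(F_s))$ with vanishing diameters whose sections are singletons by injectivity; the first separation theorem for analytic sets, proved inductively along the tree $\N^{<\N}$, then shows that $\phi(X_1) = \bigcap_n \bigcup_s \overline{\phi(F_s)}$ up to a Borel correction, hence is Borel.

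The hard part is clearly the Lusin--Souslin step: the change-of-topology reduction is essentially bookkeeping, whereas the Borel separation of disjoint analytic sets genuinely uses the tree structure of Souslin schemes and the well-foundedness argument on $\N^{<\N}$. A clean alternative, which I would fall back on if the direct proof becomes unwieldy, is to appeal to the Lusin--Suslin theorem exactly as stated in a standard reference (e.g.\ Kechris, \emph{Classical Descriptive Set Theory}, Theorem~15.1), and present the proposition as a corollary of it combined with the topology-refinement lemma; given the way the statement is used in the paper (only as a black box justifying measurability of images in the Skorokhod--Jakubowski arguments), a citation-based proof is entirely adequate, so the proposal would in practice consist of the reduction argument and a pointer to the Lusin--Souslin theorem.
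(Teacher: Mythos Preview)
Your proof outline is correct and follows the standard descriptive set theory route (topology refinement to reduce to a continuous injection, then Lusin--Souslin). Note, however, that the paper does not actually prove this theorem: it is stated in the appendix as a classical black-box result, with no argument supplied. So there is nothing to compare against; your proposal simply fills in what the paper leaves as a citation, and your own final remark---that a reference to Kechris, \emph{Classical Descriptive Set Theory}, Theorem~15.1, would suffice---is precisely in the spirit of how the paper treats this statement.
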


We may also need the following corollary \cite[Proposition~C.2]{Brz+Mot+Ondr_2017} of the above result.
\begin{corollary}
\label{cor-Kuratowski}
Suppose that $X_1$, $X_2$ are two topological spaces with their Borel $\sigma$-fields denoted respectively by $\mathcal{B}(X_1)$, $\mathcal{B}(X_2)$. Suppose that $\phi : X_1 \to X_2$ is an injective Borel measurable map such that for any $E_1 \in \mathcal{B}(X_1)$, $E_2 := \phi(E_1) \in \mathcal{B}(X_2)$. Then, if $g : X_1 \to \R$ is a Borel measurable map then a function $f : X_2 \to \R$ defined by
\begin{equation}
    \label{eqn:Kuratowski-1}
f(x_2)=
    \begin{cases}
g(\phi^{-1}(x_2)), \quad \mbox{if } x_2 \in \phi(X_1),\\
\infty, \qquad \qquad \; \; \mbox{if } x_2 \in X_2\setminus \phi(X_1),
    \end{cases}
\end{equation}
is also Borel measurable.
\end{corollary}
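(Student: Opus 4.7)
\medskip

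\noindent\textbf{Proof proposal for Corollary \ref{cor-Kuratowski}.}

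The plan is to verify Borel measurability of $f$ by computing preimages of Borel subsets of $\mathbb{R}$ (or of the extended real line if one wishes to regard $\infty$ as an admissible value) and showing that each such preimage lies in $\mathcal{B}(X_2)$. The whole argument rests on two elementary observations made possible by the hypothesis on $\phi$: (a) $\phi(X_1) \in \mathcal{B}(X_2)$, and (b) for every $A \in \mathcal{B}(X_1)$ one has $\phi(A) \in \mathcal{B}(X_2)$; together with the fact that, since $\phi$ is injective, $\phi(A) = (\phi^{-1})^{-1}(A)$ whenever we restrict the inverse to $\phi(X_1)$.

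First I would record the preliminary step: applying the hypothesis with $E_1 := X_1 \in \mathcal{B}(X_1)$ yields $\phi(X_1) \in \mathcal{B}(X_2)$, hence also $X_2 \setminus \phi(X_1) \in \mathcal{B}(X_2)$. Next, I would fix an arbitrary Borel set $B \subseteq \mathbb{R}$ and split into two cases according to whether the value $\infty$ is considered to belong to $B$ or not. If $\infty \notin B$, then no point of $X_2 \setminus \phi(X_1)$ can be mapped into $B$ by $f$, so
\begin{equation*}
f^{-1}(B) \;=\; \{x_2 \in \phi(X_1)\,:\, g(\phi^{-1}(x_2)) \in B\} \;=\; \phi\bigl(g^{-1}(B)\bigr).
\end{equation*}
Since $g$ is Borel measurable we have $g^{-1}(B) \in \mathcal{B}(X_1)$, and by the hypothesis on $\phi$ the image $\phi(g^{-1}(B))$ lies in $\mathcal{B}(X_2)$. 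If instead $\infty \in B$, the preimage picks up the complement of $\phi(X_1)$ and one obtains
\begin{equation*}
f^{-1}(B) \;=\; \phi\bigl(g^{-1}(B \cap \mathbb{R})\bigr) \;\cup\; \bigl(X_2 \setminus \phi(X_1)\bigr),
\end{equation*}
which is again in $\mathcal{B}(X_2)$ by step (a) together with the preceding case.

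Since the class of Borel sets in $\mathbb{R}$ (or in $\overline{\mathbb{R}}$) is generated by, for instance, half-lines, it would suffice in fact to check the two identities above on such generators; the case split by whether $\infty$ belongs to the generator is identical. I do not foresee any genuine obstacle: the substance of the result is contained in Theorem \ref{thm-Kuratowski}, which supplies property (b); once that is in hand, Corollary \ref{cor-Kuratowski} is a one-line set-theoretic computation of preimages, and the only subtlety is the bookkeeping on the set $X_2 \setminus \phi(X_1)$ where $f$ is declared to equal $\infty$.
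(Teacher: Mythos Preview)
Your argument is correct and is the standard verification: the hypothesis gives $\phi(X_1)\in\mathcal{B}(X_2)$ and, for every $A\in\mathcal{B}(X_1)$, $\phi(A)\in\mathcal{B}(X_2)$; since $\phi$ is injective one has $\{x_2\in\phi(X_1): g(\phi^{-1}(x_2))\in B\}=\phi(g^{-1}(B))$, and the case split on whether $\infty$ is hit handles the bookkeeping on $X_2\setminus\phi(X_1)$. Note that the paper does not actually supply its own proof of this corollary but simply quotes it from \cite[Proposition~C.2]{Brz+Mot+Ondr_2017}, so there is no in-paper argument to compare against; your proof is exactly the natural one and would serve perfectly well in its place.
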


The approach to establish H\"older continuity of a stochastic integral then relies on the Kolmogorov continuity theorem,  which statement and proof follow directly from  \cite[Theorem~3.3]{DaPrZa92}.

\begin{theorem}[Kolmogorov Continuity Theorem]
\label{thm-Kolmogorov cont}
Let $X$ be a stochastic process taking values in a separable Banach space $\mathbb{U}$. Assume that there exist  constant $K > 0$, $\nu \ge 1$, $\sigma > 0$ such that
$X(0)=0$ and
\begin{equation}\label{eqn:Kolmogorov-assumption}
\E\|X(t) - X(s)\|^\nu_{\mathbb{U}} \le K|t-s|^{1 + \sigma}, \mbox{ for all $s,t \in [0,T]$.}
\end{equation}
Then there exists $Y$, a modification of $X$, which has $\mathbb{P}$-a.s. H\"older continuous trajectories with exponent $\gamma$ for every $\kappa \in \left(0, \frac{\sigma}{\nu}\right)$. In addition, we have
\begin{equation}\label{eqn:Kolmogorov-asertion}
\E\|Y\|^\nu_{C^\kappa([0,T];\mathbb{U})} \lesssim K,
\end{equation}
where the proportional constant  depends only on  $T$.
\end{theorem}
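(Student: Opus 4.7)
The plan is to run the classical chaining argument along dyadic rationals, adapted to Banach-space-valued processes. Introduce the dyadic mesh $D_n := \{kT2^{-n}: k=0,1,\ldots,2^n\}$ and set $D := \bigcup_{n\ge 0} D_n$. Fix $\kappa \in (0, \sigma/\nu)$. For consecutive $s, t \in D_n$, Markov's inequality applied to \eqref{eqn:Kolmogorov-assumption} gives
\begin{equation*}
\mathbb{P}\bigl(\|X(t)-X(s)\|_{\mathbb{U}} > T^\kappa 2^{-n\kappa}\bigr) \le K\, T^{1+\sigma-\kappa\nu}\, 2^{-n(1+\sigma-\kappa\nu)}.
\end{equation*}
Summing over the $2^n$ consecutive pairs at level $n$ and then over $n$, the exponent $1+\sigma-\kappa\nu > 1$ makes the double sum finite, so the Borel--Cantelli lemma yields a full-measure event $\Omega_0$ on which, for each $\omega\in\Omega_0$, there is $n_0(\omega)$ such that every consecutive increment at every level $n\ge n_0$ is bounded by $T^\kappa 2^{-n\kappa}$.

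Next I would run the standard telescoping/chaining: for $s, t \in D$ with $T2^{-(n+1)}\le |t-s| < T2^{-n}$, writing both as finite unions of nearest-neighbour increments at levels $m\ge n$ gives
\begin{equation*}
\|X(t,\omega)-X(s,\omega)\|_{\mathbb{U}} \le 2\sum_{m\ge n} M_m(\omega),\qquad M_m := \max_k\bigl\|X\bigl((k+1)T2^{-m}\bigr)-X\bigl(kT2^{-m}\bigr)\bigr\|_{\mathbb{U}}.
\end{equation*}
Combined with the Borel--Cantelli bound and the geometric series, this yields $\|X(t,\omega)-X(s,\omega)\|_{\mathbb{U}}\le C(\omega)|t-s|^\kappa$ for all $s,t\in D$. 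Since $(\mathbb{U},\|\cdot\|_{\mathbb{U}})$ is complete and $D$ is dense in $[0,T]$, uniform continuity extends $X|_D$ to a process $Y:[0,T]\to\mathbb{U}$ with $\kappa$-H\"older trajectories on $\Omega_0$, and with $Y(0)=0$. To check $Y$ is a modification of $X$, for each $t\in[0,T]$ pick $t_n\in D$ with $t_n\to t$; by \eqref{eqn:Kolmogorov-assumption} $X(t_n)\to X(t)$ in $L^\nu(\Omega;\mathbb{U})$, while by construction $X(t_n)\to Y(t)$ almost surely, so $X(t)=Y(t)$ $\mathbb{P}$-a.s.

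For the quantitative estimate \eqref{eqn:Kolmogorov-asertion} I would repeat the chaining quantitatively without invoking Borel--Cantelli. Using $|t-s|^{-\kappa}\le 2^\kappa T^{-\kappa}2^{n\kappa}$ and the telescoping inequality, the H\"older seminorm on $D$ is dominated (uniformly in the pair $s,t$) by
\begin{equation*}
[Y]_{C^\kappa([0,T];\mathbb{U})} \le 2^{1+\kappa}\,T^{-\kappa}\sum_{m\ge 0} 2^{m\kappa} M_m,
\end{equation*}
where the estimate extends from $D$ to $[0,T]$ by continuity. Since $\E M_m^\nu \le 2^m\cdot K(T2^{-m})^{1+\sigma}=KT^{1+\sigma}2^{-m\sigma}$, Minkowski's inequality in $L^\nu(\Omega)$ gives
\begin{equation*}
\bigl\|\,[Y]_{C^\kappa}\,\bigr\|_{L^\nu(\Omega)} \le 2^{1+\kappa}\,T^{-\kappa}\,(KT^{1+\sigma})^{1/\nu}\sum_{m\ge 0} 2^{-m(\sigma/\nu-\kappa)},
\end{equation*}
and the geometric series converges precisely because $\kappa<\sigma/\nu$. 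Combining with $Y(0)=0$ and raising to the $\nu$-th power yields \eqref{eqn:Kolmogorov-asertion}, with proportionality constant depending only on $T,\nu,\sigma,\kappa$. The only non-bookkeeping step is the passage from the bound on the \emph{pair} $\|Y(t)-Y(s)\|_{\mathbb{U}}$ to a bound on the \emph{supremum} $[Y]_{C^\kappa}$ inside the $L^\nu$ norm: this is precisely what the inequality above achieves, since the dominating series does not depend on $(s,t)$, so one can apply Minkowski once and be done. Everything else is a careful accounting of dyadic indices.
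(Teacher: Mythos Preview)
The paper does not actually prove this theorem; it merely states it in the appendix and cites \cite[Theorem~3.3]{DaPrZa92} for the proof. Your dyadic chaining argument is the classical proof (and is essentially what Da~Prato--Zabczyk do), and it is correct, including the quantitative part: bounding $\mathbb{E}M_m^\nu$ by the crude union-bound $2^m K(T2^{-m})^{1+\sigma}$ and then applying Minkowski to the geometric series is exactly the right way to extract \eqref{eqn:Kolmogorov-asertion}. One minor remark: your proportionality constant depends on $T,\nu,\sigma,\kappa$, which is more accurate than the paper's ``depends only on $T$''.
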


We use the following consequence  of the Vitali Convergence Theorem  \cite[Theorem C.4]{Oksendal-2003}.

\begin{theorem}[Vitali]\label{thm-Vitali}
Let $(a_N)$ be a sequence of integrable functions on a probability space
$(\Omega,\mathcal{B}(\Omega),\mathbb{P})$ such that $a_N\to a$ a.e.\ as $N\to\infty$
(or $a_N\to a$ in measure) for some integrable function $a$ and there exists $r>1$ and
a constant $C> 0$ such that \[ \E|a_N|^r\le C\mbox{  for all }N\in\N.\] Then
$\E|a_N|\to\E|a|$ as $N\to\infty$.
\end{theorem}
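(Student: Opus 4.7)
The plan is to establish the standard two-step route to Vitali's theorem: first show that the sequence $(a_N)$ is uniformly integrable as a consequence of the uniform $L^r$-bound with $r>1$, and then combine uniform integrability with convergence a.e.\ (or in measure) to upgrade to convergence in $L^1$, which in particular yields $\E|a_N|\to\E|a|$.

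First I would record integrability of the limit: if $a_N\to a$ a.e., Fatou's lemma applied to $|a_N|^r$ gives $\E|a|^r\le\liminf_N\E|a_N|^r\le C$, so $a\in L^r\subset L^1$. (If only convergence in measure is assumed, extract an a.e.\ convergent subsequence to run the same argument, noting the limit is independent of the subsequence.) Next I would prove uniform integrability: for any measurable $A\subset\Omega$, H\"older's inequality with conjugate exponents $r$ and $r/(r-1)$ yields
\[
\E\bigl[|a_N|\mathbf{1}_A\bigr]\le(\E|a_N|^r)^{1/r}\mathbb{P}(A)^{1-1/r}\le C^{1/r}\mathbb{P}(A)^{1-1/r},
\]
uniformly in $N$. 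Since the right-hand side tends to $0$ as $\mathbb{P}(A)\to 0$, the family $\{a_N\}$ is uniformly integrable; the same bound applied to $a$ shows $\{a_N,a\}$ is uniformly integrable as a whole family.

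Then I would fix $\varepsilon>0$ and split
\[
\E|a_N-a|=\E\bigl[|a_N-a|\mathbf{1}_{\{|a_N-a|\le\varepsilon\}}\bigr]+\E\bigl[|a_N-a|\mathbf{1}_{\{|a_N-a|>\varepsilon\}}\bigr].
\]
The first term is bounded by $\varepsilon$. For the second, note that a.e.\ convergence implies convergence in probability, so $\mathbb{P}(|a_N-a|>\varepsilon)\to 0$ as $N\to\infty$; combined with the triangle inequality $|a_N-a|\le|a_N|+|a|$ and uniform integrability, one has $\E\bigl[(|a_N|+|a|)\mathbf{1}_{B_N}\bigr]\to 0$ whenever $\mathbb{P}(B_N)\to 0$. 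Hence the second term tends to $0$, giving $\limsup_N\E|a_N-a|\le\varepsilon$, and since $\varepsilon$ is arbitrary, $a_N\to a$ in $L^1(\Omega,\mathbb{P})$. The desired conclusion $\E|a_N|\to\E|a|$ follows at once from $|\E|a_N|-\E|a||\le\E|a_N-a|$.

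The only real subtlety is handling the "convergence in measure" case, since a priori a.e.\ convergence is not available; the standard trick is to argue by contradiction, assuming $\E|a_N|\not\to\E|a|$, extracting a subsequence along which the difference stays bounded below, and then extracting a further a.e.\ convergent sub-subsequence (via the Riesz theorem that convergence in measure implies a.e.\ convergence along a subsequence), to which the previous argument applies and produces the contradiction. Everything else is routine Hölder/Fatou bookkeeping — the only step that requires care is verifying uniform integrability cleanly enough to absorb both the tail set $\{|a_N-a|>\varepsilon\}$ and the natural splitting via $|a_N|+|a|$.
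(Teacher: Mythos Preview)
Your proof is correct and follows the standard route (uniform integrability from the uniform $L^r$-bound via H\"older, then the usual splitting argument to upgrade convergence in measure to $L^1$-convergence). Note, however, that the paper does not actually prove this theorem: it is recorded in the appendix as a known consequence of the Vitali Convergence Theorem with a citation to \O ksendal, so there is no argument in the paper to compare against. Your write-up simply fills in what the paper leaves as a reference.
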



\subsection{Tightness Preliminaries}
\label{sec-Prelim_tightness}

{Let $E$ be a Hausdorff topological  space with the Borel sigma-field denoted by $\mathcal{B}(E)$.}

\begin{definition}
\label{def-tightness}
The family $\Lambda$ of probability measures on $\bigl(E, \mathcal{B}(E)\bigr)$ is said to be tight if for arbitrary $\ep > 0$ there exists a compact set $K_\ep \subset E$ such that
\[
\nu(K_\ep) \ge 1 - \ep \quad \forall\; \nu \in \Lambda.
\]
\end{definition}

We use the following two lemmas, see \cite{Sim87}, to establish the tightness of laws for various processes.

\begin{lemma}\label{lem-compactness Simon-1}
Let $X$, $B$, and $Y$ be Banach spaces such that the embeddings
\[X\embed  B\embed  Y \mbox{  are dense and continuous }\]
 and the embedding \[ X\embed  B  \mbox{ is compact}. \]
 Assume that  $r>1$.
Let $F \subset L^\infty(0,T; X)$ be a bounded set such that the
set

\[ \bigl\{  \frac{\partial u}{\partial t}:  u \in F \}  \subset L^r(0,T; Y) \mbox{ and is bounded}.
\]
Then $F$ is relatively compact in $C([0,T]; B)$.
In other words, the embedding
\[
W^{1,r}(0,T;Y) \cap L^r(0,T;Y) \embed C([0,T]; B) \mbox{ is compact}.
\]
\end{lemma}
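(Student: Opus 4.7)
The plan is to prove the lemma by applying the Arzel\`a--Ascoli theorem in $C([0,T]; B)$, with the Ehrling--Lions interpolation inequality providing the essential link between the three spaces. First I would establish the Ehrling inequality: using compactness of $X\embed B$ and continuity of $B\embed Y$, for every $\eta>0$ there exists $C_\eta>0$ such that
\[
\|v\|_B \le \eta\|v\|_X + C_\eta\|v\|_Y \quad \text{for every } v\in X.
\]
The proof is a routine contradiction argument: a normalized sequence violating the inequality is $X$-bounded but $Y$-null, and by compactness of $X\embed B$ has a nonzero $B$-limit, contradicting injectivity of $B\embed Y$.

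Next I would pick continuous representatives. Each $u\in F$ lies in $W^{1,r}(0,T;Y)\embed C([0,T];Y)$ (since $r>1$), so $u$ has a canonical $Y$-continuous trajectory; Lebesgue differentiation of $t\mapsto\|u(t)\|_X$ combined with weak-$*$ compactness of closed balls of $X$ (which applies since the $X$ spaces of interest in the paper are reflexive) extends the pointwise $X$-bound $\|u(t)\|_X\le M:=\sup_F\|u\|_{L^\infty(0,T;X)}$ to every $t\in[0,T]$. With this in hand, equicontinuity of $F$ in $B$ falls out by combining Ehrling with H\"older: for $0\le s\le t\le T$,
\[
\|u(t)-u(s)\|_B \le \eta\bigl(\|u(t)\|_X+\|u(s)\|_X\bigr) + C_\eta\,\|u(t)-u(s)\|_Y \le 2\eta M + C_\eta\,M'\,(t-s)^{1-1/r},
\]
where $M':=\sup_F\|\partial_t u\|_{L^r(0,T;Y)}$ and the $Y$-bound uses $u(t)-u(s)=\int_s^t \partial_\tau u\,\mathrm{d}\tau$. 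Given $\varepsilon>0$, choosing $\eta$ small (so $2\eta M<\varepsilon/2$) then $\delta$ small (so $C_\eta M'\delta^{1-1/r}<\varepsilon/2$) yields a uniform modulus of continuity. Pointwise relative compactness in $B$ is immediate: for each fixed $t$, the set $\{u(t):u\in F\}$ is bounded in $X$ and the embedding $X\embed B$ is compact. Arzel\`a--Ascoli then delivers the conclusion.

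The main obstacle is the technical issue in the second step: extracting a representative of $u$ for which $u(t)\in X$ with a uniform bound \emph{for every} $t\in[0,T]$ rather than almost every $t$. Without the reflexivity (or dual-space) property of $X$, one would have to replace the weak-$*$ extraction with a mollification scheme as in Simon \cite{Sim87}, but in the applications within this manuscript (where $X$ is always a Sobolev space of the form $W^{1,p}$ with $p\in(1,\infty)$) the reflexivity argument suffices and the remaining steps are essentially mechanical.
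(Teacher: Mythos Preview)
Your proof is correct and follows the classical Aubin--Lions--Simon route (Ehrling interpolation plus Arzel\`a--Ascoli). Note, however, that the paper does not actually supply a proof of this lemma: it is stated in the appendix as a known compactness tool and simply attributed to Simon \cite{Sim87}. So there is no ``paper's own proof'' to compare against---you have provided a self-contained argument where the manuscript is content to cite the literature. The reflexivity caveat you flag is real but, as you observe, harmless in the applications here (the space $X$ is always $W^{1,3/2}(\dom)$ or $H^1(\dom)$).
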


\begin{lemma} \label{lem-compactness Simon-2}
Let $X$, $B$, and $Y$ be Banach spaces such that the embeddings
\[X\embed  B\embed  Y \mbox{  are dense and continuous }\]
 and the embedding \[ X\embed  B  \mbox{ is compact}. \]
 Assume that $1 \leq  p< q\leq \infty$ and
\begin{enumerate}
    \item $F$ is uniformly bounded in $L^q(0,T; B)\cap L^1(0,T;X)$;
    \item $\forall\, 0 < t_1 < t_2 < T$,
    \[ \lim_{h\to 0}\; \sup_{f\in F}  \;\|\tau_h f-f\|_{L^1(t_1,t_2;Y)}= 0.
    \]
\end{enumerate}
Then $F$ is relatively compact in $L^p(0,T; B)$.
\end{lemma}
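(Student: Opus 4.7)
The plan is to derive the result from the abstract criterion of Simon \cite[Theorem~1]{Sim87}, which characterises relative compactness of a bounded set $F \subset L^p(0,T;B)$ (for $1 \le p < \infty$) by the two conditions: (i) the family of averages $\bigl\{\int_{t_1}^{t_2} f(s)\,\mathrm{d}s : f \in F\bigr\}$ is relatively compact in $B$ for every $0 < t_1 < t_2 < T$; and (ii) the time translations are uniformly equicontinuous in an appropriate $L^p$ sense, i.e.\ $\lim_{h \to 0}\sup_{f\in F}\|\tau_h f - f\|_{L^p(t_1,t_2;B)} = 0$ for every $0 < t_1 < t_2 < T$. My task is therefore to verify (i) and (ii) from the two structural assumptions of the lemma.

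Condition (i) is almost immediate. Uniform boundedness of $F$ in $L^1(0,T;X)$ gives
\[
\Bigl\|\int_{t_1}^{t_2} f(s)\,\mathrm{d}s\Bigr\|_X \le \|f\|_{L^1(0,T;X)} \le C \quad \text{uniformly for } f \in F,
\]
so the family of averages lies in a bounded subset of $X$. The compact embedding $X \hookrightarrow B$ then turns this into a relatively compact set in $B$.

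The key step is condition (ii). The main tool is the Ehrling (or Lions--Peetre) inequality, which follows from the compactness of $X \hookrightarrow B$ together with the continuity of $B \hookrightarrow Y$: for every $\varepsilon > 0$ there exists $C_\varepsilon > 0$ with
\[
\|v\|_B \le \varepsilon \|v\|_X + C_\varepsilon \|v\|_Y, \qquad v \in X.
\]
Applied pointwise in time to $\tau_h f - f$ and integrated over $(t_1,t_2)$, this gives
\[
\|\tau_h f - f\|_{L^1(t_1,t_2;B)} \le 2\varepsilon \|f\|_{L^1(0,T;X)} + C_\varepsilon \|\tau_h f - f\|_{L^1(t_1,t_2;Y)}.
\]
Choosing $\varepsilon$ small relative to $\sup_{f\in F}\|f\|_{L^1(0,T;X)}$ and then sending $h \to 0$ (using hypothesis~2) yields $\lim_{h\to 0}\sup_{f\in F}\|\tau_h f - f\|_{L^1(t_1,t_2;B)} = 0$. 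To pass from the $L^1(B)$ equicontinuity to $L^p(B)$, interpolate against the $L^q(B)$ bound: since $1 \le p < q \le \infty$, there is $\theta \in (0,1]$ with
\[
\|g\|_{L^p(t_1,t_2;B)} \le \|g\|_{L^1(t_1,t_2;B)}^{\theta}\,\|g\|_{L^q(t_1,t_2;B)}^{1-\theta},
\]
and the second factor applied to $\tau_h f - f$ is bounded by $2\|f\|_{L^q(0,T;B)} \le C$ uniformly. This delivers (ii), and Simon's criterion concludes the proof.

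The main obstacle, to my mind, is the interpolation step that upgrades the natural $L^1$ translation control (which is all one gets directly from Ehrling and the hypothesis on $Y$) to an $L^p$ one; this is exactly where the assumption $p < q$ is essential, and the argument cannot dispense with the extra $L^q(B)$ bound. A minor technical point is that the translation hypothesis is stated only on interior intervals $(t_1,t_2) \Subset (0,T)$, so in applying Simon's theorem one must exhaust $(0,T)$ from the inside and use the uniform $L^q(B)$ boundedness near the endpoints to control the residual contributions, but this is a routine matter.
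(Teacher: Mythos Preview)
The paper does not supply its own proof of this lemma: it is simply quoted from Simon \cite{Sim87} (see the line ``We use the following two lemmas, see \cite{Sim87}, to establish the tightness of laws for various processes'' preceding Lemmata~\ref{lem-compactness Simon-1} and~\ref{lem-compactness Simon-2}). So there is no in-paper argument to compare against.

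Your sketch is correct and is in fact the standard route taken in Simon's original proof: verify the abstract Fr\'echet--Kolmogorov--type criterion \cite[Theorem~1]{Sim87} by (i) using the $L^1(0,T;X)$ bound and compactness of $X\hookrightarrow B$ for the averages, and (ii) combining Ehrling's lemma with assumption~2 to get $L^1(B)$-equicontinuity of translations, then interpolating against the $L^q(B)$ bound to reach $L^p(B)$. The use of $p<q$ in the interpolation step and the handling of the endpoints via the $L^q(B)$ bound are exactly as you describe. Nothing is missing.
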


\begin{definition}
\label{def-Aldous criteria}
Let $\left(X_n\right)_{n \in \N}$ be a sequence of stochastic processes indexed by $t \in [0,T]$ and  taking values in a Banach space $E$.
We say that $\left(X_n\right)_{n\in\N}$ satisfies the Aldous condition $\textbf{[A]}$ iff  for every $\epsilon > 0$ and $\eta > 0$ there exists a $\theta > 0$ such that for every sequence $\left(\tau_n\right)_{n \in \N}$ of $[0,T]$-valued stopping times one has
\[
\sup_{n \in \N}\sup_{0\le h \le \theta}\Prob\left\{\|X_n((\tau_n+h)\wedge T) - X_n(\tau_n)\|_E \ge \eta\right\}\le \epsilon.
\]
\end{definition}



\begin{thebibliography}{00}
\footnotesize
\bibitem{Ben95} A. Bensoussan, \textit{Stochastic Navier-Stokes equations}. Acta Applicandae Mathematica 38, 267--304, 1995.

\bibitem{Bianchi+Flandoli_2020}  L.M. Bianchi and    F. Flandoli,  \textit{Stochastic Navier-Stokes equations and related models}.
 Milan J. Math.  88,  no. 1, 225--246 (2020)

\bibitem{Breit+Feir+Ho18} D. Breit, E. Feireisl and M. Hofmanov\'a \textit{Stochastically forced compressible fluid flows}. De Gruyter, Berlin, Germany, 2018.

\bibitem{Breit+Feir+Ho18art} D. Breit, E. Feireisl and M. Hofmanov\'a \textit{Local strong solutions to the stochastic compressible Navier-Stokes system}. Comm. Partial Differential Equations 43, no. 2, 313--345, 2018.

\bibitem{Breit+Feir+Ho17} D. Breit, E. Feireisl and M. Hofmanov\'a \textit{Compressible fluids driven by stochastic forcing: the relative energy inequality and applications}. Comm. Math. Phys. 350, no. 2, 443--473, 2017.

\bibitem{Breit+Feir+Ho16} D. Breit, E. Feireisl and M. Hofmanov\'a \textit{Martina Incompressible limit for compressible fluids with stochastic forcing}. Arch. Ration. Mech. Anal. 222, no. 2, 895--926, 2016.

\bibitem{Breit+Feir+HoMa19} D. Breit, E. Feireisl,  M. Hofmanov\'a and B. Maslowski \textit{Stationary solutions to the compressible Navier-Stokes system driven by stochastic forces}. Probab. Theory Related Fields 174, no. 3-4, 981--1032, 2019.

\bibitem{Breit+Ho16} D. Breit and M. Hofmanov\'a, \textit{Stochastic Navier-Stokes equations for compressible fluids}. Indiana University Mathematical Journal 65, no. 4, 1183--1250, 2016.

\bibitem{BrDeLi03} D. Bresch, B. Desjardins and C.K. Lin, \textit{On some compressible fluid models: Korteweg, lubrication, and shallow water systems}. Comm. Partial Differential Equations 28, no. 3-4, 1009--1037, 2003.

\bibitem{BrDe03} D. Bresch and B. Desjardins, \textit{Existence of global weak solutions for a 2D viscous shallow water equations and convergence to the quasi-geostrophic model}. Comm. Math. Phys. 238, no. 1-2, 211--223, 2003.

\bibitem{BrDe07} D. Bresch and B.Desjardins, \textit{On the existence of global weak solutions to the Navier--Stokes equations for viscous compressible and heat conducting fluids}. J. Math. Pures Appl. 87, 57--90, 2007.

\bibitem{BrDe06} D. Bresch and B. Desjardins, \textit{On the construction of approximate solutions for the 2D viscous shallow water model and for compressible Navier-Stokes models}. J. Math. Pures Appl. (9) 86, no. 4, 362--368, 2006.


\bibitem{Bre11} H. Brezis, \textsc{Functional analysis, Sobolev spaces and partial differential equations.} Universitext. Springer, New York, 2011.

\bibitem{BrDa09}   Z. Brze{\'z}niak and  A.L. Dawidowicz, \textit{On periodic solutions to the von Foerster-Lasota equation},   Semigroup Forum, \textbf{78}, no. 1,  118-137, 2009.

\bibitem{BrDhGi20} Z. Brze{\'z}niak, G. Dhariwal and Q. T. Le Gia, \textit{Stochastic Navier-Stokes Equations on a Thin Spherical Domain}.
Appl. Math. Optim.  84,  no. 2, 1971--2035 (2021)

\bibitem{BrSl20} Z. Brze{\'z}niak and J. Slav\'ik, \textit{Well-posedness of the 3D stochastic primitive equations with multiplicative and transport noise}.
 J. Differential Equations  296  , 617--676 (2021)


\bibitem{Brz+Ferr_2019} Z. Brze{\'z}niak and B. Ferrario, \textit{Stationary solutions for stochastic damped Navier-Stokes equations in $\mathbb{R}^d$}. Indiana Univ. Math. J. 68, no. 1, 105--138, 2019.

\bibitem{BrHoWe19} Z. Brze{\'z}niak, F. Hornung  and  L. Weis,
\textit{Martingale solutions for the stochastic nonlinear Schr\"odinger equation in the energy space},   Probab. Theory Related Fields 174, no. 3-4, 1273--1338, 2019.

\bibitem{Brz+Gold_Jeg_2013} Z. Brze\'zniak, B. Goldys and T. Jegaraj, \textit{Weak solutions of a stochastic Landau-Lifshitz-Gilbert equation}. Appl. Math. Research eXpress 2013, no. 1, 1--33, 2013.

\bibitem{Brz+Mot+Ondr_2017} Z. Brze\'zniak, E. Motyl and M. Ondrejat, \textit{Invariant measues for the stochastic Navier-Stokes equations in unbounded 2D domains.} Ann. Prob., 45, no. 5, 3145--3201, 2017.

\bibitem{Brz+Mot_2013} Z. Brze\'zniak and E. Motyl, \textit{Existence of a martingale solution of the stochastic Navier-Stokes equations in unbounded 2D and 3D domains}. J. Diff. Eq., 254, , 1627--1685, 2013.

\bibitem{Brz+On_2011} Z. Brze\'zniak and M. Ondrejat, \textit{Weak solutions to stochastic wave equations with values in Riemannian manifolds},
 Comm. Partial Differential Equations  36,  no. 9, 1624--1653 (2011)

\bibitem{DaPrZa92} G. Da Prato and J. Zabczyk, \textit{Stochastic equations in infinite dimensions}, Second Edition, Cambridge University Press, Cambridge, 2014.

\bibitem{DeGlTe11} A. Debussche, N. Glatt-Holtz and R. Temam \textit{Local martingale and pathwise solutions for an abstract fluids model}. Physica D, 240, 1123--1144,  2011.

\bibitem{Duc99} B. Ducomet, \textit{A model of thermal dissipation for a one-dimensional viscous reactive and radiative gas.} Math. Methods and Models in Appl. Sci. 9, 1323--1349, 1999.

\bibitem{DuNeVa10}B. Ducomet, \v{S}. Ne\v{c}asov\'a, A.Vasseur,  \textit{On global motions of a compressible barotropic and selfgravitating gas with density-dependent viscosities.} Z. Angew. Math. Phys. 61, no. 3, 479--491, 2010.

\bibitem{Fei_2001} E. Feireisl, \textit{On compactness of solutions to the compressible isentropic Navier-Stokes equations when the density is not square integrable}. Comment. Math. Univ. Carolinae, 42, 83--98, 2001.

\bibitem{Fei+No09} E. Feireisl and A. Novotn\'y \textit{Singular limits in thermodynamics of viscous fluids. Second Edition. Advances in Mathematical Fluid Mechanics}. Birkh\"auser Verlag, Basel, 2017.

\bibitem{Fei+MaNo13} E. Feireisl, B. Maslowski, A. Novotn\'y,  \textit{Compressible fluid
flows driven by stochastic forcing}. J. Differential Equations 254, 1342--1358, 2013.

\bibitem{Flandoli+Gat_1995} F. Flandoli  and D. G{\c a}tarek, \textit{Martingale and stationary solutions for stochastic Navier-Stokes equations},
 Probab. Theory Related Fields  102,  no. 3, 367--391(1995)

\bibitem{GyKr96} I. Gy\"ongy and N. Krylov \textit{Existence of strong solutions for It\^o's stochastic equations via approximations}. Probab. Theory Relat. Fields, 105, 143--158, 1996.

\bibitem{JoMe86} A. Joffee and M. M\'etivier \textit{Weak convergence of sequences of semimartingales with applications to multitype branching processes}. Adv. Appl. Probab., 18, 20--65, 1986.

\bibitem{Jak97} A.~Jakubowski, \textit{The almost sure Skorokhod representation for
subsequences in nonmetric spaces}, Teor. Veroyatnost. i Primenen. 42: 209-216, 1997.
\textit{English translation in} Theory Probab. Appl., 42, 167-175, 1997.

\bibitem{Lio98} P.-L. Lions, \textit{Mathematical Topics in Fluid Mechanics, Vol 2: Compressible Models}. Oxford Lecture Series in Mathematics and its Applications, 10. Oxford Science Publications. The Clarendon Press, Oxford University Press, New York, 1998.

\bibitem{Me+Va05} A. Mellet, A. Vasseur, \textit{On the isentropic compressible Navier-Stokes equation}, arXiv:math/0511210v1.

\bibitem{Me+Va07} A. Mellet and A. Vasseur, \textit{On the Barotropic Compressible Navier--Stokes Equations}, Communications in Partial Differential Equations. 32: 431-452, 2007.

\bibitem{Mot12} E. Motyl, \textit{Stochastic Navier-Stokes equations driven by Le\'vy noise in unbounded 3d domains.} Potential Anal. 38(3), 863--912, 2013.

\bibitem{MuPoZa15} P. B. Mucha, M. Pokorn\'y and E. Zatorska, \textit{Heat-conducting, compressible mixtures with multicomponent diffusion: construction of a weak solution.} SIAM J. Math. Anal., 47(5), 3747--3797, 2015.

\bibitem{NoSt04} A. Novotn\'y and I. Str\v{a}skraba, \textsc{Introduction to the mathematical theory of compressible flow}. Oxford Lecture Series in Mathematics and its Applications, 27. Oxford University Press, Oxford, 2004.

\bibitem{Oksendal-2003} B. \O ksendal, \textit{ Stochastic differential equations. An introduction with
applications},  Sixth edition. Universitext. Springer-Verlag, Berlin, 2003.


\bibitem{Ond10} M. Ondrej\'at, \textit{Stochastic nonlinear wave equations in local Sobolev spaces}. Electronic Journal of Probability. 15: 1041-1091, 2010.


\bibitem{Pardoux_1976} E. Pardoux, Integrales Stochastiques Hilbertiennes, Cahiers Math\'ematiques de la Decision
No. 7617, Universit\'ee Paris Dauphine, 1976.

\bibitem{Sim87} J. Simon,  \textit{Compact Sets in the Space $L^p(0,T; B)$},   Ann. Mat. Pura Appl. \textbf{146} (4), 65--96, 1987.

\bibitem{Skorokhod_1965}  A. V. Skorokhod,  \textsc{Studies in the theory of random processes.} Translated from the Russian by Scripta Technica, Inc. Addison-Wesley Publishing Co., Inc., Reading, Mass., 1965

\bibitem{To00} E. Tornatore, \textit{Global solution of bi-dimensional stochastic equation for a viscous gas}. NoDEA Nonlinear Differential Equations Appl. \textbf{7} (4), 343--360, 2000.


\bibitem{VK95} V. A. Vaigant and A. V. Kazhikhov, \textit{On the existence of global solutions of two-dimensional
Navier--Stokes equations of a compressible viscous fluid}. Sib. Mat. Zh., 36(6):1283--1316,
1995.

\bibitem{VaYu16} A.~F. Vasseur and C.~Yu, \textit{Existence of global weak solutions for 3d degenerate compressible {N}avier--{S}tokes equations},  Invent. Math. \textbf{206}(3), 935--974, 2016.

\bibitem{VaYu16b} A.~F. Vasseur and C.~Yu, \textit{Global weak solutions to the compressible quantum Navier-Stokes equations with damping}. SIAM J. Math. Anal. 48, no. 2, 1489--1511, 2016.
\end{thebibliography}
\end{document}